\newenvironment{changemargin}[2]{\begin{list}{}{%
\setlength{\topsep}{0pt}%
\setlength{\leftmargin}{0pt}%
\setlength{\rightmargin}{0pt}%
\setlength{\listparindent}{\parindent}%
\setlength{\itemindent}{\parindent}%
\setlength{\parsep}{0pt plus 1pt}%
\addtolength{\leftmargin}{#1}%
\addtolength{\rightmargin}{#2}%
}\item }{\end{list}}
\newtheorem{theorem}{Theorem}[section]
\newtheorem{proposition}[theorem]{Proposition}
\newtheorem{corollary}[theorem]{Corollary}
\newtheorem{definition}[theorem]{Definition}
\newtheorem{lemma}[theorem]{Lemma}
\theoremstyle{remark}
\newtheorem{remark}[theorem]{Remark}
\newcommand{\flecheIso}[4]{                     
            \begin{array}{rcl} #1 & \rightarrow & #2 \\   %
                         #3 &\mapsto & #4          %
            \end{array}}
\newcommand{\fonc}[5]{                     
            \begin{array}{crll}#1 :& #2 & \rightarrow & #3 \\   %
                         &#4 &\mapsto & #5          %
            \end{array}}
\newcommand{\foncIso}[5]{                     
            \begin{array}{crll}#1 :& #2 & \overset{\sim}{\rightarrow} & #3 \\   %
                         &#4 &\mapsto & #5          %
            \end{array}}
\newcommand{\exposantGauche}[2]{{\vphantom{#2}}^{#1}#2}
\title{\bf Holonomy and (stated) skein algebras \\in combinatorial quantization}
\author{Matthieu \textsc{Faitg}}
\date{}
\begin{document}
\maketitle

\vspace{-2em}

\begin{center}
\noindent Fachbereich Mathematik\\ 
\noindent Universit\"{a}t Hamburg\\ 
\noindent Bereich Algebra und Zahlentheorie\\ 
\noindent Bundesstra{\ss}e 55,\\ 
\noindent D -- 20 146 Hamburg
\end{center}

\begin{changemargin}{1cm}{0cm}
E-mail address: \texttt{matthieu.faitg@uni-hamburg.de}

\vspace{2em}

\begin{changemargin}{2cm}{2cm}
{\small
\noindent \textsc{Abstract}.~ The algebra $\mathcal{L}_{g,n}(H)$ was introduced by Alekseev--Grosse--Schomerus and Buffenoir--Roche and quantizes the character variety of the Riemann surface $\Sigma_{g,n}\!\setminus\! D$ ($D$ is an open disk). In this article we define a holonomy map in that quantized setting, which associates a tensor with components in $\mathcal{L}_{g,n}(H)$ to tangles in $(\Sigma_{g,n}\!\setminus\!D) \times [0,1]$, generalizing previous works of Buffenoir--Roche and Bullock--Frohman--Kania-Bartoszynska. We show that holonomy behaves well for the stack product and the action of the mapping class group; then we specialize this notion to links in order to define a generalized Wilson loop map. Thanks to the holonomy map, we give a geometric interpretation of the vacuum representation of $\mathcal{L}_{g,0}(H)$ on $\mathcal{L}_{0,g}(H)$. Finally, the general results are applied to the case $H=U_{q^2}(\mathfrak{sl}_2)$ in relation to skein theory and the most important consequence is that the stated skein algebra of a compact oriented surface with just one boundary edge is isomorphic to $\mathcal{L}_{g,n}\big( U_{q^2}(\mathfrak{sl}_2) \big)$. Throughout the paper we use a graphical calculus for tensors with coefficients in $\mathcal{L}_{g,n}(H)$ which makes the computations and definitions very intuitive.
}
\end{changemargin}
\vspace{0.5em}
\section{Introduction}

\indent Let $G$ be a Lie group, usually assumed connected and simply-connected (the most studied example is $G = \mathrm{SL}_2(\mathbb{C})$) and let $\Sigma_{g,n}$ be the compact oriented surface of genus $g$ with $n$ punctures (or equivalently with $n$ open disks removed). It is well-known that the moduli space of flat $G$-connections on $\Sigma_{g,n}$ can be identified with the $G$-character variety $\mathrm{Hom}\big(\pi_1(\Sigma_{g,n}),G\big)/G$, thanks to the identification between a connection and its holonomy on closed curves. The character variety carries a Poisson structure due to Atiyah--Bott \cite{AB} and Goldman \cite{goldman}. A good survey of these topics is \cite{labourie}.

\smallskip

\indent There exist several quantizations of the character variety. In this paper we will use the combinatorial quantization, developped in the mid-90s by Alekseev \cite{alekseev}, Alekseev--Grosse--Schomerus \cite{AGS, AGS2} and Buffenoir--Roche \cite{BR}. The starting point of this approach is the combinatorial description of the Atiyah--Bott--Goldman Poisson structure by Fock--Rosly \cite{FockRosly}, based on classical $r$-matrices, and the definition of an algebra $\mathcal{L}_{g,n}$ which quantizes $\mathrm{Hom}\big(\pi_1(\Sigma_{g,n} \!\setminus\! D),G\big)$,  where $D$ is an open disk.
\end{changemargin}

\smallskip

\indent More precisely let $\Sigma_{g,n}^{\mathrm{o}} = \Sigma_{g,n} \!\setminus\! D$, $\mathcal{A}_{g,n} = \mathrm{Hom}\big(\pi_1(\Sigma_{g,n}^{\mathrm{o}}),G\big)$ and $\mathbb{C}[\mathcal{A}_{g,n}]$ be the corresponding algebra of functions. Since $\pi_1(\Sigma_{g,n}^{\mathrm{o}})$ is a free group, any $\nabla \in \mathcal{A}_{g,n} \cong G^{2g+n}$ can be identified with a collection of $2g+n$ elements of $G$
\[ \nabla = \big( h_{b_1}, h_{a_1}, \ldots, h_{b_g}, h_{a_g}, h_{m_{g+1}}, \ldots, h_{m_{g+n}} \big) \in G^{2g+n} \]
which are the holonomies of the generators $b_i, a_i, m_i$ of $\pi_1(\Sigma_{g,n}^{\mathrm{o}})$. Let $\overset{V}{T} : G \to \mathrm{End}_{\mathbb{C}}(V)$ be a finite dimensional representation of $G$. Then we define matrices
\[ \overset{V}{B}(1), \, \overset{V}{A}(1), \, \ldots, \, \overset{V}{B}(g), \, \overset{V}{A}(g), \, \overset{V}{M}(g+1), \, \ldots, \, \overset{V}{M}(g+n) \in \mathbb{C}[\mathcal{A}_{g,n}] \otimes \mathrm{End}_{\mathbb{C}}(V) \]
with coefficients in $\mathbb{C}[\mathcal{A}_{g,n}]$ as follows
\[ \big(\overset{V}{B}(k)^i_j\big)(\nabla) = \overset{V}{T}{^i_j}(h_{b_k}), \:\:\: \big(\overset{V}{A}(k)^i_j\big)(\nabla) = \overset{V}{T}{^i_j}(h_{a_k}), \:\:\:  \big(\overset{V}{M}(k)^i_j\big)(\nabla) = \overset{V}{T}{^i_j}(h_{m_{k}}) \]
where $\overset{V}{T}{^i_j}$ are the matrix coefficients of $\overset{V}{T}$ in some basis. Since $\mathbb{C}[\mathcal{A}_{g,n}] \cong \mathbb{C}[G]^{\otimes (2g+n)}$, the coefficients of the matrices $\overset{V}{B}(k), \overset{V}{A}(k), \overset{V}{M}(k)$ (with $V$ running in the set of finite dimensional $G$-modules) generate $\mathbb{C}[\mathcal{A}_{g,n}]$ as an algebra. The Fock-Rosly description expresses the Poisson brackets $\{ \overset{V}{B}(k)^i_j, \overset{W}{B}(k')^{i'}_{j'} \}$, $\{ \overset{V}{B}(k)^i_j, \overset{W}{A}(k')^{i'}_{j'} \}$ \textit{etc} in a matrix form thanks to a classical $r$-matrix. The idea of combinatorial quantization is to replace the Lie group $G$ by the quantum group $U_q(\mathfrak{g})$ ($q=e^h$) and the classical $r$-matrix $r \in \mathfrak{g}^{\otimes 2}$ by the quantum $R$-matrix $R \in U_q(\mathfrak{g})^{\otimes 2}$, modulo non-trivial commutation relations governed by $R$. This defines an associative non-commutative algebra $\mathcal{L}_{g,n}$, generated by coefficients $\overset{V}{B}(k)^i_j, \overset{V}{A}(k)^i_j, \overset{V}{M}(k)^i_j$ (with $V$ now running in the set of finite dimensional $U_q(\mathfrak{g})$-modules), and such that the commutator of two such matrix coefficients contains the Fock--Rosly Poisson bracket at the order $1$ in $h$. An important feature of $\mathcal{L}_{g,n}$ is that it is endowed with a $U_q(\mathfrak{g})$-module-algebra structure, which is the quantized version of the action of $G$ on $\mathbb{C}[\mathcal{A}_{g,n}]$ by conjugation; in particular we have a subalgebra $\mathcal{L}_{g,n}^{\mathrm{inv}}$ of invariant elements, which is the quantized version of the observables $\mathbb{C}[\mathcal{A}_{g,n}]^G$. The definition of $\mathcal{L}_{g,n}$ is purely algebraic and works for any ribbon Hopf algebra $H$ instead of $U_q(\mathfrak{g})$, hence defining an algebra $\mathcal{L}_{g,n}(H)$ (see Definition \ref{defLgn}). In this paper there are no further assumptions on $H$.

\smallskip

\indent Holonomy and Wilson loops (trace of the holonomy in some representation) are very natural notions in classical gauge theory. Thus an immediate problem is to define the corresponding notions in the quantized setting. This is has been achieved first in \cite{BR2}, where they defined a Wilson loop operation for framed links in the thickened surface with values in $\mathcal{L}_{g,n}^{\mathrm{inv}}(H)$; the element associated to a framed link is written as the quantum trace of some matrix with coefficients in $\mathcal{L}_{g,n}(H)$ which can be interpreted as the holonomy of the link. The definition is however quite complicated and it is difficult to explicitly compute examples. These notions have been redefined in a more conceptual way in \cite{BFKB2} (also see \cite{BFKB}), but in a formalism which is dual to the one used in \cite{BR2} and here (the bridge between the two formalisms is discussed in detail in \cite[\S 5.1.3]{these}). More precisely, in \cite{BFKB2}, they use a description of the surface as a fat graph and encode operations on this graph by objects called multitangles; then holonomy is defined as a particular multitangle. Computing explicit examples requires to use several rules and is still a bit tricky. Finally, in \cite[\S 8]{BaR}, a Wilson loop functor for $H$-colored tangles in a thickened punctured disk (\textit{i.e.} $g=0$, $n\geq 0$) is defined as a generalization of the Reshetikhin--Turaev functor in a manner similar to ours. Let us also mention that a holonomy functor is defined in \cite[\S 5]{MW}, but only for paths in a non-thickened ribbon graph.

\smallskip

\indent Let us now describe the content of the paper. We view the surface $\Sigma_{g,n}^{\mathrm{o}}$ as the fat graph defined by the generators of $\pi_1(\Sigma_{g,n}^{\mathrm{o}})$ (see Figures \ref{surfaceGN} and \ref{grapheSurSurface}). Our approach is based on a graphical calculus introduced in \S \ref{sectionDiagramCalculus}, which transforms complicated computations for tensors with coefficients in $\mathcal{L}_{g,n}(H)$ into simple manipulations of diagrams. This graphical calculus is an extension of the graphical calculus in ribbon categories; namely we add a new diagram, called a handle, which corresponds to a matrix with coefficients in $\mathcal{L}_{g,n}(H)$. With this tool at hand, the definition of holonomy is almost obvious and it is transparent that it is a generalization of the Reshetikhin--Turaev functor. Moreover, this formulation allows us to define holonomy for tangles (and not just links) in $\Sigma_{g,n}^{\mathrm{o}} \times [0,1]$; this is especially useful to establish the relation with stated skein algebras.

\smallskip

\indent Section \ref{sectionPreliminaries} is devoted to preliminaries about surfaces, Hopf algebras, the Reshetikhin--Turaev functor and the algebra $\mathcal{L}_{g,n}(H)$. In section \ref{sectionDiagramCalculus} we introduce the graphical calculus for tensors with coefficients in $\mathcal{L}_{g,n}(H)$, which is our main technical tool in this paper, and we reformulate the defining relations of $\mathcal{L}_{g,n}(H)$ in terms of diagram equalities. 

\smallskip

\indent In section \ref{sectionHolonomyWilson} we define holonomy for tangles in $\Sigma_{g,n}^{\mathrm{o}} \times [0,1]$ (Definition \ref{defHol}). The reason why  this operation is called holonomy is explained after Proposition \ref{holonomySimpleLoop}. We show that holonomy is compatible with the stack product (Theorem \ref{wilsonStack}, this result was known for links \cite[Thm. 1]{BR2}, \cite[Thm. 8]{BFKB2} and is similar to \cite[Th 8.1]{BaR} for $g=0$) and that it behaves well under the action of the mapping class group (Theorem \ref{MCGcommuteW}, with $n=0$ for simplicity); the proofs are entirely diagrammatic. We then specialize this definition to framed based links and provide a Hennings-type description of the holonomy for such links; this can be compared with the rules given in \cite[\S 6.2]{BFKB2}. Finally, we define generalized Wilson loops for framed links colored by symmetric linear forms (as in the Hennings setting \cite{hennings}) and with values in $\mathcal{L}_{g,n}^{\mathrm{inv}}(H)$, namely the subalgebra of invariant elements. Enlarging the set of colors is important when $H$ is non-semisimple because in that case we have symmetric linear forms which are not characters of some $H$-module and hence if we color the links only by $H$-modules (\textit{i.e.} by characters) we miss a lot of interesting invariant elements.

\smallskip

\indent Let $\Sigma_{g,n}^{\mathrm{o}, \bullet}$ be $\Sigma_{g,n}^{\mathrm{o}}$ with one point removed on its boundary. In section \ref{sectionSkeinAlg} we take $H = U_{q^2}(\mathfrak{sl}_2)$ (quantum group of $\mathfrak{sl}_2$ at a formal parameter $q^2$) and we show that the stated skein algebra $\mathcal{S}_q^{\mathrm{s}}(\Sigma_{g,n}^{\mathrm{o}, \bullet})$ \cite{Le, CL} is isomorphic to $\mathcal{L}_{g,n}\big( U_{q^2}(\mathfrak{sl}_2) \big)$ (Theorem \ref{thIsoSkeinLgn}), answering the question of \cite{CL} about the relation between the two theories. In particular, this gives a presentation by generators and relations of $\mathcal{S}_q^{\mathrm{s}}(\Sigma_{g,n}^{\mathrm{o}, \bullet})$ (Corollary \ref{presentationStatedSkein}). The isomorphism is defined as follows. For $H = U_{q^2}(\mathfrak{sl}_2)$, all the strands can be colored without loss of generality by the fundamental representation $V_2$, with basis $(v_-, v_+)$. Thanks to the isomorphism $V_2^* \cong V_2$, the holonomy $\mathrm{hol}(\mathbf{T}^{\mathrm{s}})$ of a stated tangle $\mathbf{T}^{\mathrm{s}} \in \mathcal{S}_q^{\mathrm{s}}(\Sigma_{g,n}^{\mathrm{o}, \bullet})$ does not depend on the orientation of its strands and is an element of $\mathcal{L}_{g,n}\big( U_{q^2}(\mathfrak{sl}_2) \big) \otimes V_2^{\otimes k}$, where $k$ is the number of boundary points of $\mathbf{T}^{\mathrm{s}}$. Then the isomorphism between $\mathcal{S}_q^{\mathrm{s}}(\Sigma_{g,n}^{\mathrm{o}, \bullet})$ and $\mathcal{L}_{g,n}\big( U_{q^2}(\mathfrak{sl}_2) \big)$ is simply given by reading the component of the tensor $\mathrm{hol}(\mathbf{T}^{\mathrm{s}})$ indexed by the state of $\mathbf{T}^{\mathrm{s}}$. Our result shows that in the case of the surfaces $\Sigma_{g,n}^{\mathrm{o}, \bullet}$, the stated skein algebra is a particular case of the algebra $\mathcal{L}_{g,n}(H)$. So it might be possible to generalize stated skein algebras to other Hopf algebras than $U_{q^2}(\mathfrak{sl}_2)$; in particular the states would be vectors in some representations (for $U_{q^2}(\mathfrak{sl}_2)$ the states $-,+$ correspond to the basis vectors $v_-, v_+$ of $V_2$).

\smallskip

\indent Finally, in section \ref{sectionVacuum} we explain that the stacking representation of $\Sigma_{g,0}^{\mathrm{o}} \times [0,1]$ on $\Sigma_{0,g}^{\mathrm{o}} \times [0,1]$ corresponds through holonomy to the vacuum representation of $\mathcal{L}_{g,0}(H)$ on $\mathcal{L}_{0,g}(H)$ (Theorem \ref{theoremRepVacuum}). As a result, when $H = U_{q^2}(\mathfrak{sl}_2)$, the representation of the (usual) skein algebra $\mathcal{S}_q(\Sigma_{g,0})$ obtained via the Wilson loop map and the representation of $\mathcal{L}_{g,0}^{\mathrm{inv}}\big( U_{q^2}(\mathfrak{sl}_2) \big)$ on $\mathcal{L}_{0,g}^{\mathrm{inv}}\big( U_{q^2}(\mathfrak{sl}_2) \big)$ is equivalent to the obvious representation of $\mathcal{S}_q(\Sigma_{g,0})$ on $\mathcal{S}_q(\Sigma_{0,g+1})$ defined by stacking (Corollary \ref{corollaireRepSkein}). When $q$ is a root of unity $\epsilon$, we obtain more interesting (and finite dimensional) representations thanks to the use of the restricted quantum group $\overline{U}_{\! \epsilon^2}(\mathfrak{sl}_2)$, as discussed in section \ref{sectionRepSkeinAlg}. If one wants to study explicitly these representations of skein algebras at roots of unity in higher genus (here we only describe explicitly the case of the torus $g=1$), one problem is to describe $\mathcal{L}^{\mathrm{inv}}_{0,g}\big( \overline{U}_{\! \epsilon^2}(\mathfrak{sl}_2) \big)$, which for $g>1$ is complicated both as a vector space and as an algebra. We hope that the generalized Wilson loops defined in this paper may help to understand this space (for instance to find an interesting set of generators or even better a basis in which the multiplication has a suitable form).

\smallskip

\begin{remark}
During the preparation of this manuscript, similar results as in Theorem \ref{thIsoSkeinLgn} and Theorem \ref{theoremRepVacuum} were proved independently in \cite{GJS} (see Remark 2.21 and Theorem 6.4 of that paper, respectively, for the corresponding statements). That work used the framework of factorization homology \cite{BZBJ} and its relationship to skein theory established in \cite{cooke, cooke2}, and so our proofs are very different.
\end{remark}

\medskip

\noindent \textbf{Acknowledgments.} ~ The author is partially supported  by the Deutsche Forschungsgemeinschaft (DFG, German Research Foundation) under Germany's Excellence Strategy - EXC 2121 ``Quantum Universe'' - 390833306. A significant part of this work was done when I was a PhD student at the University of Montpellier; I thank my past advisors, S. Baseilhac and P. Roche, for their useful remarks.

\section{Preliminaries}\label{sectionPreliminaries}

\subsection{Surfaces}\label{sectionDefSurfaces}
\indent We denote by $\Sigma_{g,n}$ the compact oriented surface of genus $g$ with $n$ punctures, except in \S \ref{sectionVacuum} and \S \ref{sectionRepSkeinAlg} where $\Sigma_{g,n}$ will be the compact oriented surface of genus $g$ with $n$ open disks removed (\textit{i.e.} we remove open neighborhoods of the punctures). Let $D \subset \Sigma_{g,n}$ be an open disk disjoint from the punctures, then we denote $\Sigma_{g,n}^{\mathrm{o}} = \Sigma_{g,n} \! \setminus \! D$. We put an orientation on the boundary curve induced by the deletion of $D$, as indicated in Figures \ref{surfaceGN} and \ref{grapheSurSurface}.
\begin{figure}[h]
\centering
\begingroup%
  \makeatletter%
  \providecommand\color[2][]{%
    \errmessage{(Inkscape) Color is used for the text in Inkscape, but the package 'color.sty' is not loaded}%
    \renewcommand\color[2][]{}%
  }%
  \providecommand\transparent[1]{%
    \errmessage{(Inkscape) Transparency is used (non-zero) for the text in Inkscape, but the package 'transparent.sty' is not loaded}%
    \renewcommand\transparent[1]{}%
  }%
  \providecommand\rotatebox[2]{#2}%
  \newcommand*\fsize{\dimexpr\f@size pt\relax}%
  \newcommand*\lineheight[1]{\fontsize{\fsize}{#1\fsize}\selectfont}%
  \ifx\svgwidth\undefined%
    \setlength{\unitlength}{437.43283681bp}%
    \ifx\svgscale\undefined%
      \relax%
    \else%
      \setlength{\unitlength}{\unitlength * \real{\svgscale}}%
    \fi%
  \else%
    \setlength{\unitlength}{\svgwidth}%
  \fi%
  \global\let\svgwidth\undefined%
  \global\let\svgscale\undefined%
  \makeatother%
  \begin{picture}(1,0.22965054)%
    \lineheight{1}%
    \setlength\tabcolsep{0pt}%
    \put(0,0){\includegraphics[width=\unitlength,page=1]{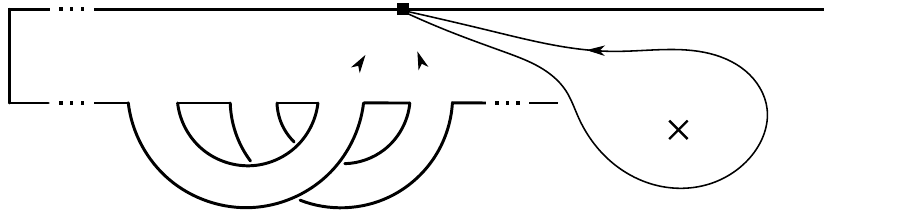}}%
    \put(0.15562339,0.13493008){\color[rgb]{0,0,0}\makebox(0,0)[lt]{\lineheight{1.25}\smash{\begin{tabular}[t]{l}$b_i$\end{tabular}}}}%
    \put(0.47775014,0.1342305){\color[rgb]{0,0,0}\makebox(0,0)[lt]{\lineheight{1.25}\smash{\begin{tabular}[t]{l}$a_i$\end{tabular}}}}%
    \put(0.83188493,0.14938513){\color[rgb]{0,0,0}\makebox(0,0)[lt]{\lineheight{1.25}\smash{\begin{tabular}[t]{l}$m_j$\end{tabular}}}}%
    \put(0,0){\includegraphics[width=\unitlength,page=2]{surfaceGN.pdf}}%
  \end{picture}%
\endgroup%

\caption{Surface $\Sigma_{g,n}^{\mathrm{o}, \bullet}$ and generators of the fundamental group.}
\label{surfaceGN}
\end{figure}

\begin{figure}[h]
\centering
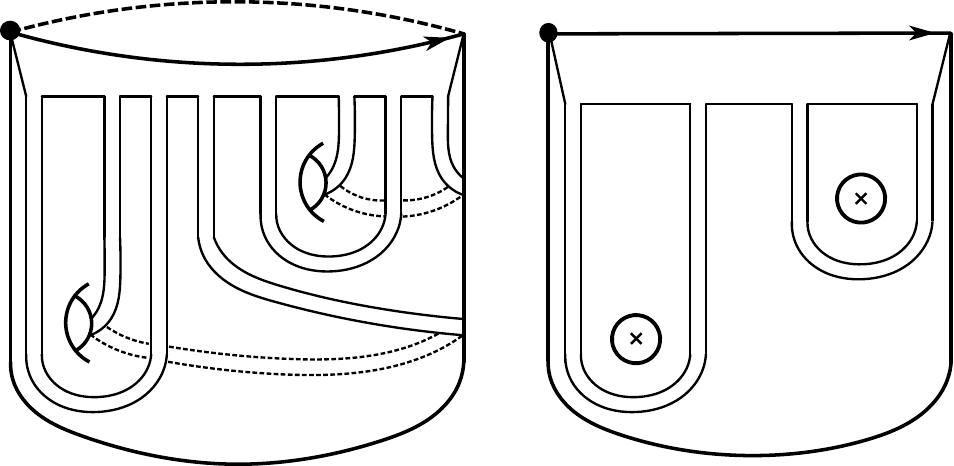
\caption{Surfaces $\Sigma_{2,0}^{\mathrm{o},\bullet}$ and $\Sigma_{0,2}^{\mathrm{o},\bullet}$ with canonical fat graphs embedded on them.}
\label{grapheSurSurface}
\end{figure}

\smallskip

\indent The surface $\Sigma_{g,n}^{\mathrm{o}}$ is represented in Figure \ref{surfaceGN} under a form suitable for our purposes (for the moment do not consider the black dot on the boundary in the figures); we represent both the situations with punctures and with open disks removed. This is not the usual view of $\Sigma_{g,n}^{\mathrm{o}}$. For instance, $\Sigma_{2,0}^{\mathrm{o}}$ and $\Sigma_{0,2}^{\mathrm{o}}$ are represented in a more familiar way in Figure \ref{grapheSurSurface}. To pass from one view to another, retract $\Sigma_{g,n}^{\mathrm{o}}$ to a neighborhood of the canonical loops generating the fundamental group or conversely embed the fat graph of Figure \ref{surfaceGN} in the corresponding surface like in Figure \ref{grapheSurSurface}.

\smallskip

\indent The fundamental group $\pi_1(\Sigma_{g,n}^{\mathrm{o}})$ is a free group generated by the loops $b_1, a_1, \ldots, b_g, a_g, m_{g+1}, \ldots,$ $m_{g+n}$ displayed in Figure \ref{surfaceGN}; moreover, the basepoint of $\pi_1(\Sigma_{g,n}^{\mathrm{o}})$ is represented by a black square dot.

\smallskip

\indent Let $\mathrm{MCG}(\Sigma_{g,n}^{\mathrm{o}})$ be the mapping class group of $\Sigma_{g,n}^{\mathrm{o}}$. For simplicity, we will restrict to $g \geq 1, n= 0$ for the results using or involving the mapping class group. So we recall that $\mathrm{MCG}(\Sigma_{g,0}^{\mathrm{o}})$ is the group of isotopy classes of orientation-preserving diffeomorphisms $\Sigma_{g,0}^{\mathrm{o}} \to \Sigma_{g,0}^{\mathrm{o}}$ which fix the boundary pointwise. A useful feature is that this group is generated by $2g+1$ Dehn twists, called the Humphries generators (see \cite{FM}).

\smallskip

\indent Finally, we let $\Sigma_{g,n}^{\mathrm{o},\bullet}$ be the surface $\Sigma_{g,n}^{\mathrm{o}}$ with one point removed from the boundary curve induced by the deletion of the open disc $D$. This is represented by a big black dot in Figures \ref{surfaceGN}, \ref{grapheSurSurface}. Note that $\pi_1(\Sigma_{g,n}^{\mathrm{o}, \bullet}) = \pi_1(\Sigma_{g,n}^{\mathrm{o}})$ and $\mathrm{MCG}(\Sigma_{g,n}^{\mathrm{o}, \bullet}) = \mathrm{MCG}(\Sigma_{g,n}^{\mathrm{o}})$.

\subsection{Hopf algebras, modules, Reshetikhin-Turaev functor}
\indent Let $H$ be a braided Hopf algebra with coproduct $\Delta$, counit $\varepsilon$, antipode $S$ and universal $R$-matrix $R$ (see \cite[Chap. VIII]{kassel}). We will often write $R = a_i \otimes b_i$ (with implicit summation on $i$) and we set $R' = b_i \otimes a_i$. The Drinfeld element and its inverse are
\begin{equation}\label{elementDrinfeld}
u = S(b_i)a_i, \:\:\:\:\:\:\:\:\: u^{-1} = S^{-2}(b_i)a_i
\end{equation}
It implements $S^2$ by conjugation: $S^2(x) = uxu^{-1}$. In all this paper we assume that $H$ is ribbon, which means that there exists a central and invertible element $v \in H$ such that 
\[ \Delta(v) = (R'R)^{-1} v \otimes v, \:\:\:\:\:\:\: S(v) = v, \:\:\:\:\:\:\:(\text{and thus } \:\:\: \varepsilon(v) =1, \:\:\:\:\:\:\: v^2 = uS(u)\:\text{).} \]
It follows that $H$ is pivotal; indeed, the element $g = uv^{-1}$ satisfies
\begin{equation}\label{pivot}
\Delta(g) = g \otimes g, \:\:\:\:\:\:\:\:\: \forall \, x \in H, \: S^2(x) = gxg^{-1}. 
\end{equation}
For any finite dimensional $H$-module $I$, $g$ provides an isomorphism of $H$-modules
\begin{equation}\label{identificationBidual}
\foncIso{e_I}{I^{**}}{I}{\mathrm{ev}_x}{g^{-1}x}
\end{equation}
where $\mathrm{ev}_x(\varphi) = \varphi(x)$ for $\varphi \in I^*$ and $x \in I$. If $I$ is a finite dimensional $H$-module, we denote by 
\[ \overset{I}{T} : H \to \mathrm{End}_{\mathbb{C}}(I) \cong \mathrm{Mat}_{\dim(I)}(\mathbb{C}) \]
the representation morphism, and by $\overset{I}{h} = \overset{I}{T}(h)$ the representation of $h \in H$ on $I$. The coefficients $\overset{I}{T}{^i_j} \in H^*$ of $\overset{I}{T}$ in some basis are called the matrix coefficients of the $H$-module $I$. The restricted dual of $H$, denoted $H^{\circ}$, is the vector subspace of $H^*$  generated by all the matrix coefficients of all the finite dimensional $H$-modules. We assume that $H^{\circ}$ separates the points, which means that if an equality is true in all the finite dimensional representations then it is true in $H$. We denote by $\mathcal{O}(H)$ the vector space $H^{\circ}$ endowed with the canonical Hopf algebra structure dual to that of $H$.

\smallskip

\indent Let $\mathrm{mod}_l(H)$ be the category of finite dimensional left $H$-modules. The braiding $c_{I,J} : I \otimes J \to J \otimes I$, its inverse $c_{I,J}^{-1} : J \otimes I \to I \otimes J$ and the twist $\theta_I : I \to I$ are defined by
\[ c_{I,J}(x \otimes y) = b_iy \otimes a_ix, \:\:\:\:\:\:\:\:\: c_{I,J}^{-1}(y \otimes x) = S(a_i)x \otimes b_iy, \:\:\:\:\:\:\:\:\: \theta_I(x) = v^{-1}x. \]
The $H$-action on the dual $I^*$ is $(h\varphi)(x) = \varphi\big( S(h)x \big)$ and the duality morphisms $b_I : \mathbb{C} \to I \otimes I^*$, $d_I : I^* \otimes I \to \mathbb{C}$, $b'_I : \mathbb{C} \to I^* \otimes I$, $d'_I : I \otimes I^* \to \mathbb{C}$ are defined by
\[ b_I(1) = v_i \otimes v^i, \:\:\:\:\:\: d_I(\varphi \otimes x) = \varphi(x), \:\:\:\:\:\: b'_I(1) = v^i \otimes g^{-1}v_i, \:\:\:\:\:\: d'_I(x \otimes \varphi) = \varphi(gx) \]
where $(v_i)$ is a basis of $I$, $(v^i)$ is the dual basis and we use Einstein's convention for pairs of indices (implicit summation).

\smallskip

\indent It is well-known (\cite{RT}, see also \cite{KM} and \cite[XIV.5.1]{kassel}) that there is a tensor functor $F_{\mathrm{RT}} : \mathcal{R}\mathcal{G}_H \to \mathrm{mod}_l(H)$, where $\mathcal{R}\mathcal{G}_H$ is the category of $H$-colored ribbon graphs; $F_{\mathrm{RT}}$ is called the Reshetikhin-Turaev functor and takes the following values:
\begin{center}
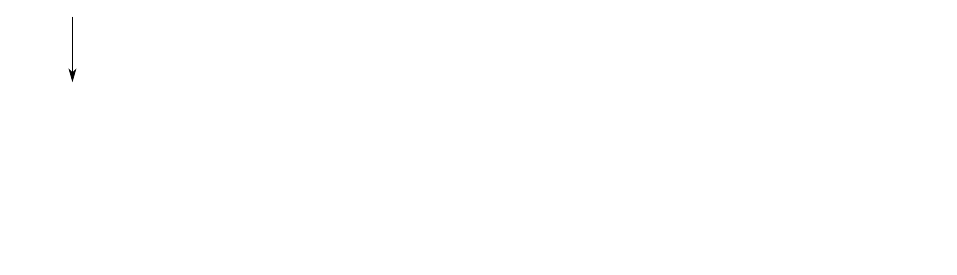
\end{center}

\noindent In the sequel we identify a ribbon graph with its evaluation through $F_{\mathrm{RT}}$. Note that we read diagrams from bottom to top.

\subsection{Algebra $\mathcal{L}_{g,n}(H)$ and related notions}\label{sectionDefLgnH}
\indent Consider $2g+n$ copies of the vector space $H^{\circ}$ (restricted dual, see above) indexed by the simple closed loops $b_i, a_i, m_j$ which generate $\pi_1(\Sigma_{g,n}^{\mathrm{o}})$ (see Figure \ref{surfaceGN}), and denote by $V_{g,n}$ their direct sum:
\[ V_{g,n} = H^{\circ}_{b_1} \oplus H^{\circ}_{a_1} \oplus \ldots \oplus H^{\circ}_{b_g} \oplus H^{\circ}_{a_g} \oplus H^{\circ}_{m_{g+1}} \oplus \ldots \oplus H^{\circ}_{m_{g+n}}. \]
Let $\mathfrak{i}_{b_1}, \mathfrak{i}_{a_1}, \ldots, \mathfrak{i}_{b_g}, \mathfrak{i}_{a_g}, \mathfrak{i}_{m_{g+1}}, \ldots, \mathfrak{i}_{m_{g+n}} : H^{\circ} \to V_{g,n}$ be the canonical injections in each copy. We define
\begin{equation}\label{coeffMatriciels}
\begin{split}
&\overset{I}{B}(1)^i_j = \mathfrak{i}_{b_1}\!\bigl( \overset{I}{T}{^i_j} \bigr), \, \overset{I}{A}(1)^i_j = \mathfrak{i}_{a_1}\!\bigl( \overset{I}{T}{^i_j} \bigr), \ldots, \overset{I}{B}(g)^i_j = \mathfrak{i}_{b_g}\!\bigl( \overset{I}{T}{^i_j} \bigr), \, \overset{I}{A}(g)^i_j = \mathfrak{i}_{a_g}\!\bigl( \overset{I}{T}{^i_j} \bigr), \\
&\overset{I}{M}(g+1)^i_j = \mathfrak{i}_{m_{g+1}}\!\bigl( \overset{I}{T}{^i_j} \bigr), \ldots, \overset{I}{M}(g+n)^i_j = \mathfrak{i}_{m_{g+n}}\!\bigl( \overset{I}{T}{^i_j} \bigr)
\end{split} 
\end{equation}
for all $I,i,j$. The vector space $V_{g,n}$ is generated by these elements. Now consider the tensor algebra $\mathbb{T}(V_{g,n})$; by definition a generic element in $\mathbb{T}(V_{g,n})$ is a linear combination of formal products of the elements introduced in \eqref{coeffMatriciels}. We identify the elements of $V_{g,n}$ with their obvious embedding in $\mathbb{T}(V_{g,n})$. Then $\mathbb{T}(V_{g,n})$ is generated as an algebra by the coefficients of the matrices
\[ \overset{I}{B}(1), \overset{I}{A}(1), \ldots, \overset{I}{B}(g), \overset{I}{A}(g), \overset{I}{M}(g+1), \ldots, \overset{I}{M}(g+n) \in \mathbb{T}(V_{g,n}) \otimes \mathrm{End}_{\mathbb{C}}(I) \]
for all the finite dimensional $H$-modules $I$.

\begin{definition}\label{defLgn}
The algebra $\mathcal{L}_{g,n}(H)$ is the quotient of $\mathbb{T}(V_{g,n})$ by the relations given by the following matrix equalities:
\begin{align}
& \overset{I \otimes J}{X}\!\!(i) = \overset{I}{X}(i)_1\,\overset{IJ}{R'}\,\overset{J}{X}(i)_2\, \overset{IJ}{R'}{^{-1}} \:\:\:\:\:\:\:\:\:\:\:\:\:\:\:\:\:\:\:\:\:\:\:\:\:\:\:\:\:\: \text{ for } 1 \leq i \leq g+n \label{relationFusion}\\
& \overset{IJ}{R}\,\overset{I}{B}(i)_1\, \overset{IJ}{R'}\,\overset{J}{A}(i)_2 = \overset{J}{A}(i)_2\, \overset{IJ}{R}\,\overset{I}{B}(i)_1\, \overset{IJ}{R}{^{-1}} \:\:\:\:\:\:\:\:\:\:\:\:\, \text{ for } 1 \leq i \leq g \label{echangeL10}\\
& \overset{IJ}{R}\,\overset{I}{X}(i)_1\, \overset{IJ}{R}{^{-1}} \,\overset{J}{Y}(j)_2 = \overset{J}{Y}(j)_2\, \overset{IJ}{R}\,\overset{I}{X}(i)_1\, \overset{IJ}{R}{^{-1}} \:\:\:\:\:\:\:\: \text{ for } 1 \leq i < j \leq g+n \label{echangeLgn}
\end{align}
where $X(i)$ is $A(i)$ or $B(i)$ if $1 \leq i \leq g$ and is $M(i)$ if $g+1 \leq i \leq g+n$, and the same applies to $Y(j)$.
\end{definition}
\noindent Let us explain the notations used in this definition. If we have a matrix $\overset{I}{U} \in \mathcal{L}_{g,n}(H) \otimes \mathrm{End}_{\mathbb{C}}(I)$ (resp. $\overset{J}{V} \in \mathcal{L}_{g,n}(H) \otimes \mathrm{End}_{\mathbb{C}}(J)$) then we denote by $\overset{I}{U_1}$ (resp. $\overset{J}{V_2}$) its canonical embedding in $\mathcal{L}_{g,n}(H) \otimes \mathrm{End}_{\mathbb{C}}(I) \otimes \mathrm{End}_{\mathbb{C}}(J)$. In components, this reads $\bigl(\overset{I}{U_1}\bigr)^{ik}_{jl} = \overset{I}{U}{^i_j}\delta^k_l$ (resp. $\bigl(\overset{J}{V_2}\bigr)^{ik}_{jl} = \delta^i_j\overset{J}{V}{^k_l}$). $\overset{IJ}{R} \in \mathrm{End}_{\mathbb{C}}(I) \otimes \mathrm{End}_{\mathbb{C}}(J)$ is the representation of $R \in H \otimes H$ on $I \otimes J$, and we implicitly identify it with $1 \otimes \overset{IJ}{R} \in \mathcal{L}_{g,n}(H) \otimes \mathrm{End}_{\mathbb{C}}(I) \otimes \mathrm{End}_{\mathbb{C}}(J)$. Similarly for $R', R^{-1}, R'^{-1}$ (recall that $R'$ is the flip of $R$). Hence these relations are equalities in $\mathcal{L}_{g,n}(H) \otimes \mathrm{End}_{\mathbb{C}}(I) \otimes \mathrm{End}_{\mathbb{C}}(J)$. To obtain the defining relations of $\mathcal{L}_{g,n}(H)$, one computes the matrix products in each side of these equalities and identifies the coefficients of the resulting matrices. Note that the defining relations may differ slightly from one paper to another, depending on the conventions choosen by the authors. This algebra first appeared in this form in \cite{alekseev, AGS2, AS}; in \cite{AGS, BR} to each fat graph $\Gamma$ describing the surface and satisfying certain properties was associated an algebra $\mathcal{L}_{\Gamma}(H)$.

\smallskip

\indent By construction we have the naturality of the (families of) matrices $\overset{I}{X}(i)$, namely for any $H$-morphism $f : I \to J$ it holds
\begin{equation}\label{fonctorialite}
f \overset{I}{X}(i) = \overset{J}{X}(i) f
\end{equation}
where $X(i)$ is $A(i)$ or $B(i)$ if $1 \leq i \leq g$ and is $M(i)$ if $g+1 \leq i \leq g+n$ (and we identify $f$ with its matrix).

\smallskip

\indent We define a right action $\cdot$ of $H$ on $\mathcal{L}_{g,n}(H)$ by
\begin{equation}\label{actionH}
\overset{I}{X}(i) \cdot h = \overset{I}{h'} \, \overset{I}{X}(i) \, \overset{I}{S(h'')} \:\:\:\:\:\:\:\: (\:\text{\it i.e.} \:\: \overset{I}{X}(i){^ j_k} \cdot h = \overset{I}{h'}{^j_l} \, \overset{I}{X}(i){^l_m} \, \overset{I}{S(h'')}{^m _k}\:)
\end{equation}
where we use Sweedler's notation for the coproduct $\Delta(h) = h' \otimes h''$, Einstein's convention for pairs of indices and we recall that $\overset{I}{x}$ is the representation of $x \in H$ on $I$. Then it is straightforward to check that $\cdot$ endows $\mathcal{L}_{g,n}(H)$ with a structure of (right) $H$-module-algebra. Equivalently, one can work with the associated left $\mathcal{O}(H)$-coaction $\Omega : \mathcal{L}_{g,n}(H) \to \mathcal{O}(H) \otimes \mathcal{L}_{g,n}(H)$:
\begin{equation}\label{coactionH}
\Omega\bigl(\overset{I}{X}(i)\bigr) = \overset{I}{T} \, \overset{I}{X}(i) \, S\bigl(\overset{I}{T}\bigr) \:\:\:\:\:\:\:\: (\:\text{\it i.e.}  \:\: \Omega\bigl(\overset{I}{X}(i)^j_k\bigr) = \overset{I}{T}{^j_l} S\bigl( \overset{I}{T}{^m_k} \bigr) \otimes \overset{I}{X}(i)^l_m \: )
\end{equation}
where in the right-hand side of the first equality we identify $\mathcal{O}(H)$ (resp. $\mathcal{L}_{g,n}(H)$) with the subalgebra $\mathcal{O}(H) \otimes 1$ (resp. $1 \otimes \mathcal{L}_{g,n}(H)$). The subalgebra of invariant elements is
\begin{equation}\label{defInvariants}
\mathcal{L}_{g,n}^{\mathrm{inv}}(H) = \big\{ x \in \mathcal{L}_{g,n}(H) \, \big| \, \forall \, h \in H, \: x \cdot h = \varepsilon(h)x \big\}.
\end{equation}
\noindent Equivalently, an element $x$ is invariant if $\Omega(x) = \varepsilon \otimes x$.

\smallskip

\indent Let $x \in \pi_1(\Sigma_{g,n}^{\mathrm{o}})$ be a simple loop (\textit{i.e.} without self-crossings). We say that $x$ is positively oriented if its orientation looks as follows (this represents a neighborhood of the basepoint in Figure \ref{surfaceGN}):
\begin{equation}\label{positivelyOriented}
\begingroup%
  \makeatletter%
  \providecommand\color[2][]{%
    \errmessage{(Inkscape) Color is used for the text in Inkscape, but the package 'color.sty' is not loaded}%
    \renewcommand\color[2][]{}%
  }%
  \providecommand\transparent[1]{%
    \errmessage{(Inkscape) Transparency is used (non-zero) for the text in Inkscape, but the package 'transparent.sty' is not loaded}%
    \renewcommand\transparent[1]{}%
  }%
  \providecommand\rotatebox[2]{#2}%
  \newcommand*\fsize{\dimexpr\f@size pt\relax}%
  \newcommand*\lineheight[1]{\fontsize{\fsize}{#1\fsize}\selectfont}%
  \ifx\svgwidth\undefined%
    \setlength{\unitlength}{78.74999915bp}%
    \ifx\svgscale\undefined%
      \relax%
    \else%
      \setlength{\unitlength}{\unitlength * \real{\svgscale}}%
    \fi%
  \else%
    \setlength{\unitlength}{\svgwidth}%
  \fi%
  \global\let\svgwidth\undefined%
  \global\let\svgscale\undefined%
  \makeatother%
  \begin{picture}(1,0.37273743)%
    \lineheight{1}%
    \setlength\tabcolsep{0pt}%
    \put(0,0){\includegraphics[width=\unitlength,page=1]{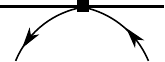}}%
  \end{picture}%
\endgroup%

\end{equation}
The lift $\overset{I}{\widetilde{x}} \in \mathcal{L}_{g,n}(H) \otimes \mathrm{End}_{\mathbb{C}}(I)$ of a positively oriented simple loop $x$ in the representation $I$ is defined as follows: first express $x$ in terms of the generators of $\pi_1(\Sigma_{g,n}^{\mathrm{o}})$, then replace each generator by the corresponding matrix in the representation $I$: $b_1 \mapsto \overset{I}{B}(1), a_1 \mapsto \overset{I}{A}(1), \ldots$ and finally multiply the resulting matrix by $\overset{I}{v}{^{N(x)}}$, where $v \in H$ is the ribbon element and $N(x) \in \mathbb{Z}$ is defined in \cite[\S 5.3.2]{these} (the definition of $N(x)$ is not important for the sequel). For instance, $\overset{I}{\widetilde{b_1}} = \overset{I}{B}(1), \overset{I}{\widetilde{a_1}} = \overset{I}{A}(1), \ldots$. If $x$ is negatively oriented, then $x^{-1}$ is positively oriented and the lift of $x$ is defined as $\overset{I}{\widetilde{x}} = \bigl( \overset{I}{\widetilde{x^{-1}}} \bigr)^{-1}$. Note that in general $\overset{I}{\widetilde{xy}} \neq \overset{I}{\widetilde{x}}\overset{I}{\widetilde{y}}$, due to the fact that $N(xy) \neq N(x) + N(y)$. A remarkable property is that the lifts of a positively-oriented simple loop $x$ satisfy the fusion relation \eqref{relationFusion} \cite[Prop. 5.3.14]{these}:
\[ \overset{I \otimes J}{\widetilde{x}} = \overset{I}{\widetilde{x}}_1\,\overset{IJ}{R'}\,\overset{J}{\widetilde{x}}_2\, \overset{IJ}{R'}{^{-1}}. \]
Without the normalization $\overset{I}{v}{^{N(x)}}$ this last equality would not be true.

\smallskip

\indent As already said, for results involving the mapping class group we will restrict to $g \geq 1, n=0$ for simplicity (the generalization to any $g,n$ is possible but would require more discussion). The lift of $f \in \mathrm{MCG}(\Sigma_{g,0}^{\mathrm{o}})$ is an automorphism $\widetilde{f} : \mathcal{L}_{g,0}(H) \to \mathcal{L}_{g,0}(H)$ defined on generators by
\begin{equation}\label{defLiftMappingClass}
 \widetilde{f}\bigl( \overset{I}{B}(i) \bigr) = \overset{I}{\widetilde{f(b_i)}}, \:\:\: \widetilde{f}\bigl( \overset{I}{A}(i) \bigr) = \overset{I}{\widetilde{f(a_i)}}
\end{equation}
which obviously means $\widetilde{f}\bigl( \overset{I}{B}(i)^k_l \bigr) = \bigl(\overset{I}{\widetilde{f(b_i)}}\bigr)^k_l, \: \widetilde{f}\bigl( \overset{I}{A}(i)^k_l \bigr) = \bigl(\overset{I}{\widetilde{f(a_i)}}\bigr)^k_l$. It holds $\widetilde{f\circ g} = \widetilde{f} \circ \widetilde{g}$. Moreover, for any simple loop $x \in \pi_1(\Sigma_{g,0}^{\mathrm{o}})$:
\begin{equation}\label{pasSurprenant}
\widetilde{f}\bigl( \overset{I}{\widetilde{x}} \bigr) = \overset{I}{\widetilde{f(x)}}.
\end{equation}
\indent Recall that in \cite{Fai18c} we assumed $H$ to be finite dimensional and factorizable in order to obtain a projective representation of $\mathrm{MCG}(\Sigma_{g,0})$ on some subspace $\mathrm{Inv}\big((H^*)^{\otimes g}\big)$ of $(H^*)^{\otimes g}$. Without these assumptions, this representation is \textit{a priori} not defined. Nevertheless the lift of a mapping class can be defined without these assumptions on $H$, as we just did above.

\section{Diagrammatic description of $\mathcal{L}_{g,n}(H)$}\label{sectionDiagramCalculus}

\indent We will define a graphical calculus for the algebras $\mathcal{L}_{g,n}(H)$. The basic observation is that these algebras are defined in a matrix way, and that we have matrices labelled by finite dimensional $H$-modules. Hence we will extend the usual evaluation of $H$-colored ribbon graphs in the sense of \cite{RT} by adding a diagram corresponding to such matrices.

\smallskip

We denote by $\overset{I}{X}$ an element of $\mathcal{L}_{g,n}(H) \otimes \mathrm{End}_{\mathbb{C}}(I)$ (in other words a matrix with coefficients in $\mathcal{L}_{g,n}(H)$ of size $\dim(I)$), where $I$ is a finite dimensional $H$-module. A typical example is a product of the matrices of generators $\overset{I}{A}(i), \overset{I}{B}(j), \overset{I}{M}(k)$, namely:
\begin{equation}\label{matriceGenerale}
\overset{I}{X} = \overset{I}{A}(i_1)^{l_1} \overset{I}{B}(j_1)^{m_1} \overset{I}{M}(k_1)^{n_1} \ldots \overset{I}{A}(i_s)^{l_s} \overset{I}{B}(j_s)^{m_s} \overset{I}{M}(k_s)^{n_s}
\end{equation}
with $l_{\alpha}, m_{\alpha}, n_{\alpha} \in \mathbb{Z}$ and $1 \leq i_{\alpha}, j_{\alpha} \leq g, \: g+1 \leq k_{\alpha} \leq g+n$; for instance $\overset{I}{X} = \overset{I}{M}(3)^{-2}\overset{I}{B}(1){^{-1}}\overset{I}{A}(2)$ (it is not obvious that the matrices $\overset{I}{A}(i), \overset{I}{B}(j), \overset{I}{M}(k)$ are invertible; a simple proof is given in Proposition \ref{propEqRefEtInverse} below). In particular a matrix of the form \eqref{matriceGenerale} is defined for any $I$ and by \eqref{fonctorialite} it satisfies naturality:
\begin{equation}\label{fonctorialiteX}
f \overset{I}{X} = \overset{J}{X} f
\end{equation}
Note that the lift $\overset{I}{\widetilde{x}}$ of a simple loop $x \in \pi_1(\Sigma_{g,n}^{\circ})$ is of the form \eqref{matriceGenerale} except that there is the normalization $\overset{I}{v}{^{N(x)}}$, but since $v$ is central the lifts also satisfies naturality.

\smallskip

\indent Let $(v_i)$ be a basis of $I$ and $(v^j)$ be the dual basis. Then we have an isomorphism of vector spaces
\begin{equation}\label{decMatrice}
\begin{array}{rcl} 
\mathcal{L}_{g,n}(H) \otimes \mathrm{End}_{\mathbb{C}}(I) & \overset{\sim}{\rightarrow} & \mathcal{L}_{g,n}(H) \otimes I \otimes I^* \\
\overset{I}{X} & \mapsto & \overset{I}{X}{^i_j}  \otimes v_i \otimes v^j
\end{array} 
\end{equation}
where we use Einstein's convention for pairs of indices. In this paper, we systematically identify a matrix $\overset{I}{X}$ with $\overset{I}{X}{^i_j} \otimes v_i \otimes v^j$. Such an element will be represented by the following diagram.

\begin{equation}\label{anse}
\begingroup%
  \makeatletter%
  \providecommand\color[2][]{%
    \errmessage{(Inkscape) Color is used for the text in Inkscape, but the package 'color.sty' is not loaded}%
    \renewcommand\color[2][]{}%
  }%
  \providecommand\transparent[1]{%
    \errmessage{(Inkscape) Transparency is used (non-zero) for the text in Inkscape, but the package 'transparent.sty' is not loaded}%
    \renewcommand\transparent[1]{}%
  }%
  \providecommand\rotatebox[2]{#2}%
  \newcommand*\fsize{\dimexpr\f@size pt\relax}%
  \newcommand*\lineheight[1]{\fontsize{\fsize}{#1\fsize}\selectfont}%
  \ifx\svgwidth\undefined%
    \setlength{\unitlength}{128.04744093bp}%
    \ifx\svgscale\undefined%
      \relax%
    \else%
      \setlength{\unitlength}{\unitlength * \real{\svgscale}}%
    \fi%
  \else%
    \setlength{\unitlength}{\svgwidth}%
  \fi%
  \global\let\svgwidth\undefined%
  \global\let\svgscale\undefined%
  \makeatother%
  \begin{picture}(1,0.41064086)%
    \lineheight{1}%
    \setlength\tabcolsep{0pt}%
    \put(0,0){\includegraphics[width=\unitlength,page=1]{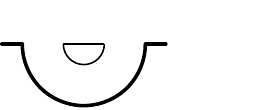}}%
    \put(0.5166977,0.011114){\color[rgb]{0,0,0}\makebox(0,0)[lt]{\lineheight{1.25}\smash{\begin{tabular}[t]{l}$\overset{I}{X}$\end{tabular}}}}%
    \put(0,0){\includegraphics[width=\unitlength,page=2]{anse.pdf}}%
    \put(0.06324935,0.35840362){\color[rgb]{0,0,0}\makebox(0,0)[lt]{\lineheight{1.25}\smash{\begin{tabular}[t]{l}$I$\end{tabular}}}}%
  \end{picture}%
\endgroup%

\end{equation}

\noindent The module $I$ colors the strand while the matrix $\overset{I}{X}$ colors the handle. We define a graphical element corresponding to the other possible orientation of the strand:
\begin{equation}\label{sensOppose}
\begingroup%
  \makeatletter%
  \providecommand\color[2][]{%
    \errmessage{(Inkscape) Color is used for the text in Inkscape, but the package 'color.sty' is not loaded}%
    \renewcommand\color[2][]{}%
  }%
  \providecommand\transparent[1]{%
    \errmessage{(Inkscape) Transparency is used (non-zero) for the text in Inkscape, but the package 'transparent.sty' is not loaded}%
    \renewcommand\transparent[1]{}%
  }%
  \providecommand\rotatebox[2]{#2}%
  \newcommand*\fsize{\dimexpr\f@size pt\relax}%
  \newcommand*\lineheight[1]{\fontsize{\fsize}{#1\fsize}\selectfont}%
  \ifx\svgwidth\undefined%
    \setlength{\unitlength}{269.69181505bp}%
    \ifx\svgscale\undefined%
      \relax%
    \else%
      \setlength{\unitlength}{\unitlength * \real{\svgscale}}%
    \fi%
  \else%
    \setlength{\unitlength}{\svgwidth}%
  \fi%
  \global\let\svgwidth\undefined%
  \global\let\svgscale\undefined%
  \makeatother%
  \begin{picture}(1,0.30218191)%
    \lineheight{1}%
    \setlength\tabcolsep{0pt}%
    \put(0.32057457,0.10997133){\color[rgb]{0,0,0}\makebox(0,0)[lt]{\lineheight{1.25}\smash{\begin{tabular}[t]{l}=\end{tabular}}}}%
    \put(0,0){\includegraphics[width=\unitlength,page=1]{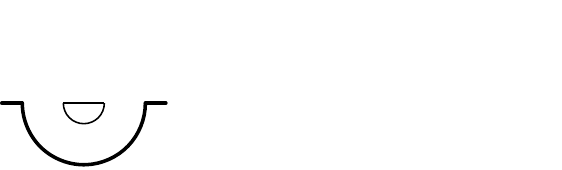}}%
    \put(0.24532379,0.0073012){\color[rgb]{0,0,0}\makebox(0,0)[lt]{\lineheight{1.25}\smash{\begin{tabular}[t]{l}$\overset{I}{X}$\end{tabular}}}}%
    \put(0,0){\includegraphics[width=\unitlength,page=2]{sens_oppose.pdf}}%
    \put(0.24193197,0.17263799){\color[rgb]{0,0,0}\makebox(0,0)[lt]{\lineheight{1.25}\smash{\begin{tabular}[t]{l}$I$\end{tabular}}}}%
    \put(0,0){\includegraphics[width=\unitlength,page=3]{sens_oppose.pdf}}%
    \put(0.62820906,0.00527683){\color[rgb]{0,0,0}\makebox(0,0)[lt]{\lineheight{1.25}\smash{\begin{tabular}[t]{l}$\overset{I^*}{X}$\end{tabular}}}}%
    \put(0,0){\includegraphics[width=\unitlength,page=4]{sens_oppose.pdf}}%
    \put(0.58648591,0.2014189){\color[rgb]{0,0,0}\makebox(0,0)[lt]{\lineheight{1.25}\smash{\begin{tabular}[t]{l}$e_I$\end{tabular}}}}%
    \put(0,0){\includegraphics[width=\unitlength,page=5]{sens_oppose.pdf}}%
    \put(0.41079139,0.20040922){\color[rgb]{0,0,0}\makebox(0,0)[lt]{\lineheight{1.25}\smash{\begin{tabular}[t]{l}$\mathrm{id}_{I^*}$\end{tabular}}}}%
    \put(0.40507797,0.26828752){\color[rgb]{0,0,0}\makebox(0,0)[lt]{\lineheight{1.25}\smash{\begin{tabular}[t]{l}$I$\end{tabular}}}}%
    \put(0.61778801,0.27036015){\color[rgb]{0,0,0}\makebox(0,0)[lt]{\lineheight{1.25}\smash{\begin{tabular}[t]{l}$I$\end{tabular}}}}%
    \put(0.39513994,0.14369681){\color[rgb]{0,0,0}\makebox(0,0)[lt]{\lineheight{1.25}\smash{\begin{tabular}[t]{l}$I^*$\end{tabular}}}}%
    \put(0.69044334,0.10844228){\color[rgb]{0,0,0}\makebox(0,0)[lt]{\lineheight{1.25}\smash{\begin{tabular}[t]{l}$=\overset{I^*}{X}{^i_j} \otimes v^i \otimes g^{-1}v_j$\end{tabular}}}}%
  \end{picture}%
\endgroup%

\end{equation}

\noindent where $e_I: I^{**} \to I$ is the isomorphism \eqref{identificationBidual}. Let us explain this. To define the graphical element on the left, we put a ribbon graph atop the one defined in \eqref{anse}. This ribbon graph represents a morphism $I^* \otimes I^{**} \to I^* \otimes I$ in $\mathrm{mod}_l(H)$, which can be applied to $\overset{I^*}{X}{^i_j} \otimes v^i \otimes \langle ?, v_j \rangle$ and gives the element in $\mathcal{L}_{g,n}(H) \otimes I^* \otimes I$ written at the right. The converse to \eqref{sensOppose} is
\begin{equation}\label{sensOpposeBis}
\begingroup%
  \makeatletter%
  \providecommand\color[2][]{%
    \errmessage{(Inkscape) Color is used for the text in Inkscape, but the package 'color.sty' is not loaded}%
    \renewcommand\color[2][]{}%
  }%
  \providecommand\transparent[1]{%
    \errmessage{(Inkscape) Transparency is used (non-zero) for the text in Inkscape, but the package 'transparent.sty' is not loaded}%
    \renewcommand\transparent[1]{}%
  }%
  \providecommand\rotatebox[2]{#2}%
  \newcommand*\fsize{\dimexpr\f@size pt\relax}%
  \newcommand*\lineheight[1]{\fontsize{\fsize}{#1\fsize}\selectfont}%
  \ifx\svgwidth\undefined%
    \setlength{\unitlength}{221.76558341bp}%
    \ifx\svgscale\undefined%
      \relax%
    \else%
      \setlength{\unitlength}{\unitlength * \real{\svgscale}}%
    \fi%
  \else%
    \setlength{\unitlength}{\svgwidth}%
  \fi%
  \global\let\svgwidth\undefined%
  \global\let\svgscale\undefined%
  \makeatother%
  \begin{picture}(1,0.37012559)%
    \lineheight{1}%
    \setlength\tabcolsep{0pt}%
    \put(0.38985462,0.13373741){\color[rgb]{0,0,0}\makebox(0,0)[lt]{\lineheight{1.25}\smash{\begin{tabular}[t]{l}=\end{tabular}}}}%
    \put(0,0){\includegraphics[width=\unitlength,page=1]{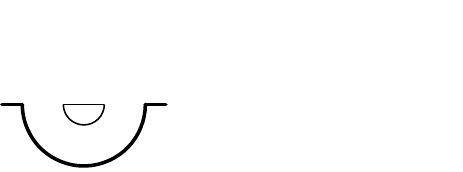}}%
    \put(0.29834121,0.00887905){\color[rgb]{0,0,0}\makebox(0,0)[lt]{\lineheight{1.25}\smash{\begin{tabular}[t]{l}$\overset{I}{X}$\end{tabular}}}}%
    \put(0,0){\includegraphics[width=\unitlength,page=2]{sens_oppose_bis.pdf}}%
    \put(0.76397268,0.00641722){\color[rgb]{0,0,0}\makebox(0,0)[lt]{\lineheight{1.25}\smash{\begin{tabular}[t]{l}$\overset{I^*}{X}$\end{tabular}}}}%
    \put(0,0){\includegraphics[width=\unitlength,page=3]{sens_oppose_bis.pdf}}%
    \put(0.03652018,0.20940395){\color[rgb]{0,0,0}\makebox(0,0)[lt]{\lineheight{1.25}\smash{\begin{tabular}[t]{l}$I$\end{tabular}}}}%
    \put(0,0){\includegraphics[width=\unitlength,page=4]{sens_oppose_bis.pdf}}%
    \put(0.51017843,0.2475864){\color[rgb]{0,0,0}\makebox(0,0)[lt]{\lineheight{1.25}\smash{\begin{tabular}[t]{l}$e_I$\end{tabular}}}}%
    \put(0,0){\includegraphics[width=\unitlength,page=5]{sens_oppose_bis.pdf}}%
    \put(0.70234823,0.24635847){\color[rgb]{0,0,0}\makebox(0,0)[lt]{\lineheight{1.25}\smash{\begin{tabular}[t]{l}$\mathrm{id}_{I^*}$\end{tabular}}}}%
    \put(0.69540021,0.32890613){\color[rgb]{0,0,0}\makebox(0,0)[lt]{\lineheight{1.25}\smash{\begin{tabular}[t]{l}$I$\end{tabular}}}}%
    \put(0.54824536,0.33142667){\color[rgb]{0,0,0}\makebox(0,0)[lt]{\lineheight{1.25}\smash{\begin{tabular}[t]{l}$I$\end{tabular}}}}%
    \put(0,0){\includegraphics[width=\unitlength,page=6]{sens_oppose_bis.pdf}}%
    \put(0.74585067,0.17551393){\color[rgb]{0,0,0}\makebox(0,0)[lt]{\lineheight{1.25}\smash{\begin{tabular}[t]{l}$I^*$\end{tabular}}}}%
  \end{picture}%
\endgroup%

\end{equation}
\noindent Note that the two previous diagrammatic identities are completely parallel to the relation between the values assigned by the Reshetikhin-Turaev functor to cups with different orientations.

\smallskip

\indent The Kronecker product of two matrices $\overset{I}{X}, \overset{J}{Y}$, defined by
\[ \overset{I}{X} \odot \overset{J}{Y} = \overset{I}{X}{^i_j} \overset{J}{Y}{^k_l} \otimes v_i \otimes v^j \otimes w_k \otimes w^l \]
(where $(v_i)$ is a basis of $I$, $(w_k)$ is a basis of $J$ and $(v^j), (w^l)$ are their respective dual bases), is represented by the gluing of the corresponding graphical elements:
\begin{equation}\label{tensorProd}
\begingroup%
  \makeatletter%
  \providecommand\color[2][]{%
    \errmessage{(Inkscape) Color is used for the text in Inkscape, but the package 'color.sty' is not loaded}%
    \renewcommand\color[2][]{}%
  }%
  \providecommand\transparent[1]{%
    \errmessage{(Inkscape) Transparency is used (non-zero) for the text in Inkscape, but the package 'transparent.sty' is not loaded}%
    \renewcommand\transparent[1]{}%
  }%
  \providecommand\rotatebox[2]{#2}%
  \newcommand*\fsize{\dimexpr\f@size pt\relax}%
  \newcommand*\lineheight[1]{\fontsize{\fsize}{#1\fsize}\selectfont}%
  \ifx\svgwidth\undefined%
    \setlength{\unitlength}{203.93260141bp}%
    \ifx\svgscale\undefined%
      \relax%
    \else%
      \setlength{\unitlength}{\unitlength * \real{\svgscale}}%
    \fi%
  \else%
    \setlength{\unitlength}{\svgwidth}%
  \fi%
  \global\let\svgwidth\undefined%
  \global\let\svgscale\undefined%
  \makeatother%
  \begin{picture}(1,0.25942581)%
    \lineheight{1}%
    \setlength\tabcolsep{0pt}%
    \put(0,0){\includegraphics[width=\unitlength,page=1]{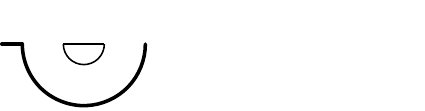}}%
    \put(0.32442983,0.00856639){\color[rgb]{0,0,0}\makebox(0,0)[lt]{\lineheight{1.25}\smash{\begin{tabular}[t]{l}$\overset{I}{X}$\end{tabular}}}}%
    \put(0,0){\includegraphics[width=\unitlength,page=2]{produit_Kronecker.pdf}}%
    \put(0.03971368,0.22662652){\color[rgb]{0,0,0}\makebox(0,0)[lt]{\lineheight{1.25}\smash{\begin{tabular}[t]{l}$I$\end{tabular}}}}%
    \put(0,0){\includegraphics[width=\unitlength,page=3]{produit_Kronecker.pdf}}%
    \put(0.69909389,0.00856627){\color[rgb]{0,0,0}\makebox(0,0)[lt]{\lineheight{1.25}\smash{\begin{tabular}[t]{l}$\overset{J}{Y}$\end{tabular}}}}%
    \put(0,0){\includegraphics[width=\unitlength,page=4]{produit_Kronecker.pdf}}%
    \put(0.41437777,0.22662639){\color[rgb]{0,0,0}\makebox(0,0)[lt]{\lineheight{1.25}\smash{\begin{tabular}[t]{l}$J$\end{tabular}}}}%
  \end{picture}%
\endgroup%

\end{equation}

\begin{definition}\label{defEvaluation}
A general diagram is obtained by gluing (as in \eqref{tensorProd}) several copies of the handle diagrams introduced in \eqref{anse} and \eqref{sensOppose}, and by putting atop an oriented and colored compatible ribbon graph $G$. The evaluation of diagrams is a map $\widetilde{F}_{\mathrm{RT}}$ which consists of applying $\mathrm{id}_{\mathcal{L}_{g,n}(H)} \otimes F_{\mathrm{RT}}(G)$ to the tensors associated to the handle diagrams introduced previously, where $F_{\mathrm{RT}}$ is the Reshetikhin-Turaev functor.  The evaluation of a diagram is an element of $\mathcal{L}_{g,n}(H) \otimes J_1 \otimes \ldots \otimes J_l$, where $J_1, \ldots, J_l$ are $H$-modules. This is depicted as follows:
\begin{center}
\begingroup%
  \makeatletter%
  \providecommand\color[2][]{%
    \errmessage{(Inkscape) Color is used for the text in Inkscape, but the package 'color.sty' is not loaded}%
    \renewcommand\color[2][]{}%
  }%
  \providecommand\transparent[1]{%
    \errmessage{(Inkscape) Transparency is used (non-zero) for the text in Inkscape, but the package 'transparent.sty' is not loaded}%
    \renewcommand\transparent[1]{}%
  }%
  \providecommand\rotatebox[2]{#2}%
  \newcommand*\fsize{\dimexpr\f@size pt\relax}%
  \newcommand*\lineheight[1]{\fontsize{\fsize}{#1\fsize}\selectfont}%
  \ifx\svgwidth\undefined%
    \setlength{\unitlength}{816.1716278bp}%
    \ifx\svgscale\undefined%
      \relax%
    \else%
      \setlength{\unitlength}{\unitlength * \real{\svgscale}}%
    \fi%
  \else%
    \setlength{\unitlength}{\svgwidth}%
  \fi%
  \global\let\svgwidth\undefined%
  \global\let\svgscale\undefined%
  \makeatother%
  \begin{picture}(1,0.11069484)%
    \lineheight{1}%
    \setlength\tabcolsep{0pt}%
    \put(0,0){\includegraphics[width=\unitlength,page=1]{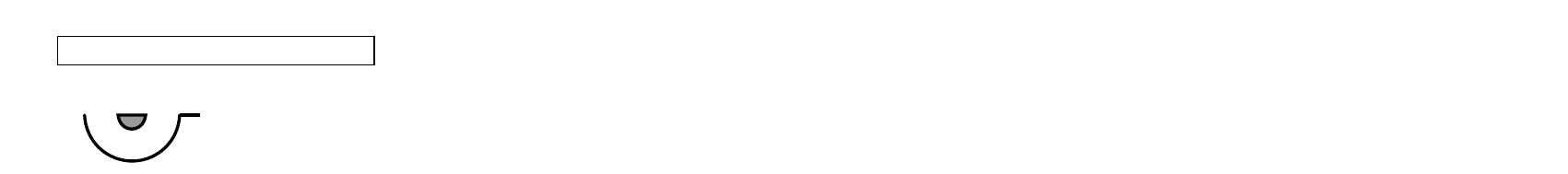}}%
    \put(0.13050593,0.07377892){\color[rgb]{0,0,0}\makebox(0,0)[lt]{\lineheight{1.25}\smash{\begin{tabular}[t]{l}$G$\end{tabular}}}}%
    \put(0.10866731,0.003366){\color[rgb]{0,0,0}\makebox(0,0)[lt]{\lineheight{1.25}\smash{\begin{tabular}[t]{l}$\overset{I_1}{X_1}$\end{tabular}}}}%
    \put(0,0){\includegraphics[width=\unitlength,page=2]{defEvaluation.pdf}}%
    \put(0.21595155,0.00339386){\color[rgb]{0,0,0}\makebox(0,0)[lt]{\lineheight{1.25}\smash{\begin{tabular}[t]{l}$\overset{I_k}{X_k}$\end{tabular}}}}%
    \put(0,0){\includegraphics[width=\unitlength,page=3]{defEvaluation.pdf}}%
    \put(0.484132,0.01581776){\color[rgb]{0,0,0}\makebox(0,0)[lt]{\lineheight{1.25}\smash{\begin{tabular}[t]{l}$\overset{I_1}{X_1}$\end{tabular}}}}%
    \put(0,0){\includegraphics[width=\unitlength,page=4]{defEvaluation.pdf}}%
    \put(0.25499525,0.05106647){\color[rgb]{0,0,0}\makebox(0,0)[lt]{\lineheight{1.25}\smash{\begin{tabular}[t]{l}$= \bigl( \mathrm{id}_{\mathcal{L}_{g,n}(H)} \otimes F_{\mathrm{RT}}(G) \bigr) \circ \widetilde{F}_{\mathrm{RT}}$\end{tabular}}}}%
    \put(0,0){\includegraphics[width=\unitlength,page=5]{defEvaluation.pdf}}%
    \put(0.58473943,0.01581771){\color[rgb]{0,0,0}\makebox(0,0)[lt]{\lineheight{1.25}\smash{\begin{tabular}[t]{l}$\overset{I_k}{X_k}$\end{tabular}}}}%
    \put(0,0){\includegraphics[width=\unitlength,page=6]{defEvaluation.pdf}}%
    \put(-0.00101704,0.0508484){\color[rgb]{0,0,0}\makebox(0,0)[lt]{\lineheight{1.25}\smash{\begin{tabular}[t]{l}$\widetilde{F}_{\mathrm{RT}}$\end{tabular}}}}%
    \put(0,0){\includegraphics[width=\unitlength,page=7]{defEvaluation.pdf}}%
    \put(0.04421411,0.04653505){\color[rgb]{0,0,0}\makebox(0,0)[lt]{\lineheight{1.25}\smash{\begin{tabular}[t]{l}$I_1$\end{tabular}}}}%
    \put(0.15236849,0.04723192){\color[rgb]{0,0,0}\makebox(0,0)[lt]{\lineheight{1.25}\smash{\begin{tabular}[t]{l}$I_k$\end{tabular}}}}%
    \put(0.45196953,0.07175101){\color[rgb]{0,0,0}\makebox(0,0)[lt]{\lineheight{1.25}\smash{\begin{tabular}[t]{l}$I_1$\end{tabular}}}}%
    \put(0.5525955,0.07155787){\color[rgb]{0,0,0}\makebox(0,0)[lt]{\lineheight{1.25}\smash{\begin{tabular}[t]{l}$I_k$\end{tabular}}}}%
    \put(0,0){\includegraphics[width=\unitlength,page=8]{defEvaluation.pdf}}%
    \put(0.06595286,0.10125663){\color[rgb]{0,0,0}\makebox(0,0)[lt]{\lineheight{1.25}\smash{\begin{tabular}[t]{l}$J_1$\end{tabular}}}}%
    \put(0.17879516,0.10116884){\color[rgb]{0,0,0}\makebox(0,0)[lt]{\lineheight{1.25}\smash{\begin{tabular}[t]{l}$J_l$\end{tabular}}}}%
    \put(0,0){\includegraphics[width=\unitlength,page=9]{defEvaluation.pdf}}%
  \end{picture}%
\endgroup%

\end{center}
where the double arrows mean that the strands can carry any orientation.
\end{definition}
\noindent In the sequel, we always identify a diagram with its evaluation through $\widetilde{F}_{\mathrm{RT}}$.

\smallskip

\indent For instance
\begin{center}
\begingroup%
  \makeatletter%
  \providecommand\color[2][]{%
    \errmessage{(Inkscape) Color is used for the text in Inkscape, but the package 'color.sty' is not loaded}%
    \renewcommand\color[2][]{}%
  }%
  \providecommand\transparent[1]{%
    \errmessage{(Inkscape) Transparency is used (non-zero) for the text in Inkscape, but the package 'transparent.sty' is not loaded}%
    \renewcommand\transparent[1]{}%
  }%
  \providecommand\rotatebox[2]{#2}%
  \newcommand*\fsize{\dimexpr\f@size pt\relax}%
  \newcommand*\lineheight[1]{\fontsize{\fsize}{#1\fsize}\selectfont}%
  \ifx\svgwidth\undefined%
    \setlength{\unitlength}{388.46909214bp}%
    \ifx\svgscale\undefined%
      \relax%
    \else%
      \setlength{\unitlength}{\unitlength * \real{\svgscale}}%
    \fi%
  \else%
    \setlength{\unitlength}{\svgwidth}%
  \fi%
  \global\let\svgwidth\undefined%
  \global\let\svgscale\undefined%
  \makeatother%
  \begin{picture}(1,0.14828019)%
    \lineheight{1}%
    \setlength\tabcolsep{0pt}%
    \put(0,0){\includegraphics[width=\unitlength,page=1]{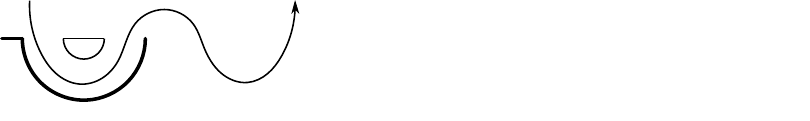}}%
    \put(0.17031425,0.02337738){\color[rgb]{0,0,0}\makebox(0,0)[lt]{\lineheight{1.25}\smash{\begin{tabular}[t]{l}$\overset{I}{X}$\end{tabular}}}}%
    \put(0,0){\includegraphics[width=\unitlength,page=2]{produit_matriciel.pdf}}%
    \put(0.0131945,0.12618147){\color[rgb]{0,0,0}\makebox(0,0)[lt]{\lineheight{1.25}\smash{\begin{tabular}[t]{l}$I$\end{tabular}}}}%
    \put(0,0){\includegraphics[width=\unitlength,page=3]{produit_matriciel.pdf}}%
    \put(0.36699971,0.02337732){\color[rgb]{0,0,0}\makebox(0,0)[lt]{\lineheight{1.25}\smash{\begin{tabular}[t]{l}$\overset{I}{Y}$\end{tabular}}}}%
    \put(0,0){\includegraphics[width=\unitlength,page=4]{produit_matriciel.pdf}}%
    \put(0.41591505,0.09519433){\color[rgb]{0,0,0}\makebox(0,0)[lt]{\lineheight{1.25}\smash{\begin{tabular}[t]{l}$= \overset{I}{X}{^i_j}\overset{I}{Y}{^k_l}\otimes F_{\mathrm{RT}}\Bigg($\end{tabular}}}}%
    \put(0.77009708,0.09677631){\color[rgb]{0,0,0}\makebox(0,0)[lt]{\lineheight{1.25}\smash{\begin{tabular}[t]{l}$\Bigg)\bigl( v_i \otimes v^j \otimes v_k \otimes v^l \bigr)$\end{tabular}}}}%
    \put(0,0){\includegraphics[width=\unitlength,page=5]{produit_matriciel.pdf}}%
    \put(0.63108523,0.10866333){\color[rgb]{0,0,0}\makebox(0,0)[lt]{\lineheight{1.25}\smash{\begin{tabular}[t]{l}$I$\end{tabular}}}}%
    \put(0.7097752,0.11710648){\color[rgb]{0,0,0}\makebox(0,0)[lt]{\lineheight{1.25}\smash{\begin{tabular}[t]{l}$I$\end{tabular}}}}%
    \put(0.75165315,0.07657947){\color[rgb]{0,0,0}\makebox(0,0)[lt]{\lineheight{1.25}\smash{\begin{tabular}[t]{l}$I$\end{tabular}}}}%
    \put(0.42004622,0.00336179){\color[rgb]{0,0,0}\makebox(0,0)[lt]{\lineheight{1.25}\smash{\begin{tabular}[t]{l}$= \overset{I}{X}{^i_j}\overset{I}{Y}{^j_l} \otimes v_i \otimes v^l = \bigl( \overset{I}{X}\overset{I}{Y} \big)^i_l \otimes v_i \otimes v^l$\end{tabular}}}}%
  \end{picture}%
\endgroup%

\end{center}
and we see that this diagram represents the matrix product $\overset{I}{X} \overset{I}{Y}$. Similarly
\begin{equation}\label{traceQuantique}
\begingroup%
  \makeatletter%
  \providecommand\color[2][]{%
    \errmessage{(Inkscape) Color is used for the text in Inkscape, but the package 'color.sty' is not loaded}%
    \renewcommand\color[2][]{}%
  }%
  \providecommand\transparent[1]{%
    \errmessage{(Inkscape) Transparency is used (non-zero) for the text in Inkscape, but the package 'transparent.sty' is not loaded}%
    \renewcommand\transparent[1]{}%
  }%
  \providecommand\rotatebox[2]{#2}%
  \newcommand*\fsize{\dimexpr\f@size pt\relax}%
  \newcommand*\lineheight[1]{\fontsize{\fsize}{#1\fsize}\selectfont}%
  \ifx\svgwidth\undefined%
    \setlength{\unitlength}{471.89152586bp}%
    \ifx\svgscale\undefined%
      \relax%
    \else%
      \setlength{\unitlength}{\unitlength * \real{\svgscale}}%
    \fi%
  \else%
    \setlength{\unitlength}{\svgwidth}%
  \fi%
  \global\let\svgwidth\undefined%
  \global\let\svgscale\undefined%
  \makeatother%
  \begin{picture}(1,0.11752761)%
    \lineheight{1}%
    \setlength\tabcolsep{0pt}%
    \put(0,0){\includegraphics[width=\unitlength,page=1]{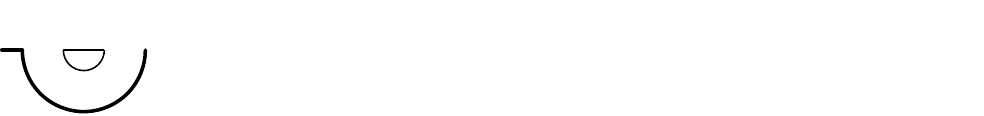}}%
    \put(0.14020555,0.00301578){\color[rgb]{0,0,0}\makebox(0,0)[lt]{\lineheight{1.25}\smash{\begin{tabular}[t]{l}$\overset{I}{X}$\end{tabular}}}}%
    \put(0,0){\includegraphics[width=\unitlength,page=2]{trace_quantique.pdf}}%
    \put(0.01086193,0.09019183){\color[rgb]{0,0,0}\makebox(0,0)[lt]{\lineheight{1.25}\smash{\begin{tabular}[t]{l}$I$\end{tabular}}}}%
    \put(0,0){\includegraphics[width=\unitlength,page=3]{trace_quantique.pdf}}%
    \put(0.18078562,0.05882696){\color[rgb]{0,0,0}\makebox(0,0)[lt]{\lineheight{1.25}\smash{\begin{tabular}[t]{l}$= \overset{I}{X}{^i_j}\otimes F_{\mathrm{RT}}\bigg($\end{tabular}}}}%
    \put(0,0){\includegraphics[width=\unitlength,page=4]{trace_quantique.pdf}}%
    \put(0.36173004,0.07758364){\color[rgb]{0,0,0}\makebox(0,0)[lt]{\lineheight{1.25}\smash{\begin{tabular}[t]{l}$I$\end{tabular}}}}%
    \put(0.37899821,0.05912639){\color[rgb]{0,0,0}\makebox(0,0)[lt]{\lineheight{1.25}\smash{\begin{tabular}[t]{l}$\bigg)\bigl( v_i \otimes v^j \bigr) = \overset{I}{X}{^i_j} \, v^j(gv_i) = \overset{I}{X}{^i_j} \, \overset{I}{g}{^j_i}= \mathrm{tr}\bigl(\overset{I}{g} \overset{I}{X}\bigr) = \mathrm{tr}_q\bigl(\overset{I}{X}\bigr)$\end{tabular}}}}%
  \end{picture}%
\endgroup%

\end{equation}
represents the quantum trace of $\overset{I}{X}$.

\begin{proposition}\label{propDefRelDiagramme}
1. The defining relations \eqref{relationFusion}, \eqref{echangeL10}, \eqref{echangeLgn} of $\mathcal{L}_{g,n}(H)$ are respectively equivalent to the diagrammatic relations \eqref{dessinRelationFusion}, \eqref{dessinEchangeL10}, \eqref{dessinEchangeLgn} below.
\begin{equation}\label{dessinRelationFusion}
\begingroup%
  \makeatletter%
  \providecommand\color[2][]{%
    \errmessage{(Inkscape) Color is used for the text in Inkscape, but the package 'color.sty' is not loaded}%
    \renewcommand\color[2][]{}%
  }%
  \providecommand\transparent[1]{%
    \errmessage{(Inkscape) Transparency is used (non-zero) for the text in Inkscape, but the package 'transparent.sty' is not loaded}%
    \renewcommand\transparent[1]{}%
  }%
  \providecommand\rotatebox[2]{#2}%
  \newcommand*\fsize{\dimexpr\f@size pt\relax}%
  \newcommand*\lineheight[1]{\fontsize{\fsize}{#1\fsize}\selectfont}%
  \ifx\svgwidth\undefined%
    \setlength{\unitlength}{324.26536176bp}%
    \ifx\svgscale\undefined%
      \relax%
    \else%
      \setlength{\unitlength}{\unitlength * \real{\svgscale}}%
    \fi%
  \else%
    \setlength{\unitlength}{\svgwidth}%
  \fi%
  \global\let\svgwidth\undefined%
  \global\let\svgscale\undefined%
  \makeatother%
  \begin{picture}(1,0.27784231)%
    \lineheight{1}%
    \setlength\tabcolsep{0pt}%
    \put(0,0){\includegraphics[width=\unitlength,page=1]{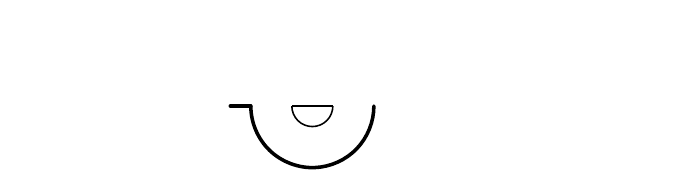}}%
    \put(0.5423006,0.02833682){\color[rgb]{0,0,0}\makebox(0,0)[lt]{\lineheight{1.25}\smash{\begin{tabular}[t]{l}$\overset{I}{X}(i)$\end{tabular}}}}%
    \put(0,0){\includegraphics[width=\unitlength,page=2]{relation_fusion.pdf}}%
    \put(0.37487312,0.24323134){\color[rgb]{0,0,0}\makebox(0,0)[lt]{\lineheight{1.25}\smash{\begin{tabular}[t]{l}$I$\end{tabular}}}}%
    \put(0.4801692,0.24323134){\color[rgb]{0,0,0}\makebox(0,0)[lt]{\lineheight{1.25}\smash{\begin{tabular}[t]{l}$J$\end{tabular}}}}%
    \put(0.29464792,0.11081663){\color[rgb]{0,0,0}\makebox(0,0)[lt]{\lineheight{1.25}\smash{\begin{tabular}[t]{l}$=$\end{tabular}}}}%
    \put(0,0){\includegraphics[width=\unitlength,page=3]{relation_fusion.pdf}}%
    \put(0.22381484,0.00615701){\color[rgb]{0,0,0}\makebox(0,0)[lt]{\lineheight{1.25}\smash{\begin{tabular}[t]{l}$\overset{I \otimes J}{X}\!\!(i)$\end{tabular}}}}%
    \put(0,0){\includegraphics[width=\unitlength,page=4]{relation_fusion.pdf}}%
    \put(0.07532627,0.16008484){\color[rgb]{0,0,0}\makebox(0,0)[lt]{\lineheight{1.25}\smash{\begin{tabular}[t]{l}$J$\end{tabular}}}}%
    \put(0,0){\includegraphics[width=\unitlength,page=5]{relation_fusion.pdf}}%
    \put(0.02265235,0.15818254){\color[rgb]{0,0,0}\makebox(0,0)[lt]{\lineheight{1.25}\smash{\begin{tabular}[t]{l}$I$\end{tabular}}}}%
    \put(0,0){\includegraphics[width=\unitlength,page=6]{relation_fusion.pdf}}%
    \put(0.78623211,0.02851531){\color[rgb]{0,0,0}\makebox(0,0)[lt]{\lineheight{1.25}\smash{\begin{tabular}[t]{l}$\overset{J}{X}(i)$\end{tabular}}}}%
    \put(0.90727,0.11221024){\color[rgb]{0,0,0}\makebox(0,0)[lt]{\lineheight{1.25}\smash{\begin{tabular}[t]{l}$(\forall \, i)$\end{tabular}}}}%
  \end{picture}%
\endgroup%

\end{equation}
\begin{equation}\label{dessinEchangeL10}
\begingroup%
  \makeatletter%
  \providecommand\color[2][]{%
    \errmessage{(Inkscape) Color is used for the text in Inkscape, but the package 'color.sty' is not loaded}%
    \renewcommand\color[2][]{}%
  }%
  \providecommand\transparent[1]{%
    \errmessage{(Inkscape) Transparency is used (non-zero) for the text in Inkscape, but the package 'transparent.sty' is not loaded}%
    \renewcommand\transparent[1]{}%
  }%
  \providecommand\rotatebox[2]{#2}%
  \newcommand*\fsize{\dimexpr\f@size pt\relax}%
  \newcommand*\lineheight[1]{\fontsize{\fsize}{#1\fsize}\selectfont}%
  \ifx\svgwidth\undefined%
    \setlength{\unitlength}{396.55940089bp}%
    \ifx\svgscale\undefined%
      \relax%
    \else%
      \setlength{\unitlength}{\unitlength * \real{\svgscale}}%
    \fi%
  \else%
    \setlength{\unitlength}{\svgwidth}%
  \fi%
  \global\let\svgwidth\undefined%
  \global\let\svgscale\undefined%
  \makeatother%
  \begin{picture}(1,0.24815658)%
    \lineheight{1}%
    \setlength\tabcolsep{0pt}%
    \put(0,0){\includegraphics[width=\unitlength,page=1]{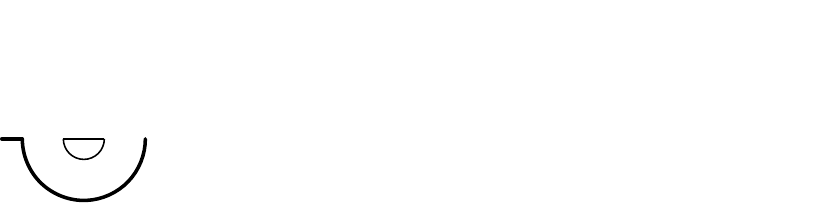}}%
    \put(0.16683967,0.00440525){\color[rgb]{0,0,0}\makebox(0,0)[lt]{\lineheight{1.25}\smash{\begin{tabular}[t]{l}$\overset{J}{A}(i)$\end{tabular}}}}%
    \put(0,0){\includegraphics[width=\unitlength,page=2]{echange_L10.pdf}}%
    \put(0.36542284,0.00440525){\color[rgb]{0,0,0}\makebox(0,0)[lt]{\lineheight{1.25}\smash{\begin{tabular}[t]{l}$\overset{I}{B}(i)$\end{tabular}}}}%
    \put(0,0){\includegraphics[width=\unitlength,page=3]{echange_L10.pdf}}%
    \put(0.62665421,0.00440525){\color[rgb]{0,0,0}\makebox(0,0)[lt]{\lineheight{1.25}\smash{\begin{tabular}[t]{l}$\overset{I}{B}(i)$\end{tabular}}}}%
    \put(0,0){\includegraphics[width=\unitlength,page=4]{echange_L10.pdf}}%
    \put(0.82523726,0.00440525){\color[rgb]{0,0,0}\makebox(0,0)[lt]{\lineheight{1.25}\smash{\begin{tabular}[t]{l}$\overset{J}{A}(i)$\end{tabular}}}}%
    \put(0,0){\includegraphics[width=\unitlength,page=5]{echange_L10.pdf}}%
    \put(0.48408267,0.21986887){\color[rgb]{0,0,0}\makebox(0,0)[lt]{\lineheight{1.25}\smash{\begin{tabular}[t]{l}$J$\end{tabular}}}}%
    \put(0.69428752,0.22725321){\color[rgb]{0,0,0}\makebox(0,0)[lt]{\lineheight{1.25}\smash{\begin{tabular}[t]{l}$I$\end{tabular}}}}%
    \put(0,0){\includegraphics[width=\unitlength,page=6]{echange_L10.pdf}}%
    \put(0.41997176,0.07192132){\color[rgb]{0,0,0}\makebox(0,0)[lt]{\lineheight{1.25}\smash{\begin{tabular}[t]{l}$=$\end{tabular}}}}%
    \put(0.92843372,0.07298846){\color[rgb]{0,0,0}\makebox(0,0)[lt]{\lineheight{1.25}\smash{\begin{tabular}[t]{l}$(\forall \, i)$\end{tabular}}}}%
    \put(0,0){\includegraphics[width=\unitlength,page=7]{echange_L10.pdf}}%
    \put(0.22009753,0.10973775){\color[rgb]{0,0,0}\makebox(0,0)[lt]{\lineheight{1.25}\smash{\begin{tabular}[t]{l}$I$\end{tabular}}}}%
    \put(0,0){\includegraphics[width=\unitlength,page=8]{echange_L10.pdf}}%
    \put(0.01953613,0.10866717){\color[rgb]{0,0,0}\makebox(0,0)[lt]{\lineheight{1.25}\smash{\begin{tabular}[t]{l}$J$\end{tabular}}}}%
  \end{picture}%
\endgroup%

\end{equation}
\begin{equation}\label{dessinEchangeLgn}
\begingroup%
  \makeatletter%
  \providecommand\color[2][]{%
    \errmessage{(Inkscape) Color is used for the text in Inkscape, but the package 'color.sty' is not loaded}%
    \renewcommand\color[2][]{}%
  }%
  \providecommand\transparent[1]{%
    \errmessage{(Inkscape) Transparency is used (non-zero) for the text in Inkscape, but the package 'transparent.sty' is not loaded}%
    \renewcommand\transparent[1]{}%
  }%
  \providecommand\rotatebox[2]{#2}%
  \newcommand*\fsize{\dimexpr\f@size pt\relax}%
  \newcommand*\lineheight[1]{\fontsize{\fsize}{#1\fsize}\selectfont}%
  \ifx\svgwidth\undefined%
    \setlength{\unitlength}{396.15031199bp}%
    \ifx\svgscale\undefined%
      \relax%
    \else%
      \setlength{\unitlength}{\unitlength * \real{\svgscale}}%
    \fi%
  \else%
    \setlength{\unitlength}{\svgwidth}%
  \fi%
  \global\let\svgwidth\undefined%
  \global\let\svgscale\undefined%
  \makeatother%
  \begin{picture}(1,0.25659102)%
    \lineheight{1}%
    \setlength\tabcolsep{0pt}%
    \put(0.48961829,0.22877856){\color[rgb]{0,0,0}\makebox(0,0)[lt]{\lineheight{1.25}\smash{\begin{tabular}[t]{l}$J$\end{tabular}}}}%
    \put(0.69799171,0.2347202){\color[rgb]{0,0,0}\makebox(0,0)[lt]{\lineheight{1.25}\smash{\begin{tabular}[t]{l}$I$\end{tabular}}}}%
    \put(0,0){\includegraphics[width=\unitlength,page=1]{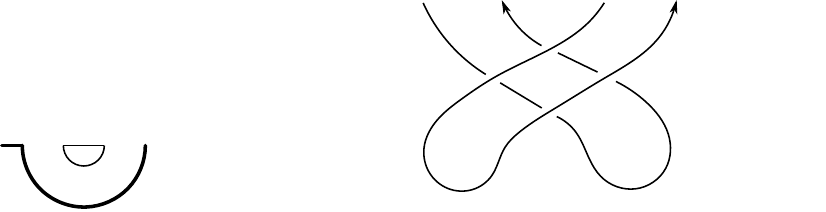}}%
    \put(0.16701199,0.00458189){\color[rgb]{0,0,0}\makebox(0,0)[lt]{\lineheight{1.25}\smash{\begin{tabular}[t]{l}$\overset{J}{Y}(j)$\end{tabular}}}}%
    \put(0,0){\includegraphics[width=\unitlength,page=2]{echange_Lgn.pdf}}%
    \put(0.36580015,0.00458189){\color[rgb]{0,0,0}\makebox(0,0)[lt]{\lineheight{1.25}\smash{\begin{tabular}[t]{l}$\overset{I}{X}(i)$\end{tabular}}}}%
    \put(0,0){\includegraphics[width=\unitlength,page=3]{echange_Lgn.pdf}}%
    \put(0.62730134,0.00458189){\color[rgb]{0,0,0}\makebox(0,0)[lt]{\lineheight{1.25}\smash{\begin{tabular}[t]{l}$\overset{I}{X}(i)$\end{tabular}}}}%
    \put(0,0){\includegraphics[width=\unitlength,page=4]{echange_Lgn.pdf}}%
    \put(0.82608974,0.00458189){\color[rgb]{0,0,0}\makebox(0,0)[lt]{\lineheight{1.25}\smash{\begin{tabular}[t]{l}$\overset{J}{Y}(j)$\end{tabular}}}}%
    \put(0,0){\includegraphics[width=\unitlength,page=5]{echange_Lgn.pdf}}%
    \put(0.42288591,0.07216772){\color[rgb]{0,0,0}\makebox(0,0)[lt]{\lineheight{1.25}\smash{\begin{tabular}[t]{l}=\end{tabular}}}}%
    \put(0.92910374,0.07520578){\color[rgb]{0,0,0}\makebox(0,0)[lt]{\lineheight{1.25}\smash{\begin{tabular}[t]{l}$(i < j)$\end{tabular}}}}%
    \put(0,0){\includegraphics[width=\unitlength,page=6]{echange_Lgn.pdf}}%
    \put(0.21955993,0.10449532){\color[rgb]{0,0,0}\makebox(0,0)[lt]{\lineheight{1.25}\smash{\begin{tabular}[t]{l}$I$\end{tabular}}}}%
    \put(0,0){\includegraphics[width=\unitlength,page=7]{echange_Lgn.pdf}}%
    \put(0.01882439,0.10545148){\color[rgb]{0,0,0}\makebox(0,0)[lt]{\lineheight{1.25}\smash{\begin{tabular}[t]{l}$J$\end{tabular}}}}%
  \end{picture}%
\endgroup%

\end{equation}
We recall that $X(i)$ is $A(i)$ or $B(i)$ if $1 \leq i \leq g$ and is $M(i)$ if $g+1 \leq i \leq g+n$, and the same applies to $Y(j)$.
\\
2. The naturality relation \eqref{fonctorialiteX} is equivalent to each one of the following diagrammatic relations:
\begin{equation}\label{naturalite}
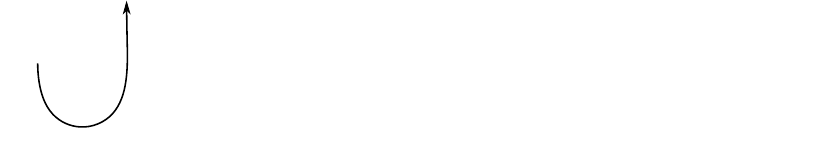
\end{equation}
where $\overset{I}{X}$ is any matrix of the form \eqref{matriceGenerale}.
\end{proposition}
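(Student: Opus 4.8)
The plan is to translate each relation into the graphical language by evaluating the relevant diagrams through $\widetilde{F}_{\mathrm{RT}}$ and to observe that the resulting identities coincide verbatim with \eqref{relationFusion}--\eqref{echangeLgn} and \eqref{fonctorialiteX}. First I would record the elementary dictionary on which everything rests. By definition $\widetilde{F}_{\mathrm{RT}}$ sends each handle diagram to the tensor $\overset{I}{X}{}^i_j\otimes v_i\otimes v^j$ of \eqref{decMatrice} and then applies $F_{\mathrm{RT}}$ to the ribbon graph sitting on top; since $F_{\mathrm{RT}}$ is a tensor functor, the value of a composite diagram is the composite of the values of its pieces, read from bottom to top. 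From the sample computations following Definition \ref{defEvaluation} we already know that stacking two handles along a common strand yields the matrix product and that juxtaposing two handles yields the Kronecker product $\odot$. The only extra ingredient is the value of a crossing: by the values assigned by $F_{\mathrm{RT}}$ to $c_{I,J}$ and $c_{I,J}^{-1}$ in Section~\ref{sectionPreliminaries}, once the two strands of a crossing are read back to fixed left/right positions, the crossing contributes exactly one of the matrices $\overset{IJ}{R}{}^{\pm 1}$, $\overset{IJ}{R'}{}^{\pm 1}$, the precise one being dictated by the sign of the crossing and by the orientations of the strands as fixed in \eqref{anse}--\eqref{sensOpposeBis}.

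With this dictionary, part 1 is a piece-by-piece evaluation of the two sides of \eqref{dessinRelationFusion}, \eqref{dessinEchangeL10} and \eqref{dessinEchangeLgn}. In \eqref{dessinRelationFusion} the left-hand diagram is a single handle colored by $\overset{I\otimes J}{X}(i)$ whose outgoing strand is resolved into $I$ and $J$; under \eqref{decMatrice}, together with the canonical identification $\mathrm{End}_{\mathbb{C}}(I\otimes J)\cong\mathrm{End}_{\mathbb{C}}(I)\otimes\mathrm{End}_{\mathbb{C}}(J)$, its value is $\overset{I\otimes J}{X}(i)$ itself. The right-hand diagram is the juxtaposition of the handles $\overset{I}{X}(i)$ and $\overset{J}{X}(i)$ together with the crossings prescribed by the figure, so by the dictionary its value is $\overset{I}{X}(i)_1\,\overset{IJ}{R'}\,\overset{J}{X}(i)_2\,\overset{IJ}{R'}{}^{-1}$. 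Hence the diagrammatic identity \eqref{dessinRelationFusion} is, verbatim, the matrix equality \eqref{relationFusion}; and since \eqref{decMatrice} is an isomorphism the two are equivalent. The same evaluation applied to \eqref{dessinEchangeL10} produces on its two sides the two members of \eqref{echangeL10} --- the relative heights of the handles $\overset{I}{B}(i)$ and $\overset{J}{A}(i)$ fixing their order in the matrix product and the crossings supplying the factors $\overset{IJ}{R}{}^{\pm1}$, $\overset{IJ}{R'}{}^{\pm1}$ in the slots dictated by the figure --- and in the same way \eqref{dessinEchangeLgn} reproduces \eqref{echangeLgn}.

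Part 2 is the same argument, shorter. In the first identity of \eqref{naturalite} the handle $\overset{I}{X}$ carries the ribbon morphism box $f$ on its outgoing $I$-leg, while the other side carries the dual morphism $f^*$ on the dual leg of the handle $\overset{J}{X}$; evaluating through $\widetilde{F}_{\mathrm{RT}}$ gives $(f\otimes\mathrm{id})\bigl(\overset{I}{X}{}^i_j\otimes v_i\otimes v^j\bigr)$ on one side and $(\mathrm{id}\otimes f^*)\bigl(\overset{J}{X}{}^i_j\otimes w_i\otimes w^j\bigr)$ on the other, and a one-line index computation shows that these agree, for every $H$-morphism $f:I\to J$, exactly when $f\,\overset{I}{X}=\overset{J}{X}\,f$, that is, when \eqref{fonctorialiteX} holds. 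The second identity in \eqref{naturalite} encodes the same equality --- it is obtained from the first by composing with the duality morphisms of $I$ and $J$, using the rigidity of $\mathrm{mod}_l(H)$ --- hence it too is equivalent to \eqref{fonctorialiteX}.

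The one genuinely delicate point is the $R$-matrix bookkeeping in part 1: one must check that each crossing appearing in the figures, with its given sign and strand orientations, contributes precisely the factor $R$, $R'$, $R^{-1}$ or $R'^{-1}$, and in precisely the slot of the word in which it occurs in Definition \ref{defLgn}, so that the value of the right-hand diagram reproduces \eqref{relationFusion}--\eqref{echangeLgn} exactly, rather than up to conjugation by a flip or a permutation of tensor factors. This is a finite verification once the conventions are pinned down (diagrams read from bottom to top, the $F_{\mathrm{RT}}$-values of Section~\ref{sectionPreliminaries}, and the orientation conventions of \eqref{anse}--\eqref{sensOpposeBis}); it is routine but error-prone, whereas everything else follows formally from the functoriality of $F_{\mathrm{RT}}$.
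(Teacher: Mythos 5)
Your proposal is correct and follows essentially the same route as the paper: evaluate both sides of each diagrammatic identity through $\widetilde{F}_{\mathrm{RT}}$ and match the result against the matrix relations, with part 2 handled by direct evaluation for the first identity and by duality (via \eqref{sensOppose}) for the second. The only difference is one of explicitness: the paper carries out the $R$-matrix bookkeeping you defer, and the mechanism it uses is precisely the identity \eqref{trucEvidentdAlgebreLineaire}, which converts the braiding's action on the basis vectors $v_i\otimes v^j$ into left/right matrix factors on $\overset{I}{X}$ --- worth isolating explicitly, since it is the one step where the antipode conventions on dual legs could trip you up.
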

\begin{proof}
1. First note that for any $a,b \in H$, it holds
\begin{equation}\label{trucEvidentdAlgebreLineaire}
\bigl(\overset{I}{a} \overset{I}{X}(i) \overset{I}{b}\bigr)^j_k \otimes v_j \otimes v^k = \overset{I}{X}(i){^j_k} \otimes av_j \otimes S^{-1}(b)v^k.
\end{equation}
To show \eqref{dessinRelationFusion}, write $R = a_{\alpha} \otimes b_{\alpha}$ (with implicit summation on $\alpha$), so that \eqref{relationFusion} becomes
 \[ \overset{I \otimes J}{X}\!(i) =  \left(\overset{I}{X}(i) \, \overset{I}{b_{\alpha}} \, \overset{I}{b_{\beta}}\right)_1 \left(\overset{J}{a_{\alpha}} \, \overset{J}{X}(i) \, \overset{J}{S(a_{\beta})}\right)_2. \]
Hence, by \eqref{trucEvidentdAlgebreLineaire} and the definition of the braiding in $\mathrm{mod}_l(H)$ we have
\begin{align*}
&\overset{I \otimes J}{X}\!(i){^{km}_{ln}} \otimes v_k \otimes w_m \otimes w^n \otimes v^l\\
&= \left(\overset{I}{X}(i) \, \overset{I}{b_{\alpha}} \, \overset{I}{b_{\beta}}\right)^k_l \left(\overset{J}{a_{\alpha}} \, \overset{J}{X}(i) \, \overset{J}{S(a_{\beta})}\right)^m_n \otimes v_k \otimes w_m \otimes w^n \otimes v^l \\
&= \overset{I}{X}(i){^k_l} \overset{J}{X}(i){^m_n} \otimes v_k \otimes a_{\alpha}w_m \otimes a_{\beta} w^n \otimes S^{-1}(b_{\alpha} b_{\beta}) v^l\\
&= \overset{I}{X}(i){^k_l} \overset{J}{X}(i){^m_n} \otimes v_k \otimes S(a_{\alpha}) w_m \otimes S(a_{\beta}) w^n \otimes b_{\beta} b_{\alpha} v^l\\
&= \overset{I}{X}(i){^k_l} \overset{J}{X}(i){^m_n} \otimes \left(\mathrm{id}_I \otimes \mathrm{id}_J \otimes c_{J^*, I^*}^{-1}\right)\!\left(v_k \otimes S(a_{\alpha}) w_m \otimes b_{\alpha} v^l \otimes w^n \right)\\
&= \overset{I}{X}(i){^k_l} \overset{J}{X}(i){^m_n} \otimes \left(\mathrm{id}_I \otimes \mathrm{id}_J \otimes c_{J^*, I^*}^{-1} \right) \circ \left( \mathrm{id}_I \otimes c_{J,I^*}^{-1} \otimes \mathrm{id}_{J^*}\right)\!\left(v_k \otimes v^l \otimes w_m \otimes w^n \right).
\end{align*}
This corresponds to the desired diagrammatic relation. To show \eqref{dessinEchangeL10} we first have to rewrite \eqref{echangeL10}:
\[ \overset{IJ}{R} \, \overset{I}{B}(i)_1 \, \overset{IJ}{R'} \, \overset{J}{A}(i)_2 \overset{IJ}{R} = \overset{J}{A}(i)_2 \, \overset{IJ}{R}  \, \overset{I}{B}(i)_1 = \overset{J}{A}(i)_2 \, (\overset{I}{a_{\alpha}})_1 \, (\overset{J}{b_{\alpha}})_2 \, \overset{I}{B}(i)_1 = (\overset{I}{a_{\alpha}})_1 \, \overset{J}{A}(i)_2 \, \overset{I}{B}(i)_1 \, (\overset{J}{b_{\alpha}})_2. \]
Using the properties of the antipode $S$ and of $R$ (see \cite{kassel}), we see that $a_{\alpha} a_{\beta} \otimes b_{\beta} S(b_{\alpha}) = 1 \otimes 1$; hence we get
\[ \overset{J}{A}(i)_2 \, \overset{I}{B}(i)_1 = (\overset{I}{a_{\alpha}})_1 \, \overset{IJ}{R} \, \overset{I}{B}(i)_1 \, \overset{IJ}{R'} \, \overset{J}{A}(i)_2 \overset{IJ}{R} \, \overset{J}{S(b_{\alpha})}_2 = \left( \overset{I}{a_{\alpha}} \, \overset{I}{a_{\beta}} \, \overset{I}{B}(i) \, \overset{I}{b_{\gamma}} \, \overset{I}{a_{\delta}} \right)_1 \left( \overset{J}{b_{\beta}} \, \overset{J}{a_{\gamma}} \, \overset{J}{A}(i) \, \overset{J}{b_{\delta}} \, \overset{J}{S(b_{\alpha})} \right)_2. \]
By \eqref{trucEvidentdAlgebreLineaire}, the fact that $(S\otimes S)(R) = R$ and the definition of the braiding in $\mathrm{mod}_l(H)$ we get
\begin{align*}
& \: \overset{J}{A}(i)^k_l \overset{I}{B}(i)^m_n \otimes w_k \otimes w^l \otimes v_m \otimes v^n \\
=& \: \left( \overset{I}{a_{\alpha}} \, \overset{I}{a_{\beta}} \, \overset{I}{B}(i) \, \overset{I}{b_{\gamma}} \, \overset{I}{a_{\delta}} \right)^m_n \left( \overset{J}{b_{\beta}} \, \overset{J}{a_{\gamma}} \, \overset{J}{A}(i) \, \overset{J}{b_{\delta}} \, \overset{J}{S(b_{\alpha})} \right)^k_l \otimes w_k \otimes w^l \otimes v_m \otimes v^n\\
=& \: \overset{I}{B}(i){^m_n} \overset{J}{A}(i){^k_l} \otimes b_{\beta} a_{\gamma} w_k \otimes S^{-1}\!\left(b_{\delta}S(b_{\alpha})\right)w^l \otimes a_{\alpha}a_{\beta} v_m \otimes S^{-1}(b_{\gamma} a_{\delta}) v_n\\
=& \: \overset{I}{B}(i){^m_n} \overset{J}{A}(i){^k_l} \otimes b_{\beta}S(a_{\gamma})w_k \otimes b_{\alpha} b_{\delta}w^l \otimes a_{\alpha}a_{\beta} v_m \otimes a_{\delta} b_{\gamma} v^n\\
=& \: \overset{I}{B}(i){^m_n} \overset{J}{A}(i){^k_l} \otimes (\mathrm{id}_I \otimes c_{J,I^*} \otimes \mathrm{id}_{J^*})\bigl(b_{\beta}S(a_{\gamma})w_k \otimes a_{\beta} v_m \otimes b_{\delta}w^l \otimes a_{\delta} b_{\gamma} v^n\bigr)\\
=& \: \overset{I}{B}(i){^m_n} \overset{J}{A}(i){^k_l} \otimes (\mathrm{id}_I \otimes c_{J,I^*} \otimes \mathrm{id}_{J^*}) \circ (c_{J,I} \otimes c_{J^*, I^*})\bigl(v_m \otimes S(a_{\gamma})w_k \otimes b_{\gamma} v^n \otimes w^l\bigr)\\
=& \: \overset{I}{B}(i){^m_n} \overset{J}{A}(i){^k_l} \otimes (\mathrm{id}_I \otimes c_{J,I^*} \otimes \mathrm{id}_{J^*}) \circ (c_{J,I} \otimes c_{J^*, I^*}) \circ (\mathrm{id}_J \otimes c_{I,J^*}^{-1} \otimes \mathrm{id}_{I^*})\bigl(v_m \otimes v^n \otimes w_k \otimes w^l\bigr)
\end{align*}
which corresponds to the desired diagrammatic relation. \eqref{dessinEchangeLgn} is proven in a similar way.
\\2. By \eqref{fonctorialiteX}, we have
\begin{align*}
\overset{I}{X}{^i_j} \otimes (f \otimes \mathrm{id}_{I^*})(v_i \otimes v^j)  &= \overset{I}{X}{^i_j} \otimes f_i^kv_k \otimes v^j = \bigl(f\overset{I}{X}\bigr)^k_j \otimes v_k \otimes v^j = \bigl(\overset{J}{X}f\bigr)^k_j \otimes w_k \otimes w^j\\
& = \overset{J}{X}{^k_l} \otimes w_k \otimes f^l_jw^j = \overset{J}{X}{^k_l} \otimes (\mathrm{id}_I \otimes f^*)(w_k \otimes w^l)
\end{align*}
where $f^* : J^* \to I^*$ is the transpose of $f$. This gives the first diagram below. The second diagram is equivalent to the first thanks to \eqref{sensOppose} and the equality $f \circ e_I = e_J \circ f^{**}$.
\end{proof}

\indent This diagrammatic reformulation of the defining relations of $\mathcal{L}_{g,n}(H)$ easily implies two basic well-known properties of the matrices $\overset{I}{B}(i), \overset{I}{A}(i), \overset{I}{M}(i)$:

\begin{proposition}\label{propEqRefEtInverse}
Let $X(i)$ be $A(i)$ or $B(i)$ if $1 \leq i \leq g$ and be $M(i)$ if $g+1 \leq i \leq g+n$. 
\\1. The reflection equation $\overset{IJ}{R}_{12} \overset{I}{X}(i)_1 (\overset{IJ}{R'})_{12} \overset{J}{X}(i)_2 = \overset{J}{X}(i)_2 \overset{IJ}{R}_{12} \overset{I}{X}(i)_1 (\overset{IJ}{R'})_{12}$ holds and is equivalent to the following diagrammatic equality:
\begin{equation}\label{equationReflexion}
\begingroup%
  \makeatletter%
  \providecommand\color[2][]{%
    \errmessage{(Inkscape) Color is used for the text in Inkscape, but the package 'color.sty' is not loaded}%
    \renewcommand\color[2][]{}%
  }%
  \providecommand\transparent[1]{%
    \errmessage{(Inkscape) Transparency is used (non-zero) for the text in Inkscape, but the package 'transparent.sty' is not loaded}%
    \renewcommand\transparent[1]{}%
  }%
  \providecommand\rotatebox[2]{#2}%
  \newcommand*\fsize{\dimexpr\f@size pt\relax}%
  \newcommand*\lineheight[1]{\fontsize{\fsize}{#1\fsize}\selectfont}%
  \ifx\svgwidth\undefined%
    \setlength{\unitlength}{401.72939106bp}%
    \ifx\svgscale\undefined%
      \relax%
    \else%
      \setlength{\unitlength}{\unitlength * \real{\svgscale}}%
    \fi%
  \else%
    \setlength{\unitlength}{\svgwidth}%
  \fi%
  \global\let\svgwidth\undefined%
  \global\let\svgscale\undefined%
  \makeatother%
  \begin{picture}(1,0.24244395)%
    \lineheight{1}%
    \setlength\tabcolsep{0pt}%
    \put(0,0){\includegraphics[width=\unitlength,page=1]{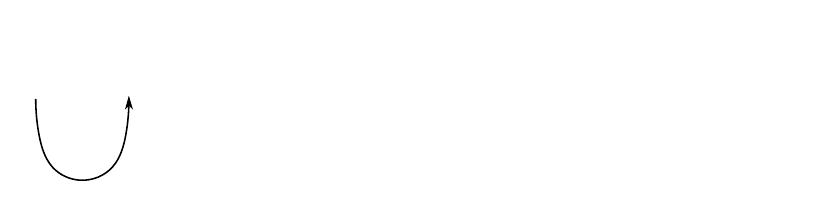}}%
    \put(0.01781626,0.10216918){\color[rgb]{0,0,0}\makebox(0,0)[lt]{\lineheight{1.25}\smash{\begin{tabular}[t]{l}$I$\end{tabular}}}}%
    \put(0,0){\includegraphics[width=\unitlength,page=2]{equation_reflexion.pdf}}%
    \put(0.16469254,0.00434855){\color[rgb]{0,0,0}\makebox(0,0)[lt]{\lineheight{1.25}\smash{\begin{tabular}[t]{l}$\overset{I}{X}(i)$\end{tabular}}}}%
    \put(0,0){\includegraphics[width=\unitlength,page=3]{equation_reflexion.pdf}}%
    \put(0.36072011,0.00434855){\color[rgb]{0,0,0}\makebox(0,0)[lt]{\lineheight{1.25}\smash{\begin{tabular}[t]{l}$\overset{J}{X}(i)$\end{tabular}}}}%
    \put(0,0){\includegraphics[width=\unitlength,page=4]{equation_reflexion.pdf}}%
    \put(0.21331162,0.1020592){\color[rgb]{0,0,0}\makebox(0,0)[lt]{\lineheight{1.25}\smash{\begin{tabular}[t]{l}$J$\end{tabular}}}}%
    \put(0.41456705,0.07099574){\color[rgb]{0,0,0}\makebox(0,0)[lt]{\lineheight{1.25}\smash{\begin{tabular}[t]{l}$=$\end{tabular}}}}%
    \put(0.50351096,0.222354){\color[rgb]{0,0,0}\makebox(0,0)[lt]{\lineheight{1.25}\smash{\begin{tabular}[t]{l}$I$\end{tabular}}}}%
    \put(0.69651129,0.22611963){\color[rgb]{0,0,0}\makebox(0,0)[lt]{\lineheight{1.25}\smash{\begin{tabular}[t]{l}$J$\end{tabular}}}}%
    \put(0,0){\includegraphics[width=\unitlength,page=5]{equation_reflexion.pdf}}%
    \put(0.63142479,0.00434855){\color[rgb]{0,0,0}\makebox(0,0)[lt]{\lineheight{1.25}\smash{\begin{tabular}[t]{l}$\overset{J}{X}(i)$\end{tabular}}}}%
    \put(0,0){\includegraphics[width=\unitlength,page=6]{equation_reflexion.pdf}}%
    \put(0.8274522,0.00434855){\color[rgb]{0,0,0}\makebox(0,0)[lt]{\lineheight{1.25}\smash{\begin{tabular}[t]{l}$\overset{I}{X}(i)$\end{tabular}}}}%
    \put(0,0){\includegraphics[width=\unitlength,page=7]{equation_reflexion.pdf}}%
  \end{picture}%
\endgroup%

\end{equation}
2. The matrix $\overset{I}{X}(i)$ is invertible with inverse $\overset{I}{X}(i){^{-1}} = \overset{I}{u}{^{-1}} \overset{I}{S(b_i)} \, {^{t\!}}\overset{I^*}{X}(i) \, \overset{I}{a_i}$ where $u$ is the Drinfeld element \eqref{elementDrinfeld}, $^t$ is the transpose and $R = a_i \otimes b_i$. Diagramatically:
\begin{equation}\label{dessinMatriceInverse}
\begingroup%
  \makeatletter%
  \providecommand\color[2][]{%
    \errmessage{(Inkscape) Color is used for the text in Inkscape, but the package 'color.sty' is not loaded}%
    \renewcommand\color[2][]{}%
  }%
  \providecommand\transparent[1]{%
    \errmessage{(Inkscape) Transparency is used (non-zero) for the text in Inkscape, but the package 'transparent.sty' is not loaded}%
    \renewcommand\transparent[1]{}%
  }%
  \providecommand\rotatebox[2]{#2}%
  \newcommand*\fsize{\dimexpr\f@size pt\relax}%
  \newcommand*\lineheight[1]{\fontsize{\fsize}{#1\fsize}\selectfont}%
  \ifx\svgwidth\undefined%
    \setlength{\unitlength}{240.47936655bp}%
    \ifx\svgscale\undefined%
      \relax%
    \else%
      \setlength{\unitlength}{\unitlength * \real{\svgscale}}%
    \fi%
  \else%
    \setlength{\unitlength}{\svgwidth}%
  \fi%
  \global\let\svgwidth\undefined%
  \global\let\svgscale\undefined%
  \makeatother%
  \begin{picture}(1,0.29427232)%
    \lineheight{1}%
    \setlength\tabcolsep{0pt}%
    \put(0,0){\includegraphics[width=\unitlength,page=1]{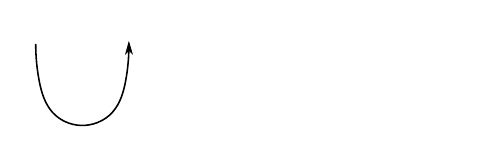}}%
    \put(0.02976273,0.16933077){\color[rgb]{0,0,0}\makebox(0,0)[lt]{\lineheight{1.25}\smash{\begin{tabular}[t]{l}$I$\end{tabular}}}}%
    \put(0,0){\includegraphics[width=\unitlength,page=2]{matrice_inverse.pdf}}%
    \put(0.27512482,0.00591784){\color[rgb]{0,0,0}\makebox(0,0)[lt]{\lineheight{1.25}\smash{\begin{tabular}[t]{l}$\overset{I}{X}(i)^{-1}$\end{tabular}}}}%
    \put(0,0){\includegraphics[width=\unitlength,page=3]{matrice_inverse.pdf}}%
    \put(0.46444731,0.25472812){\color[rgb]{0,0,0}\makebox(0,0)[lt]{\lineheight{1.25}\smash{\begin{tabular}[t]{l}$I$\end{tabular}}}}%
    \put(0,0){\includegraphics[width=\unitlength,page=4]{matrice_inverse.pdf}}%
    \put(0.71175272,0.00591784){\color[rgb]{0,0,0}\makebox(0,0)[lt]{\lineheight{1.25}\smash{\begin{tabular}[t]{l}$\overset{I}{X}(i)$\end{tabular}}}}%
    \put(0,0){\includegraphics[width=\unitlength,page=5]{matrice_inverse.pdf}}%
    \put(0.3601648,0.1173894){\color[rgb]{0,0,0}\makebox(0,0)[lt]{\lineheight{1.25}\smash{\begin{tabular}[t]{l}$=$\end{tabular}}}}%
  \end{picture}%
\endgroup%

\end{equation}
\end{proposition}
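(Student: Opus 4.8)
The plan is to derive both parts from the fusion relation \eqref{relationFusion} (equivalently its diagrammatic form \eqref{dessinRelationFusion}) and naturality \eqref{fonctorialiteX} (equivalently \eqref{naturalite}) alone: the exchange relations \eqref{echangeL10}, \eqref{echangeLgn} play no role, and the diagrammatic statements \eqref{equationReflexion}, \eqref{dessinMatriceInverse} are then merely the translations of the algebraic identities through the dictionary of Section \ref{sectionDiagramCalculus} (crossings $\leftrightarrow$ $R^{\pm 1}, R'^{\pm 1}$, handles $\leftrightarrow$ the matrices $\overset{I}{X}(i)$, vertical stacking $\leftrightarrow$ matrix product).

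\emph{Part 1.} Since the braiding $c_{I,J}\colon I\otimes J\to J\otimes I$ is an $H$-module morphism, naturality \eqref{fonctorialiteX} gives $\overset{I\otimes J}{X}(i)=c_{I,J}^{-1}\,\overset{J\otimes I}{X}(i)\,c_{I,J}$ in $\mathcal{L}_{g,n}(H)\otimes\mathrm{End}_{\mathbb{C}}(I\otimes J)$. Writing $c_{I,J}=\tau\circ\overset{IJ}{R}$ with $\tau$ the tensor flip (so $c_{I,J}^{-1}=\overset{IJ}{R}{}^{-1}\circ\tau$, using $R^{-1}=S(a_i)\otimes b_i$) and expanding $\overset{J\otimes I}{X}(i)$ by \eqref{relationFusion} applied to the pair $(J,I)$, a short computation rewrites the right-hand side as $\overset{IJ}{R}{}^{-1}\,\overset{J}{X}(i)_2\,\overset{IJ}{R}\,\overset{I}{X}(i)_1$. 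Equating this with the expansion $\overset{I\otimes J}{X}(i)=\overset{I}{X}(i)_1\,\overset{IJ}{R'}\,\overset{J}{X}(i)_2\,\overset{IJ}{R'}{}^{-1}$ of \eqref{relationFusion} for the pair $(I,J)$, and then multiplying on the left by $\overset{IJ}{R}$ and on the right by $\overset{IJ}{R'}$, yields exactly the reflection equation. Translating every symbol into a diagram gives \eqref{equationReflexion}; alternatively one sees directly that both sides of \eqref{equationReflexion}, after the fusion relation \eqref{dessinRelationFusion} is used to merge the two handles, become a single handle colored $I\otimes J$ resp. $J\otimes I$, and these are interchanged by sliding the crossing $c_{I,J}$ across the handle, which is precisely \eqref{naturalite}.

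\emph{Part 2.} Set $Y=\overset{I}{u}^{-1}\,\overset{I}{S(b_i)}\,{}^{t}\overset{I^{*}}{X}(i)\,\overset{I}{a_i}$ (summation over $i$, $R=a_i\otimes b_i$). I would verify $\overset{I}{X}(i)\,Y=\mathrm{id}_I$ and $Y\,\overset{I}{X}(i)=\mathrm{id}_I$ separately, each by the same mechanism: contract a fusion expansion of $\overset{I\otimes I^{*}}{X}(i)$ (resp.\ $\overset{I^{*}\otimes I}{X}(i)$) against one of the duality $H$-morphisms of Section \ref{sectionPreliminaries}, namely $d'_I\colon I\otimes I^{*}\to\mathbb{C}$ and $b_I\colon\mathbb{C}\to I\otimes I^{*}$ (resp.\ $d_I$ and $b'_I$), using naturality \eqref{fonctorialite} and $\overset{\mathbb{C}}{X}(i)=\mathrm{id}_{\mathbb{C}}$. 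Carrying this out, the contraction turns the dual action into $\overset{I^{*}}{h}={}^{t}\overset{I}{S(h)}$, replaces the braiding by $R$, and brings in the pivot $g$ through $d'_I(x\otimes\varphi)=\varphi(gx)$ (resp.\ $b'_I$); after simplifying with $g=uv^{-1}$, with $S^2(x)=uxu^{-1}$ and with the defining properties of $R$, the resulting matrix identity collapses to the stated formula for $\overset{I}{X}(i)^{-1}$. Diagrammatically this is the content of \eqref{dessinMatriceInverse}: the right-hand diagram, read through \eqref{sensOppose}--\eqref{sensOpposeBis}, is the handle $\overset{I}{X}(i)$ together with a handle $\overset{I^{*}}{X}(i)$ on the orientation-reversed strand, closed up by the cup $b_I$ and cap $d'_I$; applying \eqref{dessinRelationFusion} backwards merges them into a single handle colored $I\otimes I^{*}$, which disappears because $X(i)$ is trivial on $\mathbb{C}$, leaving the identity strand.

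\emph{Main obstacle.} Everything above is mechanical except the bookkeeping in Part 2: pinning down the exact group-like correction --- the Drinfeld element $u$, equivalently the pivot $g$ together with the ribbon element $v$ --- that is produced when the $R$-matrix contractions coming from the fusion relation are pushed through the duality morphisms $b_I,d'_I$ and the identification $e_I\colon I^{**}\to I$ of \eqref{identificationBidual}. This is exactly the classical computation showing that solutions of the reflection equation are invertible, and it is the only step where one must be genuinely careful about conventions.
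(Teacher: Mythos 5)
Your proposal is correct and follows essentially the same route as the paper: both parts are derived from the fusion relation \eqref{dessinRelationFusion} together with naturality \eqref{naturalite} (applied to the braiding $c_{I,J}$ for the reflection equation, and to the duality morphisms closing up $I\otimes I^{*}$, with $\overset{\mathbb{C}}{X}(i)$ trivial, for the inverse). The paper carries this out diagrammatically and likewise leaves the translation into the explicit matrix identities --- including the bookkeeping of $u$, $g$ and $v$ that you flag as the main obstacle in Part 2 --- to the reader via \eqref{sensOppose}.
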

\begin{proof}
1. This is a consequence of \eqref{naturalite} and \eqref{dessinRelationFusion}:
\begin{center}
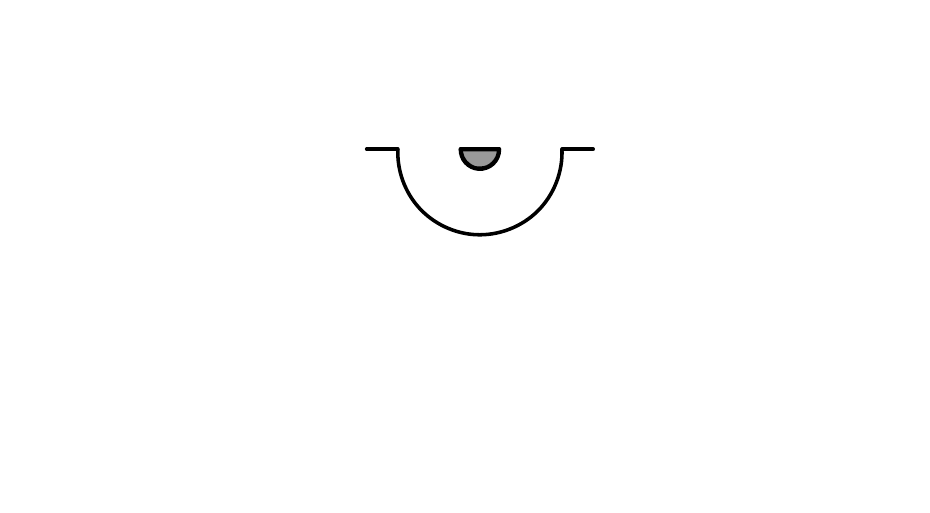
\end{center}
It is left to the reader to check that the diagrammatic equality is equivalent to the matrix equality.
\\2. This is also a consequence of \eqref{naturalite} and \eqref{dessinRelationFusion}:
\begin{center}
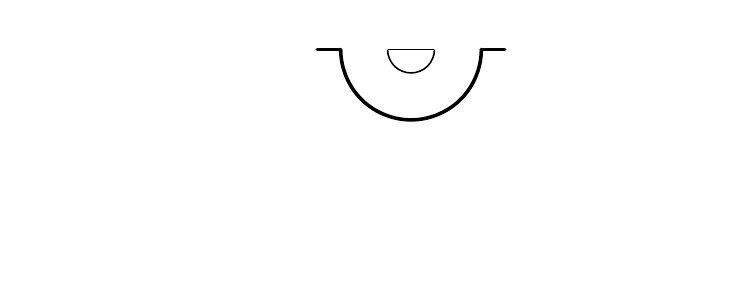
\end{center}
Again it is left to the reader to check using \eqref{sensOppose} that the diagrammatic and matrix equalities are equivalent.
\end{proof}

\begin{remark}\label{remarkInverseLift}
The proof of the previous proposition uses only the naturality and the fusion relation \eqref{relationFusion}, hence the result applies to any family of matrices $(\overset{I}{X})_{I \in \mathrm{mod}_l(H)}$ satisfying these properties and in particular to the lifts $(\overset{I}{\widetilde{x}})_{I \in \mathrm{mod}_l(H)}$ of a positively-oriented simple loop $x \in \pi_1(\Sigma_{g,n}^{\circ})$ (see section \ref{sectionDefLgnH}).
\end{remark}

\section{Holonomy and Wilson loops}\label{sectionHolonomyWilson}
\subsection{General definition for tangles}
\indent Recall that the surface $\Sigma_{g,n}^{\mathrm{o},\bullet}$ is defined in \S \ref{sectionDefSurfaces}; it is a punctured bordered surface in the sense of \cite[\S 2]{Le}, \cite[\S 2.2]{CL}.

\smallskip

\indent Let $\mathbf{\Sigma} = \Sigma_{g,n}^{\mathrm{o},\bullet} \times [0,1]$ be the thickening of $\Sigma_{g,n}^{\mathrm{o},\bullet}$. For a point $p = (x,t) \in \mathbf{\Sigma}$ we call $t \in [0,1]$ the height of $p$. The following definition is taken from \cite[\S 2.2]{Le}, except that we add an orientation and a coloring on the tangles. Note however that we restrict to the surfaces $\Sigma_{g,n}^{\mathrm{o},\bullet}$ (we do not consider more general punctured bordered surfaces).

\begin{definition}\label{defBordTangle}
1. A $H$-colored $\partial \mathbf{\Sigma}$-tangle is an oriented, framed, compact, properly embedded $1$-dimensional submanifold $\mathbf{T} \subset \mathbf{\Sigma}$ such that
\begin{itemize}
\item at every point of $\partial\mathbf{T} = \mathbf{T} \cap \partial\mathbf{\Sigma}$ the framing is vertical
\item the points of $\partial\mathbf{T}$ have distinct heights
\item each connected component of $\mathbf{T}$ is colored (\textit{i.e.} labelled) by a finite dimensional $H$-module.
\end{itemize}
We denote by $\mathscr{T}(\Sigma_{g,n}^{\mathrm{o},\bullet})$ the set of isotopy classes of such tangles.\\
2. An isotopy of $H$-colored $\partial \mathbf{\Sigma}$-tangles is an isotopy preserving these defining properties.\\
3. A $H$-colored $\partial \mathbf{\Sigma}$-tangle diagram is the projection of $\mathbf{T} \subset \Sigma_{g,n}^{\mathrm{o},\bullet} \times [0,1]$ on $\Sigma_{g,n}^{\mathrm{o},\bullet}$, together with the colors, the over/underpassing information at each double point and the height information at each boundary point. We assume as usual that $\mathbf{T}$ is in general position before doing the projection.\\
4. Let $\mathfrak{o}$ be an orientation of the boundary curve $\partial\Sigma_{g,n}^{\mathrm{o},\bullet}$. A $\partial \mathbf{\Sigma}$-tangle diagram is $\mathfrak{o}$-ordered if the heights of the boundary points are increasing when one goes along $\partial\Sigma_{g,n}^{\mathrm{o},\bullet}$ according to $\mathfrak{o}$.
\end{definition}
Up to isotopy, a $\partial \mathbf{\Sigma}$-tangle $\mathbf{T}$ can always be presented by a $\mathfrak{o}$-ordered diagram. More precisely, up to isotopy, a $H$-colored $\partial \mathbf{\Sigma}$-tangle diagram can always be presented as follows:
\begin{equation}\label{formeStandard}
\begingroup%
  \makeatletter%
  \providecommand\color[2][]{%
    \errmessage{(Inkscape) Color is used for the text in Inkscape, but the package 'color.sty' is not loaded}%
    \renewcommand\color[2][]{}%
  }%
  \providecommand\transparent[1]{%
    \errmessage{(Inkscape) Transparency is used (non-zero) for the text in Inkscape, but the package 'transparent.sty' is not loaded}%
    \renewcommand\transparent[1]{}%
  }%
  \providecommand\rotatebox[2]{#2}%
  \newcommand*\fsize{\dimexpr\f@size pt\relax}%
  \newcommand*\lineheight[1]{\fontsize{\fsize}{#1\fsize}\selectfont}%
  \ifx\svgwidth\undefined%
    \setlength{\unitlength}{424.14031937bp}%
    \ifx\svgscale\undefined%
      \relax%
    \else%
      \setlength{\unitlength}{\unitlength * \real{\svgscale}}%
    \fi%
  \else%
    \setlength{\unitlength}{\svgwidth}%
  \fi%
  \global\let\svgwidth\undefined%
  \global\let\svgscale\undefined%
  \makeatother%
  \begin{picture}(1,0.27098175)%
    \lineheight{1}%
    \setlength\tabcolsep{0pt}%
    \put(0,0){\includegraphics[width=\unitlength,page=1]{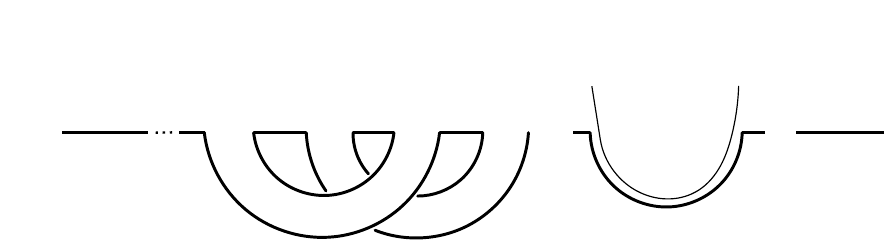}}%
    \put(0.50867249,0.1885787){\color[rgb]{0,0,0}\makebox(0,0)[lt]{\lineheight{1.25}\smash{\begin{tabular}[t]{l}$T$\end{tabular}}}}%
    \put(0,0){\includegraphics[width=\unitlength,page=2]{formeStandard.pdf}}%
    \put(-0.00335231,-0.35651493){\color[rgb]{0,0,0}\makebox(0,0)[lt]{\begin{minipage}{0.03486159\unitlength}\raggedright \end{minipage}}}%
    \put(0,0){\includegraphics[width=\unitlength,page=3]{formeStandard.pdf}}%
    \put(0.63192396,0.14208154){\color[rgb]{0,0,0}\makebox(0,0)[lt]{\lineheight{1.25}\smash{\begin{tabular}[t]{l}$K_1$\end{tabular}}}}%
    \put(0.70813048,0.14272569){\color[rgb]{0,0,0}\makebox(0,0)[lt]{\lineheight{1.25}\smash{\begin{tabular}[t]{l}$K_r$\end{tabular}}}}%
    \put(0.77041803,0.23137388){\color[rgb]{0,0,0}\makebox(0,0)[lt]{\lineheight{1.25}\smash{\begin{tabular}[t]{l}$V_k$\end{tabular}}}}%
    \put(0.26199619,0.23131375){\color[rgb]{0,0,0}\makebox(0,0)[lt]{\lineheight{1.25}\smash{\begin{tabular}[t]{l}$V_1$\end{tabular}}}}%
    \put(0,0){\includegraphics[width=\unitlength,page=4]{formeStandard.pdf}}%
    \put(-0.00142315,0.14803053){\color[rgb]{0,0,0}\makebox(0,0)[lt]{\lineheight{1.25}\smash{\begin{tabular}[t]{l}$\mathbf{T} =$\end{tabular}}}}%
    \put(0,0){\includegraphics[width=\unitlength,page=5]{formeStandard.pdf}}%
    \put(0.20778507,0.1429347){\color[rgb]{0,0,0}\makebox(0,0)[lt]{\lineheight{1.25}\smash{\begin{tabular}[t]{l}$I_1$\end{tabular}}}}%
    \put(0.27213777,0.14249262){\color[rgb]{0,0,0}\makebox(0,0)[lt]{\lineheight{1.25}\smash{\begin{tabular}[t]{l}$I_l$\end{tabular}}}}%
    \put(0,0){\includegraphics[width=\unitlength,page=6]{formeStandard.pdf}}%
    \put(0.32170569,0.14335409){\color[rgb]{0,0,0}\makebox(0,0)[lt]{\lineheight{1.25}\smash{\begin{tabular}[t]{l}$J_1$\end{tabular}}}}%
    \put(0.39037021,0.14251518){\color[rgb]{0,0,0}\makebox(0,0)[lt]{\lineheight{1.25}\smash{\begin{tabular}[t]{l}$J_m$\end{tabular}}}}%
  \end{picture}%
\endgroup%

\end{equation}
where the handles contain only bunches of parallel strands, $T$ is a (non-unique) tangle, the $H$-modules $I_i, J_i, K_i, V_i$ color the strands, the double arrows mean that each strand is oriented, the boundary points are all on the line at the top of the rectangle and the big dot ($\bullet$) is the point that we have removed from $\partial\Sigma_{g,n}^{\mathrm{o}}$. Last but not least, the boundary points of $\mathbf{T}$ have increasing heights with respect to the orientation of the boundary represented by the arrow. Note that $k$ may be equal to $0$, which means that $\mathbf{T}$ is a link (no boundary points).

\begin{definition}\label{defHol}
Let $\mathbf{T} \in \mathscr{T}(\Sigma_{g,n}^{\mathrm{o},\bullet})$ be represented by a $H$-colored $\partial \mathbf{\Sigma}$-tangle diagram as in \eqref{formeStandard}. The holonomy of $\mathbf{T}$ is an element $\mathrm{hol}(\mathbf{T}) \in \mathcal{L}_{g,n}(H) \otimes V_1^{a_1} \otimes \ldots \otimes V_k^{a_k}$ defined as the evaluation (see Definition \ref{defEvaluation}) of the following diagram:
\begin{center}
\begingroup%
  \makeatletter%
  \providecommand\color[2][]{%
    \errmessage{(Inkscape) Color is used for the text in Inkscape, but the package 'color.sty' is not loaded}%
    \renewcommand\color[2][]{}%
  }%
  \providecommand\transparent[1]{%
    \errmessage{(Inkscape) Transparency is used (non-zero) for the text in Inkscape, but the package 'transparent.sty' is not loaded}%
    \renewcommand\transparent[1]{}%
  }%
  \providecommand\rotatebox[2]{#2}%
  \newcommand*\fsize{\dimexpr\f@size pt\relax}%
  \newcommand*\lineheight[1]{\fontsize{\fsize}{#1\fsize}\selectfont}%
  \ifx\svgwidth\undefined%
    \setlength{\unitlength}{435.74998059bp}%
    \ifx\svgscale\undefined%
      \relax%
    \else%
      \setlength{\unitlength}{\unitlength * \real{\svgscale}}%
    \fi%
  \else%
    \setlength{\unitlength}{\svgwidth}%
  \fi%
  \global\let\svgwidth\undefined%
  \global\let\svgscale\undefined%
  \makeatother%
  \begin{picture}(1,0.33185671)%
    \lineheight{1}%
    \setlength\tabcolsep{0pt}%
    \put(0,0){\includegraphics[width=\unitlength,page=1]{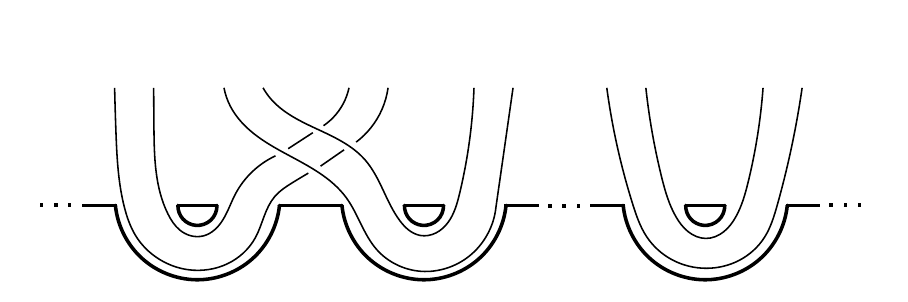}}%
    \put(0.10132467,0.2072859){\color[rgb]{0,0,0}\makebox(0,0)[lt]{\lineheight{1.25}\smash{\begin{tabular}[t]{l}$I_1$\end{tabular}}}}%
    \put(0.17476744,0.20746386){\color[rgb]{0,0,0}\makebox(0,0)[lt]{\lineheight{1.25}\smash{\begin{tabular}[t]{l}$I_l$\end{tabular}}}}%
    \put(0.22410838,0.20816811){\color[rgb]{0,0,0}\makebox(0,0)[lt]{\lineheight{1.25}\smash{\begin{tabular}[t]{l}$J_1$\end{tabular}}}}%
    \put(0.31793237,0.21144673){\color[rgb]{0,0,0}\makebox(0,0)[lt]{\lineheight{1.25}\smash{\begin{tabular}[t]{l}$J_m$\end{tabular}}}}%
    \put(0.63421108,0.20912114){\color[rgb]{0,0,0}\makebox(0,0)[lt]{\lineheight{1.25}\smash{\begin{tabular}[t]{l}$K_1$\end{tabular}}}}%
    \put(0.72188803,0.20905166){\color[rgb]{0,0,0}\makebox(0,0)[lt]{\lineheight{1.25}\smash{\begin{tabular}[t]{l}$K_r$\end{tabular}}}}%
    \put(0,0){\includegraphics[width=\unitlength,page=2]{link_tangle.pdf}}%
    \put(0.28416726,0.0033755){\color[rgb]{0,0,0}\makebox(0,0)[lt]{\lineheight{1.25}\smash{\begin{tabular}[t]{l}$\overset{I_1 \otimes \ldots \otimes I_l}{B(i)}$\end{tabular}}}}%
    \put(0.53396078,0.00607667){\color[rgb]{0,0,0}\makebox(0,0)[lt]{\lineheight{1.25}\smash{\begin{tabular}[t]{l}$\overset{J_1 \otimes \ldots \otimes J_m}{A(i)}$\end{tabular}}}}%
    \put(0.84791176,0.00641319){\color[rgb]{0,0,0}\makebox(0,0)[lt]{\lineheight{1.25}\smash{\begin{tabular}[t]{l}$\overset{K_1 \otimes \ldots \otimes K_r}{M(j)}$\end{tabular}}}}%
    \put(-0.01310965,-0.26437538){\color[rgb]{0,0,0}\makebox(0,0)[lt]{\begin{minipage}{0.03393278\unitlength}\raggedright \end{minipage}}}%
    \put(0.48027698,0.2471247){\color[rgb]{0,0,0}\makebox(0,0)[lt]{\lineheight{1.25}\smash{\begin{tabular}[t]{l}$T$\end{tabular}}}}%
    \put(0,0){\includegraphics[width=\unitlength,page=3]{link_tangle.pdf}}%
    \put(0.18964084,0.29248309){\color[rgb]{0,0,0}\makebox(0,0)[lt]{\lineheight{1.25}\smash{\begin{tabular}[t]{l}$V_1$\end{tabular}}}}%
    \put(0.78389819,0.29286843){\color[rgb]{0,0,0}\makebox(0,0)[lt]{\lineheight{1.25}\smash{\begin{tabular}[t]{l}$V_k$\end{tabular}}}}%
    \put(0,0){\includegraphics[width=\unitlength,page=4]{link_tangle.pdf}}%
  \end{picture}%
\endgroup%

\end{center}
The symbol $a_i \in \{ \downarrow, \uparrow \}$ depends on the orientation of the strands ($a_i = \downarrow$ if the strand is oriented downwards and $a_i = \uparrow$ else) and we define $V_i^{\downarrow} = V_i$ and $V_i^{\uparrow} = V_i^*$. If $\mathbf{T}$ does not have boundary points then $\mathrm{hol}(\mathbf{T})$ is just an element of $\mathcal{L}_{g,n}(H)$.
\end{definition}

\indent We note that $\mathrm{hol}(\mathbf{T})$ does not depend on the choice of the tangle $T$. Indeed, since the Reshetikhin-Turaev functor is an isotopy invariant, the evaluation of the diagram in Definition \ref{defHol} depends only on the isotopy class of $T$. Moreover, if we drag certain crossings, cups and caps along the handles in order to obtain another tangle $T'$ representing $\mathbf{T}$, then this does not change the value of $\mathrm{hol}(\mathbf{T})$ thanks to naturality \eqref{naturalite}:
\begin{center}
\begingroup%
  \makeatletter%
  \providecommand\color[2][]{%
    \errmessage{(Inkscape) Color is used for the text in Inkscape, but the package 'color.sty' is not loaded}%
    \renewcommand\color[2][]{}%
  }%
  \providecommand\transparent[1]{%
    \errmessage{(Inkscape) Transparency is used (non-zero) for the text in Inkscape, but the package 'transparent.sty' is not loaded}%
    \renewcommand\transparent[1]{}%
  }%
  \providecommand\rotatebox[2]{#2}%
  \newcommand*\fsize{\dimexpr\f@size pt\relax}%
  \newcommand*\lineheight[1]{\fontsize{\fsize}{#1\fsize}\selectfont}%
  \ifx\svgwidth\undefined%
    \setlength{\unitlength}{454.75533014bp}%
    \ifx\svgscale\undefined%
      \relax%
    \else%
      \setlength{\unitlength}{\unitlength * \real{\svgscale}}%
    \fi%
  \else%
    \setlength{\unitlength}{\svgwidth}%
  \fi%
  \global\let\svgwidth\undefined%
  \global\let\svgscale\undefined%
  \makeatother%
  \begin{picture}(1,0.13710413)%
    \lineheight{1}%
    \setlength\tabcolsep{0pt}%
    \put(0,0){\includegraphics[width=\unitlength,page=1]{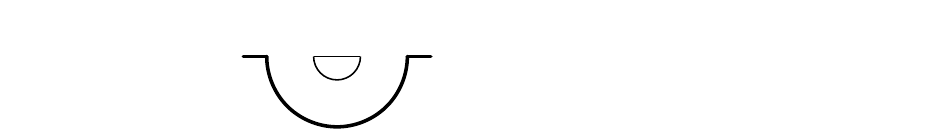}}%
    \put(0.27142095,0.10796273){\color[rgb]{0,0,0}\makebox(0,0)[lt]{\lineheight{1.25}\smash{\begin{tabular}[t]{l}$I$\end{tabular}}}}%
    \put(0.4278767,0.00918257){\color[rgb]{0,0,0}\makebox(0,0)[lt]{\lineheight{1.25}\smash{\begin{tabular}[t]{l}$\overset{I \otimes I}{X}\!\!(i)$\end{tabular}}}}%
    \put(0,0){\includegraphics[width=\unitlength,page=2]{Wilson_bien_def.pdf}}%
    \put(0.01651063,0.10263953){\color[rgb]{0,0,0}\makebox(0,0)[lt]{\lineheight{1.25}\smash{\begin{tabular}[t]{l}$I$\end{tabular}}}}%
    \put(0,0){\includegraphics[width=\unitlength,page=3]{Wilson_bien_def.pdf}}%
    \put(0.21415393,0.07065556){\color[rgb]{0,0,0}\makebox(0,0)[lt]{\lineheight{1.25}\smash{\begin{tabular}[t]{l}$=$\end{tabular}}}}%
    \put(0,0){\includegraphics[width=\unitlength,page=4]{Wilson_bien_def.pdf}}%
    \put(0.17003162,0.00794811){\color[rgb]{0,0,0}\makebox(0,0)[lt]{\lineheight{1.25}\smash{\begin{tabular}[t]{l}$\overset{\mathbb{C}}{X}(i)$\end{tabular}}}}%
    \put(0,0){\includegraphics[width=\unitlength,page=5]{Wilson_bien_def.pdf}}%
    \put(0.51897404,0.11780528){\color[rgb]{0,0,0}\makebox(0,0)[lt]{\lineheight{1.25}\smash{\begin{tabular}[t]{l}$J$\end{tabular}}}}%
    \put(0.5813655,0.11741866){\color[rgb]{0,0,0}\makebox(0,0)[lt]{\lineheight{1.25}\smash{\begin{tabular}[t]{l}$I$\end{tabular}}}}%
    \put(0.72541782,0.07065558){\color[rgb]{0,0,0}\makebox(0,0)[lt]{\lineheight{1.25}\smash{\begin{tabular}[t]{l}$=$\end{tabular}}}}%
    \put(0,0){\includegraphics[width=\unitlength,page=6]{Wilson_bien_def.pdf}}%
    \put(0.77170179,0.11701361){\color[rgb]{0,0,0}\makebox(0,0)[lt]{\lineheight{1.25}\smash{\begin{tabular}[t]{l}$J$\end{tabular}}}}%
    \put(0.82420152,0.11727055){\color[rgb]{0,0,0}\makebox(0,0)[lt]{\lineheight{1.25}\smash{\begin{tabular}[t]{l}$I$\end{tabular}}}}%
    \put(0,0){\includegraphics[width=\unitlength,page=7]{Wilson_bien_def.pdf}}%
    \put(0.46666947,0.07436717){\color[rgb]{0,0,0}\makebox(0,0)[lt]{\lineheight{1.25}\smash{\begin{tabular}[t]{l},\end{tabular}}}}%
    \put(0.68355502,0.00786871){\color[rgb]{0,0,0}\makebox(0,0)[lt]{\lineheight{1.25}\smash{\begin{tabular}[t]{l}$\overset{I \otimes J}{X}\!\!(i)$\end{tabular}}}}%
    \put(0.93910766,0.00741032){\color[rgb]{0,0,0}\makebox(0,0)[lt]{\lineheight{1.25}\smash{\begin{tabular}[t]{l}$\overset{J \otimes I}{X}\!\!(i)$\end{tabular}}}}%
  \end{picture}%
\endgroup%

\end{center}

\indent The diagrammatic relations introduced previously allow us to compute the value of $\mathrm{hol}$ in a purely diagrammatic way, and sometimes to obtain a simple expression of the result. For instance, here is a computation:
\begin{center}
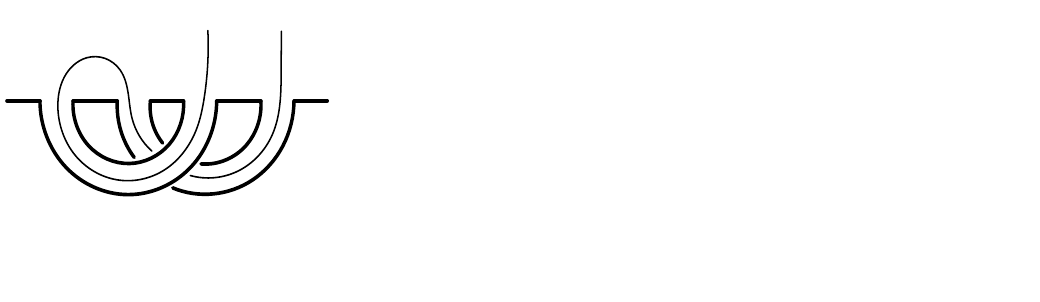
\end{center}

\indent A natural operation for $H$-colored $\partial \mathbf{\Sigma}$-tangles is the stack product, defined in the usual way:
\begin{definition}\label{stackProduct}
Let $\mathbf{T}_1, \mathbf{T}_2 \in \mathscr{T}(\Sigma_{g,n}^{\mathrm{o},\bullet})$ and let $\mathbf{T}^-_1 \in \Sigma_{g,n}^{\mathrm{o},\bullet} \times [0, \frac{1}{2}[$ be isotopic to $\mathbf{T}_1$ and $\mathbf{T}^+_2 \in \Sigma_{g,n}^{\mathrm{o},\bullet} \times ]\frac{1}{2}, 1]$ be isotopic to $\mathbf{T}_2$. The stack product of $\mathbf{T}_1$ and $\mathbf{T}_2$ is $\mathbf{T}_1 \ast \mathbf{T}_2 = \mathbf{T}^-_1 \cup \mathbf{T}^+_2 \in \mathscr{T}(\Sigma_{g,n}^{\mathrm{o},\bullet})$. 
\end{definition}
\noindent Note that our convention for $\ast$ is the opposite to the one of \cite{Le, CL}: here $\mathbf{T}_1 \ast \mathbf{T}_2$ means that we put $\mathbf{T}_1$ {\em below} $\mathbf{T}_2$ in the thickened surface.

\smallskip

\indent For $x \otimes v \in \mathcal{L}_{g,n}(H) \otimes V_1 \otimes \ldots \otimes V_k$ and $y \otimes w \in \mathcal{L}_{g,n}(H) \otimes W_1 \otimes \ldots \otimes W_l$ (with $x,y \in \mathcal{L}_{g,n}(H)$, $v \in V_1 \otimes \ldots \otimes V_k$ and $w \in W_1 \otimes \ldots \otimes W_l$), we define
\[ (x \otimes v) \odot (y \otimes w) = xy \otimes v \otimes w \in \mathcal{L}_{g,n}(H) \otimes V_1 \otimes \ldots \otimes V_k \otimes W_1 \otimes \ldots \otimes W_{l}. \]

\begin{theorem}\label{wilsonStack}
Let $\mathbf{T}_1, \mathbf{T}_2 \in \mathscr{T}(\Sigma_{g,n}^{\mathrm{o},\bullet})$. It holds
\[ \mathrm{hol}(\mathbf{T}_1 \ast \mathbf{T}_2) = \mathrm{hol}(\mathbf{T}_1) \odot \mathrm{hol}(\mathbf{T}_2). \]
\end{theorem}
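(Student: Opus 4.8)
The plan is to work entirely with the standard diagrammatic presentation \eqref{formeStandard} of $\partial\mathbf{\Sigma}$-tangles and to reduce the identity to the fusion relation \eqref{dessinRelationFusion} together with naturality \eqref{naturalite} and the isotopy invariance of $F_{\mathrm{RT}}$.

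First I would fix $\mathfrak{o}$-ordered standard-form diagrams for $\mathbf{T}_1$ and $\mathbf{T}_2$ as in \eqref{formeStandard}: over the $i$-th generator the handle of $\mathbf{T}_1$ carries the matrix $\overset{P_i}{X}(i)$ (where $P_i$ is the tensor product of the modules colouring the strands of $\mathbf{T}_1$ running through that handle), and that of $\mathbf{T}_2$ carries $\overset{Q_i}{X}(i)$; write $T_1,T_2$ for the tangle parts and $V_1,\dots,V_k$, $W_1,\dots,W_l$ for the boundary modules. By Definition \ref{stackProduct} the class of $\mathbf{T}_1\ast\mathbf{T}_2$ can be realized by placing the diagram of $\mathbf{T}_1$ "inside/below" the one of $\mathbf{T}_2$; since $\mathrm{hol}$ is isotopy-invariant I may choose the $\mathfrak{o}$-ordered standard form of $\mathbf{T}_1\ast\mathbf{T}_2$ to be the one in which, over the $i$-th generator, there is a single handle carrying $\overset{P_i\otimes Q_i}{X}(i)$ (the $\mathbf{T}_1$-strands occupying the first tensor slot, in the sense of the lower handle of \eqref{dessinRelationFusion}), the tangle part is $T_1$ placed below $T_2$ together with the crossings needed to route the $\mathbf{T}_2$-strands past the $\mathbf{T}_1$-strands on the way from the handles to the boundary, and — because heights along $\partial\Sigma_{g,n}^{\mathrm{o},\bullet}$ increase in the direction of $\mathfrak{o}$ and $\mathbf{T}_1$ sits at lower heights — the boundary points of $\mathbf{T}_1$ precede those of $\mathbf{T}_2$. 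Thus $\mathrm{hol}(\mathbf{T}_1\ast\mathbf{T}_2)$ lands in $\mathcal{L}_{g,n}(H)\otimes V_1^{a_1}\otimes\cdots\otimes V_k^{a_k}\otimes W_1^{c_1}\otimes\cdots\otimes W_l^{c_l}$, matching the target of $\odot$.

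Next I would evaluate this diagram. Applying the fusion relation \eqref{dessinRelationFusion} to each handle replaces $\overset{P_i\otimes Q_i}{X}(i)$ by the nested pair consisting of a $P_i$-handle and a $Q_i$-handle separated by the braidings $\overset{P_iQ_i}{R'}{}^{\pm1}$ coming from \eqref{relationFusion}. These braidings are precisely the ones that $F_{\mathrm{RT}}$ assigns to the routing crossings introduced in the standard form of $\mathbf{T}_1\ast\mathbf{T}_2$, so using naturality \eqref{naturalite} to slide the $\mathbf{T}_2$-handles and $\mathbf{T}_2$-strands freely past the cups, caps and crossings belonging to $\mathbf{T}_1$, and isotopy invariance of $F_{\mathrm{RT}}$, they cancel and the whole diagram separates into the side-by-side disjoint union of the holonomy diagram of $\mathbf{T}_1$ and that of $\mathbf{T}_2$. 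By Definition \ref{defEvaluation} and the description of the gluing of handle diagrams in \eqref{tensorProd}, the evaluation of such a side-by-side union multiplies the $\mathcal{L}_{g,n}(H)$-components — with $\mathbf{T}_1$'s handle coefficients appearing to the left, consistent with $\mathbf{T}_1$ being the lower tangle — and tensors the module components in the corresponding order; this is exactly $\mathrm{hol}(\mathbf{T}_1)\odot\mathrm{hol}(\mathbf{T}_2)$, which proves the theorem.

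The main obstacle is the bookkeeping in the middle step: one must verify that the braidings produced by \emph{iterated} fusion — when a component of $\mathbf{T}_1$ or $\mathbf{T}_2$ winds several times around one generator, or winds around several different generators — are matched exactly by the crossings needed to push $\mathbf{T}_2$ off $\mathbf{T}_1$, and that the reorderings of handle-factors attached to different generators that this entails are compatible with the exchange relations \eqref{echangeL10} and \eqref{echangeLgn}. I would organize this by induction on the number of handle-crossings of $\mathbf{T}_2$ with $\mathbf{T}_1$: the base case, one $\mathbf{T}_1$-strand and one $\mathbf{T}_2$-strand each running once around a single handle, is literally \eqref{dessinRelationFusion}, and in the inductive step one peels off one outermost $\mathbf{T}_2$-strand at a time, using \eqref{naturalite} to slide it past the rest of the diagram. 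Alternatively, one may argue globally: after fusion, both $\mathrm{hol}(\mathbf{T}_1\ast\mathbf{T}_2)$ and $\mathrm{hol}(\mathbf{T}_1)\odot\mathrm{hol}(\mathbf{T}_2)$ are evaluations of $H$-coloured diagrams built from the same handles, and these diagrams are related by an ambient isotopy of $\Sigma_{g,n}^{\mathrm{o},\bullet}\times[0,1]$ fixing the handles, so isotopy invariance of $F_{\mathrm{RT}}$ gives the equality directly.
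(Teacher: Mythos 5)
Your main argument is essentially the paper's proof: present $\mathbf{T}_1\ast\mathbf{T}_2$ with a single handle $\overset{P_i\otimes Q_i}{X}(i)$ over each generator, split it with the fusion relation \eqref{dessinRelationFusion}, absorb the resulting braidings into the routing crossings by naturality and isotopy, and reorder the handles attached to distinct generators with the exchange relations \eqref{dessinEchangeL10}, \eqref{dessinEchangeLgn} (the paper carries this out in Figure \ref{preuveWilson} for $g=1$, $n=0$). Only your closing ``global'' alternative should be dropped: the final reordering of handle-factors sitting on different generators is precisely the exchange relation, an algebraic defining relation of $\mathcal{L}_{g,n}(H)$ rather than a consequence of isotopy invariance of $F_{\mathrm{RT}}$, so it cannot be obtained from an ambient isotopy fixing the handles.
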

\begin{proof}
See the appendix at the end of the paper (page \pageref{appendixPreuveStack}).
\end{proof}

\indent The holonomy behaves well under the action of the mapping class group. As said in \S \ref{sectionDefLgnH}, we will prove this for $n=0$. Note that $\mathrm{MCG}(\Sigma_{g,0}^{\mathrm{o}})$ acts on $\Sigma_{g,0}^{\mathrm{o},\bullet} \times [0,1]$ by $f(x,t) = (f(x),t)$ and recall the lift $\widetilde{f}$ of a mapping class $f$ defined in \eqref{defLiftMappingClass}.
\begin{theorem}\label{MCGcommuteW}
Let $f \in \mathrm{MCG}(\Sigma_{g,0}^{\mathrm{o}})$ and let $\mathbf{T} \in \mathscr{T}(\Sigma_{g,0}^{\mathrm{o},\bullet})$, then
\[ \mathrm{hol}\bigl( f(\mathbf{T}) \bigr) = \widetilde{f} \otimes \mathrm{id}_{V^{a_1}_1 \otimes \ldots \otimes V^{a_k}_k}\bigl( \mathrm{hol}(\mathbf{T}) \bigr) \]
where the $V_i^{a_i}$ are the colors of the boundary strands of $\mathbf{T}$ (see Definition \ref{defHol}).
\end{theorem}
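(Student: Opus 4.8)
The plan is to compare the two sides by presenting $\mathbf{T}$ in the standard form \eqref{formeStandard} and tracking how the diffeomorphism $f$ moves its pieces; no reduction to generators of $\mathrm{MCG}(\Sigma_{g,0}^{\mathrm{o}})$ will be needed. First I would use that $f$ fixes $\partial\Sigma_{g,0}^{\mathrm{o}}$ pointwise: by the collar argument one may choose a representative of the isotopy class of $f$ that is the identity on a collar of the boundary. After an isotopy of $\mathbf{T}$ I can then assume that the ``box'' of \eqref{formeStandard} (containing the ribbon tangle $T$ and carrying the removed point $\bullet$) lies in that collar times $[0,1]$. With this normalisation $f(\mathbf{T})$ has literally the same box-part $T$, while outside the box each bunch of parallel strands of $\mathbf{T}$, being a framed parallel push-off (a cable) of one of the canonical based loops $b_i$ or $a_i$, is carried by $f$ to a cable of the loop $f(b_i)$ or $f(a_i)$.

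\textbf{Computing $\mathrm{hol}(f(\mathbf{T}))$.} Next I would evaluate $\mathrm{hol}(f(\mathbf{T}))$ directly from Definition \ref{defHol}: writing each loop $f(b_i)$, $f(a_i)$ as a word in the generators of $\pi_1(\Sigma_{g,0}^{\mathrm{o}})$ and threading the corresponding cable accordingly, the recipe produces, at the $i$-th $B$-handle, the matrix $\overset{I_1\otimes\cdots\otimes I_p}{\widetilde{f(b_i)}}$ and at the $i$-th $A$-handle the matrix $\overset{J_1\otimes\cdots\otimes J_q}{\widetilde{f(a_i)}}$ (the $I_\alpha$, resp. $J_\beta$, being the colours of that cable read from left to right), while the ribbon-graph part $T$ is untouched. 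Here I use the definition of the lift $\overset{I}{\widetilde{x}}$ of a based loop $x$ (a word in the $\overset{I}{B}(j),\overset{I}{A}(j)$ times $\overset{I}{v}{}^{N(x)}$), together with the fact that the holonomy of a $p$-cable of $x$ coloured $I_1,\dots,I_p$ equals $\overset{I_1\otimes\cdots\otimes I_p}{\widetilde x}$ — which follows from the fusion relation \eqref{dessinRelationFusion} applied to each $\overset{I}{B}(j),\overset{I}{A}(j)$ block and to the twist, and from naturality \eqref{naturalite} (exactly as in the discussion following Definition \ref{defHol}, which also shows $\mathrm{hol}$ is independent of the chosen word and of how crossings are dragged along handles). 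Consequently $\mathrm{hol}(f(\mathbf{T}))$ is the $\widetilde{F}_{\mathrm{RT}}$-evaluation of the diagram defining $\mathrm{hol}(\mathbf{T})$ in which every handle-matrix $\overset{J}{B}(i)$ is replaced by $\overset{J}{\widetilde{f(b_i)}}$ and every $\overset{J}{A}(i)$ by $\overset{J}{\widetilde{f(a_i)}}$.

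\textbf{Identifying with $(\widetilde f\otimes\mathrm{id})(\mathrm{hol}(\mathbf T))$.} It remains to recognise this as $(\widetilde f\otimes\mathrm{id})(\mathrm{hol}(\mathbf{T}))$. By \eqref{defLiftMappingClass} the lift $\widetilde f$ is an algebra automorphism of $\mathcal{L}_{g,0}(H)$ with $\overset{I}{B}(i)\mapsto\overset{I}{\widetilde{f(b_i)}}$ and $\overset{I}{A}(i)\mapsto\overset{I}{\widetilde{f(a_i)}}$ for every module $I$; applying $\widetilde f$ to the fusion relation \eqref{relationFusion}, whose $R$-matrix factors lie in the $\mathrm{End}_{\mathbb{C}}$-legs untouched by $\widetilde f$, and using \eqref{pasSurprenant}, one gets the same substitution rule for tensor-product colours, $\widetilde f(\overset{I\otimes J}{B}(i))=\overset{I\otimes J}{\widetilde{f(b_i)}}$ and likewise for $A$. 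Since the evaluation $\widetilde F_{\mathrm{RT}}$ consists of applying $\mathrm{id}_{\mathcal{L}_{g,0}(H)}\otimes F_{\mathrm{RT}}(G)$ to the tensors carried by the handles (Definition \ref{defEvaluation}), the operator $\widetilde f\otimes\mathrm{id}$ commutes with it; hence $(\widetilde f\otimes\mathrm{id})(\mathrm{hol}(\mathbf{T}))$ is exactly the $\widetilde F_{\mathrm{RT}}$-evaluation obtained in the previous paragraph, which proves the claim. (Alternatively one could first reduce to a generating set of Dehn twists using $\widetilde{f\circ g}=\widetilde f\circ\widetilde g$ and $(f\circ g)(\mathbf T)=f(g(\mathbf T))$, but the argument above does not require this.)

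\textbf{Main obstacle.} The delicate point is the geometric normalisation together with the cabling/framing bookkeeping of the middle step: one must make precise that, after isotoping $\mathbf{T}$ to standard form and $f$ to the identity near the box, $f$ genuinely carries each cable of $b_i$ (resp. $a_i$) to a cable of $f(b_i)$ (resp. $f(a_i)$) \emph{with the correct framing}, so that the integer $N$ entering the lift $\widetilde{\,\cdot\,}$ is respected. Once this is set up, isotopy invariance of $F_{\mathrm{RT}}$ and naturality \eqref{naturalite} make the rest routine, but this is where the care is required.
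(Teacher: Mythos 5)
Your geometric normalisation (pushing the box of \eqref{formeStandard} into a boundary collar on which a representative of $f$ is the identity, so that $f(\mathbf{T})$ differs from $\mathbf{T}$ only by replacing each cable of $b_i$, $a_i$ by a cable of $f(b_i)$, $f(a_i)$) is sound, and your final step (that $\widetilde f\otimes\mathrm{id}$ commutes with the evaluation $\widetilde F_{\mathrm{RT}}$, so everything reduces to a substitution rule on the handle matrices) is fine. The genuine gap is the middle step: you assert that the holonomy recipe applied to the cable of $f(b_i)$ produces exactly $\overset{I_1\otimes\cdots\otimes I_p}{\widetilde{f(b_i)}}$, and this is not a routine consequence of the fusion relation and naturality. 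The curves $f(b_i)$, $f(a_j)$ are in general long words in the generators which intersect one another and themselves; their cables thread through many handles and cross each other, and collecting the resulting matrices into a single product sitting where the original $i$-th handle was requires the exchange relations \eqref{dessinEchangeL10}, \eqref{dessinEchangeLgn} and the reflection equation \eqref{equationReflexion}, together with a bookkeeping of twists that must reproduce the factor $\overset{I}{v}{}^{N(f(b_i))}$ in the definition of the lift --- and $N(\cdot)$ is not even defined in this paper. Worse, the statement you invoke is essentially Proposition \ref{holonomySimpleLoop} for the loops $f(b_i)$, $f(a_i)$, and the paper's proof of that proposition is itself \emph{deduced from} Theorem \ref{MCGcommuteW} via \eqref{coroHolLinksMCG}: it checks the formula by hand only for the short list of loops \eqref{canonicalSimpleLoops} and then propagates it by MCG-equivariance. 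As written, your argument is therefore circular, or at least rests on an unproved generalisation of that proposition; flagging this as the ``main obstacle'' does not close it.

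The paper avoids exactly this difficulty by taking the route you set aside as optional: reduce to the Humphries generators using $\widetilde{f\circ g}=\widetilde f\circ\widetilde g$, isotope the twisting curve $\gamma$ and $\mathbf{T}$ into a neighbourhood of the handles so that $\tau_\gamma(\mathbf{T})$ is an explicit local modification of the diagram, and verify the identity by a finite diagrammatic computation (Figure \ref{twistAetMCG}) using the fusion relation and the inverse of the reflection equation --- for a single Dehn twist the lift of the image loop can be computed explicitly, so no general knowledge of $N$ is needed. To repair your proof you would either have to supply an independent proof of the cable version of Proposition \ref{holonomySimpleLoop} for arbitrary simple loops (which requires the definition of $N$ and a substantial diagrammatic argument), or fall back on the reduction to generators.
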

\begin{proof}
The proof is purely diagrammatic. We can assume that $f$ is one of the Humphries generators $\tau_{\gamma}$ \cite[\S 4.4.3]{FM} where $\gamma$ is one of the simple closed curves depicted in \cite[Fig. 4.5]{FM}. Let $U \subset \Sigma_{1,0}^{\mathrm{o},\bullet}$ be a connected subset containing all the handles and a very small part of the bottom of the rectangle (see Figure \ref{surfaceGN}). By isotopy, we can assume that $\gamma \subset U$.  Also by isotopy we can assume that $\mathbf{T} \cap (U \times [0,1])$ contains only bunches of parallel strands and then by another isotopy that $\mathbf{T} \cap (U \times [0,1]) \subset U \times \{0\}$. Then $\tau_{\gamma}(\mathbf{T})$ is obtained by computing $\tau_{\gamma}\bigl( \mathbf{T} \cap (U \times \{0\} \bigr)$ in the usual way in the surface $\Sigma_{g,0}^{\mathrm{o},\bullet} \times \{0\}$. Moreover, as in the proof of Theorem \ref{wilsonStack}, we can assume up to introducing coupons that each handle contains only one positively oriented strand (as in \eqref{anse}). Now, if $f = \tau_{a_1}$, which is one of the Humphries generators ($a_1$ is $m_1$ in the notations of \cite[Fig. 4.5]{FM}), we can restrict to $\Sigma_{1,0}^{\mathrm{o},\bullet}$ (since $a_1$ is contained in this subsurface) and then we have the graphical computation in $\mathcal{L}_{1,0}(H)$ represented in Figure \ref{twistAetMCG}. We used the fusion relation, the inverse of the reflection equation \eqref{equationReflexion} and the definition of the lift $\widetilde{\tau}_a = \widetilde{\tau}_{a_1}$ (see section \ref{sectionDefLgnH}):
\[ \widetilde{\tau}_a\bigl( \overset{I}{A} \bigr) = \overset{I}{A}, \:\:\:\: \widetilde{\tau}_a\bigl( \overset{I}{B} \bigr) = \overset{I}{v}{^{-1}} \overset{I}{B} \overset{I}{A}. \]
The equalities for the others Humphries generators are shown similarly, although the diagrams are more cumbersome.
\begin{figure}[p]
\centering
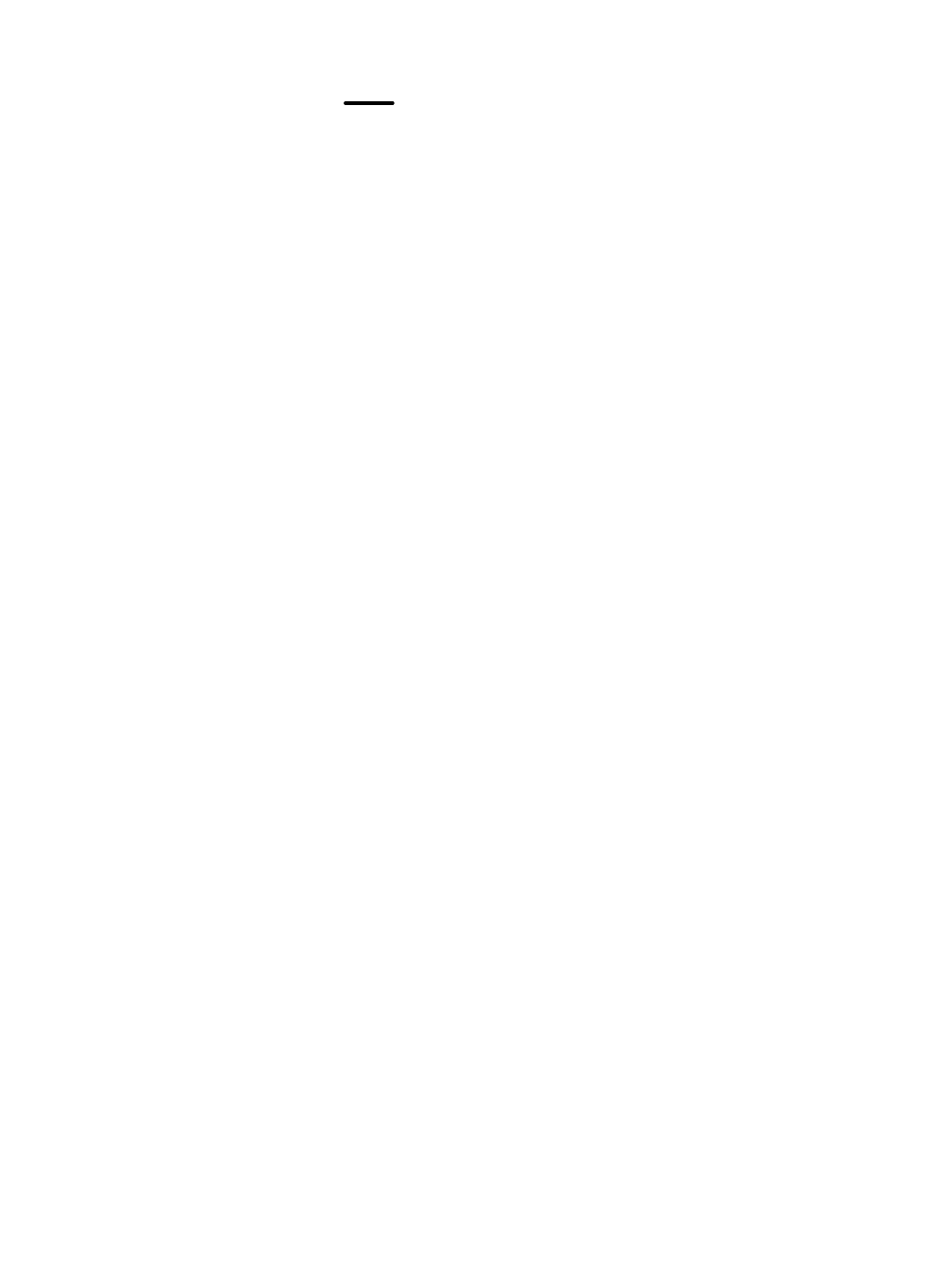
\caption{Proof of the equality $\mathrm{hol} \circ \tau_a(\mathbf{T}) = (\widetilde{\tau_a} \otimes \mathrm{id}) \circ \mathrm{hol}(\mathbf{T})$.}
\label{twistAetMCG}
\end{figure}
\end{proof}

\indent For further use let us describe the behaviour of $\mathrm{hol}$ when we change the orientation of a strand. Consider (not colored) $\partial \mathbf{\Sigma}$-tangles $\mathbf{T}^+$, $\mathbf{T}^-$ with one connected component, which are equal as unoriented tangles and which are oriented as follows :
\begin{center}
\begingroup%
  \makeatletter%
  \providecommand\color[2][]{%
    \errmessage{(Inkscape) Color is used for the text in Inkscape, but the package 'color.sty' is not loaded}%
    \renewcommand\color[2][]{}%
  }%
  \providecommand\transparent[1]{%
    \errmessage{(Inkscape) Transparency is used (non-zero) for the text in Inkscape, but the package 'transparent.sty' is not loaded}%
    \renewcommand\transparent[1]{}%
  }%
  \providecommand\rotatebox[2]{#2}%
  \newcommand*\fsize{\dimexpr\f@size pt\relax}%
  \newcommand*\lineheight[1]{\fontsize{\fsize}{#1\fsize}\selectfont}%
  \ifx\svgwidth\undefined%
    \setlength{\unitlength}{282.82892561bp}%
    \ifx\svgscale\undefined%
      \relax%
    \else%
      \setlength{\unitlength}{\unitlength * \real{\svgscale}}%
    \fi%
  \else%
    \setlength{\unitlength}{\svgwidth}%
  \fi%
  \global\let\svgwidth\undefined%
  \global\let\svgscale\undefined%
  \makeatother%
  \begin{picture}(1,0.12312914)%
    \lineheight{1}%
    \setlength\tabcolsep{0pt}%
    \put(0,0){\includegraphics[width=\unitlength,page=1]{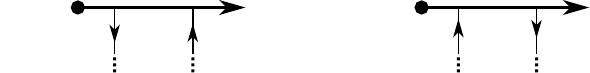}}%
    \put(-0.00143905,0.05351815){\color[rgb]{0,0,0}\makebox(0,0)[lt]{\lineheight{1.25}\smash{\begin{tabular}[t]{l}$\mathbf{T}^+ = $\end{tabular}}}}%
    \put(0.58824875,0.05351468){\color[rgb]{0,0,0}\makebox(0,0)[lt]{\lineheight{1.25}\smash{\begin{tabular}[t]{l}$\mathbf{T}^- = $\end{tabular}}}}%
  \end{picture}%
\endgroup%

\end{center}
For $I$ a finite dimensional $H$-module, we denote by $\mathrm{hol}^I(\mathbf{T}^{\pm})$ the value of $\mathrm{hol}$ on the $\partial \mathbf{\Sigma}$-tangle $\mathbf{T}^{\pm}$ colored by $I$. Then thanks to \eqref{sensOppose}, \eqref{sensOpposeBis}, \eqref{naturalite}, it is easy to see that
\begin{equation}\label{changeOrientationTangle}
\begin{split}
&\mathrm{hol}^I(\mathbf{T}^-) = (\mathrm{id}_{I^*} \otimes e_I)\bigl( \mathrm{hol}^{I^*}(\mathbf{T}^+) \bigr)\\
&\mathrm{hol}^I(\mathbf{T}^+) = (e_I \otimes \mathrm{id}_{I^*})\bigl( \mathrm{hol}^{I^*}(\mathbf{T}^-) \bigr).
\end{split}
\end{equation}
This is obviously generalized to arbitrary $\partial \mathbf{\Sigma}$-tangles.

\subsection{Holonomy of a link}
\indent We will now specialize Definition \ref{defHol} to the case of $H$-colored framed links in $\Sigma_{g,n}^{\mathrm{o}} \times [0,1]$ with basepoints, so that it will become more clear why we call this operation the ``holonomy''.

\begin{definition}
A based $H$-colored link $L \subset \boldsymbol{\Sigma} = \Sigma_{g,n}^{\mathrm{o},\bullet} \times [0,1]$ is an oriented and framed embedding of circles such that:
\begin{itemize}
\item for each connected component $L_i$ of $L$, we have $L_i \cap \partial\boldsymbol{\Sigma} = \{p_i\}$; $p_i$ is called a basepoint
\item the basepoints have distinct heights
\item each connected component of $L$ is colored (\textit{i.e.} labelled) by a finite dimensional $H$-module
\end{itemize}
\end{definition}
An isotopy of based $H$-colored links  is an isotopy compatible with the defining properties and preserving globally the boundary $\partial \boldsymbol{\Sigma}$ (so that basepoints can be moved along the boundary but cannot be suppressed). It is clear that using isotopy any based $H$-colored  link can be presented as follows, where the basepoints $p_1, \ldots, p_k$ have strictly increasing heights:
\begin{equation}\label{basedLink}
\begingroup%
  \makeatletter%
  \providecommand\color[2][]{%
    \errmessage{(Inkscape) Color is used for the text in Inkscape, but the package 'color.sty' is not loaded}%
    \renewcommand\color[2][]{}%
  }%
  \providecommand\transparent[1]{%
    \errmessage{(Inkscape) Transparency is used (non-zero) for the text in Inkscape, but the package 'transparent.sty' is not loaded}%
    \renewcommand\transparent[1]{}%
  }%
  \providecommand\rotatebox[2]{#2}%
  \newcommand*\fsize{\dimexpr\f@size pt\relax}%
  \newcommand*\lineheight[1]{\fontsize{\fsize}{#1\fsize}\selectfont}%
  \ifx\svgwidth\undefined%
    \setlength{\unitlength}{397.92420221bp}%
    \ifx\svgscale\undefined%
      \relax%
    \else%
      \setlength{\unitlength}{\unitlength * \real{\svgscale}}%
    \fi%
  \else%
    \setlength{\unitlength}{\svgwidth}%
  \fi%
  \global\let\svgwidth\undefined%
  \global\let\svgscale\undefined%
  \makeatother%
  \begin{picture}(1,0.31296841)%
    \lineheight{1}%
    \setlength\tabcolsep{0pt}%
    \put(0,0){\includegraphics[width=\unitlength,page=1]{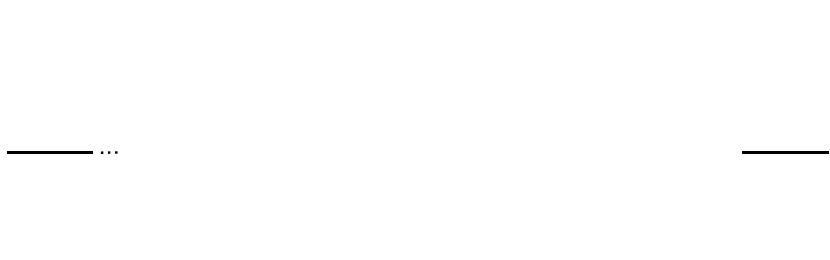}}%
    \put(0.4763027,0.2010027){\color[rgb]{0,0,0}\makebox(0,0)[lt]{\lineheight{1.25}\smash{\begin{tabular}[t]{l}$T$\end{tabular}}}}%
    \put(0,0){\includegraphics[width=\unitlength,page=2]{basedLink.pdf}}%
    \put(0.68422306,0.24739335){\color[rgb]{0,0,0}\makebox(0,0)[lt]{\lineheight{1.25}\smash{\begin{tabular}[t]{l}$V_k$\end{tabular}}}}%
    \put(0.16893149,0.24771535){\color[rgb]{0,0,0}\makebox(0,0)[lt]{\lineheight{1.25}\smash{\begin{tabular}[t]{l}$V_1$\end{tabular}}}}%
    \put(0,0){\includegraphics[width=\unitlength,page=3]{basedLink.pdf}}%
    \put(0.25747753,0.30072637){\color[rgb]{0,0,0}\makebox(0,0)[lt]{\lineheight{1.25}\smash{\begin{tabular}[t]{l}$p_1$\end{tabular}}}}%
    \put(0.77610542,0.30093457){\color[rgb]{0,0,0}\makebox(0,0)[lt]{\lineheight{1.25}\smash{\begin{tabular}[t]{l}$p_k$\end{tabular}}}}%
    \put(0,0){\includegraphics[width=\unitlength,page=4]{basedLink.pdf}}%
    \put(0.60767437,0.15144219){\color[rgb]{0,0,0}\makebox(0,0)[lt]{\lineheight{1.25}\smash{\begin{tabular}[t]{l}$K_1$\end{tabular}}}}%
    \put(0.68890149,0.15212878){\color[rgb]{0,0,0}\makebox(0,0)[lt]{\lineheight{1.25}\smash{\begin{tabular}[t]{l}$K_r$\end{tabular}}}}%
    \put(0,0){\includegraphics[width=\unitlength,page=5]{basedLink.pdf}}%
    \put(0.15559223,0.15235157){\color[rgb]{0,0,0}\makebox(0,0)[lt]{\lineheight{1.25}\smash{\begin{tabular}[t]{l}$I_1$\end{tabular}}}}%
    \put(0.22418458,0.15188034){\color[rgb]{0,0,0}\makebox(0,0)[lt]{\lineheight{1.25}\smash{\begin{tabular}[t]{l}$I_l$\end{tabular}}}}%
    \put(0,0){\includegraphics[width=\unitlength,page=6]{basedLink.pdf}}%
    \put(0.27701819,0.15279858){\color[rgb]{0,0,0}\makebox(0,0)[lt]{\lineheight{1.25}\smash{\begin{tabular}[t]{l}$J_1$\end{tabular}}}}%
    \put(0.35020648,0.1519044){\color[rgb]{0,0,0}\makebox(0,0)[lt]{\lineheight{1.25}\smash{\begin{tabular}[t]{l}$J_m$\end{tabular}}}}%
    \put(0,0){\includegraphics[width=\unitlength,page=7]{basedLink.pdf}}%
  \end{picture}%
\endgroup%

\end{equation}
We will always use this presentation in the sequel.

\smallskip

\indent To a based $H$-colored link $L$ we associate a $H$-colored $\partial\boldsymbol{\Sigma}$-tangle $\mathbf{T}(L)$ as follows. First using isotopy, represent $L$ as in \eqref{basedLink}. Then at each basepoint $p_i$ apply one of the following substitutions, depending on the orientation of the connected component $L_i$ attached to that basepoint:
\begin{equation}\label{openLoop}
\begingroup%
  \makeatletter%
  \providecommand\color[2][]{%
    \errmessage{(Inkscape) Color is used for the text in Inkscape, but the package 'color.sty' is not loaded}%
    \renewcommand\color[2][]{}%
  }%
  \providecommand\transparent[1]{%
    \errmessage{(Inkscape) Transparency is used (non-zero) for the text in Inkscape, but the package 'transparent.sty' is not loaded}%
    \renewcommand\transparent[1]{}%
  }%
  \providecommand\rotatebox[2]{#2}%
  \newcommand*\fsize{\dimexpr\f@size pt\relax}%
  \newcommand*\lineheight[1]{\fontsize{\fsize}{#1\fsize}\selectfont}%
  \ifx\svgwidth\undefined%
    \setlength{\unitlength}{421.49997071bp}%
    \ifx\svgscale\undefined%
      \relax%
    \else%
      \setlength{\unitlength}{\unitlength * \real{\svgscale}}%
    \fi%
  \else%
    \setlength{\unitlength}{\svgwidth}%
  \fi%
  \global\let\svgwidth\undefined%
  \global\let\svgscale\undefined%
  \makeatother%
  \begin{picture}(1,0.12457394)%
    \lineheight{1}%
    \setlength\tabcolsep{0pt}%
    \put(0,0){\includegraphics[width=\unitlength,page=1]{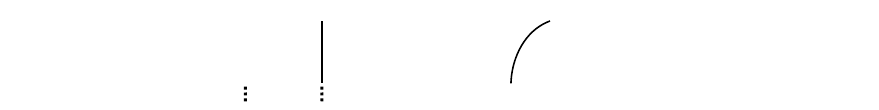}}%
    \put(0.80149571,0.01817934){\color[rgb]{0,0,0}\makebox(0,0)[lt]{\lineheight{1.25}\smash{\begin{tabular}[t]{l}$V_i$\end{tabular}}}}%
    \put(0,0){\includegraphics[width=\unitlength,page=2]{openLoop.pdf}}%
    \put(0.55344323,0.02554131){\color[rgb]{0,0,0}\makebox(0,0)[lt]{\lineheight{1.25}\smash{\begin{tabular}[t]{l}$V_i$\end{tabular}}}}%
    \put(0,0){\includegraphics[width=\unitlength,page=3]{openLoop.pdf}}%
    \put(0.00758293,0.02746101){\color[rgb]{0,0,0}\makebox(0,0)[lt]{\lineheight{1.25}\smash{\begin{tabular}[t]{l}$V_i$\end{tabular}}}}%
    \put(0,0){\includegraphics[width=\unitlength,page=4]{openLoop.pdf}}%
    \put(0.24957584,0.02691851){\color[rgb]{0,0,0}\makebox(0,0)[lt]{\lineheight{1.25}\smash{\begin{tabular}[t]{l}$V_i$\end{tabular}}}}%
    \put(0,0){\includegraphics[width=\unitlength,page=5]{openLoop.pdf}}%
    \put(0.19735778,0.04895647){\color[rgb]{0,0,0}\makebox(0,0)[lt]{\lineheight{1.25}\smash{\begin{tabular}[t]{l}$\mapsto$\end{tabular}}}}%
    \put(0.74735825,0.05829805){\color[rgb]{0,0,0}\makebox(0,0)[lt]{\lineheight{1.25}\smash{\begin{tabular}[t]{l}$\mapsto$\end{tabular}}}}%
    \put(0.09383504,0.11228939){\color[rgb]{0,0,0}\makebox(0,0)[lt]{\lineheight{1.25}\smash{\begin{tabular}[t]{l}$p_i$\end{tabular}}}}%
    \put(0.27292033,0.11275842){\color[rgb]{0,0,0}\makebox(0,0)[lt]{\lineheight{1.25}\smash{\begin{tabular}[t]{l}$p_i^-$\end{tabular}}}}%
    \put(0.36134569,0.11266355){\color[rgb]{0,0,0}\makebox(0,0)[lt]{\lineheight{1.25}\smash{\begin{tabular}[t]{l}$p_i^+$\end{tabular}}}}%
    \put(0,0){\includegraphics[width=\unitlength,page=6]{openLoop.pdf}}%
    \put(0.82201649,0.11271645){\color[rgb]{0,0,0}\makebox(0,0)[lt]{\lineheight{1.25}\smash{\begin{tabular}[t]{l}$p_i^-$\end{tabular}}}}%
    \put(0.92065016,0.11226709){\color[rgb]{0,0,0}\makebox(0,0)[lt]{\lineheight{1.25}\smash{\begin{tabular}[t]{l}$p_i^+$\end{tabular}}}}%
    \put(0,0){\includegraphics[width=\unitlength,page=7]{openLoop.pdf}}%
    \put(0.63831903,0.1122894){\color[rgb]{0,0,0}\makebox(0,0)[lt]{\lineheight{1.25}\smash{\begin{tabular}[t]{l}$p_i$\end{tabular}}}}%
    \put(0,0){\includegraphics[width=\unitlength,page=8]{openLoop.pdf}}%
  \end{picture}%
\endgroup%

\end{equation}
where the resulting boundary points are arranged so that $p_1^-, p_1^+, \ldots, p_k^-, p_k^+$ (in this order) have strictly increasing heights (to demistify the reason of the right hand-side assignment, look at \eqref{dessinMatriceInverse} as well as at the proof of Proposition \ref{holonomySimpleLoop} and at Remark \ref{remarqueAvecCaracteres} below). Using this transformation, we define the holonomy of a $H$-colored based link as
\[ \mathrm{hol}(L) = \mathrm{hol}\bigl( \mathbf{T}(L) \bigr). \]
By definition, we have 
\[
\mathrm{hol}(L) \in \mathcal{L}_{g,n}(H) \otimes V_1 \otimes V_1^* \otimes \ldots \otimes V_k \otimes V_k^* = \mathcal{L}_{g,n}(H) \otimes \mathrm{End}_{\mathbb{C}}(V_1) \otimes \ldots \otimes \mathrm{End}_{\mathbb{C}}(V_k). \]
Hence $\mathrm{hol}(L)$ can be written as $x \otimes N_1 \otimes \ldots \otimes N_k$ ($N_i \in \mathrm{End}_{\mathbb{C}}(V_i)$), with implicit summation to simplify notations.

\begin{proposition}\label{actionSurHol}
The right action \eqref{actionH} of $h \in H$ on the coefficients of the tensor $\mathrm{hol}(L)$ is given by
\[ \mathrm{hol}(L) \cdot h = x \otimes \overset{V_1}{h^{(1)}} N_{1} \overset{V_1}{S(h^{(2)})} \otimes \ldots \otimes \overset{V_k}{h^{(2k-1)}} N_{k} \overset{V_k}{S(h^{(2k)})} \]
where $\Delta^{(2k-1)}(h) = h^{(1)} \otimes \ldots \otimes h^{(2k)}$ is the coproduct iterated $2k-1$ times.
\end{proposition}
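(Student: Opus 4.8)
The plan is to push the action through the diagrammatic definition of $\mathrm{hol}$. The action \eqref{actionH} (equivalently the coaction \eqref{coactionH}) touches only the $\mathcal{L}_{g,n}(H)$-coefficients, and on a generator matrix it is $\overset{W}{X}(i)\cdot h=\overset{W}{h'}\,\overset{W}{X}(i)\,\overset{W}{S(h'')}$; since $\Omega$ is an algebra morphism (so that $\Omega(\overset{W}{X}\,\overset{W}{Y})=\overset{W}{T}\,\overset{W}{X}\,\overset{W}{Y}\,S(\overset{W}{T})$, using $S(\overset{W}{T})\overset{W}{T}=\varepsilon\,\mathbb{I}_W$), the same identity holds for any matrix of the form \eqref{matriceGenerale}. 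Diagrammatically, as in \eqref{trucEvidentdAlgebreLineaire}, this says that acting by $h$ on the coefficients of a handle amounts to letting $h$ act on the $W\otimes W^*$-legs of that handle, i.e.\ inserting an $\overset{W}{h'}$-coupon on the strand leaving the handle and an $S(h'')$-coupon on its return leg, with $\Delta(h)$ split between the two. The statement to prove is then the quantum analogue of the classical fact that a gauge transformation at the basepoint conjugates the holonomy of each based loop.

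\textbf{Reduction to a coupon computation.} First I would present $\mathbf{T}(L)$ in the standard form \eqref{basedLink}--\eqref{openLoop}: the $p=2g+n$ handles at the bottom, which by the fusion relation \eqref{dessinRelationFusion} we may take to carry single (fused) generator matrices $\overset{W_\mu}{X_\mu}(i_\mu)$ on single strands, topped by a compatible ribbon graph $G$ whose $2k$ outputs are the two ends $p_i^-,p_i^+$ of the opened component $L_i$. By Definition \ref{defHol}, $\mathrm{hol}(L)=(\mathrm{id}_{\mathcal{L}_{g,n}(H)}\otimes F_{\mathrm{RT}}(G))\big(\overset{W_1}{X_1}(i_1)\odot\cdots\odot\overset{W_p}{X_p}(i_p)\big)$, and since $(\mathrm{id}\otimes F_{\mathrm{RT}}(G))$ is $\mathcal{L}_{g,n}(H)$-linear, applying $\cdot\,h$ and using the module-algebra axiom together with the previous step, $\mathrm{hol}(L)\cdot h$ becomes the evaluation of the \emph{same} diagram with, around each handle $\mu$, an $\overset{W_\mu}{(h^{[\mu]})'}$-coupon on its outgoing strand and an $S((h^{[\mu]})'')$-coupon on its return leg, where $\Delta^{(2p-1)}(h)=(h^{[1]})'\otimes(h^{[1]})''\otimes\cdots\otimes(h^{[p]})'\otimes(h^{[p]})''$ is distributed over the $2p$ strand-ends feeding $G$.

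\textbf{Sliding the coupons to the boundary.} The heart of the argument is to slide all $2p$ coupons upward along the strands of $G$. Every elementary morphism occurring in $G$ (braidings $c^{\pm1}$, twists $\theta^{\pm1}$ coming from framings, and the duality morphisms $b_I,d_I,b'_I,d'_I$ of \S\ref{sectionPreliminaries}) is a morphism of $\mathrm{mod}_l(H)$, hence $H$-linear, so by naturality \eqref{naturalite} a coupon passes through it, the coproduct of $h$ redistributing over the strands exactly as dictated by the comultiplicativity of the $H$-action on tensor products (and twists pose no problem since $v$ is central). Each time an internal arc of $G$ joins two handle-ends, the two coupons they carry become consecutive and produce a factor $\overset{\cdot}{S(h^{(a)})}\,\overset{\cdot}{h^{(a+1)}}$, which collapses to $\varepsilon$ by the antipode axiom and coassociativity, shortening the coproduct by one. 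There are $p-k$ internal arcs, so after all cancellations exactly $2k$ coupons survive, with $\Delta^{(2k-1)}(h)=h^{(1)}\otimes\cdots\otimes h^{(2k)}$. Tracking the height ordering of the boundary points forced by \eqref{basedLink}--\eqref{openLoop} (the two ends of $L_i$ occupy positions $2i-1,2i$) and the orientation conventions of \eqref{openLoop}, \eqref{changeOrientationTangle} and \eqref{dessinMatriceInverse}, one sees these are precisely $\overset{V_i}{h^{(2i-1)}}$ on the strand of $p_i^-$ (to the left of $N_i$) and $\overset{V_i}{S(h^{(2i)})}$ on the strand of $p_i^+$ (to the right of $N_i$), which is exactly the right-hand side of the claimed formula. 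As consistency checks: the case of a simple loop (one component, no crossings) is already covered by Proposition \ref{holonomySimpleLoop} plus the first step, and the case of a stacked link $L=L_1\ast\cdots\ast L_k$ follows at once from Theorem \ref{wilsonStack} and the module-algebra axiom (expanding $(x_1\cdots x_k)\cdot h$ and recombining the coproducts), so the genuinely new content is the linked case handled by the sliding argument.

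\textbf{Main obstacle.} I expect the delicate point to be the combinatorics of this sliding step: verifying that after the internal cancellations it is exactly the right $2k$ coupons that remain, attached to the correct boundary strands, in the cyclic order and with the antipode placement matching the convention $\Delta^{(2k-1)}(h)=h^{(1)}\otimes\cdots\otimes h^{(2k)}$ and the $V_1,V_1,V_2,V_2,\dots$ pattern. The cleanest way to make this rigorous is to reformulate the whole computation via the coaction $\Omega$: the claim becomes a matrix identity expressing $\Omega(\mathrm{hol}(L))$ as $\mathrm{hol}(L)$ flanked by the $\mathcal{O}(H)$-valued matrices $\overset{V_i}{T}$ and $S(\overset{V_i}{T})$ inserted at the $2k$ output legs in the prescribed order, which one proves by moving these matrices through $F_{\mathrm{RT}}(G)$ using its $H$-linearity (exactly as the coupons above), and then recovers the stated form by evaluating the $\mathcal{O}(H)$-factor on $h$.
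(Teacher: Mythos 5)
Your proposal is correct and follows essentially the same route as the paper: the paper's proof transfers the action $\cdot\, h$ from the matrix coefficients to the tensor legs via \eqref{trucEvidentdAlgebreLineaire} and then invokes the $H$-linearity of $F_{\mathrm{RT}}(T_{\mathrm{tot}})$ in a single step, which is exactly the ``cleanest way'' reformulation you describe at the end. The coupon-sliding and cancellation bookkeeping you anticipate as the main obstacle is unnecessary --- it is subsumed by the one-line observation that $F_{\mathrm{RT}}$ of the whole tangle is a morphism of $H$-modules, with the iterated coproduct acting diagonally on source and target.
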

\begin{proof}
This is straightforward. For instance, consider the following link $L \subset \Sigma_{1,0}^{\mathrm{o},\bullet}$ with two basepoints:
\begin{center}
\begingroup%
  \makeatletter%
  \providecommand\color[2][]{%
    \errmessage{(Inkscape) Color is used for the text in Inkscape, but the package 'color.sty' is not loaded}%
    \renewcommand\color[2][]{}%
  }%
  \providecommand\transparent[1]{%
    \errmessage{(Inkscape) Transparency is used (non-zero) for the text in Inkscape, but the package 'transparent.sty' is not loaded}%
    \renewcommand\transparent[1]{}%
  }%
  \providecommand\rotatebox[2]{#2}%
  \newcommand*\fsize{\dimexpr\f@size pt\relax}%
  \newcommand*\lineheight[1]{\fontsize{\fsize}{#1\fsize}\selectfont}%
  \ifx\svgwidth\undefined%
    \setlength{\unitlength}{466.24977592bp}%
    \ifx\svgscale\undefined%
      \relax%
    \else%
      \setlength{\unitlength}{\unitlength * \real{\svgscale}}%
    \fi%
  \else%
    \setlength{\unitlength}{\svgwidth}%
  \fi%
  \global\let\svgwidth\undefined%
  \global\let\svgscale\undefined%
  \makeatother%
  \begin{picture}(1,0.24052189)%
    \lineheight{1}%
    \setlength\tabcolsep{0pt}%
    \put(0,0){\includegraphics[width=\unitlength,page=1]{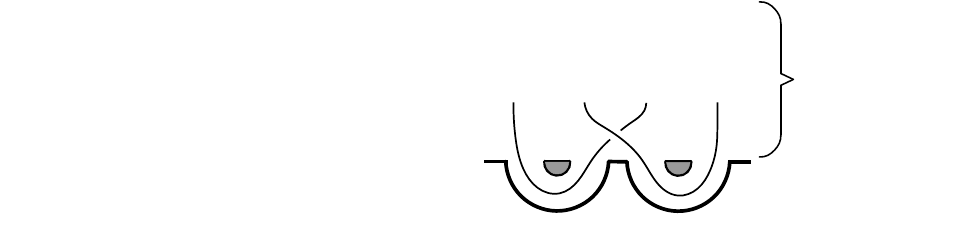}}%
    \put(0.82495609,0.15226195){\color[rgb]{0,0,0}\makebox(0,0)[lt]{\lineheight{1.25}\smash{\begin{tabular}[t]{l}$T_{\mathrm{tot}}$\end{tabular}}}}%
    \put(0.61580084,0.01293417){\color[rgb]{0,0,0}\makebox(0,0)[lt]{\lineheight{1.25}\smash{\begin{tabular}[t]{l}$\overset{I}{B}$\end{tabular}}}}%
    \put(0.7427796,0.0120121){\color[rgb]{0,0,0}\makebox(0,0)[lt]{\lineheight{1.25}\smash{\begin{tabular}[t]{l}$\overset{J}{A}$\end{tabular}}}}%
    \put(0.51105762,0.09987802){\color[rgb]{0,0,0}\makebox(0,0)[lt]{\lineheight{1.25}\smash{\begin{tabular}[t]{l}$I$\end{tabular}}}}%
    \put(0.74217268,0.09982683){\color[rgb]{0,0,0}\makebox(0,0)[lt]{\lineheight{1.25}\smash{\begin{tabular}[t]{l}$J$\end{tabular}}}}%
    \put(0,0){\includegraphics[width=\unitlength,page=2]{tanglePreuveInv.pdf}}%
    \put(0.02683856,0.10424863){\color[rgb]{0,0,0}\makebox(0,0)[lt]{\lineheight{1.25}\smash{\begin{tabular}[t]{l}$I$\end{tabular}}}}%
    \put(0,0){\includegraphics[width=\unitlength,page=3]{tanglePreuveInv.pdf}}%
    \put(0.14511477,0.13755837){\color[rgb]{0,0,0}\makebox(0,0)[lt]{\lineheight{1.25}\smash{\begin{tabular}[t]{l}$T$\end{tabular}}}}%
    \put(0.10193394,0.10450161){\color[rgb]{0,0,0}\makebox(0,0)[lt]{\lineheight{1.25}\smash{\begin{tabular}[t]{l}$J$\end{tabular}}}}%
    \put(0.12831212,0.18100642){\color[rgb]{0,0,0}\makebox(0,0)[lt]{\lineheight{1.25}\smash{\begin{tabular}[t]{l}$V_1$\end{tabular}}}}%
    \put(0,0){\includegraphics[width=\unitlength,page=4]{tanglePreuveInv.pdf}}%
    \put(0.61942446,0.14229389){\color[rgb]{0,0,0}\makebox(0,0)[lt]{\lineheight{1.25}\smash{\begin{tabular}[t]{l}$T$\end{tabular}}}}%
    \put(0,0){\includegraphics[width=\unitlength,page=5]{tanglePreuveInv.pdf}}%
    \put(0.50013147,0.2183839){\color[rgb]{0,0,0}\makebox(0,0)[lt]{\lineheight{1.25}\smash{\begin{tabular}[t]{l}$V_1$\end{tabular}}}}%
    \put(0,0){\includegraphics[width=\unitlength,page=6]{tanglePreuveInv.pdf}}%
    \put(0.3897259,0.12279121){\color[rgb]{0,0,0}\makebox(0,0)[lt]{\lineheight{1.25}\smash{\begin{tabular}[t]{l}$\mathrm{hol}$\end{tabular}}}}%
    \put(0,0){\includegraphics[width=\unitlength,page=7]{tanglePreuveInv.pdf}}%
    \put(0.26504139,0.18100642){\color[rgb]{0,0,0}\makebox(0,0)[lt]{\lineheight{1.25}\smash{\begin{tabular}[t]{l}$V_2$\end{tabular}}}}%
    \put(0,0){\includegraphics[width=\unitlength,page=8]{tanglePreuveInv.pdf}}%
    \put(0.66524204,0.21971784){\color[rgb]{0,0,0}\makebox(0,0)[lt]{\lineheight{1.25}\smash{\begin{tabular}[t]{l}$V_2$\end{tabular}}}}%
  \end{picture}%
\endgroup%

\end{center}
Then using the identification \eqref{decMatrice} together with \eqref{actionH}, \eqref{trucEvidentdAlgebreLineaire} and the fact that $F_{\mathrm{RT}}(T_{\mathrm{tot}}) : I \otimes I^* \otimes J \otimes J^* \to V_1 \otimes V_1^* \otimes V_2 \otimes V_2^*$ is $H$-linear, we obtain
\begin{align*}
\mathrm{hol}(L) \cdot h &= \bigl(\overset{I}{B}{^i_j} \overset{J}{A}{^k_l}\bigr) \cdot h \otimes F_{\mathrm{RT}}(T_{\mathrm{tot}})\bigl( x_i \otimes x^j \otimes y_k \otimes y^l \bigr)\\
& = \bigl( \overset{I}{h^{(1)}} \, \overset{I}{B} \, \overset{I}{S(h^{(2)})} \bigr)^i_j \, \bigl( \overset{J}{h^{(3)}} \, \overset{J}{A} \, \overset{J}{S(h^{(4)})} \bigr)^k_l \otimes F_{\mathrm{RT}}(T_{\mathrm{tot}})\bigl( x_i \otimes x^j \otimes y_k \otimes y^l \bigr)\\
& = \overset{I}{B}{^i_j}\overset{J}{A}{^k_l} \otimes F_{\mathrm{RT}}(T_{\mathrm{tot}})\bigl( h^{(1)}x_i \otimes h^{(2)}x^j \otimes h^{(3)}y_k \otimes h^{(4)}y^l \bigr)\\
& = \overset{I}{B}{^i_j}\overset{J}{A}{^k_l} \otimes \bigg((h^{(1)} \otimes h^{(2)} \otimes h^{(3)} \otimes h^{(4)}) \, F_{\mathrm{RT}}(T_{\mathrm{tot}})\bigl( x_i \otimes x^j \otimes y_k \otimes y^l \bigr)\bigg)\\
& = x \, (N_1)^m_n \, (N_2)^o_p \otimes h^{(1)}v_m \otimes h^{(2)}v^n \otimes h^{(3)}w_o \otimes h^{(4)}w^p\\
& = x \, \big(\overset{V_1}{h^{(1)}} N_1 \overset{V_1}{S(h^{(2)})} \big)^m_n \, \big(\overset{V_2}{h^{(3)}} N_2 \overset{V_2}{S(h^{(4)})} \big)^o_p  \otimes v_m \otimes v^n \otimes w_o \otimes w^p\\
& = x \otimes \overset{V_1}{h^{(1)}} N_{1} \overset{V_1}{S(h^{(2)})} \otimes \overset{V_2}{h^{(3)}} N_2 \overset{V_2}{S(h^{(4)})}
\end{align*}
where $(x_i), (y_i), (v_i), (w_i)$ are basis of $I,J,V_1,V_2$ respectively.
\end{proof}

\smallskip

\indent A simple loop $x \in \pi_1(\Sigma_{g,n}^{\mathrm{o}})$ can naturally be viewed as a based link $x \subset \Sigma_{g,n}^{\mathrm{o},\bullet} \times \{0\}$. For the next result we will use the mapping class group, so we restrict to $n=0$ for simplicity. We denote by $\mathrm{hol}^I(x)$ the value of $\mathrm{hol}$ on the simple loop $x$ colored by $I$. Thanks to Theorem \ref{MCGcommuteW}, we have for all $f \in \mathrm{MCG}(\Sigma_{g,0}^{\mathrm{o}})$
\begin{equation}\label{coroHolLinksMCG}
\widetilde{f}\bigl( \mathrm{hol}^I(x) \bigr) = \mathrm{hol}^I\bigl( f(x) \bigr)
\end{equation}
where the matrix at the left-hand side is obviously defined by $\widetilde{f}\bigl( \mathrm{hol}^I(x) \bigr)^i_j = \widetilde{f}\bigl( \mathrm{hol}^I(x)^i_j\bigr)$. Also recall the lift of a simple loop defined in \S \ref{sectionDefLgnH}.

\begin{proposition}\label{holonomySimpleLoop}
For any simple loop $x \in \pi_1(\Sigma_{g,0}^{\mathrm{o}})$ and finite dimensional $H$-module $I$ it holds
\[ \mathrm{hol}^I(x) = \overset{I}{\widetilde{x}}.\]
\end{proposition}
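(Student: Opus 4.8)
The plan is to first compute $\mathrm{hol}^I$ on the generators, and then bootstrap to an arbitrary simple loop by isotoping it onto the canonical fat graph of Figure \ref{surfaceGN}.

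\emph{Step 1: generators.} View $b_i$ (positively oriented) as a based link presented as in \eqref{basedLink} with trivial tangle $T$; the substitution \eqref{openLoop} turns it into the $\partial\mathbf{\Sigma}$-tangle $\mathbf{T}(b_i)$ consisting of a single $I$-colored strand that enters and leaves the handle $\overset{I}{B}(i)$, with both endpoints on the boundary and no double points. Its evaluation in the sense of Definition \ref{defHol} is exactly the handle diagram \eqref{anse}, so $\mathrm{hol}^I(b_i)=\overset{I}{B}(i)=\overset{I}{\widetilde{b_i}}$ (recall $N(b_i)=0$), and likewise $\mathrm{hol}^I(a_i)=\overset{I}{A}(i)$. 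For a negatively oriented generator one uses the other branch of \eqref{openLoop}: the resulting diagram is precisely the left-hand side of \eqref{dessinMatriceInverse}, whence $\mathrm{hol}^I(b_i^{-1})=\overset{I}{B}(i){^{-1}}=\bigl(\overset{I}{\widetilde{b_i}}\bigr)^{-1}=\overset{I}{\widetilde{b_i^{-1}}}$, and similarly for $a_i^{-1}$.

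\emph{Step 2: general simple loops.} Given a simple based loop $x$, fix a reduced word $w=g_{i_1}^{\epsilon_1}\cdots g_{i_m}^{\epsilon_m}$ (with $g_{i_\alpha}\in\{b_{i_\alpha},a_{i_\alpha}\}$) representing $x$ in the free group $\pi_1(\Sigma_{g,0}^{\mathrm{o}})$. By isotopy we put $\mathbf{T}(x)$ (the arc obtained from $x$ via \eqref{openLoop}) into the standard form \eqref{formeStandard}: it traverses the handles $\overset{I}{B}(i),\overset{I}{A}(i)$ in the cyclic order dictated by $w$, and the portion $T$ lying in the body rectangle of $\Sigma_{g,0}^{\mathrm{o},\bullet}$ is an \emph{embedded} framed arc (it has no double points because $x$ is embedded). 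Using the fusion relation \eqref{dessinRelationFusion} to merge the parallel strands sharing a common handle, naturality \eqref{naturalite}, the reorientation identities \eqref{sensOppose}--\eqref{sensOpposeBis}, and the description \eqref{dessinMatriceInverse} of inverse handles, the evaluation of this diagram collapses to the ordered matrix product $\overset{I}{G}_{i_1}^{\epsilon_1}\cdots\overset{I}{G}_{i_m}^{\epsilon_m}$ times $F_{\mathrm{RT}}(T)$; and $F_{\mathrm{RT}}(T)$, being the image of a single $I$-colored framed arc with no double points, is a power $\overset{I}{\theta_I}{^{-N}}=\overset{I}{v}{^{N}}$ of the ribbon element. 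By construction this exponent $N$ coincides with the integer $N(x)$ of \cite[\S 5.3.2]{these} — both record the framing anomaly produced by routing the loop $x$ through the canonical fat graph — so $\mathrm{hol}^I(x)=\overset{I}{G}_{i_1}^{\epsilon_1}\cdots\overset{I}{G}_{i_m}^{\epsilon_m}\,\overset{I}{v}{^{N(x)}}=\overset{I}{\widetilde{x}}$. (For the many cases where the underlying curve of $x$ is non-separating one can shortcut Step 2: by \eqref{coroHolLinksMCG} and \eqref{pasSurprenant} the set of simple loops satisfying the proposition is stable under $\mathrm{MCG}(\Sigma_{g,0}^{\mathrm{o}})$, and such a loop lies in the orbit of $b_1$ by the change-of-coordinates principle, reducing to Step 1.)

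\emph{Expected obstacle.} The routine part is the diagrammatic collapse in Step 2; the genuine content is the framing bookkeeping, namely verifying that the power of $v$ extracted from the routing arc $T$ equals $N(x)$ as defined in the thesis. This is not a computation but a comparison of two descriptions of the same framing correction, and it is precisely what makes this seemingly tautological statement require an argument. A secondary care point is that when $w$ revisits a handle (as in a conjugate $b_1a_1b_1^{-1}$), that handle carries several strands of mixed orientation, and one must combine \eqref{dessinRelationFusion}, \eqref{sensOppose}--\eqref{sensOpposeBis} and \eqref{dessinMatriceInverse} carefully to see that they recombine into the correct signed power of a single matrix $\overset{I}{B}(i)$ or $\overset{I}{A}(i)$.
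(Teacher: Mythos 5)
There is a genuine gap, and it sits exactly where you flagged it. In Step 2 you reduce everything to the claim that the power of $\overset{I}{v}$ extracted from the routing arc $T$ equals the integer $N(x)$ of \cite[\S 5.3.2]{these}, and then you assert this as ``a comparison of two descriptions of the same framing correction'' without carrying it out. But that identity \emph{is} the proposition: the lift $\overset{I}{\widetilde{x}}$ is defined as the word in the generator matrices times $\overset{I}{v}{}^{N(x)}$, where $N(x)$ is an externally defined combinatorial quantity whose definition the paper deliberately never uses. Without unpacking that definition and matching it against the twists produced by $F_{\mathrm{RT}}(T)$, nothing has been proved. A second, related soft spot is the claimed ``collapse to the ordered matrix product'': when the word revisits a handle (already for $s_1=b_1a_1^{-1}b_1^{-1}a_1$), the fusion relation \eqref{dessinRelationFusion} gives $\overset{I\otimes J}{X}=\overset{I}{X}_1\,\overset{IJ}{R'}\,\overset{J}{X}_2\,\overset{IJ}{R'}{}^{-1}$ — a conjugated product, not a plain one — and the handles sit in a fixed left-to-right order different from the traversal order, so exchange relations and the crossings of $T$ must all be tracked before anything resembling $\overset{I}{G}_{i_1}^{\epsilon_1}\cdots\overset{I}{G}_{i_m}^{\epsilon_m}$ appears. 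You acknowledge both points as ``care points'' but supply neither.

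The paper's proof avoids all of this by promoting your parenthetical shortcut to the whole argument. By \eqref{coroHolLinksMCG} and \eqref{pasSurprenant} the set of simple loops satisfying the statement is $\mathrm{MCG}$-stable; by the change-of-coordinates principle every positively oriented simple loop is in the orbit of one of the finitely many explicit loops $a_1,\ s_1=b_1a_1^{-1}b_1^{-1}a_1,\ \ldots,\ s_g$ (the $s_i$ exhaust the topological types of separating curves, which your non-separating shortcut misses); one then checks the statement directly on this finite list, where both the fusion bookkeeping and the value of $N$ are concrete finite computations, and finally handles negatively oriented loops by a short diagrammatic argument using \eqref{changeOrientationTangle}, \eqref{sensOppose} and \eqref{dessinMatriceInverse} (your Step 1 treatment of $b_i^{-1}$ is essentially this argument in the easiest case). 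To repair your proof you would either have to actually perform the $N(x)$ comparison for an arbitrary reduced word, or restructure along the paper's lines so that only finitely many explicit loops ever need to be computed.
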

\begin{proof}
Observe first that if the result is true for some simple loop $x$, then it is true for every $f(x)$, where $f \in \mathrm{MCG}(\Sigma_{g,0}^{\mathrm{o},\bullet})$. Indeed, thanks to \eqref{coroHolLinksMCG} and \eqref{pasSurprenant}:
\[ \mathrm{hol}^I\bigl(f(x)\bigr) = \widetilde{f}\bigl( \mathrm{hol}^I(x) \bigr) = \widetilde{f}\bigl(\overset{I}{\widetilde{x}}\bigr) =  \overset{I}{\widetilde{f(x)}}. \]
Now, one can check directly that the result is true for the loops 
\begin{equation}\label{canonicalSimpleLoops}
a_1, \: s_1 = b_1a_1^{-1}b_1^{-1}a_1, \: s_2 = b_1a_1^{-1}b_1^{-1}a_1b_2a_2^{-1}b_2^{-1}a_2, \: \ldots, \: s_g =  b_1a_1^{-1}b_1^{-1}a_1 \ldots b_ga_g^{-1}b_g^{-1}a_g.
\end{equation}
These are positively oriented simple loops (see \eqref{positivelyOriented}). $a_1$ is a non-separating loop while $s_1, \ldots, s_g$ are separating loops exhausting all the possible topological types of separating loops (see \cite[\S 1.3.1]{FM}). Hence, if $y$ is a positively oriented simple loop, there exists a homeomorphism $f \in \mathrm{MCG}(\Sigma_{g,0}^{\mathrm{o}})$ such that $y = f(x)$ where $x$ is one of the loops in \eqref{canonicalSimpleLoops}. It follows that the result is true for any positively oriented simple loop. The result for negatively oriented simple loops is deduced as follows:
\begin{center}
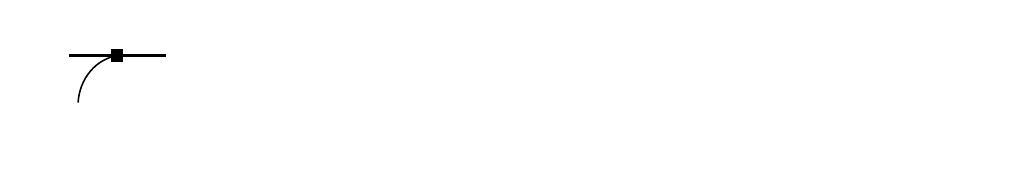
\end{center}
where $T_x$ is the part of $x$ outside a small neighborhood of the basepoint and $D_{x^{-1}}$ is the corresponding part of diagram (but with opposite orientation) obtained when applying $\mathrm{hol}^I$. For the first equality we used the definition of $\mathrm{hol}$ together with \eqref{changeOrientationTangle}, for the second we applied the previously established result for the positively oriented loop $x^{-1}$ and also the naturality of braiding and twist, for the third we used \eqref{sensOppose} and for the fourth we used Remark \ref{remarkInverseLift}. We are done since by definition $\bigl(\overset{I}{\widetilde{x^{-1}}}\bigr)^{-1} = \overset{I}{\widetilde{x}}$.
\end{proof}
\indent Recall what is holonomy in the classical case. For a discrete $G$-connection $\nabla \in \mathcal{A}_{g,n} = \mathrm{Hom}\bigl( \pi_1(\Sigma_{g,n}^{\mathrm{o}}), G \bigr)$ and a simple loop $x \in \pi_1(\Sigma_{g,n}^{\mathrm{o}})$, the holonomy is simply $\mathrm{hol}(x)(\nabla) = \nabla(x) \in G$. Now, if $V$ is a finite dimensional representation of $G$, we define $\mathrm{hol}^V(x)(\nabla)$ to be $\overset{V}{\nabla(x)}$, namely the representation of $\nabla(x)$ on $V$. By taking matrix coefficients, we obtain functions on $\mathcal{A}_{g,n}$:
\[ \mathrm{hol}^V(x)^i_j : \mathcal{A}_{g,n} \to \mathbb{C}, \:\:\:\:\: \nabla \mapsto \overset{V}{\nabla(x)}{^i_j} \]
Hence $\mathrm{hol}^V(x) \in \mathbb{C}[\mathcal{A}_{g,n}] \otimes \mathrm{End}_{\mathbb{C}}(V)$. Let $\overset{V}{B}(1) = \mathrm{hol}^V(b_1), \, \overset{V}{A}(1) = \mathrm{hol}^V(a_1), \ldots$ Now express $x$ in terms of the generators of $\pi_1(\Sigma_{g,n}^{\mathrm{o}})$ and replace each generator by the corresponding matrix in the representation $V$: $b_1 \mapsto \overset{V}{B}(1), a_1 \mapsto \overset{V}{A}(1), \ldots$; this defines the lift $\overset{V}{\widetilde{x}} \in \mathbb{C}[\mathcal{A}_{g,n}] \otimes \mathrm{End}_{\mathbb{C}}(V)$ of $x$ in that representation. It is clear that $\mathrm{hol}^V(x) = \overset{V}{\widetilde{x}}$, which is exactly the same formula as in Proposition \ref{holonomySimpleLoop} and then justifies the name ``holonomy'' for Definition \ref{defHol}.

\subsection{Hennings formulation of the holonomy of a based link}\label{Sectionhennings}

\indent We now give a description \textit{\`a la} Hennings \cite{hennings} of the holonomy which will be useful in the next section. This description applies only to based links.

\smallskip

\indent We will use universal elements. These are elements $\mathcal{B}(i), \mathcal{A}(i), \mathcal{M}(i) \in \mathcal{L}_{g,n}(H) \otimes H$ such that
\begin{equation}\label{elementUniversel}
\overset{I}{B}(i) = (\mathrm{id} \otimes \overset{I}{T})\bigl( \mathcal{B}(i) \bigr), \:\:\:\:\: \overset{I}{A}(i) = (\mathrm{id} \otimes \overset{I}{T})\bigl( \mathcal{A}(i) \bigr), \:\:\:\:\: \overset{I}{M}(i) = (\mathrm{id} \otimes \overset{I}{T})\bigl( \mathcal{M}(i) \bigr) 
\end{equation}
for all finite dimensional $H$-module $I$ and all $i$ (we recall that $\overset{I}{T}$ is the representation morphism of $H$ on $I$). In the sequel we assume that $H$ is such that these elements exist; this assumption is fulfilled by relevant classes of Hopf algebras:
\begin{lemma}
If $H$ is finite dimensional or if $H = U_q(\mathfrak{g})$ with $q$ generic, then there exist universal elements satisfying \eqref{elementUniversel}.
\end{lemma}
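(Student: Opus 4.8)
The plan is to construct the universal elements $\mathcal{B}(i), \mathcal{A}(i), \mathcal{M}(i)$ explicitly in each of the two cases, using the fact that $\mathcal{L}_{g,n}(H)$ is built out of copies of $H^{\circ}$ together with a suitable pairing-type element that reproduces the matrix coefficients when paired against a representation. The underlying mechanism is always the same: we need an element of $H^{\circ} \otimes H$ which, under $\mathrm{id} \otimes \overset{I}{T}$, becomes the matrix $\overset{I}{T} = (\overset{I}{T}{^i_j}) \in H^{\circ} \otimes \mathrm{End}_{\mathbb{C}}(I)$ of matrix coefficients, i.e. a ``universal matrix coefficient''. Composing this with the canonical injection $\mathfrak{i}_{b_i}, \mathfrak{i}_{a_i}, \mathfrak{i}_{m_j} : H^{\circ} \to V_{g,n} \hookrightarrow \mathcal{L}_{g,n}(H)$ in the relevant copy then produces $\mathcal{B}(i), \mathcal{A}(i), \mathcal{M}(i)$, and \eqref{elementUniversel} is satisfied by the very definition \eqref{coeffMatriciels} of the generating matrices.

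First I would treat the finite-dimensional case. If $\dim_{\mathbb{C}} H = N < \infty$, then $H^{\circ} = H^*$, and the canonical element of $H^* \otimes H$ associated to a basis $(e_\alpha)$ of $H$ with dual basis $(e^\alpha)$ is $\mathfrak{C} = \sum_\alpha e^\alpha \otimes e_\alpha \in H^* \otimes H$. For any finite-dimensional $H$-module $I$ one checks immediately that $(\mathrm{id} \otimes \overset{I}{T})(\mathfrak{C}) = \sum_\alpha e^\alpha \otimes \overset{I}{e_\alpha} = \sum_{\alpha,i,j} e^\alpha \otimes \overset{I}{T}{^i_j}(e_\alpha)\, E^i_j$, and since $e^\alpha \otimes \overset{I}{T}{^i_j}(e_\alpha)$ contracts (summing over $\alpha$) to the linear form $\overset{I}{T}{^i_j} \in H^*$, this equals $\sum_{i,j} \overset{I}{T}{^i_j} \otimes E^i_j = \overset{I}{T}$. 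Hence setting $\mathcal{B}(i) = (\mathfrak{i}_{b_i} \otimes \mathrm{id}_H)(\mathfrak{C})$ and similarly for $\mathcal{A}(i), \mathcal{M}(j)$ gives the required universal elements. This case is essentially immediate.

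For $H = U_q(\mathfrak{g})$ with $q$ generic, finite-dimensionality fails, so there is no honest element of $H^{\circ} \otimes H$; instead one works with a completion or with a topological tensor product. The standard resolution, which I would invoke, is that for $U_q(\mathfrak{g})$ with $q$ generic the category of finite-dimensional modules is semisimple and $H^{\circ}$ is spanned by matrix coefficients of the simple modules $V_\lambda$, $\lambda \in P^+$; one then has $H^{\circ} \cong \bigoplus_\lambda \mathrm{End}(V_\lambda)^*$ as a coalgebra (Peter--Weyl), and the universal element is the (infinite, but well-defined after applying any $\overset{I}{T}$) sum $\mathfrak{C} = \sum_{\lambda \in P^+} \sum_{i,j} \overset{V_\lambda}{T}{^i_j} \otimes \big(\text{element of a completion } \widehat{H} \text{ acting as } E^i_j \text{ on } V_\lambda, \text{ as } 0 \text{ on } V_\mu, \mu \neq \lambda\big)$. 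Working in $\mathcal{L}_{g,n}(H) \otimes \widehat{H}$, where $\widehat{H} = \prod_\lambda \mathrm{End}(V_\lambda)$ is the algebra of ``endomorphisms of the identity functor'' (equivalently $(H^{\circ})^*$), the element $\mathfrak{C}$ is honestly defined, and for each fixed finite-dimensional $I$ the map $\mathrm{id} \otimes \overset{I}{T}$ kills all but finitely many summands (those $\lambda$ with $V_\lambda$ a composition factor of $I$) and recovers $\overset{I}{T}$ by the same contraction identity as above. So the precise statement being proved is that the universal elements exist in the appropriate completion $\mathcal{L}_{g,n}(H) \widehat{\otimes} H$; I would make this explicit and note that all the subsequent Hennings-type manipulations only ever evaluate against finitely many representations at a time, so the completion causes no difficulty.

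The main obstacle is purely the bookkeeping around the completion in the $U_q(\mathfrak{g})$ case: one must be careful that $\widehat{H}$ is the correct object (the full linear dual $(H^{\circ})^*$, or equivalently $\prod_\lambda \mathrm{End}(V_\lambda)$, which contains $H$ but is strictly larger) and that ``universal element'' is understood in $\mathcal{L}_{g,n}(H) \widehat{\otimes} H$ rather than in the algebraic tensor product. Once this is set up, the verification of \eqref{elementUniversel} is a one-line contraction of dual bases in each case, and the semisimplicity of $\mathrm{mod}_l(U_q(\mathfrak{g}))$ (for generic $q$) guarantees that $H^{\circ}$ has the Peter--Weyl decomposition making the sum over $\lambda$ meaningful.
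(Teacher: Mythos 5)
Your proposal is correct, but it reaches the universal elements by a different route than the paper in both cases. In the finite-dimensional case the paper does not guess the element: it starts from $\overset{H}{M}$ on the regular representation, uses the naturality relation \eqref{fonctorialite} applied to the right multiplications $\rho_a \in \mathrm{End}_H(H)$ to show that $\overset{H}{M}$ is forced to be $(\mathrm{id}\otimes\overset{H}{T})(\mathcal{M})$ with $\mathcal{M} = x_i \otimes f_i(1)$, and then propagates the identity to every finite-dimensional $I$ via a surjection $H^{\oplus N}\twoheadrightarrow I$ and naturality again. Your canonical element $\mathfrak{C}=\sum_\alpha e^\alpha\otimes e_\alpha$ is in fact the same $\mathcal{M}$ (one checks $\sum_\beta \overset{H}{T}{^\alpha_\beta}\,e^\beta(1)=e^\alpha$), so you gain a shorter, purely computational verification valid for all $I$ at once, while the paper's argument exhibits the element as a consequence of naturality alone. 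For $H=U_q(\mathfrak{g})$ the divergence is more substantial: the paper invokes the injective algebra morphism $\Phi:\mathcal{L}_{0,1}(H)\to H$, $\overset{I}{M}\mapsto(\mathrm{id}\otimes\overset{I}{T})(R'R)$, and simply takes $\mathcal{M}=R'R$ under the identification $\mathcal{L}_{0,1}(H)\cong\mathrm{im}(\Phi)$, which yields a concrete, multiplicatively meaningful formula tied to the braided-dual structure; you instead use semisimplicity and the Peter--Weyl decomposition $H^{\circ}\cong\bigoplus_\lambda\mathrm{End}(V_\lambda)^*$ to build a block-diagonal element of $\prod_\lambda\mathrm{End}(V_\lambda)$. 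Both constructions live only in a completion (the paper admits this in one sentence; you are more explicit about which completion and why it is harmless for the later Hennings-type manipulations), and your version has the merit of applying verbatim to any ribbon Hopf algebra whose finite-dimensional module category is semisimple with finitely many simples occurring in each module, at the cost of not producing the formula $\mathcal{M}=R'R$ that is used elsewhere in the literature. One small point to make explicit in your generic-$q$ argument: the evaluation $\overset{I}{T}$ only extends to $\prod_\lambda\mathrm{End}(V_\lambda)$ because every finite-dimensional $I$ is semisimple, so you should state that the index set $\lambda$ runs over all simples (including the one-dimensional twists if non-type-1 modules are allowed), but this is exactly the semisimplicity you already invoke.
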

\begin{proof}
We can restrict to $\mathcal{L}_{0,1}(H)$, and we let $\overset{I}{M} = \overset{I}{M}(1)$ for all $I$. Assume first that $H$ is finite dimensional; then we can use the regular representation $H$. Write $\overset{H}{M} = x_i \otimes f_i \in \mathcal{L}_{0,1}(H)  \otimes \mathrm{End}_{\mathbb{C}}(H)$ and define $\mathcal{M} = x_i \otimes f_i(1) \in \mathcal{L}_{0,1}(H)  \otimes H$. Let $\rho_a \in \mathrm{End}_H(H)$ defined by $\rho_a(h) = ha$. By  \eqref{fonctorialite} it holds $\overset{H}{M} \rho_a = \rho_a \overset{H}{M}$, thus we have for all $a \in H$:
\[ x_i \otimes f_i(a) = x_i \otimes f_i \circ \rho_a(1) = x_i \otimes \rho_a \circ f_i(1) = x_i \otimes f_i(1)a = x_i \otimes \overset{H}{T}\big(f_i(1)\big)(a) \]
which shows that $\overset{H}{M} = (\mathrm{id} \otimes \overset{H}{T})(\mathcal{M})$. Next consider a direct sum $H^{\oplus N}$, and let $j_{\alpha} : H \to H^{\oplus N}$ and $p_{\alpha} : H^{\oplus N} \to H$ be the associated canonical injections and projections. By \eqref{fonctorialite}, we have
\[ \overset{H^{\oplus N}}{M} = \sum_{\alpha} j_{\alpha} \overset{H}{M} p_{\alpha} = \sum_{\alpha} (\mathrm{id} \otimes j_{\alpha}\overset{H}{T} p_{\alpha})(\mathcal{M}) = (\mathrm{id} \otimes \overset{H^{\oplus N}}{T})(\mathcal{M}). \]
Finally, let $I$ be any finite dimensional $H$-module. Since $I$ is finitely generated there is a surjective morphism $\pi : H^{\oplus N} \to I$ for some $N$ and hence $\overset{I}{M} \pi = \pi \overset{H^{\oplus N}}{M} = (\mathrm{id} \otimes \pi\overset{H^{\oplus N}}{T})(\mathcal{M}) = (\mathrm{id} \otimes \overset{I}{T}\pi)(\mathcal{M})$. But since $\pi$ is surjective, it is right invertible and thus $\overset{I}{M} = (\mathrm{id} \otimes \overset{I}{T})(\mathcal{M})$.
\\Now take $H = U_q(\mathfrak{g})$. The map $\Phi : \mathcal{L}_{0,1}(H) \to H$ defined by $\overset{I}{M} \to (\mathrm{id}_H \otimes \overset{I}{T})(R'R)$ is an injective morphism of algebras (see \cite[Th. 4.3]{BaR} and the references therein). Then under the identification $\mathcal{L}_{0,1}(H) \cong \mathrm{im}(\Phi)$ we have $\overset{I}{M} = (\mathrm{id} \otimes \overset{I}{T})(\mathcal{M})$, where $\mathcal{M} = R'R \in \mathrm{im}(\Phi) \otimes H$. Note that $\mathcal{M}$ actually belongs to some completion but we will not discuss this detail here.
\end{proof}
We write 
\[ \mathcal{B}(i) = \mathcal{B}(i)_0 \otimes \mathcal{B}(i)_1, \:\:\:\:\:\: \mathcal{A}(i) = \mathcal{A}(i)_0 \otimes \mathcal{A}(i)_1, \:\:\:\:\:\: \mathcal{M}(i) = \mathcal{M}(i)_0 \otimes \mathcal{M}(i)_1 \]
with implicit summation to simplify notations. By definition, the right action \eqref{actionH} of $H$ on $\mathcal{L}_{g,n}(H)$ satisfies
\[ \mathcal{X}_0\cdot h \otimes \mathcal{X}_1 = \mathcal{X}_0 \otimes h' \mathcal{X}_1 S(h'') \]
where $\mathcal{X}$ is $\mathcal{B}(i), \mathcal{A}(i)$ or $\mathcal{M}(i)$

\smallskip

\indent Let $L$ be a based link presented as in \eqref{basedLink} but without coloring. First, for each basepoint $p_i$ we define a coupon $\mathrm{Or}_i$ depending on the orientation of the strand and which is the analogue of \eqref{openLoop}:
\begin{center}
\begingroup%
  \makeatletter%
  \providecommand\color[2][]{%
    \errmessage{(Inkscape) Color is used for the text in Inkscape, but the package 'color.sty' is not loaded}%
    \renewcommand\color[2][]{}%
  }%
  \providecommand\transparent[1]{%
    \errmessage{(Inkscape) Transparency is used (non-zero) for the text in Inkscape, but the package 'transparent.sty' is not loaded}%
    \renewcommand\transparent[1]{}%
  }%
  \providecommand\rotatebox[2]{#2}%
  \newcommand*\fsize{\dimexpr\f@size pt\relax}%
  \newcommand*\lineheight[1]{\fontsize{\fsize}{#1\fsize}\selectfont}%
  \ifx\svgwidth\undefined%
    \setlength{\unitlength}{399.72259014bp}%
    \ifx\svgscale\undefined%
      \relax%
    \else%
      \setlength{\unitlength}{\unitlength * \real{\svgscale}}%
    \fi%
  \else%
    \setlength{\unitlength}{\svgwidth}%
  \fi%
  \global\let\svgwidth\undefined%
  \global\let\svgscale\undefined%
  \makeatother%
  \begin{picture}(1,0.12148484)%
    \lineheight{1}%
    \setlength\tabcolsep{0pt}%
    \put(0,0){\includegraphics[width=\unitlength,page=1]{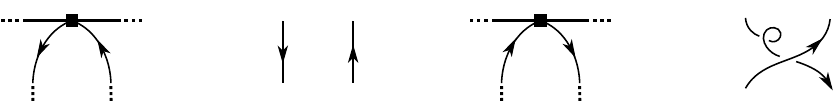}}%
    \put(0.19716468,0.04224217){\color[rgb]{0,0,0}\makebox(0,0)[lt]{\lineheight{1.25}\smash{\begin{tabular}[t]{l}$\Rightarrow \:\: \mathrm{Or}_i =$\end{tabular}}}}%
    \put(0.09894727,0.10902559){\color[rgb]{0,0,0}\makebox(0,0)[lt]{\lineheight{1.25}\smash{\begin{tabular}[t]{l}$p_i$\end{tabular}}}}%
    \put(0.66216248,0.10876511){\color[rgb]{0,0,0}\makebox(0,0)[lt]{\lineheight{1.25}\smash{\begin{tabular}[t]{l}$p_i$\end{tabular}}}}%
    \put(0.76100262,0.0515332){\color[rgb]{0,0,0}\makebox(0,0)[lt]{\lineheight{1.25}\smash{\begin{tabular}[t]{l}$\Rightarrow \:\: \mathrm{Or}_i =$\end{tabular}}}}%
  \end{picture}%
\endgroup%

\end{center}
\noindent Then we form the following diagram:
\begin{equation}\label{defHolHennings}
\begingroup%
  \makeatletter%
  \providecommand\color[2][]{%
    \errmessage{(Inkscape) Color is used for the text in Inkscape, but the package 'color.sty' is not loaded}%
    \renewcommand\color[2][]{}%
  }%
  \providecommand\transparent[1]{%
    \errmessage{(Inkscape) Transparency is used (non-zero) for the text in Inkscape, but the package 'transparent.sty' is not loaded}%
    \renewcommand\transparent[1]{}%
  }%
  \providecommand\rotatebox[2]{#2}%
  \newcommand*\fsize{\dimexpr\f@size pt\relax}%
  \newcommand*\lineheight[1]{\fontsize{\fsize}{#1\fsize}\selectfont}%
  \ifx\svgwidth\undefined%
    \setlength{\unitlength}{389.99999055bp}%
    \ifx\svgscale\undefined%
      \relax%
    \else%
      \setlength{\unitlength}{\unitlength * \real{\svgscale}}%
    \fi%
  \else%
    \setlength{\unitlength}{\svgwidth}%
  \fi%
  \global\let\svgwidth\undefined%
  \global\let\svgscale\undefined%
  \makeatother%
  \begin{picture}(1,0.40551137)%
    \lineheight{1}%
    \setlength\tabcolsep{0pt}%
    \put(0,0){\includegraphics[width=\unitlength,page=1]{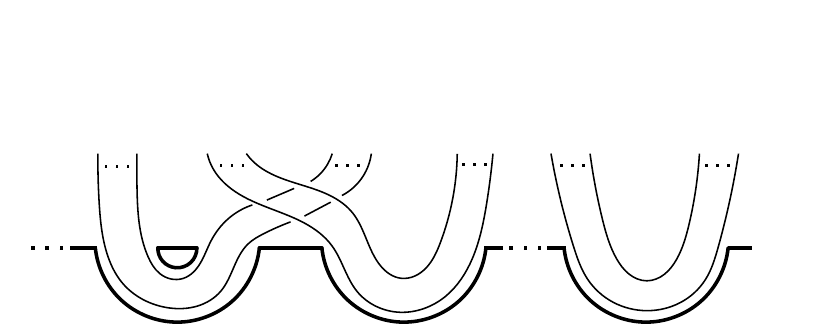}}%
    \put(0.29083938,0.00260801){\color[rgb]{0,0,0}\makebox(0,0)[lt]{\lineheight{1.25}\smash{\begin{tabular}[t]{l}$\mathcal{B}(i)$\end{tabular}}}}%
    \put(0.56993552,0.0056261){\color[rgb]{0,0,0}\makebox(0,0)[lt]{\lineheight{1.25}\smash{\begin{tabular}[t]{l}$\mathcal{A}(i)$\end{tabular}}}}%
    \put(0.8726386,0.00600205){\color[rgb]{0,0,0}\makebox(0,0)[lt]{\lineheight{1.25}\smash{\begin{tabular}[t]{l}$\mathcal{M}(j)$\end{tabular}}}}%
    \put(0.51420018,0.22980582){\color[rgb]{0,0,0}\makebox(0,0)[lt]{\lineheight{1.25}\smash{\begin{tabular}[t]{l}$T$\end{tabular}}}}%
    \put(0,0){\includegraphics[width=\unitlength,page=2]{defHolHennings.pdf}}%
    \put(0.16892534,0.29849659){\color[rgb]{0,0,0}\makebox(0,0)[lt]{\lineheight{1.25}\smash{\begin{tabular}[t]{l}$\mathrm{Or}_1$\end{tabular}}}}%
    \put(0,0){\includegraphics[width=\unitlength,page=3]{defHolHennings.pdf}}%
    \put(0.22958602,0.38757469){\color[rgb]{0,0,0}\makebox(0,0)[lt]{\lineheight{1.25}\smash{\begin{tabular}[t]{l}$1$\end{tabular}}}}%
    \put(0,0){\includegraphics[width=\unitlength,page=4]{defHolHennings.pdf}}%
    \put(0.78623302,0.2984966){\color[rgb]{0,0,0}\makebox(0,0)[lt]{\lineheight{1.25}\smash{\begin{tabular}[t]{l}$\mathrm{Or}_k$\end{tabular}}}}%
    \put(0,0){\includegraphics[width=\unitlength,page=5]{defHolHennings.pdf}}%
    \put(0.84498106,0.38903164){\color[rgb]{0,0,0}\makebox(0,0)[lt]{\lineheight{1.25}\smash{\begin{tabular}[t]{l}$k$\end{tabular}}}}%
    \put(0,0){\includegraphics[width=\unitlength,page=6]{defHolHennings.pdf}}%
  \end{picture}%
\endgroup%

\end{equation}
We put the dotted line to stress that the diagram can be deformed by isotopy, but with the restriction that all the points below (resp. above) the line must remain below (resp. above) the line. The evaluation of this diagram is an element $\mathrm{Hen}(L) \in \mathcal{L}_{g,n}(H) \otimes H^{\otimes k}$ computed according to the following rules, which generalize those defining the Hennings invariant. Starting from one of the basepoints  labelled $1, \ldots, k$ and following the strand according to its orientation, we multiply together from left to right the values associated to each graphical element that we encounter until we come back to the basepoint:
\begin{center}
\begingroup%
  \makeatletter%
  \providecommand\color[2][]{%
    \errmessage{(Inkscape) Color is used for the text in Inkscape, but the package 'color.sty' is not loaded}%
    \renewcommand\color[2][]{}%
  }%
  \providecommand\transparent[1]{%
    \errmessage{(Inkscape) Transparency is used (non-zero) for the text in Inkscape, but the package 'transparent.sty' is not loaded}%
    \renewcommand\transparent[1]{}%
  }%
  \providecommand\rotatebox[2]{#2}%
  \newcommand*\fsize{\dimexpr\f@size pt\relax}%
  \newcommand*\lineheight[1]{\fontsize{\fsize}{#1\fsize}\selectfont}%
  \ifx\svgwidth\undefined%
    \setlength{\unitlength}{419.15262059bp}%
    \ifx\svgscale\undefined%
      \relax%
    \else%
      \setlength{\unitlength}{\unitlength * \real{\svgscale}}%
    \fi%
  \else%
    \setlength{\unitlength}{\svgwidth}%
  \fi%
  \global\let\svgwidth\undefined%
  \global\let\svgscale\undefined%
  \makeatother%
  \begin{picture}(1,0.1517084)%
    \lineheight{1}%
    \setlength\tabcolsep{0pt}%
    \put(0,0){\includegraphics[width=\unitlength,page=1]{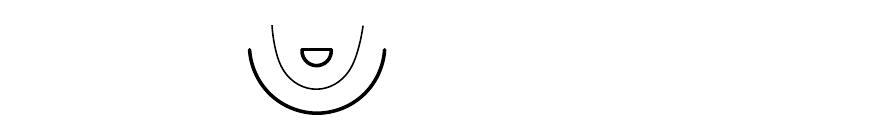}}%
    \put(0.42977248,0.03011199){\color[rgb]{0,0,0}\makebox(0,0)[lt]{\lineheight{1.25}\smash{\begin{tabular}[t]{l}$\mathcal{X}$\end{tabular}}}}%
    \put(0,0){\includegraphics[width=\unitlength,page=2]{anseHennings.pdf}}%
    \put(0.25280342,0.13336376){\color[rgb]{0,0,0}\makebox(0,0)[lt]{\lineheight{1.25}\smash{\begin{tabular}[t]{l}$g^{-1}S(\mathcal{X}_1^{(l)})$\end{tabular}}}}%
    \put(0,0){\includegraphics[width=\unitlength,page=3]{anseHennings.pdf}}%
    \put(0.16923491,0.02920834){\color[rgb]{0,0,0}\makebox(0,0)[lt]{\lineheight{1.25}\smash{\begin{tabular}[t]{l}$\mathcal{X}$\end{tabular}}}}%
    \put(0,0){\includegraphics[width=\unitlength,page=4]{anseHennings.pdf}}%
    \put(0.14515849,0.13607979){\color[rgb]{0,0,0}\makebox(0,0)[lt]{\lineheight{1.25}\smash{\begin{tabular}[t]{l}$\mathcal{X}_1^{(l)}$\end{tabular}}}}%
    \put(0,0){\includegraphics[width=\unitlength,page=5]{anseHennings.pdf}}%
    \put(0.80830713,0.14205441){\color[rgb]{0,0,0}\makebox(0,0)[lt]{\lineheight{1.25}\smash{\begin{tabular}[t]{l}$g^{-1}$\end{tabular}}}}%
    \put(0,0){\includegraphics[width=\unitlength,page=6]{anseHennings.pdf}}%
    \put(0.99138461,0.13776162){\color[rgb]{0,0,0}\makebox(0,0)[lt]{\lineheight{1.25}\smash{\begin{tabular}[t]{l}$1$\end{tabular}}}}%
    \put(0,0){\includegraphics[width=\unitlength,page=7]{anseHennings.pdf}}%
    \put(0.87811397,0.00264305){\color[rgb]{0,0,0}\makebox(0,0)[lt]{\lineheight{1.25}\smash{\begin{tabular}[t]{l}$1$\end{tabular}}}}%
    \put(0,0){\includegraphics[width=\unitlength,page=8]{anseHennings.pdf}}%
    \put(0.92876661,0.00264306){\color[rgb]{0,0,0}\makebox(0,0)[lt]{\lineheight{1.25}\smash{\begin{tabular}[t]{l}$g$\end{tabular}}}}%
    \put(0,0){\includegraphics[width=\unitlength,page=9]{anseHennings.pdf}}%
    \put(0.651626,0.02111792){\color[rgb]{0,0,0}\makebox(0,0)[lt]{\lineheight{1.25}\smash{\begin{tabular}[t]{l}$b_i$\end{tabular}}}}%
    \put(0.72641156,0.02068914){\color[rgb]{0,0,0}\makebox(0,0)[lt]{\lineheight{1.25}\smash{\begin{tabular}[t]{l}$S(a_i)$\end{tabular}}}}%
    \put(0,0){\includegraphics[width=\unitlength,page=10]{anseHennings.pdf}}%
    \put(0.51488345,0.02377918){\color[rgb]{0,0,0}\makebox(0,0)[lt]{\lineheight{1.25}\smash{\begin{tabular}[t]{l}$a_i$\end{tabular}}}}%
    \put(0.5951129,0.02356788){\color[rgb]{0,0,0}\makebox(0,0)[lt]{\lineheight{1.25}\smash{\begin{tabular}[t]{l}$b_i$\end{tabular}}}}%
    \put(0,0){\includegraphics[width=\unitlength,page=11]{anseHennings.pdf}}%
  \end{picture}%
\endgroup%

\end{center}
Here $\mathcal{X} = \mathcal{X}_0 \otimes \mathcal{X}_1$ is one of the $\mathcal{B}(i), \mathcal{A}(i), \mathcal{M}(i)$ and $\Delta^{(m)}(\mathcal{X}_1) = \mathcal{X}_1^{(1)} \otimes \ldots \otimes \mathcal{X}_1^{(m+1)}$ is the iterated coproduct. Note that the handle contains a bunch of parallel strands and the picture represents the value associated to the $l$-th strand. Then by applying this rule for each starting point, we obtain elements $Z_1, \ldots, Z_k \in H$, and we define
\[ \mathrm{Hen}(L) = \bigl(\mathcal{B}(1)_0 \, \mathcal{A}(1)_0 \ldots \mathcal{B}(g)_0 \, \mathcal{A}(g)_0 \, \mathcal{M}(g+1)_0 \ldots \mathcal{M}(g+n)_0\bigr) \otimes Z_1 \otimes \ldots \otimes Z_k.  \]
\noindent Here is an example in $\Sigma_{0,1}^{\mathrm{o}} \times [0,1]$:
\begin{center}
\begingroup%
  \makeatletter%
  \providecommand\color[2][]{%
    \errmessage{(Inkscape) Color is used for the text in Inkscape, but the package 'color.sty' is not loaded}%
    \renewcommand\color[2][]{}%
  }%
  \providecommand\transparent[1]{%
    \errmessage{(Inkscape) Transparency is used (non-zero) for the text in Inkscape, but the package 'transparent.sty' is not loaded}%
    \renewcommand\transparent[1]{}%
  }%
  \providecommand\rotatebox[2]{#2}%
  \newcommand*\fsize{\dimexpr\f@size pt\relax}%
  \newcommand*\lineheight[1]{\fontsize{\fsize}{#1\fsize}\selectfont}%
  \ifx\svgwidth\undefined%
    \setlength{\unitlength}{428.31073855bp}%
    \ifx\svgscale\undefined%
      \relax%
    \else%
      \setlength{\unitlength}{\unitlength * \real{\svgscale}}%
    \fi%
  \else%
    \setlength{\unitlength}{\svgwidth}%
  \fi%
  \global\let\svgwidth\undefined%
  \global\let\svgscale\undefined%
  \makeatother%
  \begin{picture}(1,0.20143505)%
    \lineheight{1}%
    \setlength\tabcolsep{0pt}%
    \put(0,0){\includegraphics[width=\unitlength,page=1]{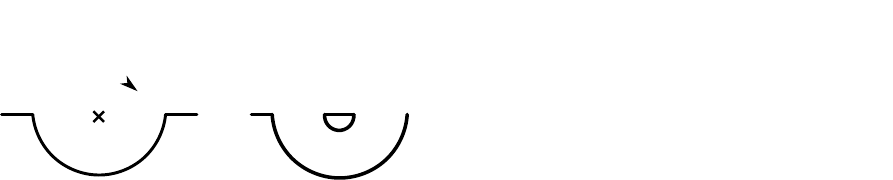}}%
    \put(0.44545119,0.00895719){\color[rgb]{0,0,0}\makebox(0,0)[lt]{\lineheight{1.25}\smash{\begin{tabular}[t]{l}$\mathcal{M}$\end{tabular}}}}%
    \put(0,0){\includegraphics[width=\unitlength,page=2]{exempleHolHennings.pdf}}%
    \put(0.23710315,0.06838286){\color[rgb]{0,0,0}\makebox(0,0)[lt]{\lineheight{1.25}\smash{\begin{tabular}[t]{l}$\mapsto$\end{tabular}}}}%
    \put(0.50034701,0.06869562){\color[rgb]{0,0,0}\makebox(0,0)[lt]{\lineheight{1.25}\smash{\begin{tabular}[t]{l}$= \: \mathcal{M}_0 \otimes gv^{-1}S(a_i)g^{-1}S(\mathcal{M}_1')a_jg^{-1}S(\mathcal{M}_1'')b_jb_i$\end{tabular}}}}%
    \put(0,0){\includegraphics[width=\unitlength,page=3]{exempleHolHennings.pdf}}%
  \end{picture}%
\endgroup%

\end{center}
where we used that $b_iga_i = v^{-1}$.

\smallskip

\indent The evaluation of $\mathrm{Hen}(L)$ on representations almost gives back the holonomy:
\begin{equation}\label{HenUniversal}
(\mathrm{id} \otimes \overset{V_1}{T} \otimes \ldots \otimes \overset{V_k}{T})\big( \mathrm{Hen}(L) \big) = \big(\overset{V_1}{g} \otimes \ldots \otimes \overset{V_k}{g}\big) \mathrm{hol}^{V_1, \ldots, V_k}(L) 
\end{equation}
where $\mathrm{hol}^{V_1, \ldots, V_k}$ means the holonomy of the based link whose strands are colored by $V_1, \ldots, V_k$. This equality is the generalization of the corresponding fact for the Hennings invariant and the proof is left to the reader. Note that the pivotal elements in the right-hand side come from the caps at the top of \eqref{defHolHennings}. Let us write $\mathrm{Hen}(L) = Z_0 \otimes Z_1 \otimes \ldots \otimes Z_k \in \mathcal{L}_{g,n}(H) \otimes H^{\otimes k}$ with implicit summation. Then according to Proposition \ref{actionSurHol} and \eqref{pivot}, the right action of $H$ on $\mathcal{L}_{g,n}(H)$ satisfies
\begin{equation}\label{actionSurHen}
(Z_0 \cdot h) \otimes Z_1 \otimes \ldots \otimes Z_k = Z_0 \otimes S^2(h^{(1)}) Z_1 S(h^{(2)}) \otimes \ldots \otimes S^2(h^{(2k-1)}) Z_k S(h^{(2k)}).
\end{equation}

\subsection{Generalized Wilson loops}
\indent In the classical case, a Wilson loop around a simple curve $\gamma$ assigns to a connection the trace of the holonomy of $\gamma$ in some representation; this quantity is gauge invariant. We will straightforwardly generalize this to the quantized case, by replacing the simple curve by a based link and by replacing the trace by a quantum trace (or more generally by a symmetric linear form shifted by $g$).

\smallskip

\indent Let $L$ be an uncolored based link with $k$ connected components. Let $f \in (H^{\circ})^{\otimes k}$ (recall that $H^{\circ}$ is the restricted dual of $H$). By definition $f$ is a linear combination of the form 
\[ f = \sum_{I_1, \ldots, I_k} (\Lambda_{I_1, \ldots, I_k})^{i_1, \ldots, i_k}_{j_1, \ldots, j_k} \, \overset{I_1}{T}{^{\,i_1}_{\,j_1}} \otimes \ldots \otimes \overset{I_k}{T}{^{\,i_k}_{\,j_k}} \]
with implicit summation on $i_1, \ldots, i_k, j_1, \ldots, j_k$ and where the $I_l$ are isomorphism classes of finite dimensional $H$-modules and the coefficients $(\Lambda_{I_1, \ldots, I_k})^{i_1, \ldots, i_k}_{j_1, \ldots, j_k} \in \mathbb{C}$ are all zero except a finite number of them. This can be written more conveniently as $f = \sum_{I_1, \ldots, I_k} \mathrm{tr}\bigr( \Lambda_{I_1, \ldots, I_k} \, \overset{I_1}{T} \otimes \ldots \otimes \overset{I_k}{T} \, \bigl)$ where the matrices $\Lambda_{I_1, \ldots, I_k} \in \mathrm{End}_{\mathbb{C}}(I_1 \otimes \ldots \otimes I_k)$ are all zero except a finite number of them. We define 
\[ W^f(L)  = \sum_{I_1, \ldots, I_k} \mathrm{tr}\big( \Lambda_{I_1, \ldots, I_k} \, \big(\overset{I_1}{g} \otimes \ldots \otimes \overset{I_k}{g}\big) \, \mathrm{hol}^{I_1, \ldots, I_k}(L) \big)  \in \mathcal{L}_{g,n}(H) \]
where $\mathrm{hol}^{I_1, \ldots, I_k}(L)$ is the holonomy of the based link whose components are colored by $I_1, \ldots, I_k$. 
In terms of the Hennings formulation, we see thanks to \eqref{HenUniversal} that it holds
\[ W^f(L) = (\mathrm{id} \otimes f)\big( \mathrm{Hen}(L) \big). \]

\smallskip

\indent We denote by $\mathrm{Inv}_k(H)$ the subspace of multilinear forms which are invariant under an iterated coadjoint action:
\[ \mathrm{Inv}_k(H) = \big\{ f \in (H^{\circ})^{\otimes k} \, \big| \, \forall \, h \in H, \: f\big( S(h^{(2k)}) ? h^{(2k-1)}, \ldots, S(h^{(2)}) ? h^{(1)} \big) = \varepsilon(h)f \big\} \]
where the ``$?$'' are the positions of the variables. In particular $\mathrm{Inv}_1(H)$ is the subspace of symmetric linear forms:
\[ \mathrm{Inv}_1(H) = \mathrm{SLF}(H) = \big\{ \varphi \in H^{\circ} \: \big| \: \forall \, x,y \in H, \: \varphi(xy) = \varphi(yx) \big\}. \]

\begin{proposition}\label{propertiesWf}
We have the following properties
\begin{enumerate}
\item $f \in \mathrm{Inv}_k(H) \: \Rightarrow \: W^f(L) \in \mathcal{L}_{g,n}^{\mathrm{inv}}(H)$, where $L$ is a based link with $k$ basepoints.
\item $W^f(L)W^{f'}(L') = W^{f \otimes f'}(L \ast L') $.
\item For $n=0$ and any $j \in \mathrm{MCG}(\Sigma_{g,0}^{\mathrm{o}})$, $\widetilde{j}\bigl( W^f(L) \bigr) = W^f\bigl( j(L) \bigr)$.
\item Any element of $\mathcal{L}_{g,n}(H)$ can be written $W^f(L)$ for some $f \in (H^{\circ})^{\otimes k}$ and some based link $L$ with $k$ basepoints ($k,f,L$ are non-unique).
\item Any element of $\mathcal{L}_{g,n}^{\mathrm{inv}}(H)$ can be written $W^f(L)$ for some $f \in \mathrm{Inv}_k(H)$ and some based link $L$ with $k$ basepoints ($k,f,L$ are non-unique).
\end{enumerate}
\end{proposition}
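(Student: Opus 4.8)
My plan is to deduce everything from the Hennings description $W^f(L)=(\mathrm{id}\otimes f)\bigl(\mathrm{Hen}(L)\bigr)$, which is linear in $f$, together with the three compatibilities already established: the $H$-action formula \eqref{actionSurHen}, Theorem~\ref{wilsonStack} and Theorem~\ref{MCGcommuteW}. Write $\mathrm{Hen}(L)=Z_0\otimes Z_1\otimes\cdots\otimes Z_k$. For part~(1): by \eqref{actionSurHen} one has $W^f(L)\cdot h=Z_0\,f\bigl(S^2(h^{(1)})Z_1S(h^{(2)}),\,\ldots,\,S^2(h^{(2k-1)})Z_kS(h^{(2k)})\bigr)$, and substituting $h\mapsto S(h)$ in the condition defining $\mathrm{Inv}_k(H)$, using $\Delta(S(h))=(S\otimes S)\Delta^{\mathrm{op}}(h)$, shows that $f\in\mathrm{Inv}_k(H)$ is equivalent to $f\bigl(S^2(h^{(1)})\,?_1\,S(h^{(2)}),\ldots\bigr)=\varepsilon(h)f$; hence $W^f(L)\cdot h=\varepsilon(h)W^f(L)$. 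It is worth recording the byproduct $W^{h\cdot f}(L)=W^f(L)\cdot S^{-1}(h)$ (so $f\mapsto W^f(L)$ is $H$-equivariant for the iterated coadjoint action and the twisted $\cdot$-action), which I will use in part~(5). For part~(2): Theorem~\ref{wilsonStack} gives $\mathrm{Hen}(L\ast L')=Z_0Z'_0\otimes Z_1\otimes\cdots\otimes Z_k\otimes Z'_1\otimes\cdots\otimes Z'_l$, so applying $\mathrm{id}\otimes f\otimes f'$ yields $W^{f\otimes f'}(L\ast L')=W^f(L)\,W^{f'}(L')$. For part~(3): Theorem~\ref{MCGcommuteW} gives $\mathrm{Hen}(j(L))=(\widetilde j\otimes\mathrm{id})\bigl(\mathrm{Hen}(L)\bigr)$, and since $\widetilde j$ only acts on the $\mathcal L_{g,n}(H)$-factor it commutes with $\mathrm{id}\otimes f$, giving $\widetilde j\bigl(W^f(L)\bigr)=W^f(j(L))$.

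For part~(4) the key points are: (a) a based loop $L_c$ going once around one of the simple loops $c\in\{b_i,a_i,m_j\}$, coloured by a finite dimensional module $I$, has $\mathrm{hol}^I(L_c)=\overset I{X}(i)\in\mathcal L_{g,n}(H)\otimes I\otimes I^*$, directly from Definition~\ref{defHol}; since $\overset I g$ is invertible, the elements $W^f(L_c)=\mathrm{tr}\bigl(\Lambda\,\overset I g\,\overset I{X}(i)\bigr)$ with $f=\mathrm{tr}(\Lambda\,\overset I T)\in H^{\circ}$ and $\Lambda$ ranging over $\mathrm{End}_{\mathbb C}(I)$ span all the matrix entries $\overset I{X}(i)^a_b$. (b) By part~(2), every monomial in these entries is a $W^f(L)$ with $L$ a stack of such based loops. (c) A finite linear combination $x=\sum_t\alpha_t W^{F_t}(L^{(t)})$ is turned into a single Wilson loop by putting $L=L^{(1)}\ast\cdots\ast L^{(m)}$ and padding each $F_t$ by $\varepsilon$'s in the tensor slots belonging to the other $L^{(t')}$; since colouring all components of a based link by the trivial module $\mathbb C$ gives holonomy $1$ (the generator matrices on $\mathbb C$ are the identity by the fusion relation \eqref{relationFusion}, and $\overset{\mathbb C}g=\varepsilon(g)=1$), one has $W^{\varepsilon^{\otimes k'}}(L')=1$, and then part~(2) and linearity give $W^f(L)=\sum_t\alpha_t W^{F_t}(L^{(t)})=x$. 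As $\mathcal L_{g,n}(H)$ is spanned by such monomials (it is a quotient of $\mathbb T(V_{g,n})$ with $V_{g,n}=\bigoplus H^{\circ}$), part~(4) follows.

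For part~(5) I would first reduce to showing that $\mathcal L_{g,n}^{\mathrm{inv}}(H)$ is spanned by the elements $W^f(L)$ with $f\in\mathrm{Inv}_k(H)$ and $L$ a based link. Granting this, the combining step of (4) applies verbatim: $\varepsilon\in\mathrm{Inv}_1(H)$, and inserting $\varepsilon$'s into any tensor slot of a form of $\mathrm{Inv}_j(H)$ keeps it in $\mathrm{Inv}$ (the $\varepsilon$'s absorb their coproduct factors into scalars), so every invariant element becomes a single $W^f(L)$ with $f\in\mathrm{Inv}_k(H)$. The spanning statement itself I would obtain by composing part~(4) with the canonical projection onto invariants: by the equivariance $W^{h\cdot f}(L)=W^f(L)\cdot S^{-1}(h)$, the map $f\mapsto W^f(L)$ intertwines a projection $(H^{\circ})^{\otimes k}\twoheadrightarrow\mathrm{Inv}_k(H)$ with a projection $\mathcal L_{g,n}(H)\twoheadrightarrow\mathcal L_{g,n}^{\mathrm{inv}}(H)$, so if $x\in\mathcal L_{g,n}^{\mathrm{inv}}(H)$ is written as $W^F(L)$ by (4) then $x=W^{\mathrm{Proj}(F)}(L)$ with $\mathrm{Proj}(F)\in\mathrm{Inv}_k(H)$.

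The routine compatibilities (parts (1)--(3)), the assembly argument of part~(4), and the reduction in part~(5) are all bookkeeping. The real obstacle is the very last point: producing an \emph{invariant} colouring that realizes a given invariant element. The projection argument above works cleanly whenever the trivial module $\mathbb C_\varepsilon$ is a direct summand (e.g. $H=U_q(\mathfrak g)$ at generic $q$, where $\mathrm{mod}_l(H)$ is semisimple). For a general $H$, where no $H$-module projection $\mathcal L_{g,n}(H)\to\mathcal L_{g,n}^{\mathrm{inv}}(H)$ need exist, one must argue more carefully — e.g. by lifting the (automatically invariant) class of $F$ in the relevant $H$-subquotient of $(H^{\circ})^{\otimes k}$, or by exhibiting directly a spanning set of $\mathcal L_{g,n}^{\mathrm{inv}}(H)$ by products of quantum traces of loop lifts combined with forms in $\mathrm{Inv}_k(H)$ — and this is where the genuine work lies.
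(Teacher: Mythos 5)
Your parts (1)--(3) coincide with the paper's proof: (1) is the same computation via $\mathrm{Hen}(L)$, \eqref{actionSurHen} and the substitution $h\mapsto S(h)$, and (2), (3) are read off from Theorems \ref{wilsonStack} and \ref{MCGcommuteW} exactly as in the paper. For (4) you take a genuinely different route: you realize each matrix entry $\overset{I}{X}(i)^a_b$ by a one-component based loop, build monomials by stacking, and merge a finite linear combination into a single $W^f(L)$ by padding the colouring with counits (the observation $W^{\varepsilon^{\otimes k'}}(L')=1$ is correct). This works, but the paper does something sharper: the fusion relation \eqref{relationFusion} puts every element of $\mathcal{L}_{g,n}(H)$ in the normal form $\sum_{I_1,\ldots}\mathrm{tr}\bigl(\Lambda\,\overset{I_1\otimes\cdots}{g}\,\overset{I_1}{B}(1)\cdots\overset{K_n}{M}(g+n)\bigr)$ with exactly \emph{one} factor per generator matrix, so that a single \emph{fixed} link $L_0$ with $2g+n$ components (one around each handle) realizes everything, all the freedom being pushed into $f\in(H^{\circ})^{\otimes(2g+n)}$.

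This difference is not cosmetic: it is precisely what makes (5) close, and it is where your argument stalls. For the canonical $L_0$, the map $f\mapsto W^f(L_0)$ is, up to the invertible linear twists by $\overset{I}{g}$ and by the $R$-matrix components coming from the crossings of $L_0$, the standard ``normal ordering'' identification of $(H^{\circ})^{\otimes(2g+n)}$ with $\mathcal{L}_{g,n}(H)$ (the linear content of the decomposition $\mathcal{L}_{g,n}\cong\mathcal{L}_{1,0}^{\widetilde{\otimes}g}\,\widetilde{\otimes}\,\mathcal{L}_{0,1}^{\widetilde{\otimes}n}$ invoked later in the paper); in particular it is injective, and by your own computation in (1) it intertwines the twisted iterated coadjoint action, whose invariants are exactly $\mathrm{Inv}_{2g+n}(H)$, with the action $\cdot$ on $\mathcal{L}_{g,n}(H)$. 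Hence if $x=W^F(L_0)$ is invariant, then $W^{F\cdot h}(L_0)=\varepsilon(h)W^F(L_0)$ forces $F\cdot h=\varepsilon(h)F$, i.e.\ $F\in\mathrm{Inv}_{2g+n}(H)$: no projection onto invariants and no semisimplicity is needed. By contrast, for the links you build in (4) (one component per \emph{occurrence} of a generator, stacked and padded) the map $(H^{\circ})^{\otimes k}\to\mathcal{L}_{g,n}(H)$ has a large kernel, and lifting an invariant of the quotient to an invariant of $(H^{\circ})^{\otimes k}$ is exactly the cohomological obstruction you describe; your closing caveat is therefore warranted, but the remedy is not a finer projection argument --- it is to prove (4) with the canonical link first, after which (5) is the one-line equivariance-plus-injectivity argument above. (The injectivity itself is the one input the paper leaves implicit in its ``combine 1 and 4''; it is the standard flatness/PBW property of $\mathcal{L}_{g,n}(H)$, consistent with Remark \ref{baseInvariants}.)
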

\begin{proof}
1. Write $\mathrm{Hen}(L) = Z_0 \otimes Z_1 \otimes \ldots \otimes Z_k$. Then thanks to \eqref{actionSurHen}:
\begin{align*}
W^f(L) \cdot h &= (Z_0 \cdot h) \, f(Z_1, \ldots, Z_k) = Z_0 \, f\big(S^2(h^{(1)}) Z_1 S(h^{(2)}), \ldots, S^2(h^{(2k-1)}) Z_k S(h^{(2k)})\big)\\
&= Z_0 \, f\big(S\big( S(h)^{(2k)}\big) Z_1 S(h)^{(2k-1)}, \ldots, S\big(S(h)^{(2)}\big) Z_k S(h)^{(1)}\big)\\
&= Z_0 \, \varepsilon\big(S(h)\big)f\big(Z_1, \ldots, Z_k \big) = \varepsilon(h)W^f(L).
\end{align*}
2. Follows immediately from Theorem \ref{wilsonStack}.\\
3. Follows immediately from Theorem \ref{MCGcommuteW}.\\
4.  Let $x \in \mathcal{L}_{g,n}(H)$. Thanks to the defining relations of $\mathcal{L}_{g,n}(H)$ (Definition \ref{defLgn}), it is clear that any element of $\mathcal{L}_{g,n}(H)$ can be written as a linear combination of some products of the form
\[ \overset{I_1}{B}(1)^{i_1}_{j_1} \, \overset{J_1}{A}(1)^{k_1}_{l_1} \, \ldots \, \overset{I_g}{B}(g)^{i_g}_{j_g} \, \overset{J_g}{A}(g)^{k_g}_{l_g} \, \overset{K_1}{M}(g+1)^{m_1}_{o_1} \, \ldots \, \overset{K_n}{M}(g+n)^{m_n}_{o_n}. \]
To avoid cumbersome notations, take for instance $(g,n)=(1,1)$. Then we can write
\[ x = \sum_{I,J,K} \big(\Lambda_{I,J,K}\overset{I\otimes J \otimes K}{g}\big)_{ikl}^{jlo} \, \overset{I}{B}{^i_j} \, \overset{J}{A}{^k_l} \,  \overset{K}{M}{^l_o} = \sum_{I,J,K} \mathrm{tr}\big(\Lambda_{I,J,K}\overset{I\otimes J \otimes K}{g} \overset{I}{B} \, \overset{J}{A} \,  \overset{K}{M}\big) \]
where only a finite number of the matrices $\Lambda_{I,J,K} \in \mathrm{End}_{\mathbb{C}}(I \otimes J \otimes K)$ are non-zero, and inserting $g$ is not an issue since it is invertible. Take $f=\sum_{I,J,K} \mathrm{tr}\big(\Lambda_{I,J,K} \, \overset{I}{T} \otimes \overset{J}{T} \otimes  \overset{K}{T}\big) \in (H^{\circ})^{\otimes 3}$ and 
\begin{center}
\begingroup%
  \makeatletter%
  \providecommand\color[2][]{%
    \errmessage{(Inkscape) Color is used for the text in Inkscape, but the package 'color.sty' is not loaded}%
    \renewcommand\color[2][]{}%
  }%
  \providecommand\transparent[1]{%
    \errmessage{(Inkscape) Transparency is used (non-zero) for the text in Inkscape, but the package 'transparent.sty' is not loaded}%
    \renewcommand\transparent[1]{}%
  }%
  \providecommand\rotatebox[2]{#2}%
  \newcommand*\fsize{\dimexpr\f@size pt\relax}%
  \newcommand*\lineheight[1]{\fontsize{\fsize}{#1\fsize}\selectfont}%
  \ifx\svgwidth\undefined%
    \setlength{\unitlength}{272.24285498bp}%
    \ifx\svgscale\undefined%
      \relax%
    \else%
      \setlength{\unitlength}{\unitlength * \real{\svgscale}}%
    \fi%
  \else%
    \setlength{\unitlength}{\svgwidth}%
  \fi%
  \global\let\svgwidth\undefined%
  \global\let\svgscale\undefined%
  \makeatother%
  \begin{picture}(1,0.37449927)%
    \lineheight{1}%
    \setlength\tabcolsep{0pt}%
    \put(0,0){\includegraphics[width=\unitlength,page=1]{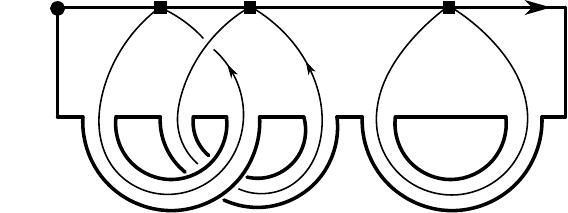}}%
    \put(-0.00370206,0.19907216){\color[rgb]{0,0,0}\makebox(0,0)[lt]{\lineheight{1.25}\smash{\begin{tabular}[t]{l}$L =$\end{tabular}}}}%
  \end{picture}%
\endgroup%

\end{center}
By definition we have $W^f(L) = x$.\\
5. Follows by combining the proofs of the properties 1 and 4.
\end{proof}

\begin{remark}\label{baseInvariants}
Let $L$ be the obvious generalization to any $g,n$ of the based link used in the previous proof. Then we have shown that
\begin{align*}
(H^{\circ})^{\otimes (2g+n)} \twoheadrightarrow \mathcal{L}_{g,n}(H), &\:\:\:\: f \mapsto W^f(L),\\
\mathrm{Inv}_{2g+n}(H) \twoheadrightarrow \mathcal{L}_{g,n}^{\mathrm{inv}}(H), &\:\:\:\: f \mapsto W^f(L)
\end{align*}
are surjective linear maps. When $H$ is finite dimensional and factorizable, we have $\dim\big( \mathcal{L}_{g,n}(H) \big) = \dim(H)^{2g+n}$ (see \cite[\S 3.3]{Fai18c}) and by comparison of dimensions these maps are isomorphisms of vector spaces.
\end{remark}

\indent For a based link $L$, we define its free isotopy class $[L]$ to be the result of the following operation (applied to all the basepoints):
\begin{center}
\begingroup%
  \makeatletter%
  \providecommand\color[2][]{%
    \errmessage{(Inkscape) Color is used for the text in Inkscape, but the package 'color.sty' is not loaded}%
    \renewcommand\color[2][]{}%
  }%
  \providecommand\transparent[1]{%
    \errmessage{(Inkscape) Transparency is used (non-zero) for the text in Inkscape, but the package 'transparent.sty' is not loaded}%
    \renewcommand\transparent[1]{}%
  }%
  \providecommand\rotatebox[2]{#2}%
  \newcommand*\fsize{\dimexpr\f@size pt\relax}%
  \newcommand*\lineheight[1]{\fontsize{\fsize}{#1\fsize}\selectfont}%
  \ifx\svgwidth\undefined%
    \setlength{\unitlength}{416.24999745bp}%
    \ifx\svgscale\undefined%
      \relax%
    \else%
      \setlength{\unitlength}{\unitlength * \real{\svgscale}}%
    \fi%
  \else%
    \setlength{\unitlength}{\svgwidth}%
  \fi%
  \global\let\svgwidth\undefined%
  \global\let\svgscale\undefined%
  \makeatother%
  \begin{picture}(1,0.10159872)%
    \lineheight{1}%
    \setlength\tabcolsep{0pt}%
    \put(0,0){\includegraphics[width=\unitlength,page=1]{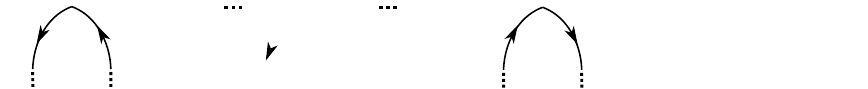}}%
    \put(0.21065774,0.04056496){\color[rgb]{0,0,0}\makebox(0,0)[lt]{\lineheight{1.25}\smash{\begin{tabular}[t]{l}$\mapsto$\end{tabular}}}}%
    \put(0.75137892,0.05002437){\color[rgb]{0,0,0}\makebox(0,0)[lt]{\lineheight{1.25}\smash{\begin{tabular}[t]{l}$\mapsto$\end{tabular}}}}%
    \put(0,0){\includegraphics[width=\unitlength,page=2]{unbasing.pdf}}%
  \end{picture}%
\endgroup%

\end{center}
By definition, $[L] \subset \Sigma_{g,n}^{\mathrm{o}} \times [0,1]$ is a link in the usual sense. Recall that in the classical case, a Wilson loop does not depend on the basepoint, but only on the free isotopy class of the curve. In general, $W^f(L)$ depends on the basepoint. If we want to remove this dependence, we must restrict $f$ to be in a suitable subspace:

\begin{proposition}Let $L_1, L_2$ be based links with $k$ basepoints. If $f \in \mathrm{SLF}(H)^{\otimes k}$, then $W^f$ depends only on the free isotopy class:
\[ [L_1] = [L_2] \:\: \implies \:\: W^f(L_1) = W^f(L_2). \]
\end{proposition}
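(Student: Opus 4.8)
The plan is to reduce the claim to understanding how $W^f(L)$ changes when we move a basepoint along its connected component, and to show that when $f$ is a tensor product of symmetric linear forms this change is trivial. First I would observe that any two based links $L_1, L_2$ with the same free isotopy class $[L_1]=[L_2]$ differ by a sequence of elementary moves: (a) an isotopy of based links (which already preserves $W^f$ since $\mathrm{hol}$ is an isotopy invariant of $\partial\boldsymbol{\Sigma}$-tangles, via the Reshetikhin--Turaev functor), (b) sliding a basepoint $p_i$ around its component $L_i$ so that it passes once through the rest of the diagram, and (c) permuting the heights of two basepoints belonging to \emph{distinct} components. So it suffices to check invariance of $W^f$ under moves (b) and (c).

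For move (b): using the Hennings formulation $W^f(L) = (\mathrm{id}\otimes f)(\mathrm{Hen}(L))$ and the fact that, reading along the component $L_i$, the group-like $Z_i \in H$ associated to that component is a product $x_1 x_2 \cdots x_r$ of the letters coming from the graphical elements encountered, sliding the basepoint past one full turn cyclically permutes this product: $Z_i \mapsto x_2 \cdots x_r x_1$ (one has to be a little careful with the coupons $\mathrm{Or}_i$ and the $g^{\pm 1}$ factors, but $g$ is group-like so it can be absorbed and shuffled without changing the trace). Since $f$ is in $\mathrm{SLF}(H)^{\otimes k}$, its $i$-th tensor factor $\varphi_i$ satisfies $\varphi_i(ab)=\varphi_i(ba)$, hence $\varphi_i(x_1\cdots x_r)=\varphi_i(x_2\cdots x_r x_1)$, and the $\mathcal{L}_{g,n}(H)$-component $Z_0$ (the product of the $\mathcal{B}(i)_0,\mathcal{A}(i)_0,\mathcal{M}(i)_0$) is unchanged because it only records the order in which the handles are traversed around the \emph{whole} diagram, not where the individual basepoints sit. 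For move (c): permuting heights of basepoints on different components corresponds, under $\mathrm{hol}$, to sliding one boundary strand past another in the target $[0,1]$-direction; because the two strands belong to different components there is no crossing between them in the relevant region, so the underlying ribbon graph is unchanged up to isotopy, and both $\mathrm{Hen}(L)$ and $W^f(L)$ are unaffected.

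Concretely, I would carry this out diagrammatically in the spirit of the proofs of Theorems \ref{wilsonStack} and \ref{MCGcommuteW}: present $L_1$ and $L_2$ in the standard form \eqref{basedLink}, connect them by the moves above, and at the one nontrivial step (move (b)) invoke symmetry of the linear forms. An even cleaner route, which I would mention, is to use $W^f(L)=(\mathrm{id}\otimes f)(\mathrm{Hen}(L))$ directly: the only dependence of $\mathrm{Hen}(L)$ on the choice of basepoints (as opposed to $[L]$) is the choice of the cyclic representative of each $Z_i$ and the relative heights of basepoints on distinct components; the former is killed by $\varphi_i \in \mathrm{SLF}(H)$ and the latter does not affect $\mathrm{Hen}(L)$ at all.

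The main obstacle I anticipate is bookkeeping the auxiliary factors introduced by the coupons $\mathrm{Or}_i$ and the pivotal element $g$ in the Hennings rules of \S\ref{Sectionhennings} --- i.e.\ verifying precisely that, after the slide, the new word for $Z_i$ is genuinely a cyclic rotation of the old one (with the $g$'s correctly accounted for, using $\Delta(g)=g\otimes g$ and centrality of $v$), rather than something more complicated. Once that is pinned down the symmetry of $f$ finishes the argument immediately. A secondary point requiring care is that moving a basepoint past a \emph{self}-crossing or past a handle of $\mathbf{T}(L)$ must be shown to produce only a cyclic reordering and not, say, an extra $R$-matrix contribution; this follows from the reflection equation \eqref{equationReflexion} and naturality \eqref{naturalite}, exactly as in the computation preceding Definition \ref{stackProduct} showing that $\mathrm{hol}(\mathbf{T})$ is independent of the chosen tangle $T$.
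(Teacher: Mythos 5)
Your proposal is correct and takes essentially the same route as the paper: the paper's proof also works through the Hennings formulation $W^f(L)=(\mathrm{id}\otimes f)(\mathrm{Hen}(L))$, invoking cyclicity of the symmetric linear forms to move each basepoint to any generic point of its component and the isotopy invariance of the Hennings-type evaluation for the rest. Your decomposition into elementary moves and the bookkeeping of the $g$-factors merely spell out details the paper leaves implicit.
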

\begin{proof}
Thanks to the cyclicity of symmetric linear forms, it is clear that the basepoints in the diagram \eqref{defHolHennings} can be moved at any generic point (\textit{i.e.} not at a maximal, minimal or crossing point, nor in a handle) without changing the value of $(\mathrm{id} \otimes \varphi_1 \otimes \ldots \otimes \varphi_k)\bigl( \mathrm{Hen}(L_i)\bigr)$. Moreover, that the evaluation of the diagram \eqref{defHolHennings} is unchanged under isotopy is a generalization of the corresponding fact for the Hennings invariant.
\end{proof}
\noindent It follows that $W^f$ makes sense for (usual) links if $f \in \mathrm{SLF}(H)$. More precisely, let $\mathscr{L}(\Sigma_{g,n}^{\mathrm{o}})$ be the set of isotopy classes of framed links in $\Sigma_{g,n}^{\mathrm{o}} \times [0,1]$ which are $\mathrm{SLF}(H)$-colored (\textit{i.e.} each connected component is labelled by a symmetric linear form). Let $L \in \mathscr{L}(\Sigma_{g,n}^{\mathrm{o}})$ be a link with $k$ components and let $L_b$ be any uncolored based link such that $[L_b] = L$, then we define
\begin{equation}\label{defWilsonGen}
W(L) = W^{\varphi_1 \otimes \ldots \otimes \varphi_k}(L_b)
\end{equation}
where $\varphi_1, \ldots, \varphi_k$ are the colors of the components of $L$.
\begin{definition}\label{defWilsonLoopMap}
We call $W : \mathbb{C}\mathscr{L}(\Sigma_{g,n}^{\mathrm{o}}) \to \mathcal{L}_{g,n}^{\mathrm{inv}}(H)$ the generalized Wilson loop map, and an element $W(L)$ is called a generalized Wilson loop.
\end{definition}
\noindent By the second property in Proposition \ref{propertiesWf}, the generalized Wilson loops form a subalgebra in $\mathcal{L}_{g,n}^{\mathrm{inv}}(H)$.

\begin{remark}\label{remarqueAvecCaracteres}
Recall that the character of a finite dimensional $H$-module $I$ is $\chi^I = \mathrm{tr}\big( \overset{I}{T} \big)$. Let $L^{\chi}$ be an unbased link whose all strands are colored by characters $\chi^{I_1}, \ldots, \chi^{I_k}$ and $L$ be the same link without coloring. Then 
\[ W(L^{\chi}) = \mathrm{hol}^{I_1, \ldots, I_k}(L) \]
and we recover the Wilson loop map defined in \cite[Chap. 6]{these}. Indeed, it suffices to consider the case where there is only one strand. Let $L_b$ be a  based link such that $[L_b] = L$. By definition $W(L^{\chi}) = \mathrm{tr}_q\bigl( \mathrm{hol}^I(L_b) \bigr)$. Hence according to \eqref{openLoop} and \eqref{traceQuantique}, we have:
\begin{center}
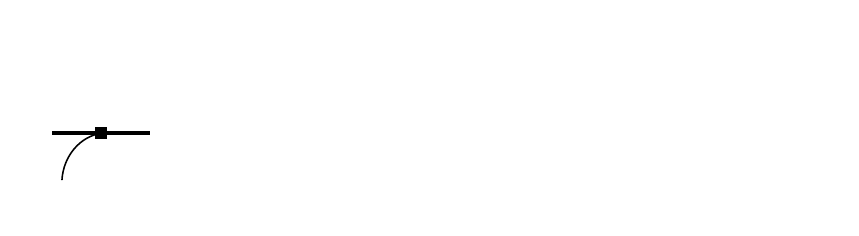
\end{center}
where $T_{L_b}$ is the part of $L_b$ outside a small neighborhood of the basepoint, $D_{L_b}$ is the corresponding diagram obtained when applying $\mathrm{hol}^I$ and $D_L$ is the free isotopy diagram of $D_{L_b}$.
\end{remark}

\section{Relationship with stated skein algebras}\label{sectionSkeinAlg}
\indent Recall the definition \ref{defBordTangle} of $\partial \boldsymbol{\Sigma}$-tangles, which is just a slight generalization of \cite[\S 2.3]{Le}. In this section and as in \cite{Le, CL}, we will consider $\partial \boldsymbol{\Sigma}$-tangles which are uncolored and unoriented. Recall from \cite[\S 2.5]{Le} that the {\em stated skein algebra} $\mathcal{S}^{\mathrm{s}}_q(\boldsymbol{\Sigma})$ of a punctured bordered surface $\boldsymbol{\Sigma}$ is the $\mathbb{Z}[q^{1/2}, q^{-1/2}]$-module freely generated by the (isotopy classes of) $ \partial \boldsymbol{\Sigma}$-tangles on that surface, modulo the Kauffman relations:
\begin{equation}\label{kauffmanUq2}
\begingroup%
  \makeatletter%
  \providecommand\color[2][]{%
    \errmessage{(Inkscape) Color is used for the text in Inkscape, but the package 'color.sty' is not loaded}%
    \renewcommand\color[2][]{}%
  }%
  \providecommand\transparent[1]{%
    \errmessage{(Inkscape) Transparency is used (non-zero) for the text in Inkscape, but the package 'transparent.sty' is not loaded}%
    \renewcommand\transparent[1]{}%
  }%
  \providecommand\rotatebox[2]{#2}%
  \newcommand*\fsize{\dimexpr\f@size pt\relax}%
  \newcommand*\lineheight[1]{\fontsize{\fsize}{#1\fsize}\selectfont}%
  \ifx\svgwidth\undefined%
    \setlength{\unitlength}{301.80688023bp}%
    \ifx\svgscale\undefined%
      \relax%
    \else%
      \setlength{\unitlength}{\unitlength * \real{\svgscale}}%
    \fi%
  \else%
    \setlength{\unitlength}{\svgwidth}%
  \fi%
  \global\let\svgwidth\undefined%
  \global\let\svgscale\undefined%
  \makeatother%
  \begin{picture}(1,0.101312)%
    \lineheight{1}%
    \setlength\tabcolsep{0pt}%
    \put(0,0){\includegraphics[width=\unitlength,page=1]{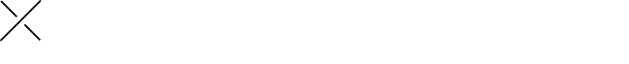}}%
    \put(0.08300571,0.06273809){\color[rgb]{0,0,0}\makebox(0,0)[lt]{\lineheight{1.25}\smash{\begin{tabular}[t]{l}$= \: q$\end{tabular}}}}%
    \put(0.2414581,0.06121874){\color[rgb]{0,0,0}\makebox(0,0)[lt]{\lineheight{1.25}\smash{\begin{tabular}[t]{l}$+ \:\, q^{-1}$\end{tabular}}}}%
    \put(0,0){\includegraphics[width=\unitlength,page=2]{kauffmanUq2.pdf}}%
    \put(0.6148487,0.05516126){\color[rgb]{0,0,0}\makebox(0,0)[lt]{\lineheight{1.25}\smash{\begin{tabular}[t]{l}$= \: -(q^2+q^{-2})\varnothing$\end{tabular}}}}%
    \put(0,0){\includegraphics[width=\unitlength,page=3]{kauffmanUq2.pdf}}%
  \end{picture}%
\endgroup%

\end{equation}
and the boundary relations:
\begin{equation}\label{boundaryRels}
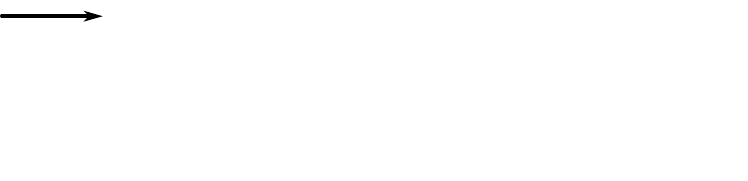
\end{equation}
The conventions for the figures are the same as in \cite{Le, CL}. However, we stress that our convention for the stack product (Definition \ref{stackProduct}) is {\em opposite} to that of \cite{Le, CL}; in other words, the stated skein algebras considered here have {\em opposite multiplication} to those in these papers.

\smallskip

\indent In this section we work with $H = U_{q^2} = U_{q^2}(\mathfrak{sl}_2)$, the quantum group associated to $\mathfrak{sl}_2(\mathbb{C})$ with ground ring $\mathbb{Z}[q^{1/2}, q^{-1/2}]$, where $q$ is a formal parameter or a complex number which is not a root of unity. Let $\mathcal{O}_{q^2} = \mathcal{O}_{q^2}(\mathfrak{sl}_2)$ be the restricted dual of $U_{q^2}$ with its canonical Hopf algebra structure. The goal of this section is to show that the holonomy map provides an isomorphism of $\mathcal{O}_{q^2}$-comodule-algebras between $\mathcal{S}^{\mathrm{s}}_q(\Sigma_{g,n}^{\mathrm{o},\bullet})$ and $\mathcal{L}_{g,n}(U_{q^2})$. We recall that $\Sigma_{g,n}^{\mathrm{o},\bullet}$ is the surface $\Sigma_{g,n}\!\setminus\! D$ (where $D$ is an open disk) with one point removed on its boundary (Figure \ref{surfaceGN}), where $\Sigma_{g,n}$ is the compact oriented surface of genus $g$ with $n$ punctures. We begin with useful preliminary remarks.

\smallskip

\indent Let $V_2$ be a 2-dimensional $\mathbb{C}$-vector space with basis $\bigl(v_-, v_+\bigr)$. $V_2$ can be endowed with an $U_{q^2}$-module structure, called the fundamental representation of $U_{q^2}$. The action is given by:
\[ \begin{array}{lll}
Ev_- = 0, \:\:\:& Fv_- = v_+, \:\:\:& Kv_- = q^2v_-,\\
Ev_+ = v_-, \:\:\:& Fv_+ = 0, \:\:\:& Kv_+ = q^{-2}v_+.
\end{array} \]
It is well-known that every finite dimensional simple $U_{q^2} $-module is a direct summand of some tensor power $V_2^{\otimes N}$. It follows that $\mathcal{O}_{q^2}$ is generated as an algebra by the matrix coefficients $\overset{V_2}{T}{^s_t}$ of $V_2$ (with $s,t \in \{\pm\}$). This also applies to the algebra $\mathcal{L}_{g,n}(U_{q^2})$; indeed, let $B(i) = \overset{V_2}{B}(i), A(i) = \overset{V_2}{A}(i), M(i) = \overset{V_2}{M}(i)$. For any matrix $X  \in \mathcal{L}_{g,n}(U_{q^2}) \otimes \mathrm{End}_{\mathbb{C}}(V_2)$, we denote by $X^s_t$ ($s,t \in \{ \pm \}$) its components in the basis $\bigl(v_-, v_+\bigr)$:
\[
X =
\begin{pmatrix}
X^-_- & X^-_+\\
X^+_- & X^+_+
\end{pmatrix}.
\]
Finally, let $R \in \mathrm{End}_{\mathbb{C}}(V_2)^{\otimes 2}$ be the $R$-matrix of $U_{q^2}$ evaluated on $V_2 \otimes V_2$, and let $R_{21}$ be the matrix defined by $(R_{21})^{s_1s_2}_{t_1t_2} = R^{s_2s_1}_{t_2t_1}$. Explicitly:
\begin{equation}\label{RMatriceV2}
R=q^{-1}
\begin{pmatrix}
q^2 & 0 & 0 & 0 \\
0 & 1 & q^2-q^{-2} & 0\\
0 & 0 & 1 & 0\\
0 & 0 & 0 & q^2
\end{pmatrix}, \:\:\:\:\:\:\:\: 
R_{21}=q^{-1}
\begin{pmatrix}
q^2 & 0 & 0 & 0 \\
0 & 1 & 0 & 0\\
0 & q^2-q^{-2} & 1 & 0\\
0 & 0 & 0 & q^2
\end{pmatrix}.
\end{equation}
\begin{lemma}\label{GensRelsLgnUq2}
The algebra $\mathcal{L}_{g,n}(U_{q^2})$ is generated by the coefficients of the matrices $B(i), A(i)$ for $1 \leq i \leq g$ and $M(i)$ for $g+1\leq i \leq g+n$, modulo the relations given by the following matrix identities:
\[
\begin{array}{ll}
R \, X(i)_1\, R_{21} \, X(i)_2 = X(i)_2 \, R \, X(i)_1\, R_{21}, &\:\:\: \text{for } 1 \leq i \leq g+n,\\[2pt]
X(i)^-_-X(i)^+_+ - q^4X(i)^-_+X(i)^+_- = 1 &\:\:\: \text{for } 1 \leq i \leq g+n,\\[2pt]
R \, B(i)_1\, R_{21} \, A(i)_2 = A(i)_2\, R \, B(i)_1 \, R^{-1} &\:\:\: \text{for } 1 \leq i \leq g,\\[2pt]
R \, X(i)_1\, R^{-1} \, Y(j)_2 = Y(j)_2 \, R \, X(i)_1 \, R^{-1} &\:\:\: \text{for } 1 \leq i < j \leq g+n.
\end{array}
\]
where $X(i)$ is $A(i)$ or $B(i)$ if $1 \leq i \leq g$ and is $M(i)$ if $g+1 \leq i \leq g+n$, and the same applies to $Y(j)$. Recall that the meaning of the subscripts $1,2$ is explained after Definition \ref{defLgn}.
\end{lemma}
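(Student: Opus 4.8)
The plan is to introduce the algebra $\widetilde{\mathcal A}$ presented by the generators and relations in the statement, together with the tautological map $\Psi\colon\widetilde{\mathcal A}\to\mathcal L_{g,n}(U_{q^2})$ sending each abstract generator to the corresponding matrix coefficient, and then to prove that $\Psi$ is an isomorphism by reducing everything to tensor powers of the fundamental representation $V_2$.

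First I would prove the generation statement. By Definition \ref{defLgn}, $\mathcal L_{g,n}(U_{q^2})$ is generated by the coefficients $\overset{I}{X}(i)^k_l$ over all finite dimensional $U_{q^2}$-modules $I$, where $X(i)$ is $B(i)$, $A(i)$ or $M(i)$. Since $q$ is not a root of unity, finite dimensional $U_{q^2}$-modules are semisimple, and by the paragraph preceding the lemma each simple one is a direct summand of some $V_2^{\otimes N}$; hence $I$ is a direct summand of a finite direct sum of such tensor powers. Using naturality \eqref{fonctorialite} and the canonical injections and projections, it therefore suffices to see that every entry of $\overset{V_2^{\otimes N}}{X}(i)$ lies in the subalgebra generated by the entries of the $B(j)$, $A(j)$, $M(j)$; this follows by iterating the fusion relation \eqref{relationFusion} with $J=V_2$ and inducting on $N$, the matrix $R$ appearing there being the numerical matrix of \eqref{RMatriceV2}. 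This proves the generation claim.

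Next I would check that the four families of relations hold in $\mathcal L_{g,n}(U_{q^2})$, which makes $\Psi$ a well-defined algebra morphism; it is then surjective by the previous paragraph. The reflection equation is the first part of Proposition \ref{propEqRefEtInverse} with $I=J=V_2$, using that on $V_2\otimes V_2$ the matrix $R_{21}$ of \eqref{RMatriceV2} coincides with $R'$. The two exchange relations are \eqref{echangeL10} and \eqref{echangeLgn} with $I=J=V_2$. The ``quantum determinant'' relation comes from the fusion relation \eqref{relationFusion} with $I=J=V_2$: decomposing $V_2\otimes V_2\cong V_3\oplus V_1$ with $V_1$ the trivial module, applying naturality \eqref{fonctorialite} to the inclusion of the $V_1$-summand, and using $\overset{V_1}{X}(i)=1$ (which itself follows from the fusion relation with the unit object together with the invertibility in the second part of Proposition \ref{propEqRefEtInverse}), one turns the fusion relation into the scalar identity $X(i)^-_-X(i)^+_+ - q^4X(i)^-_+X(i)^+_- = 1$ after expanding in the basis $(v_-,v_+)$; the power $q^4$ encodes the normalisation of $R$ in \eqref{RMatriceV2}. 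This is a short explicit matrix computation.

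Finally I would construct an inverse to $\Psi$. Working inside $\widetilde{\mathcal A}$, I would define $\overset{V_2^{\otimes N}}{X}(i)\in\widetilde{\mathcal A}\otimes\mathrm{End}_{\mathbb C}(V_2^{\otimes N})$ by iterating the fusion formula \eqref{relationFusion}, and then, using the relations of $\widetilde{\mathcal A}$ together with the hexagon and Yang--Baxter identities for $R$ on tensor powers of $V_2$, check by induction (on $N$, then on $N+M$) that these matrices are natural with respect to every $U_{q^2}$-morphism between tensor powers of $V_2$ and satisfy the fusion relation in both legs, the reflection equation and the two exchange relations; in the naturality step one uses that such morphisms are generated by $R$-matrices, cups and caps, the cup and cap cases being exactly where the quantum determinant relation enters. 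For a general finite dimensional $I$ I would pick an idempotent $e\in\mathrm{End}_{U_{q^2}}(V_2^{\otimes N})$ with image $I$ and set $\overset{I}{X}(i):=e\,\overset{V_2^{\otimes N}}{X}(i)\,e$; naturality makes this independent of $(N,e)$, and the properties just listed show that these matrices satisfy \eqref{relationFusion}, \eqref{echangeL10} and \eqref{echangeLgn}, so that $\overset{I}{X}(i)\mapsto$ this element defines an algebra morphism $\mathcal L_{g,n}(U_{q^2})\to\widetilde{\mathcal A}$ inverse to $\Psi$. I expect the main obstacle to be this last verification, and especially the check that the fusion-defined matrices commute with the evaluation and coevaluation morphisms on $V_2^{\otimes N}$, where the quantum determinant relation is indispensable and the bookkeeping is heaviest. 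One can alternatively short-circuit it by invoking the description of the single-handle algebra $\mathcal L_{0,1}(U_{q^2})$ from \cite{BaR} and the fact that $\mathcal L_{g,n}$ is the braided tensor product of its single-handle subalgebras, the relation \eqref{echangeLgn} being precisely the braiding.
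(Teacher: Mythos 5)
Your proposal is correct, but it takes a noticeably longer and more self-contained route than the paper. The paper's proof of Lemma \ref{GensRelsLgnUq2} is three sentences: the generation claim is obtained exactly as in your first paragraph (every finite dimensional $U_{q^2}$-module is a direct summand of some $V_2^{\otimes N}$, plus the fusion relation \eqref{relationFusion} and naturality); the equivalence of the fusion relation at $I=J=V_2$ with the first two lines of relations (reflection equation and quantum determinant) is delegated to the citation \cite[Lemma 5.1]{BaR}; and the last two lines are just \eqref{echangeL10} and \eqref{echangeLgn} specialized to $V_2$. In particular the hard direction --- that the defining relations of Definition \ref{defLgn} for \emph{arbitrary} $I,J$ are consequences of the listed relations at $V_2$, which is what makes the tautological surjection $\Psi\colon\widetilde{\mathcal A}\to\mathcal L_{g,n}(U_{q^2})$ injective --- is entirely outsourced to \cite{BaR}, whereas your third paragraph proves it directly: iterated fusion on $V_2^{\otimes N}$, naturality with respect to the Temperley--Lieb morphisms between tensor powers (with the quantum determinant relation doing the work at cups and caps, as you correctly locate), and idempotent truncation to define $\overset{I}{X}(i)$ for general $I$. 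That construction is the standard proof of the cited fact and is sound, as is your derivation of the quantum determinant relation from the trivial summand of $V_2\otimes V_2$ together with $\overset{\mathbb{C}}{X}(i)=1$ (which is what the invertibility statement of Proposition \ref{propEqRefEtInverse} encodes). The paper's route buys brevity at the price of an external reference; yours buys independence from \cite{BaR} at the price of the naturality bookkeeping you rightly flag as the heaviest step. Your closing alternative --- handle $\mathcal L_{0,1}(U_{q^2})$ via \cite{BaR} and assemble $\mathcal L_{g,n}$ as a braided tensor product of single-handle algebras, with \eqref{echangeLgn} playing the role of the braiding --- is essentially the logic the paper leaves implicit.
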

\begin{proof}
Since any finite dimensional $U_{q^2}$-module is a direct summand of some $V_2^{\otimes N}$, $N \in \mathbb{N}$, the fusion relation \eqref{relationFusion} implies the claim about the set of generators. It is well-known (see e.g. \cite[Lemma 5.1]{BaR}) that the fusion relation \eqref{relationFusion} with $I=J=V_2$ is equivalent to the two first lines of relations. The third and fourth lines of relations are just \eqref{echangeL10} and \eqref{echangeLgn} with $I=J=V_2$.
\end{proof}
\indent As a result, we can without loss of generality restrict the color of all the strands in diagrams to be $V_2$, since any color can be recovered by cabling and inserting a Jones-Wenzl idempotent. Hence in this section, all the strands in diagrams are implicitly colored by $V_2$. 

\smallskip

\indent Another important property is that $V_2$ is self-dual. For our purposes we fix a particular isomorphism as follows:
\begin{equation}\label{isoDualUq2}
\begin{array}{rcrcl} 
D& : & V_2^* & \overset{\sim}{\rightarrow} & V_2\\
& & v^- & \mapsto & -q^{5/2} v_+\\
& & v^+ & \mapsto & q^{1/2}v_-
\end{array}
\:\:\:\:\:\:\:\:\:\:\: \text{\it i.e.} \:\:\: D=
\begin{pmatrix}
0 & q^{1/2}\\
-q^{5/2} & 0
\end{pmatrix}
\end{equation}
where $(v^-, v^+)$ is the dual basis of $(v_-, v_+)$. Recall that by definition of the $H$-module structure on $I^*$, it holds $\overset{I^*}{h} = \exposantGauche{t}{\!\bigl(}\overset{I}{S(h)}\bigr)$, where $^t$ denotes the transpose and we recall that $\overset{I}{x}$ means the representation of $x \in H$ on the finite dimensional $H$-module $I$. If we take $I=V_2$, we have $\overset{V_2}{h}D = D \overset{V_2^*}{h}$ and hence we deduce the following equality, which will be useful later:
\begin{equation}\label{ruseFondaDual}
\exposantGauche{t}{\!\bigl(} \overset{V_2}{h} \bigr) = D^{-1} \, \overset{V_2}{S^{-1}(h)} \, D.
\end{equation}

\smallskip

\indent There exists a particular pivotal element $g$ lying in some completion of $U_{q^2}$ (see \cite{tingley}) and satisfying
\[ gv_- = -q^2 v_-, \:\:\:\:\: gv_+ = -q^{-2}v_+ \]
(of course $g \neq -K$). With this choice one can check that it holds
\begin{equation}\label{propKD}
e \circ D^* = D \:\:\:\:\:\:\:\:\:\:\: \text{\it i.e.} \:\:\: ^t\!D = \overset{V_2}{g} D
\end{equation}
where $e = e_{V_2} : V_2^{**} \overset{\sim}{\to} V_2$ is the identification with the bidual \eqref{identificationBidual}. This implies that the value of a diagram does not depend of the orientation of its closed components. Indeed:
\begin{center}
\begingroup%
  \makeatletter%
  \providecommand\color[2][]{%
    \errmessage{(Inkscape) Color is used for the text in Inkscape, but the package 'color.sty' is not loaded}%
    \renewcommand\color[2][]{}%
  }%
  \providecommand\transparent[1]{%
    \errmessage{(Inkscape) Transparency is used (non-zero) for the text in Inkscape, but the package 'transparent.sty' is not loaded}%
    \renewcommand\transparent[1]{}%
  }%
  \providecommand\rotatebox[2]{#2}%
  \newcommand*\fsize{\dimexpr\f@size pt\relax}%
  \newcommand*\lineheight[1]{\fontsize{\fsize}{#1\fsize}\selectfont}%
  \ifx\svgwidth\undefined%
    \setlength{\unitlength}{499.83395212bp}%
    \ifx\svgscale\undefined%
      \relax%
    \else%
      \setlength{\unitlength}{\unitlength * \real{\svgscale}}%
    \fi%
  \else%
    \setlength{\unitlength}{\svgwidth}%
  \fi%
  \global\let\svgwidth\undefined%
  \global\let\svgscale\undefined%
  \makeatother%
  \begin{picture}(1,0.14487761)%
    \lineheight{1}%
    \setlength\tabcolsep{0pt}%
    \put(0,0){\includegraphics[width=\unitlength,page=1]{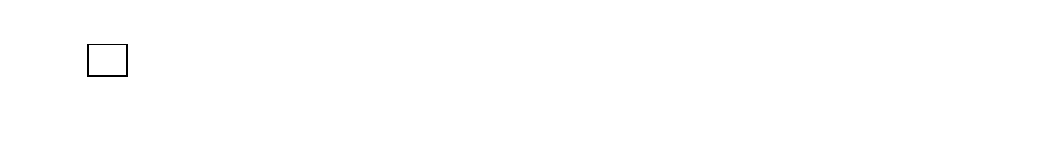}}%
    \put(0.09245059,0.07913032){\color[rgb]{0,0,0}\makebox(0,0)[lt]{\lineheight{1.25}\smash{\begin{tabular}[t]{l}$D$\end{tabular}}}}%
    \put(0,0){\includegraphics[width=\unitlength,page=2]{changeOrientation.pdf}}%
    \put(0.11722515,0.00292178){\color[rgb]{0,0,0}\makebox(0,0)[lt]{\lineheight{1.25}\smash{\begin{tabular}[t]{l}$\overset{V_2}{X}$\end{tabular}}}}%
    \put(0,0){\includegraphics[width=\unitlength,page=3]{changeOrientation.pdf}}%
    \put(0.312204,0.0019448){\color[rgb]{0,0,0}\makebox(0,0)[lt]{\lineheight{1.25}\smash{\begin{tabular}[t]{l}$\overset{V_2}{X}$\end{tabular}}}}%
    \put(0,0){\includegraphics[width=\unitlength,page=4]{changeOrientation.pdf}}%
    \put(0.15959362,0.05102385){\color[rgb]{0,0,0}\makebox(0,0)[lt]{\lineheight{1.25}\smash{\begin{tabular}[t]{l}$=$\end{tabular}}}}%
    \put(0,0){\includegraphics[width=\unitlength,page=5]{changeOrientation.pdf}}%
    \put(0.21659025,0.07960781){\color[rgb]{0,0,0}\makebox(0,0)[lt]{\lineheight{1.25}\smash{\begin{tabular}[t]{l}$D$\end{tabular}}}}%
    \put(0,0){\includegraphics[width=\unitlength,page=6]{changeOrientation.pdf}}%
    \put(0.52436023,0.06663191){\color[rgb]{0,0,0}\makebox(0,0)[lt]{\lineheight{1.25}\smash{\begin{tabular}[t]{l}$=$\end{tabular}}}}%
    \put(0,0){\includegraphics[width=\unitlength,page=7]{changeOrientation.pdf}}%
    \put(0.4739469,0.08005616){\color[rgb]{0,0,0}\makebox(0,0)[lt]{\lineheight{1.25}\smash{\begin{tabular}[t]{l}$D$\end{tabular}}}}%
    \put(0.57219283,0.07967783){\color[rgb]{0,0,0}\makebox(0,0)[lt]{\lineheight{1.25}\smash{\begin{tabular}[t]{l}$D$\end{tabular}}}}%
    \put(0,0){\includegraphics[width=\unitlength,page=8]{changeOrientation.pdf}}%
    \put(0.86215168,0.07843771){\color[rgb]{0,0,0}\makebox(0,0)[lt]{\lineheight{1.25}\smash{\begin{tabular}[t]{l}$=$\end{tabular}}}}%
    \put(0,0){\includegraphics[width=\unitlength,page=9]{changeOrientation.pdf}}%
    \put(0.81227255,0.06467286){\color[rgb]{0,0,0}\makebox(0,0)[lt]{\lineheight{1.25}\smash{\begin{tabular}[t]{l}$D$\end{tabular}}}}%
    \put(0,0){\includegraphics[width=\unitlength,page=10]{changeOrientation.pdf}}%
    \put(0.90980501,0.06467286){\color[rgb]{0,0,0}\makebox(0,0)[lt]{\lineheight{1.25}\smash{\begin{tabular}[t]{l}$D$\end{tabular}}}}%
    \put(0,0){\includegraphics[width=\unitlength,page=11]{changeOrientation.pdf}}%
  \end{picture}%
\endgroup%

\end{center}
(we use \eqref{sensOppose} and the similar fact for cups and caps) and hence by inserting coupons $D$ and $D^{-1}$ on a strand we can change its orientation wihout changing the value of the diagram.
\begin{remark}
If one chooses the usual pivotal element $K$ instead of $g$, then \eqref{propKD} becomes $e \circ D^* = -D$ and the value of a diagram depends up to a sign of the orientation on its closed components, see \cite[Lem 3.18]{KM}, \cite{tingley}. This explains the choice of the less natural-looking pivotal element $g$.
\end{remark}

According to these remarks, we define a non-oriented graphical calculus for $\mathcal{L}_{g,n}(U_{q^2})$, with unoriented cups, caps, crossings and handles:
\begin{center}
\begingroup%
  \makeatletter%
  \providecommand\color[2][]{%
    \errmessage{(Inkscape) Color is used for the text in Inkscape, but the package 'color.sty' is not loaded}%
    \renewcommand\color[2][]{}%
  }%
  \providecommand\transparent[1]{%
    \errmessage{(Inkscape) Transparency is used (non-zero) for the text in Inkscape, but the package 'transparent.sty' is not loaded}%
    \renewcommand\transparent[1]{}%
  }%
  \providecommand\rotatebox[2]{#2}%
  \newcommand*\fsize{\dimexpr\f@size pt\relax}%
  \newcommand*\lineheight[1]{\fontsize{\fsize}{#1\fsize}\selectfont}%
  \ifx\svgwidth\undefined%
    \setlength{\unitlength}{431.26421829bp}%
    \ifx\svgscale\undefined%
      \relax%
    \else%
      \setlength{\unitlength}{\unitlength * \real{\svgscale}}%
    \fi%
  \else%
    \setlength{\unitlength}{\svgwidth}%
  \fi%
  \global\let\svgwidth\undefined%
  \global\let\svgscale\undefined%
  \makeatother%
  \begin{picture}(1,0.18314553)%
    \lineheight{1}%
    \setlength\tabcolsep{0pt}%
    \put(0.13590691,0.02422788){\color[rgb]{0,0,0}\makebox(0,0)[lt]{\lineheight{1.25}\smash{\begin{tabular}[t]{l}$X$\end{tabular}}}}%
    \put(0.36113747,0.02353071){\color[rgb]{0,0,0}\makebox(0,0)[lt]{\lineheight{1.25}\smash{\begin{tabular}[t]{l}$\overset{V_2}{X}$\end{tabular}}}}%
    \put(0,0){\includegraphics[width=\unitlength,page=1]{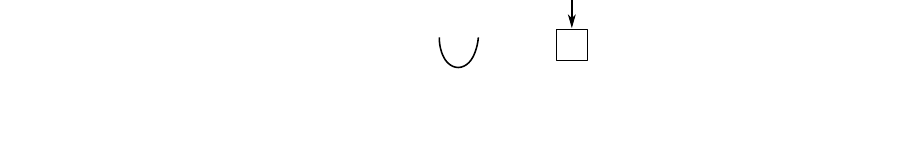}}%
    \put(0.62403796,0.1223501){\color[rgb]{0,0,0}\makebox(0,0)[lt]{\lineheight{1.25}\smash{\begin{tabular}[t]{l}$D$\end{tabular}}}}%
    \put(0,0){\includegraphics[width=\unitlength,page=2]{anse_non_orientee.pdf}}%
    \put(0.32988392,0.10830546){\color[rgb]{0,0,0}\makebox(0,0)[lt]{\lineheight{1.25}\smash{\begin{tabular}[t]{l}$D$\end{tabular}}}}%
    \put(0,0){\includegraphics[width=\unitlength,page=3]{anse_non_orientee.pdf}}%
    \put(0.18099622,0.07680305){\color[rgb]{0,0,0}\makebox(0,0)[lt]{\lineheight{1.25}\smash{\begin{tabular}[t]{l}=\end{tabular}}}}%
    \put(0.55053862,0.11951511){\color[rgb]{0,0,0}\makebox(0,0)[lt]{\lineheight{1.25}\smash{\begin{tabular}[t]{l}=\end{tabular}}}}%
    \put(0,0){\includegraphics[width=\unitlength,page=4]{anse_non_orientee.pdf}}%
    \put(0.82970295,0.12777001){\color[rgb]{0,0,0}\makebox(0,0)[lt]{\lineheight{1.25}\smash{\begin{tabular}[t]{l}=\end{tabular}}}}%
    \put(0,0){\includegraphics[width=\unitlength,page=5]{anse_non_orientee.pdf}}%
    \put(0.89519043,0.12292786){\color[rgb]{0,0,0}\makebox(0,0)[lt]{\lineheight{1.25}\smash{\begin{tabular}[t]{l}$D^{-1}$\end{tabular}}}}%
    \put(0,0){\includegraphics[width=\unitlength,page=6]{anse_non_orientee.pdf}}%
    \put(0.55010761,0.01844094){\color[rgb]{0,0,0}\makebox(0,0)[lt]{\lineheight{1.25}\smash{\begin{tabular}[t]{l}=\end{tabular}}}}%
    \put(0.82970295,0.0193281){\color[rgb]{0,0,0}\makebox(0,0)[lt]{\lineheight{1.25}\smash{\begin{tabular}[t]{l}=\end{tabular}}}}%
  \end{picture}%
\endgroup%

\end{center}
The explicit values of these unoriented graphical elements are:
\begin{equation}\label{evalNonOriente}
\begingroup%
  \makeatletter%
  \providecommand\color[2][]{%
    \errmessage{(Inkscape) Color is used for the text in Inkscape, but the package 'color.sty' is not loaded}%
    \renewcommand\color[2][]{}%
  }%
  \providecommand\transparent[1]{%
    \errmessage{(Inkscape) Transparency is used (non-zero) for the text in Inkscape, but the package 'transparent.sty' is not loaded}%
    \renewcommand\transparent[1]{}%
  }%
  \providecommand\rotatebox[2]{#2}%
  \newcommand*\fsize{\dimexpr\f@size pt\relax}%
  \newcommand*\lineheight[1]{\fontsize{\fsize}{#1\fsize}\selectfont}%
  \ifx\svgwidth\undefined%
    \setlength{\unitlength}{413.79383869bp}%
    \ifx\svgscale\undefined%
      \relax%
    \else%
      \setlength{\unitlength}{\unitlength * \real{\svgscale}}%
    \fi%
  \else%
    \setlength{\unitlength}{\svgwidth}%
  \fi%
  \global\let\svgwidth\undefined%
  \global\let\svgscale\undefined%
  \makeatother%
  \begin{picture}(1,0.23715289)%
    \lineheight{1}%
    \setlength\tabcolsep{0pt}%
    \put(0.13779256,0.08509071){\color[rgb]{0,0,0}\makebox(0,0)[lt]{\lineheight{1.25}\smash{\begin{tabular}[t]{l}$X$\end{tabular}}}}%
    \put(0,0){\includegraphics[width=\unitlength,page=1]{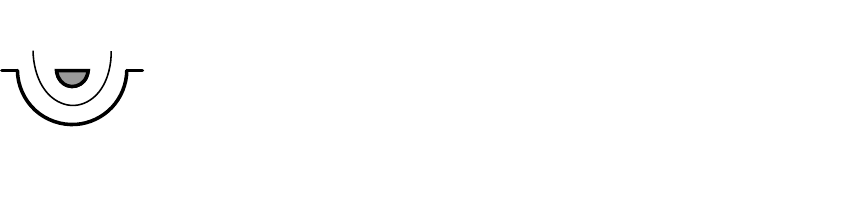}}%
    \put(0.18331141,0.14744469){\color[rgb]{0,0,0}\makebox(0,0)[lt]{\lineheight{1.25}\smash{\begin{tabular}[t]{l}$= \begin{pmatrix} q^{1/2}X^-_+\\-q^{5/2}X^-_-\\q^{1/2}X^+_+\\-q^{5/2}X^+_- \end{pmatrix}$\end{tabular}}}}%
    \put(0.44310404,0.14571){\color[rgb]{0,0,0}\makebox(0,0)[lt]{\lineheight{1.25}\smash{\begin{tabular}[t]{l}$\cup = \begin{pmatrix}0\\-q^{5/2}\\q^{1/2}\\0\end{pmatrix} \:\:\:\:\:\:\:\:\cap = \begin{pmatrix} 0 & q^{-1/2} &  -q^{-5/2} & 0\end{pmatrix}$\end{tabular}}}}%
    \put(0,0){\includegraphics[width=\unitlength,page=2]{valeurRmatrice.pdf}}%
    \put(0.35875549,0.01742025){\color[rgb]{0,0,0}\makebox(0,0)[lt]{\lineheight{1.25}\smash{\begin{tabular}[t]{l}$=PR$\end{tabular}}}}%
    \put(0.57606268,0.01703534){\color[rgb]{0,0,0}\makebox(0,0)[lt]{\lineheight{1.25}\smash{\begin{tabular}[t]{l}$=(PR)^{-1}$\end{tabular}}}}%
  \end{picture}%
\endgroup%

\end{equation}
where $P$ is the flip tensor $P^{ik}_{jl} = \delta^i_l\delta^k_j$, the $X^s_t$ (with $s,t \in \{\pm\}$) are the components of the matrix $X = \overset{V_2}{X}$ and $R$ is the $R$-matrix \eqref{RMatriceV2}. More generally, a handle containing several unoriented strands is obviously defined by putting a coupon $D$ on each strand of the oriented version, and the associated explicit value is computed thanks to the fusion relation \eqref{dessinRelationFusion}. The values \eqref{evalNonOriente} extend  in the usual way to an evaluation map $Z$ of diagrams. When $Z$ is restricted to tangles (diagrams without handles), it is just an unoriented version of the Reshetikhin-Turaev functor for $U_{q^2}$-colored framed tangles. It is well-known that $Z$ satisfies the Kauffman bracket skein relation \eqref{kauffmanUq2}. Let us record for further use that by definition
\begin{center}
\begingroup%
  \makeatletter%
  \providecommand\color[2][]{%
    \errmessage{(Inkscape) Color is used for the text in Inkscape, but the package 'color.sty' is not loaded}%
    \renewcommand\color[2][]{}%
  }%
  \providecommand\transparent[1]{%
    \errmessage{(Inkscape) Transparency is used (non-zero) for the text in Inkscape, but the package 'transparent.sty' is not loaded}%
    \renewcommand\transparent[1]{}%
  }%
  \providecommand\rotatebox[2]{#2}%
  \newcommand*\fsize{\dimexpr\f@size pt\relax}%
  \newcommand*\lineheight[1]{\fontsize{\fsize}{#1\fsize}\selectfont}%
  \ifx\svgwidth\undefined%
    \setlength{\unitlength}{216.10190825bp}%
    \ifx\svgscale\undefined%
      \relax%
    \else%
      \setlength{\unitlength}{\unitlength * \real{\svgscale}}%
    \fi%
  \else%
    \setlength{\unitlength}{\svgwidth}%
  \fi%
  \global\let\svgwidth\undefined%
  \global\let\svgscale\undefined%
  \makeatother%
  \begin{picture}(1,0.11477773)%
    \lineheight{1}%
    \setlength\tabcolsep{0pt}%
    \put(0,0){\includegraphics[width=\unitlength,page=1]{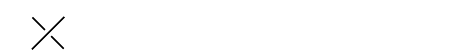}}%
    \put(-0.00308193,0.02466968){\color[rgb]{0,0,0}\makebox(0,0)[lt]{\lineheight{1.25}\smash{\begin{tabular}[t]{l}$Z\Big($\end{tabular}}}}%
    \put(0.14531502,0.02473324){\color[rgb]{0,0,0}\makebox(0,0)[lt]{\lineheight{1.25}\smash{\begin{tabular}[t]{l}$\Big)^{ik}_{jl}=R^{ki}_{jl},$\end{tabular}}}}%
    \put(0,0){\includegraphics[width=\unitlength,page=2]{valeurCroisementComposantes.pdf}}%
    \put(0.48279996,0.02537544){\color[rgb]{0,0,0}\makebox(0,0)[lt]{\lineheight{1.25}\smash{\begin{tabular}[t]{l}$Z\Big($\end{tabular}}}}%
    \put(0.63271808,0.02473323){\color[rgb]{0,0,0}\makebox(0,0)[lt]{\lineheight{1.25}\smash{\begin{tabular}[t]{l}$\Big)^{ik}_{jl}=(R^{-1})^{ik}_{lj}$\end{tabular}}}}%
  \end{picture}%
\endgroup%

\end{center}
where $i,j,k,l \in \{ \pm \}$.

\smallskip

\indent Let us now proceed with preliminary remarks about stated skein algebras. By \eqref{boundaryRels} we have
\begin{center}
\begingroup%
  \makeatletter%
  \providecommand\color[2][]{%
    \errmessage{(Inkscape) Color is used for the text in Inkscape, but the package 'color.sty' is not loaded}%
    \renewcommand\color[2][]{}%
  }%
  \providecommand\transparent[1]{%
    \errmessage{(Inkscape) Transparency is used (non-zero) for the text in Inkscape, but the package 'transparent.sty' is not loaded}%
    \renewcommand\transparent[1]{}%
  }%
  \providecommand\rotatebox[2]{#2}%
  \newcommand*\fsize{\dimexpr\f@size pt\relax}%
  \newcommand*\lineheight[1]{\fontsize{\fsize}{#1\fsize}\selectfont}%
  \ifx\svgwidth\undefined%
    \setlength{\unitlength}{334.30169833bp}%
    \ifx\svgscale\undefined%
      \relax%
    \else%
      \setlength{\unitlength}{\unitlength * \real{\svgscale}}%
    \fi%
  \else%
    \setlength{\unitlength}{\svgwidth}%
  \fi%
  \global\let\svgwidth\undefined%
  \global\let\svgscale\undefined%
  \makeatother%
  \begin{picture}(1,0.12164344)%
    \lineheight{1}%
    \setlength\tabcolsep{0pt}%
    \put(0,0){\includegraphics[width=\unitlength,page=1]{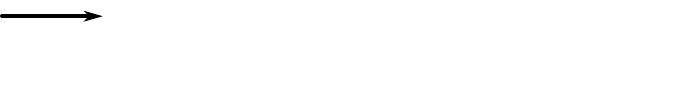}}%
    \put(0.0161684,0.10745079){\color[rgb]{0,0,0}\makebox(0,0)[lt]{\lineheight{1.25}\smash{\begin{tabular}[t]{l}$s$\end{tabular}}}}%
    \put(0,0){\includegraphics[width=\unitlength,page=2]{cupCapBoundary.pdf}}%
    \put(0.09353848,0.107422){\color[rgb]{0,0,0}\makebox(0,0)[lt]{\lineheight{1.25}\smash{\begin{tabular}[t]{l}$t$\end{tabular}}}}%
    \put(0,0){\includegraphics[width=\unitlength,page=3]{cupCapBoundary.pdf}}%
    \put(0.1520195,0.05897559){\color[rgb]{0,0,0}\makebox(0,0)[lt]{\lineheight{1.25}\smash{\begin{tabular}[t]{l}$=\: \cup^{st}$\end{tabular}}}}%
    \put(0,0){\includegraphics[width=\unitlength,page=4]{cupCapBoundary.pdf}}%
    \put(0.6213421,0.00274549){\color[rgb]{0,0,0}\makebox(0,0)[lt]{\lineheight{1.25}\smash{\begin{tabular}[t]{l}$s$\end{tabular}}}}%
    \put(0,0){\includegraphics[width=\unitlength,page=5]{cupCapBoundary.pdf}}%
    \put(0.69871212,0.00277428){\color[rgb]{0,0,0}\makebox(0,0)[lt]{\lineheight{1.25}\smash{\begin{tabular}[t]{l}$t$\end{tabular}}}}%
    \put(0,0){\includegraphics[width=\unitlength,page=6]{cupCapBoundary.pdf}}%
    \put(0.75685434,0.05915947){\color[rgb]{0,0,0}\makebox(0,0)[lt]{\lineheight{1.25}\smash{\begin{tabular}[t]{l}$=\: \cap_{st}$\end{tabular}}}}%
    \put(0,0){\includegraphics[width=\unitlength,page=7]{cupCapBoundary.pdf}}%
  \end{picture}%
\endgroup%

\end{center}
More generally, it is shown in \cite[\S 5]{CL} that 
\begin{equation}\label{RTbigone}
\begingroup%
  \makeatletter%
  \providecommand\color[2][]{%
    \errmessage{(Inkscape) Color is used for the text in Inkscape, but the package 'color.sty' is not loaded}%
    \renewcommand\color[2][]{}%
  }%
  \providecommand\transparent[1]{%
    \errmessage{(Inkscape) Transparency is used (non-zero) for the text in Inkscape, but the package 'transparent.sty' is not loaded}%
    \renewcommand\transparent[1]{}%
  }%
  \providecommand\rotatebox[2]{#2}%
  \newcommand*\fsize{\dimexpr\f@size pt\relax}%
  \newcommand*\lineheight[1]{\fontsize{\fsize}{#1\fsize}\selectfont}%
  \ifx\svgwidth\undefined%
    \setlength{\unitlength}{171.0583589bp}%
    \ifx\svgscale\undefined%
      \relax%
    \else%
      \setlength{\unitlength}{\unitlength * \real{\svgscale}}%
    \fi%
  \else%
    \setlength{\unitlength}{\svgwidth}%
  \fi%
  \global\let\svgwidth\undefined%
  \global\let\svgscale\undefined%
  \makeatother%
  \begin{picture}(1,0.4043123)%
    \lineheight{1}%
    \setlength\tabcolsep{0pt}%
    \put(0,0){\includegraphics[width=\unitlength,page=1]{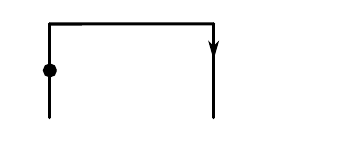}}%
    \put(0.20857457,0.360694){\color[rgb]{0,0,0}\makebox(0,0)[lt]{\lineheight{1.25}\smash{\begin{tabular}[t]{l}$s_1$\end{tabular}}}}%
    \put(0.21047637,0.00860985){\color[rgb]{0,0,0}\makebox(0,0)[lt]{\lineheight{1.25}\smash{\begin{tabular}[t]{l}$t_1$\end{tabular}}}}%
    \put(0,0){\includegraphics[width=\unitlength,page=2]{counitRT.pdf}}%
    \put(0.4935343,0.36289875){\color[rgb]{0,0,0}\makebox(0,0)[lt]{\lineheight{1.25}\smash{\begin{tabular}[t]{l}$s_k$\end{tabular}}}}%
    \put(0.49432141,0.00979249){\color[rgb]{0,0,0}\makebox(0,0)[lt]{\lineheight{1.25}\smash{\begin{tabular}[t]{l}$t_l$\end{tabular}}}}%
    \put(0.72162363,0.19432132){\color[rgb]{0,0,0}\makebox(0,0)[lt]{\lineheight{1.25}\smash{\begin{tabular}[t]{l}$= Z(T)^{s_1 \ldots s_k}_{t_1 \ldots t_l}.$\end{tabular}}}}%
    \put(0,0){\includegraphics[width=\unitlength,page=3]{counitRT.pdf}}%
    \put(0.33675799,0.1872295){\color[rgb]{0,0,0}\makebox(0,0)[lt]{\lineheight{1.25}\smash{\begin{tabular}[t]{l}$T$\end{tabular}}}}%
    \put(0,0){\includegraphics[width=\unitlength,page=4]{counitRT.pdf}}%
    \put(-0.00303038,0.19480983){\color[rgb]{0,0,0}\makebox(0,0)[lt]{\lineheight{1.25}\smash{\begin{tabular}[t]{l}$\varepsilon$\end{tabular}}}}%
    \put(0,0){\includegraphics[width=\unitlength,page=5]{counitRT.pdf}}%
    \put(0.24962337,0.29125141){\color[rgb]{0,0,0}\makebox(0,0)[lt]{\lineheight{1.25}\smash{\begin{tabular}[t]{l}...\end{tabular}}}}%
    \put(0.43048273,0.29125143){\color[rgb]{0,0,0}\makebox(0,0)[lt]{\lineheight{1.25}\smash{\begin{tabular}[t]{l}...\end{tabular}}}}%
    \put(0.24962337,0.11587266){\color[rgb]{0,0,0}\makebox(0,0)[lt]{\lineheight{1.25}\smash{\begin{tabular}[t]{l}...\end{tabular}}}}%
    \put(0.43048273,0.11587269){\color[rgb]{0,0,0}\makebox(0,0)[lt]{\lineheight{1.25}\smash{\begin{tabular}[t]{l}...\end{tabular}}}}%
  \end{picture}%
\endgroup%

\end{equation}
where $s_i, t_i \in \{\pm\}$ are states. The left-hand side of this figure represents an element of the stated skein algebra of the bigon $\mathfrak{B}$ (with $T$ any tangle), $\varepsilon : \mathcal{S}^{\mathrm{s}}_q(\mathfrak{B}) \to \mathbb{Z}[q^{\pm 1/2}]$ denotes the counit of $\mathcal{S}_q^{\mathrm{s}}(\mathfrak{B})$ \cite[\S 3.4]{CL} and $Z$ is the evaluation map defined by \eqref{evalNonOriente}.

\smallskip

\indent As explained in \cite[\S 3.2]{CL}, cutting out a bigon $\mathfrak{B}$ from a boundary edge  $e$ of a punctured bordered surface $\mathfrak{S}$ and applying the splitting theorem \cite[Th. 3.1]{Le}  yields a morphism $\Delta_e : \mathcal{S}_q^{\mathrm{s}}(\mathfrak{S}) \to \mathcal{S}_q^{\mathrm{s}}(\mathfrak{B}) \otimes \mathcal{S}_q^{\mathrm{s}}(\mathfrak{S})$ which is a $\mathcal{S}_q^{\mathrm{s}}(\mathfrak{B})$-comodule-algebra structure on $\mathcal{S}_q^{\mathrm{s}}(\mathfrak{S})$ (see \cite[Fig. 7]{CL}). An important fact for our purposes is that the equality $(\varepsilon \otimes \mathrm{id}) \circ \Delta_e = \mathrm{id}$ (which is one of the axioms of a comodule) together with \eqref{RTbigone} allows us to prove relations in $\mathcal{S}_q^{\mathrm{s}}(\mathfrak{S})$ in a graphical way.

\smallskip

\indent Let us illustrate this in the case of $\mathcal{S}_q^{\mathrm{s}}(\mathfrak{B})$. Consider the following matrix $T \in \mathcal{S}_q^{\mathrm{s}}(\mathfrak{B}) \otimes \mathrm{End}_{\mathbb{C}}(V_2)$ (here $V_2$ is just considered as a vector space):
\begin{center}
\begingroup%
  \makeatletter%
  \providecommand\color[2][]{%
    \errmessage{(Inkscape) Color is used for the text in Inkscape, but the package 'color.sty' is not loaded}%
    \renewcommand\color[2][]{}%
  }%
  \providecommand\transparent[1]{%
    \errmessage{(Inkscape) Transparency is used (non-zero) for the text in Inkscape, but the package 'transparent.sty' is not loaded}%
    \renewcommand\transparent[1]{}%
  }%
  \providecommand\rotatebox[2]{#2}%
  \newcommand*\fsize{\dimexpr\f@size pt\relax}%
  \newcommand*\lineheight[1]{\fontsize{\fsize}{#1\fsize}\selectfont}%
  \ifx\svgwidth\undefined%
    \setlength{\unitlength}{312.9716642bp}%
    \ifx\svgscale\undefined%
      \relax%
    \else%
      \setlength{\unitlength}{\unitlength * \real{\svgscale}}%
    \fi%
  \else%
    \setlength{\unitlength}{\svgwidth}%
  \fi%
  \global\let\svgwidth\undefined%
  \global\let\svgscale\undefined%
  \makeatother%
  \begin{picture}(1,0.17672217)%
    \lineheight{1}%
    \setlength\tabcolsep{0pt}%
    \put(0,0){\includegraphics[width=\unitlength,page=1]{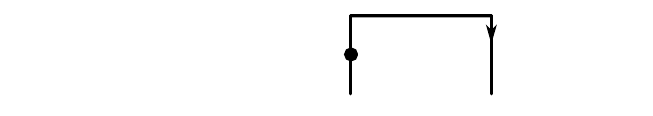}}%
    \put(0.6345541,0.16156224){\color[rgb]{0,0,0}\makebox(0,0)[lt]{\lineheight{1.25}\smash{\begin{tabular}[t]{l}$s$\end{tabular}}}}%
    \put(0.63858904,0.00293261){\color[rgb]{0,0,0}\makebox(0,0)[lt]{\lineheight{1.25}\smash{\begin{tabular}[t]{l}$t$\end{tabular}}}}%
    \put(0,0){\includegraphics[width=\unitlength,page=2]{bigonT.pdf}}%
    \put(-0.00165629,0.08511358){\color[rgb]{0,0,0}\makebox(0,0)[lt]{\lineheight{1.25}\smash{\begin{tabular}[t]{l}$T = \begin{pmatrix}T^-_- & T^-_+\\T^+_- & T^+_+ \end{pmatrix}$, $\quad$ with  $\: T^s_t =\:$\end{tabular}}}}%
  \end{picture}%
\endgroup%

\end{center}
($s, t \in \{\pm\}$ are states). First, observe that
\begin{center}
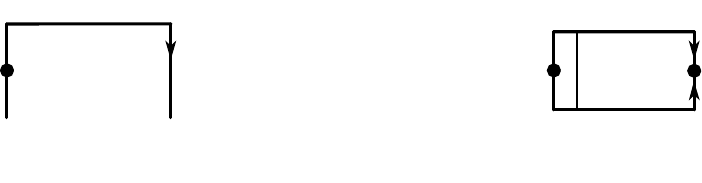
\end{center}

\medskip

\noindent where $X$ is any tangle. For the first equality we used $\mathrm{id} = (\varepsilon \otimes \mathrm{id}) \circ \Delta_e$ where $e$ is the dashed arc, and for the second we used the product in $\mathcal{S}_q^{\mathrm{s}}(\mathfrak{B})$ and \eqref{RTbigone}. Hence we see that the elements $T^s_t$ generate $\mathcal{S}_q^{\mathrm{s}}(\mathfrak{B})$ as an algebra. Similarly, we have:
\begin{center}
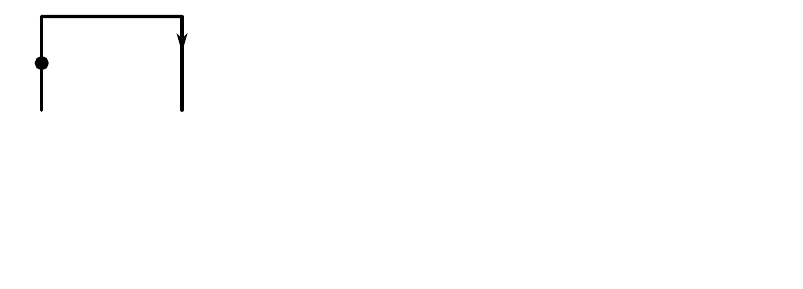
\end{center}
Hence we obtain the relation $RT_1T_2 = T_2T_1R$, where as usual $T_1, T_2$ denote the canonical embeddings of $T \in \mathcal{S}_q^{\mathrm{s}}(\mathfrak{B}) \otimes \mathrm{End}_{\mathbb{C}}(V_2)$ into $\mathcal{S}_q^{\mathrm{s}}(\mathfrak{B}) \otimes \mathrm{End}_{\mathbb{C}}(V_2)^{\otimes 2}$. Finallly:
\begin{center}
\begingroup%
  \makeatletter%
  \providecommand\color[2][]{%
    \errmessage{(Inkscape) Color is used for the text in Inkscape, but the package 'color.sty' is not loaded}%
    \renewcommand\color[2][]{}%
  }%
  \providecommand\transparent[1]{%
    \errmessage{(Inkscape) Transparency is used (non-zero) for the text in Inkscape, but the package 'transparent.sty' is not loaded}%
    \renewcommand\transparent[1]{}%
  }%
  \providecommand\rotatebox[2]{#2}%
  \newcommand*\fsize{\dimexpr\f@size pt\relax}%
  \newcommand*\lineheight[1]{\fontsize{\fsize}{#1\fsize}\selectfont}%
  \ifx\svgwidth\undefined%
    \setlength{\unitlength}{394.29454176bp}%
    \ifx\svgscale\undefined%
      \relax%
    \else%
      \setlength{\unitlength}{\unitlength * \real{\svgscale}}%
    \fi%
  \else%
    \setlength{\unitlength}{\svgwidth}%
  \fi%
  \global\let\svgwidth\undefined%
  \global\let\svgscale\undefined%
  \makeatother%
  \begin{picture}(1,0.15146319)%
    \lineheight{1}%
    \setlength\tabcolsep{0pt}%
    \put(0,0){\includegraphics[width=\unitlength,page=1]{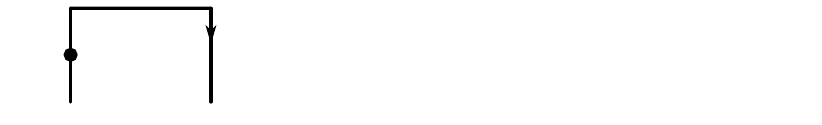}}%
    \put(0.12940026,0.00331394){\color[rgb]{0,0,0}\makebox(0,0)[lt]{\lineheight{1.25}\smash{\begin{tabular}[t]{l}$s$\end{tabular}}}}%
    \put(0,0){\includegraphics[width=\unitlength,page=2]{detqBigon.pdf}}%
    \put(0.19525898,0.00232776){\color[rgb]{0,0,0}\makebox(0,0)[lt]{\lineheight{1.25}\smash{\begin{tabular}[t]{l}$t$\end{tabular}}}}%
    \put(0,0){\includegraphics[width=\unitlength,page=3]{detqBigon.pdf}}%
    \put(0.28265078,0.07988839){\color[rgb]{0,0,0}\makebox(0,0)[lt]{\lineheight{1.25}\smash{\begin{tabular}[t]{l}$= \:\displaystyle \sum_{ij}$\end{tabular}}}}%
    \put(0,0){\includegraphics[width=\unitlength,page=4]{detqBigon.pdf}}%
    \put(0.37426049,0.07855798){\color[rgb]{0,0,0}\makebox(0,0)[lt]{\lineheight{1.25}\smash{\begin{tabular}[t]{l}$\varepsilon$\end{tabular}}}}%
    \put(0,0){\includegraphics[width=\unitlength,page=5]{detqBigon.pdf}}%
    \put(0.47067957,0.02065412){\color[rgb]{0,0,0}\makebox(0,0)[lt]{\lineheight{1.25}\smash{\begin{tabular}[t]{l}$i$\end{tabular}}}}%
    \put(0,0){\includegraphics[width=\unitlength,page=6]{detqBigon.pdf}}%
    \put(0.52452931,0.02014816){\color[rgb]{0,0,0}\makebox(0,0)[lt]{\lineheight{1.25}\smash{\begin{tabular}[t]{l}$j$\end{tabular}}}}%
    \put(0,0){\includegraphics[width=\unitlength,page=7]{detqBigon.pdf}}%
    \put(0.6878208,0.13292864){\color[rgb]{0,0,0}\makebox(0,0)[lt]{\lineheight{1.25}\smash{\begin{tabular}[t]{l}$i$\end{tabular}}}}%
    \put(0.74521109,0.13286617){\color[rgb]{0,0,0}\makebox(0,0)[lt]{\lineheight{1.25}\smash{\begin{tabular}[t]{l}$j$\end{tabular}}}}%
    \put(0,0){\includegraphics[width=\unitlength,page=8]{detqBigon.pdf}}%
    \put(0.84077935,0.0751989){\color[rgb]{0,0,0}\makebox(0,0)[lt]{\lineheight{1.25}\smash{\begin{tabular}[t]{l}$= \: \displaystyle \sum_{ij} \cap_{ij} T^i_s T^j_t $\end{tabular}}}}%
    \put(-0.00131468,0.07892764){\color[rgb]{0,0,0}\makebox(0,0)[lt]{\lineheight{1.25}\smash{\begin{tabular}[t]{l}$\cap_{st} = $\end{tabular}}}}%
    \put(0.68101833,0.02233526){\color[rgb]{0,0,0}\makebox(0,0)[lt]{\lineheight{1.25}\smash{\begin{tabular}[t]{l}$s$\end{tabular}}}}%
    \put(0,0){\includegraphics[width=\unitlength,page=9]{detqBigon.pdf}}%
    \put(0.74687708,0.02134908){\color[rgb]{0,0,0}\makebox(0,0)[lt]{\lineheight{1.25}\smash{\begin{tabular}[t]{l}$t$\end{tabular}}}}%
  \end{picture}%
\endgroup%

\end{center}
and it is easy to compute that this implies $\mathrm{det}_q(T) = T^-_-T^+_+ - q^{-2}T^-_+T^+_- = 1$. Hence the coefficients of $T \in \mathcal{S}_q^{\mathrm{s}}(\mathfrak{B}) \otimes \mathrm{End}_{\mathbb{C}}(V_2)$ satisfy exactly the relations of the matrix coefficients of the fundamental representation $V_2$ of $U_{q^2}$. In \cite[\S 3.3]{CL} a less usual presentation of $\mathcal{O}_{q^2}$ is obtained using a similar method and it is shown that these algebras are indeed isomorphic: $\mathcal{S}_q^{\mathrm{s}}(\mathfrak{B}) \cong \mathcal{O}_{q^2}$. In the sequel we identify these two algebras (\textit{i.e.} we identify $T$ with $\overset{V_2}{T}$), so that $\mathcal{S}_q^{\mathrm{s}}(\mathfrak{S})$ becomes a $\mathcal{O}_{q^2}$-comodule-algebra for any punctured bordered surface $\mathfrak{S}$.

\smallskip

Let $\mathbf{T}^{\mathrm{s}} \subset \boldsymbol{\Sigma} = (\Sigma_{g,n}^{\mathrm{o},\bullet}) \times [0,1]$ be an unoriented, uncolored stated $\partial\boldsymbol{\Sigma}$-tangle, with states $s_1, \ldots, s_k \in \{\pm\}$. Forget the states of $\mathbf{T}^{\mathrm{s}}$ and on each strand put an arbitrary orientation and the color $V_2$. This gives an oriented colored $\partial\boldsymbol{\Sigma}$-tangle $\mathbf{T}$ and by Definition \ref{defHol}, $\mathrm{hol}(\mathbf{T}) \in \mathcal{L}_{0,1}(H) \otimes V_2^{a_1} \otimes \ldots \otimes V_2^{a_k}$ where $a_i \in \{ \downarrow, \uparrow \}$. Let $o^{\downarrow} = \mathrm{id} : V_2 \to V_2$ and $o^{\uparrow} = D : V_2^* \overset{\sim}{\to} V_2$. We define a map
\begin{equation}\label{formuleHolSgn}
\fonc{\mathrm{hol}^{\mathrm{s}}}{ \{ \text{stated}\ \partial \boldsymbol{\Sigma}\text{-tangles} \} }{\mathcal{L}_{g,n}(U_{q^2})}{\mathbf{T}^{\mathrm{s}}}{\bigr(\mathrm{id} \otimes (v^{s_1} \circ o^{a_1}) \otimes \ldots \otimes (v^{s_k} \circ o^{a_k})\bigl)\bigl(\mathrm{hol}(\mathbf{T})\bigr)}
\end{equation}
where $\bigl(v^-, v^+\bigr)$ is dual to the canonical basis $\bigl( v_-, v_+ \bigr)$ of $V_2$ (recall that we use the special pivotal element $g$ discussed above to evaluate the diagram of $\mathrm{hol}(\mathbf{T})$). This map is obviously extended to formal linear combinations of $\partial\boldsymbol{\Sigma}$-tangles.

\smallskip

$\mathrm{hol}^{\mathrm{s}}$ can also be described as follows: consider the (unoriented and uncolored) diagram built like in Definition \ref{defHol} associated to $\mathbf{T}^{\mathrm{s}}$ (states are not used in the diagram). Then evaluate this diagram with the unoriented graphical calculus defined in \eqref{evalNonOriente}; the result is an element of $\mathcal{L}_{g,n}(U_{q^2}) \otimes V_2^{\otimes k}$ and $\mathrm{hol}^{\mathrm{s}}(\mathbf{T}^{\mathrm{s}})$ is the component $(s_1, \ldots, s_k)$ of that tensor. In particular, we see that the formula in \eqref{formuleHolSgn} does not depend on the arbitrary orientations choosen to obtain $\mathbf{T}$.

\begin{theorem}\label{thIsoSkeinLgn}
The map $\mathrm{hol}^{\mathrm{s}}$ descends to an isomorphism of $\mathcal{O}_{q^2}$-comodule-algebras:
\[ \mathcal{S}^{\mathrm{s}}_q(\Sigma_{g,n}^{\mathrm{o},\bullet}) \cong \mathcal{L}_{g,n}(U_{q^2}). \]
\end{theorem}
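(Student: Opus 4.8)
The plan is to show that $\mathrm{hol}^{\mathrm{s}}$ is a well-defined algebra morphism, then that it is a morphism of $\mathcal{O}_{q^2}$-comodules, and finally that it is bijective; each of these will be reduced to the diagrammatic machinery already set up in the excerpt. First I would check that $\mathrm{hol}^{\mathrm{s}}$ descends to the stated skein algebra, i.e.\ that it kills the Kauffman relations \eqref{kauffmanUq2} and the boundary relations \eqref{boundaryRels}. For the Kauffman relations this is exactly the statement, recalled after \eqref{evalNonOriente}, that the unoriented evaluation map $Z$ (which is the tangle part of the diagram computing $\mathrm{hol}^{\mathrm{s}}$) satisfies the Kauffman bracket skein relation for $U_{q^2}$ at $V_2$; since the handles are untouched by these local moves, naturality of the handle diagrams is not even needed here. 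For the boundary relations one uses the explicit values of $\cup^{st}$, $\cap_{st}$ and $PR$, $(PR)^{-1}$ from \eqref{evalNonOriente}: each boundary relation is a local identity among the vectors $v^{s}\circ o^{a}$ composed with a cup/cap/crossing, i.e.\ a finite linear-algebra identity in $\mathrm{End}(V_2^{\otimes 2})$, which can be verified directly from \eqref{RMatriceV2}. So well-definedness is a routine but finite check.

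Next I would prove that $\mathrm{hol}^{\mathrm{s}}$ is an algebra morphism. The multiplication in $\mathcal{S}^{\mathrm{s}}_q(\Sigma_{g,n}^{\mathrm{o},\bullet})$ is the stack product (with the opposite convention fixed in the text, matching Definition \ref{stackProduct}), so this is precisely Theorem \ref{wilsonStack}: $\mathrm{hol}(\mathbf{T}_1\ast\mathbf{T}_2)=\mathrm{hol}(\mathbf{T}_1)\odot\mathrm{hol}(\mathbf{T}_2)$, and reading off the $(s_1,\dots,s_k,s'_1,\dots,s'_l)$-component of a $\odot$-product multiplies the $\mathcal{L}_{g,n}$-components and tensors the $V_2$-components, which is exactly the product of $\mathrm{hol}^{\mathrm{s}}(\mathbf{T}_1^{\mathrm{s}})$ and $\mathrm{hol}^{\mathrm{s}}(\mathbf{T}_2^{\mathrm{s}})$. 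The unit corresponds to the empty tangle. For the comodule structure: on the stated skein side the coaction $\Delta_e:\mathcal{S}^{\mathrm{s}}_q(\mathfrak{S})\to\mathcal{S}^{\mathrm{s}}_q(\mathfrak{B})\otimes\mathcal{S}^{\mathrm{s}}_q(\mathfrak{S})=\mathcal{O}_{q^2}\otimes\mathcal{S}^{\mathrm{s}}_q(\mathfrak{S})$ is obtained by cutting a bigon off the boundary edge near the removed point, and on $\mathcal{L}_{g,n}(U_{q^2})$ the coaction is $\Omega$ from \eqref{coactionH}. I would show that $\mathrm{hol}^{\mathrm{s}}$ intertwines these by computing both sides on a tangle presented as in \eqref{formeStandard}: cutting the bigon introduces, for each of the $k$ boundary points, a factor $\overset{V_2}{T}$ resp.\ $S(\overset{V_2}{T})$ on the bigon side (this is the content of \eqref{RTbigone} together with the identification $\mathcal{S}^{\mathrm{s}}_q(\mathfrak{B})\cong\mathcal{O}_{q^2}$), which is precisely the conjugation $\overset{V_2}{T}\,\overset{V_2}{X}(i)\,S(\overset{V_2}{T})$ appearing in $\Omega$; the self-duality coupon $D$ and \eqref{propKD}, \eqref{ruseFondaDual} are what make the two boundary orientations match up with $T$ and $S(T)$ consistently. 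This step is conceptually straightforward once one remembers that $\mathrm{hol}^{\mathrm{s}}$ is ``read the component of a tensor'' and that both coactions act by conjugating the boundary legs.

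Finally, bijectivity. Surjectivity follows from Lemma \ref{GensRelsLgnUq2}: $\mathcal{L}_{g,n}(U_{q^2})$ is generated by the coefficients $X(i)^s_t$ of the matrices $B(i),A(i),M(i)$ in the basis $(v_-,v_+)$, and each such coefficient is $\mathrm{hol}^{\mathrm{s}}$ of an explicit stated tangle --- namely a single $V_2$-strand running once around the corresponding handle loop, with boundary points pulled to the boundary and stated by $s,t$ (this is exactly the ``handle diagram'' \eqref{anse} opened up as in \eqref{openLoop}/\eqref{evalNonOriente}); since $\mathrm{hol}^{\mathrm{s}}$ is an algebra morphism, its image contains the subalgebra these generate, which is all of $\mathcal{L}_{g,n}(U_{q^2})$. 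For injectivity I would use that $\mathrm{hol}^{\mathrm{s}}$ is in particular an $\mathcal{O}_{q^2}$-comodule map between two algebras which both arise by ``gluing handles'' in a compatible way, but the cleanest route is to exhibit a two-sided inverse on generators: $\mathcal{S}^{\mathrm{s}}_q(\Sigma_{g,n}^{\mathrm{o},\bullet})$ has a presentation --- established in \cite{CL} via the same splitting/bigon technique illustrated in the excerpt --- with generators the ``handle'' stated arcs $B(i)^s_t,A(i)^s_t,M(i)^s_t$ and relations of reflection-equation/$q$-determinant type; one checks that these relations are exactly the ones in Lemma \ref{GensRelsLgnUq2} (the matrices $R,R_{21}$ of \eqref{RMatriceV2} being the same that govern crossings in both settings), so the assignment $X(i)^s_t\mapsto$ (the stated handle arc) is a well-defined algebra map $\mathcal{L}_{g,n}(U_{q^2})\to\mathcal{S}^{\mathrm{s}}_q(\Sigma_{g,n}^{\mathrm{o},\bullet})$ inverse to $\mathrm{hol}^{\mathrm{s}}$ on generators, hence everywhere. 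The main obstacle I anticipate is this last point: pinning down a generators-and-relations presentation of $\mathcal{S}^{\mathrm{s}}_q(\Sigma_{g,n}^{\mathrm{o},\bullet})$ (or, alternatively, proving injectivity by a basis argument --- e.g.\ showing $\mathrm{hol}^{\mathrm{s}}$ sends a known spanning set of stated tangles onto a linearly independent family in $\mathcal{L}_{g,n}(U_{q^2})$) requires genuine work with the splitting theorem and the bigon coactions rather than the purely formal diagrammatics used for the other parts; this is presumably where Corollary \ref{presentationStatedSkein} is extracted as a byproduct, so in practice one proves the presentation and injectivity simultaneously.
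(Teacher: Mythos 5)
Your outline is essentially the paper's strategy: well-definedness is checked exactly as you describe (the unoriented calculus satisfies the Kauffman bracket, and the boundary relations reduce to finite computations with $\cup$, $\cap$ and $D$), the algebra-morphism property is Theorem \ref{wilsonStack}, and bijectivity is obtained by constructing an explicit algebra map $j$ sending each generator $X(i)^s_t$ to the corresponding stated handle arc (twisted by $(^t\!D)^{-1}$), verifying $\mathrm{hol}^{\mathrm{s}}\circ j=\mathrm{id}$ via \eqref{holSurMatricesGen}, and proving that $j$ is surjective. The one genuine organizational difference is that the paper does not attack the full surface directly: it first invokes the gluing theorem for stated skein algebras and the Alekseev--Schomerus decomposition to write both $\mathcal{S}^{\mathrm{s}}_q(\Sigma_{g,n}^{\mathrm{o},\bullet})$ and $\mathcal{L}_{g,n}(U_{q^2})$ as braided tensor products of the building blocks for $\Sigma_{1,0}^{\mathrm{o},\bullet}$ and $\Sigma_{0,1}^{\mathrm{o},\bullet}$, so that the reflection-equation, $q$-determinant and exchange relations, and the generation statement, only ever need to be verified for one or two handles at a time (Lemmas \ref{lemmeInjectionL01S01} and \ref{lemmeInjectionL10S10}); this reduction is what keeps the bigon-splitting computations manageable, and it also delivers the comodule compatibility for free from the building blocks rather than by a separate global argument. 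Two small corrections to your framing: the presentation of $\mathcal{S}^{\mathrm{s}}_q(\Sigma_{g,n}^{\mathrm{o},\bullet})$ is \emph{not} available in \cite{CL} for general $(g,n)$ (only the annulus case, via the braided dual, is in \cite[Prop.\ 4.17]{CL}) and is a \emph{consequence} of this theorem (Corollary \ref{presentationStatedSkein}), so it cannot be used as an input --- but you correctly anticipate this and note that the generation statement must be proved directly via the splitting theorem, which is indeed where the real work lies (the spanning sets $\mathbf{U}(l)$ and the inversion of the braid matrix $Z(\sigma)$). Also, ``inverse on generators, hence everywhere'' only gives $\mathrm{hol}^{\mathrm{s}}\circ j=\mathrm{id}$; to conclude you still need surjectivity of $j$, not just surjectivity of $\mathrm{hol}^{\mathrm{s}}$, which again is the generation statement.
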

\begin{proof}
We first show that $\mathrm{hol}^{\mathrm{s}}$ is well-defined, \textit{i.e.} that it preserves the defining relations of $\mathcal{S}^{\mathrm{s}}_q(\Sigma_{g,n}^{\mathrm{o},\bullet})$. It is clear that $\mathrm{hol}^{\mathrm{s}}$ is compatible with the Kauffman bracket skein relations \eqref{kauffmanUq2} since so does the unoriented graphical calculus for $U_{q^2}$. For the boundary relations \eqref{boundaryRels}, we have
\begin{center}
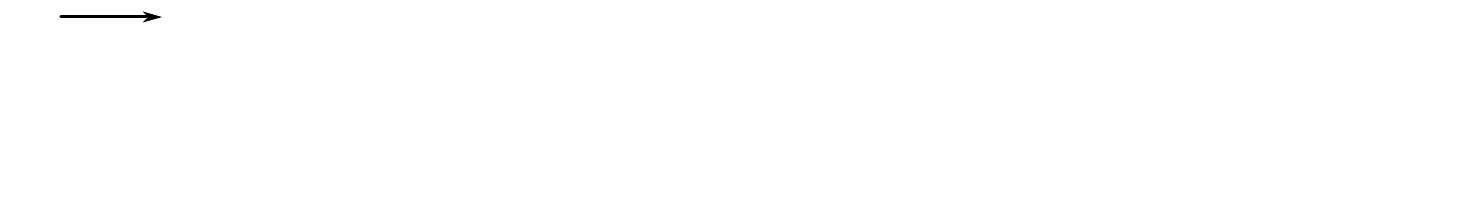
\end{center}
for all $s,t \in \{\pm\}$, as desired. 
\\\noindent The fact that $\mathrm{hol}^{\mathrm{s}}$ is an algebra morphism follows immediately from Theorem \ref{wilsonStack}. To show that it is an isomorphism, we will use the following facts:
\begin{align*}
\mathcal{S}^{\mathrm{s}}_q(\Sigma_{g,n}^{\mathrm{o},\bullet}) &\cong \mathcal{S}^{\mathrm{s}}_q(\Sigma_{1,0}^{\mathrm{o},\bullet})^{\widetilde{\otimes} g} \, \widetilde{\otimes} \, \mathcal{S}^{\mathrm{s}}_q(\Sigma_{0,1}^{\mathrm{o},\bullet})^{\widetilde{\otimes} n}, \\
\mathcal{L}_{g,n}(U_{q^2}) &\cong \mathcal{L}_{1,0}(U_{q^2})^{\widetilde{\otimes} g} \, \widetilde{\otimes} \, \mathcal{L}_{0,1}(U_{q^2})^{\widetilde{\otimes} n}
\end{align*}
where $\widetilde{\otimes}$ is the braided tensor product of algebras in the category of left $\mathcal{O}_{q^2}$-comodules. The first isomorphism follows from the glueing theorem for stated skein algebras \cite[Th. 4.13]{CL}. The second isomorphism is a well-known and general fact about the algebras $\mathcal{L}_{g,n}(H)$ (\cite[\S 3.2]{AS2}, also see \cite[\S 3.2]{Fai18c}). Hence to show that $\mathrm{hol}^{\mathrm{s}}$ is an isomorphism, it is enough to show it for the surfaces $\Sigma_{0,1}^{\mathrm{o},\bullet}$ and $\Sigma_{1,0}^{\mathrm{o},\bullet}$, which is done in the two next subsections. The fact that $\mathrm{hol}^{\mathrm{s}}$ is a morphism of $\mathcal{O}_{q^2}$-comodules also follows from the corresponding fact for these two building blocks.
\end{proof}

\begin{remark}
I learned from the referee that T. L\^e announced independently in some conferences (in particular Paris, october 2019) that stated skein algebras are isomorphic to $\mathcal{L}_{g,n}(U_{q^2})$, with a sketch of the proof. This result is also mentionned in his preprint with Yu \cite{LY}, which appeared shortly after the present work.
\end{remark}

\indent Let $\mathbf{U}^{(b_i)}{^s_t}, \mathbf{U}^{(a_i)}{^s_t}, \mathbf{U}^{(m_i)}{^s_t}$  (with $s,t \in \{\pm\}$) be the $\partial\boldsymbol{\Sigma}$-tangles naturally associated to the generators $b_i, a_i,m_j$ of $\pi_1(\Sigma_{g,n}^{\mathrm{o}})$ represented in Figure \ref{surfaceGN}:
\begin{equation}\label{genSgn}
\begingroup%
  \makeatletter%
  \providecommand\color[2][]{%
    \errmessage{(Inkscape) Color is used for the text in Inkscape, but the package 'color.sty' is not loaded}%
    \renewcommand\color[2][]{}%
  }%
  \providecommand\transparent[1]{%
    \errmessage{(Inkscape) Transparency is used (non-zero) for the text in Inkscape, but the package 'transparent.sty' is not loaded}%
    \renewcommand\transparent[1]{}%
  }%
  \providecommand\rotatebox[2]{#2}%
  \newcommand*\fsize{\dimexpr\f@size pt\relax}%
  \newcommand*\lineheight[1]{\fontsize{\fsize}{#1\fsize}\selectfont}%
  \ifx\svgwidth\undefined%
    \setlength{\unitlength}{368.8386049bp}%
    \ifx\svgscale\undefined%
      \relax%
    \else%
      \setlength{\unitlength}{\unitlength * \real{\svgscale}}%
    \fi%
  \else%
    \setlength{\unitlength}{\svgwidth}%
  \fi%
  \global\let\svgwidth\undefined%
  \global\let\svgscale\undefined%
  \makeatother%
  \begin{picture}(1,0.26459984)%
    \lineheight{1}%
    \setlength\tabcolsep{0pt}%
    \put(0,0){\includegraphics[width=\unitlength,page=1]{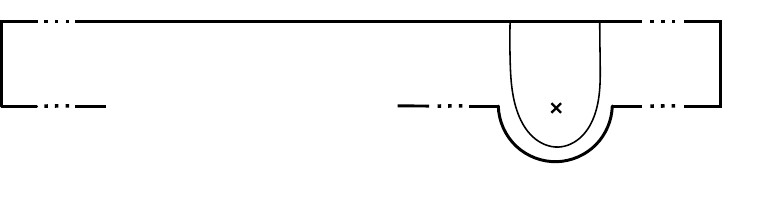}}%
    \put(0.79287569,0.17185623){\color[rgb]{0,0,0}\makebox(0,0)[lt]{\lineheight{1.25}\smash{\begin{tabular}[t]{l}$\mathbf{U}^{(m_j)}{^s_t}$\end{tabular}}}}%
    \put(0.49868812,0.1812448){\color[rgb]{0,0,0}\makebox(0,0)[lt]{\lineheight{1.25}\smash{\begin{tabular}[t]{l}$\mathbf{U}^{(a_i)}{^s_t}$\end{tabular}}}}%
    \put(0.65984591,0.24854552){\color[rgb]{0,0,0}\makebox(0,0)[lt]{\lineheight{1.25}\smash{\begin{tabular}[t]{l}$s$\end{tabular}}}}%
    \put(0.7761316,0.24854552){\color[rgb]{0,0,0}\makebox(0,0)[lt]{\lineheight{1.25}\smash{\begin{tabular}[t]{l}$t$\end{tabular}}}}%
    \put(0,0){\includegraphics[width=\unitlength,page=2]{genSgn.pdf}}%
    \put(0.39492399,0.18029911){\color[rgb]{0,0,0}\makebox(0,0)[lt]{\lineheight{1.25}\smash{\begin{tabular}[t]{l}$\mathbf{U}^{(b_i)}{^s_t}$\end{tabular}}}}%
    \put(0,0){\includegraphics[width=\unitlength,page=3]{genSgn.pdf}}%
    \put(0.15327966,0.24776196){\color[rgb]{0,0,0}\makebox(0,0)[lt]{\lineheight{1.25}\smash{\begin{tabular}[t]{l}$s$\end{tabular}}}}%
    \put(0.38081157,0.24626814){\color[rgb]{0,0,0}\makebox(0,0)[lt]{\lineheight{1.25}\smash{\begin{tabular}[t]{l}$t$\end{tabular}}}}%
    \put(0,0){\includegraphics[width=\unitlength,page=4]{genSgn.pdf}}%
    \put(0.48796383,0.24643371){\color[rgb]{0,0,0}\makebox(0,0)[lt]{\lineheight{1.25}\smash{\begin{tabular}[t]{l}$t$\end{tabular}}}}%
    \put(0.26901528,0.24805858){\color[rgb]{0,0,0}\makebox(0,0)[lt]{\lineheight{1.25}\smash{\begin{tabular}[t]{l}$s$\end{tabular}}}}%
  \end{picture}%
\endgroup%

\end{equation}
These elements can be arranged as matrices $\mathbf{U}^{(b_i)}, \mathbf{U}^{(a_i)}, \mathbf{U}^{(m_j)} \in \mathcal{S}^{\mathrm{s}}_q(\Sigma_{g,n}^{\mathrm{o},\bullet}) \otimes \mathrm{End}_{\mathbb{C}}(V_2)$, and by definition it holds
\begin{equation}\label{holSurMatricesGen}
\mathrm{hol}^{\mathrm{s}}(\mathbf{U}^{(b_i)}) = B(i) \, ^t \!D, \:\:\:\:\:\:\:\: \mathrm{hol}^{\mathrm{s}}(\mathbf{U}^{(a_i)}) = A(i) \, ^t \!D, \:\:\:\:\:\:\:\: \mathrm{hol}^{\mathrm{s}}(\mathbf{U}^{(m_j)}) = M(j) \, ^t \!D 
\end{equation}
where obviously $\mathrm{hol}^{\mathrm{s}}(\mathbf{U}^{(b_i)})^s_t = \mathrm{hol}^{\mathrm{s}}(\mathbf{U}^{(b_i)}{^s_t})$ \textit{etc}.  Thanks to Lemma \ref{GensRelsLgnUq2}, we obtain a presentation by generators and relations of $\mathcal{S}^{\mathrm{s}}_q(\Sigma_{g,n}^{\mathrm{o},\bullet})$:

\begin{corollary}\label{presentationStatedSkein}
The algebra $\mathcal{S}^{\mathrm{s}}_q(\Sigma_{g,n}^{\mathrm{o},\bullet})$ is generated by the coefficients of the matrices $\mathbf{U}^{(b_i)}, \mathbf{U}^{(a_i)}$ for $1 \leq i \leq g$ and $\mathbf{U}^{(m_i)}$ for $g+1 \leq i \leq g+n$, modulo the relations obtained by replacing $B(i)$ with $\mathbf{U}^{(b_i)} (^t \!D)^{-1}$, $A(i)$ with $\mathbf{U}^{(a_i)} (^t \!D)^{-1}$ and $M(i)$ with $\mathbf{U}^{(m_i)} (^t \!D)^{-1}$ in the relations of Lemma \ref{GensRelsLgnUq2}.
\end{corollary}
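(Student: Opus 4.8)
The plan is to transport the presentation of $\mathcal{L}_{g,n}(U_{q^2})$ from Lemma~\ref{GensRelsLgnUq2} through the isomorphism $\mathrm{hol}^{\mathrm{s}}$ of Theorem~\ref{thIsoSkeinLgn}, using the dictionary \eqref{holSurMatricesGen} to rewrite everything in terms of the stated tangles $\mathbf{U}^{(b_i)},\mathbf{U}^{(a_i)},\mathbf{U}^{(m_i)}$. Since $\mathrm{hol}^{\mathrm{s}}$ is an isomorphism of algebras, $\mathcal{S}^{\mathrm{s}}_q(\Sigma_{g,n}^{\mathrm{o},\bullet})$ inherits a presentation with generators $\bigl(\mathrm{hol}^{\mathrm{s}}\bigr)^{-1}$ of the generators of $\mathcal{L}_{g,n}(U_{q^2})$, namely the coefficients of the matrices $B(i),A(i),M(i)$, and with relations the images under $\bigl(\mathrm{hol}^{\mathrm{s}}\bigr)^{-1}$ of the relations in Lemma~\ref{GensRelsLgnUq2}. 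So the only thing to check is that under $\bigl(\mathrm{hol}^{\mathrm{s}}\bigr)^{-1}$ the matrix $B(i)$ corresponds to $\mathbf{U}^{(b_i)}(^t\!D)^{-1}$, and similarly for $A(i),M(i)$; this is immediate from \eqref{holSurMatricesGen}, since $\mathrm{hol}^{\mathrm{s}}(\mathbf{U}^{(b_i)})=B(i)\,^t\!D$ gives $B(i)=\mathrm{hol}^{\mathrm{s}}(\mathbf{U}^{(b_i)})(^t\!D)^{-1}=\mathrm{hol}^{\mathrm{s}}\bigl(\mathbf{U}^{(b_i)}(^t\!D)^{-1}\bigr)$, where in the last step we used that $^t\!D$ is a constant invertible scalar matrix (and hence commutes with the coefficient-wise application of $\mathrm{hol}^{\mathrm{s}}$).

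Concretely, first I would recall that a presentation of an algebra $\mathcal{A}$ by generators $x_\alpha$ and relations $r_\beta(x_\alpha)=0$ is preserved by any algebra isomorphism $\phi:\mathcal{A}\xrightarrow{\sim}\mathcal{A}'$: the images $\phi(x_\alpha)$ generate $\mathcal{A}'$, and $\mathcal{A}'$ is presented by those generators modulo $r_\beta(\phi(x_\alpha))=0$ (this is a triviality, but worth stating since it is the engine of the proof). Applying this with $\phi=\bigl(\mathrm{hol}^{\mathrm{s}}\bigr)^{-1}$ and $\mathcal{A}=\mathcal{L}_{g,n}(U_{q^2})$, $\mathcal{A}'=\mathcal{S}^{\mathrm{s}}_q(\Sigma_{g,n}^{\mathrm{o},\bullet})$, we get that $\mathcal{S}^{\mathrm{s}}_q$ is generated by the coefficients of $\bigl(\mathrm{hol}^{\mathrm{s}}\bigr)^{-1}(B(i))$, etc. Then I would substitute the explicit formulas $\bigl(\mathrm{hol}^{\mathrm{s}}\bigr)^{-1}(B(i))=\mathbf{U}^{(b_i)}(^t\!D)^{-1}$, $\bigl(\mathrm{hol}^{\mathrm{s}}\bigr)^{-1}(A(i))=\mathbf{U}^{(a_i)}(^t\!D)^{-1}$, $\bigl(\mathrm{hol}^{\mathrm{s}}\bigr)^{-1}(M(i))=\mathbf{U}^{(m_i)}(^t\!D)^{-1}$, which follow from \eqref{holSurMatricesGen} as explained above. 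Since $\mathcal{S}^{\mathrm{s}}_q$ is equally generated by the coefficients of the matrices $\mathbf{U}^{(b_i)},\mathbf{U}^{(a_i)},\mathbf{U}^{(m_i)}$ themselves (as $^t\!D$ is an invertible scalar matrix, so the two sets of coefficients generate the same subalgebra), we may take the latter as generators, and the relations are obtained from those of Lemma~\ref{GensRelsLgnUq2} by the stated substitution.

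I do not expect a genuine obstacle here, as this is a formal consequence of Theorem~\ref{thIsoSkeinLgn} and Lemma~\ref{GensRelsLgnUq2}; the proof is essentially a one-line deduction. The only point requiring a small remark is the substitution step: one must observe that $^t\!D$ is a fixed invertible matrix with scalar entries (from \eqref{isoDualUq2}, $D=\begin{pmatrix}0 & q^{1/2}\\ -q^{5/2} & 0\end{pmatrix}$, so $^t\!D=\begin{pmatrix}0 & -q^{5/2}\\ q^{1/2} & 0\end{pmatrix}$ is invertible over $\mathbb{Z}[q^{\pm 1/2}]$), hence right-multiplying a matrix of generators by $(^t\!D)^{-1}$ does not change the subalgebra generated by the matrix entries, and the relations transform accordingly. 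Therefore the proof reads: by Theorem~\ref{thIsoSkeinLgn} the map $\mathrm{hol}^{\mathrm{s}}$ is an algebra isomorphism; by \eqref{holSurMatricesGen} it sends $\mathbf{U}^{(b_i)}(^t\!D)^{-1}\mapsto B(i)$ (and likewise for $a_i,m_i$); transporting the presentation of Lemma~\ref{GensRelsLgnUq2} along $\bigl(\mathrm{hol}^{\mathrm{s}}\bigr)^{-1}$ and performing the substitution $B(i)\leftrightarrow\mathbf{U}^{(b_i)}(^t\!D)^{-1}$, $A(i)\leftrightarrow\mathbf{U}^{(a_i)}(^t\!D)^{-1}$, $M(i)\leftrightarrow\mathbf{U}^{(m_i)}(^t\!D)^{-1}$ yields the claimed presentation of $\mathcal{S}^{\mathrm{s}}_q(\Sigma_{g,n}^{\mathrm{o},\bullet})$. $\qed$
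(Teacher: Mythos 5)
Your proposal is correct and is exactly the argument the paper intends: the corollary is stated as an immediate consequence of Theorem \ref{thIsoSkeinLgn}, the dictionary \eqref{holSurMatricesGen} and Lemma \ref{GensRelsLgnUq2}, with no further proof given. Your elaborations (transporting a presentation along an algebra isomorphism, and the observation that $^t\!D$ is an invertible scalar matrix so right-multiplication by $(^t\!D)^{-1}$ commutes with the coefficient-wise application of $\mathrm{hol}^{\mathrm{s}}$ and does not change the generated subalgebra) correctly fill in the routine details.
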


\indent The (usual) skein algebra $\mathcal{S}_q(\Sigma_{g,n}^{\mathrm{o}})$ is the subalgebra of $\mathcal{S}^{\mathrm{s}}_q(\Sigma_{g,n}^{\mathrm{o},\bullet})$ consisting of linear combinations of closed links (tangles without boundary points and hence without states). On this subalgebra, $\mathrm{hol}^{\mathrm{s}}$ is simply the Wilson loop map $W$ with all the strands colored by $\chi^+_2$, the character of $V_2$. By \cite[Th. 10]{BFKB} (see also \cite[Th. 8.4]{BaR} for the statement of this result in the formalism used here), the map $W$ provides an isomorphism from $\mathcal{S}_q(\Sigma_{g,n}^{\mathrm{o}})$ to the subalgebra of invariant elements $\mathcal{L}_{g,n}^{\mathrm{inv}}(U_{q^2})$. Hence:

\begin{corollary}\label{coroInvariantsStated}
The (usual) skein algebra $\mathcal{S}_q(\Sigma_{g,n}^{\mathrm{o}})$ is exactly the subalgebra of invariant elements of the $\mathcal{O}_{q^2}$-comodule-algebra $\mathcal{S}^{\mathrm{s}}_q(\Sigma_{g,n}^{\mathrm{o},\bullet})$.
\end{corollary}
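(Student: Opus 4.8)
The plan is to obtain this as a short consequence of Theorem~\ref{thIsoSkeinLgn} and the description of the Wilson loop map $W$ recalled immediately above it, rather than by a direct skein-theoretic argument. One direction is elementary: because the $\mathcal{O}_{q^2}$-coaction on $\mathcal{S}^{\mathrm{s}}_q(\Sigma_{g,n}^{\mathrm{o},\bullet})$ is the one induced by cutting a bigon off the boundary edge, and a closed link has no boundary point to interact with that bigon, the splitting morphism $\Delta_e$ sends a closed link $L$ to $1 \otimes L$. Hence every element of $\mathcal{S}_q(\Sigma_{g,n}^{\mathrm{o}})$ is coinvariant, i.e. $\mathcal{S}_q(\Sigma_{g,n}^{\mathrm{o}}) \subseteq \mathcal{S}^{\mathrm{s}}_q(\Sigma_{g,n}^{\mathrm{o},\bullet})^{\mathrm{inv}}$, and only the reverse inclusion requires work.

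For the reverse inclusion I would transport the whole situation across the isomorphism $\mathrm{hol}^{\mathrm{s}}$. Since by Theorem~\ref{thIsoSkeinLgn} the map $\mathrm{hol}^{\mathrm{s}}$ is an isomorphism of $\mathcal{O}_{q^2}$-comodule-algebras, it carries coinvariants to coinvariants, so $\mathrm{hol}^{\mathrm{s}}\big(\mathcal{S}^{\mathrm{s}}_q(\Sigma_{g,n}^{\mathrm{o},\bullet})^{\mathrm{inv}}\big) = \mathcal{L}_{g,n}^{\mathrm{inv}}(U_{q^2})$, the invariant subalgebra on the right being the one defined in \eqref{defInvariants}. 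On the other hand, restricted to the subalgebra of closed (stateless) links, $\mathrm{hol}^{\mathrm{s}}$ coincides --- by Remark~\ref{remarqueAvecCaracteres} --- with the Wilson loop map $W$ evaluated with every component colored by the character $\chi^+_2$ of $V_2$, and by the theorem of Bullock--Frohman--Kania-Bartoszynska \cite[Th.~10]{BFKB} (in the present conventions \cite[Th.~8.4]{BaR}) this map is a bijection of $\mathcal{S}_q(\Sigma_{g,n}^{\mathrm{o}})$ onto $\mathcal{L}_{g,n}^{\mathrm{inv}}(U_{q^2})$. Putting the two computations together, $\mathrm{hol}^{\mathrm{s}}\big(\mathcal{S}_q(\Sigma_{g,n}^{\mathrm{o}})\big) = \mathcal{L}_{g,n}^{\mathrm{inv}}(U_{q^2}) = \mathrm{hol}^{\mathrm{s}}\big(\mathcal{S}^{\mathrm{s}}_q(\Sigma_{g,n}^{\mathrm{o},\bullet})^{\mathrm{inv}}\big)$, and since $\mathrm{hol}^{\mathrm{s}}$ is injective the two subalgebras of $\mathcal{S}^{\mathrm{s}}_q(\Sigma_{g,n}^{\mathrm{o},\bullet})$ must coincide, which is the claim.

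The only genuine point to be careful about --- and hence the ``hard part'' --- is that the notion of coinvariance on $\mathcal{S}^{\mathrm{s}}_q(\Sigma_{g,n}^{\mathrm{o},\bullet})$ coming from $\Delta_e$ is matched, under $\mathrm{hol}^{\mathrm{s}}$, with the notion \eqref{defInvariants} on $\mathcal{L}_{g,n}(U_{q^2})$; but this is precisely the last clause of Theorem~\ref{thIsoSkeinLgn} (that $\mathrm{hol}^{\mathrm{s}}$ is a morphism of $\mathcal{O}_{q^2}$-comodules), so here it suffices to invoke it. Everything else is a two-line chase through the injective comodule isomorphism $\mathrm{hol}^{\mathrm{s}}$ together with the already-quoted surjectivity of $W$ onto $\mathcal{L}_{g,n}^{\mathrm{inv}}(U_{q^2})$.
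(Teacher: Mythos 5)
Your argument is correct and is essentially the paper's own: the paper likewise observes that on closed links $\mathrm{hol}^{\mathrm{s}}$ coincides with $W$ colored by $\chi^+_2$, invokes \cite[Th.~10]{BFKB} (or \cite[Th.~8.4]{BaR}) to identify the image with $\mathcal{L}_{g,n}^{\mathrm{inv}}(U_{q^2})$, and concludes via the comodule-algebra isomorphism of Theorem~\ref{thIsoSkeinLgn}. Your explicit easy-direction check that $\Delta_e(L)=1\otimes L$ for a closed link is a harmless addition already subsumed by the injectivity chase.
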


\subsection{Proof for $\Sigma_{0,1}^{\mathrm{o},\bullet}$}
Let
\[ M = \overset{V_2}{M} = 
\begin{pmatrix}
M^-_- & M^-_+\\
M^+_- & M^+_+
\end{pmatrix} \in \mathcal{L}_{0,1}(U_{q^2}) \otimes \mathrm{End}_{\mathbb{C}}(V_2).
\]
Then by Lemma \ref{GensRelsLgnUq2}, $\mathcal{L}_{0,1}(U_{q^2})$ is generated by the $4$ coefficients of the matrix $M$, modulo the following relations:
\begin{equation}\label{presentationL01Uq2}
 R M_1 R_{21} M_2 = M_2 R M_1 R_{21}, \:\:\:\:\:\:\:\: \mathrm{qdet}(M) = 1
\end{equation}
where by definition the quantum determinant is $\mathrm{qdet}(M) = M^-_-M^+_+ - q^4M^-_+M^+_-$.

\smallskip

\indent For $l \geq 0$ and $s_1, \ldots, s_l, t_1, \ldots, t_l \in \{\pm\}$, we define the following elements of $\mathcal{S}^{\mathrm{s}}_q(\Sigma_{0,1}^{\mathrm{o},\bullet})$:
\begin{center}
\begingroup%
  \makeatletter%
  \providecommand\color[2][]{%
    \errmessage{(Inkscape) Color is used for the text in Inkscape, but the package 'color.sty' is not loaded}%
    \renewcommand\color[2][]{}%
  }%
  \providecommand\transparent[1]{%
    \errmessage{(Inkscape) Transparency is used (non-zero) for the text in Inkscape, but the package 'transparent.sty' is not loaded}%
    \renewcommand\transparent[1]{}%
  }%
  \providecommand\rotatebox[2]{#2}%
  \newcommand*\fsize{\dimexpr\f@size pt\relax}%
  \newcommand*\lineheight[1]{\fontsize{\fsize}{#1\fsize}\selectfont}%
  \ifx\svgwidth\undefined%
    \setlength{\unitlength}{177.91399733bp}%
    \ifx\svgscale\undefined%
      \relax%
    \else%
      \setlength{\unitlength}{\unitlength * \real{\svgscale}}%
    \fi%
  \else%
    \setlength{\unitlength}{\svgwidth}%
  \fi%
  \global\let\svgwidth\undefined%
  \global\let\svgscale\undefined%
  \makeatother%
  \begin{picture}(1,0.39426455)%
    \lineheight{1}%
    \setlength\tabcolsep{0pt}%
    \put(0,0){\includegraphics[width=\unitlength,page=1]{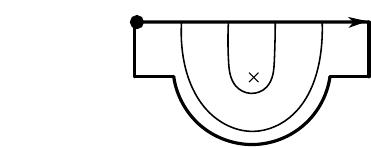}}%
    \put(0.45932036,0.36520712){\color[rgb]{0,0,0}\makebox(0,0)[lt]{\lineheight{1.25}\smash{\begin{tabular}[t]{l}$s_1$\end{tabular}}}}%
    \put(0.53621522,0.22425118){\color[rgb]{0,0,0}\makebox(0,0)[lt]{\lineheight{1.25}\smash{\begin{tabular}[t]{l}...\end{tabular}}}}%
    \put(0.77513628,0.22425118){\color[rgb]{0,0,0}\makebox(0,0)[lt]{\lineheight{1.25}\smash{\begin{tabular}[t]{l}...\end{tabular}}}}%
    \put(0.6038525,0.36520724){\color[rgb]{0,0,0}\makebox(0,0)[lt]{\lineheight{1.25}\smash{\begin{tabular}[t]{l}$s_l$\end{tabular}}}}%
    \put(0.73107102,0.36520712){\color[rgb]{0,0,0}\makebox(0,0)[lt]{\lineheight{1.25}\smash{\begin{tabular}[t]{l}$t_l$\end{tabular}}}}%
    \put(0.85678394,0.36520724){\color[rgb]{0,0,0}\makebox(0,0)[lt]{\lineheight{1.25}\smash{\begin{tabular}[t]{l}$t_1$\end{tabular}}}}%
    \put(-0.00317464,0.22573764){\color[rgb]{0,0,0}\makebox(0,0)[lt]{\lineheight{1.25}\smash{\begin{tabular}[t]{l}$\mathbf{U}(l)^{s_1 \ldots s_l}_{t_1 \ldots t_l} = $\end{tabular}}}}%
  \end{picture}%
\endgroup%

\end{center}
(for $l=0$ this is $\mathbf{U}(0)=1$, namely the empty diagram). Note that for a fixed $l$, these elements can be arranged into a tensor $\mathbf{U}(l) \in \mathcal{S}^{\mathrm{s}}_q(\Sigma_{0,1}^{\mathrm{o},\bullet}) \otimes \mathrm{End}_{\mathbb{C}}(V_2)^{\otimes l}$. For simplicity we denote $\mathbf{U} = \mathbf{U}(1)$ and $\mathbf{V} = \mathbf{U}(2)$; in a matrix form these tensors are written as
\[
\mathbf{U} = \mathbf{U}(1)= 
\begin{pmatrix}
\mathbf{U}^-_- & \mathbf{U}^-_+\\
\mathbf{U}^+_- & \mathbf{U}^+_+
\end{pmatrix}, \:\:\:\:\:\:
\mathbf{V}= \mathbf{U}(2) =
\begin{pmatrix}
\mathbf{V}^{--}_{--} & \mathbf{V}^{--}_{-+} & \mathbf{V}^{--}_{+-} & \mathbf{V}^{--}_{++}\\
\mathbf{V}^{-+}_{--} & \mathbf{V}^{-+}_{-+} & \mathbf{V}^{-+}_{+-} & \mathbf{V}^{-+}_{++}\\
\mathbf{V}^{+-}_{--} & \mathbf{V}^{+-}_{-+} & \mathbf{V}^{+-}_{+-} & \mathbf{V}^{+-}_{++}\\
\mathbf{V}^{++}_{--} & \mathbf{V}^{++}_{-+} & \mathbf{V}^{++}_{+-} & \mathbf{V}^{++}_{++}
\end{pmatrix}.
\]
\indent As explained previously, we use the $(\mathcal{S}_q^{\mathrm{s}}(\mathfrak{B}) = \mathcal{O}_{q^2})$-comodule-algebra structure of stated skein algebras together with $\mathrm{id} = (\varepsilon \otimes \mathrm{id}) \circ \Delta_e$ and \eqref{RTbigone} to derive equalities graphically. Here we apply this to the dashed arc $e$ depicted below.
\begin{equation}\label{ruseStated}
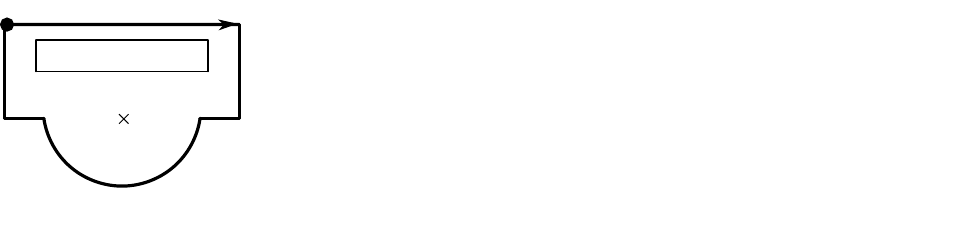
\end{equation}
where the summation indices are all in $\{ \pm \}$. This equality implies in particular that the collection of elements $\mathbf{U}(l)^{i_1 \ldots i_l}_{j_1 \ldots j_l}$ (with $l \geq 0$ and $i_{\eta}, j_{\eta} \in \{ \pm \}$ for each $\eta$) spans $\mathcal{S}^{\mathrm{s}}_q(\Sigma_{0,1}^{\mathrm{o},\bullet})$ as a $\mathbb{Z}[q^{\pm 1/2}]$-module.

\smallskip

\indent Since $\mathcal{L}_{0,1}(U_{q^2})$ is isomorphic to the braided dual of $U_{q^2}$ \cite[\S 4]{BaR}, we can deduce the following lemma from \cite[Prop. 4.17]{CL}, where it is shown that $\mathcal{S}^{\mathrm{s}}_q(\Sigma_{0,1}^{\mathrm{o},\bullet})$ is isomorphic to the braided dual. Our proof is however different.
\begin{lemma}\label{lemmeInjectionL01S01}
1. We have a morphism $j : \mathcal{L}_{0,1}(U_{q^2}) \to \mathcal{S}^{\mathrm{s}}_q(\Sigma_{0,1}^{\mathrm{o},\bullet})$ of $\mathcal{O}_{q^2}$-comodule-algebras defined by $j(M) = \mathbf{U} (^t \! D)^{-1}$. Explicitly:
\[  
\begin{pmatrix}
j(M^-_-) & j(M^-_+) \\
j(M^+_-) & j(M^+_+)
\end{pmatrix}
=
\begin{pmatrix}
q^{-5/2}\mathbf{U}^-_+  & -q^{-1/2}\mathbf{U}^-_- \\
q^{-5/2}\mathbf{U}^+_+ & -q^{-1/2}\mathbf{U}^+_-
\end{pmatrix}.
\]
2. The morphism $j$ is surjective.
\end{lemma}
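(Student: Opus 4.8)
\textbf{Proof plan for Lemma \ref{lemmeInjectionL01S01}.}

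The plan is to verify, first, that the assignment $M \mapsto \mathbf{U}\,(^t\!D)^{-1}$ respects the presentation of $\mathcal{L}_{0,1}(U_{q^2})$ given in \eqref{presentationL01Uq2}, and second, that the image of $j$ contains all the spanning elements $\mathbf{U}(l)^{i_1\ldots i_l}_{j_1\ldots j_l}$. For part 1, I would exploit the graphical-derivation technique already set up in \eqref{ruseStated} and in the preliminary discussion of $\mathcal{S}^{\mathrm{s}}_q(\mathfrak{B})$: using $\mathrm{id}=(\varepsilon\otimes\mathrm{id})\circ\Delta_e$ and \eqref{RTbigone} with the arc $e$ from \eqref{ruseStated}, the product $\mathbf{U}_1\mathbf{U}_2$ (with the usual tensor-leg notation) can be rewritten by stacking two copies of $\mathbf{U}$ and splitting off a bigon; this is exactly the computation done for the matrix $T\in\mathcal{S}^{\mathrm{s}}_q(\mathfrak{B})\otimes\mathrm{End}_{\mathbb{C}}(V_2)$ earlier, except that now one strand runs through the handle. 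Carrying it out in both stacking orders and using the value $Z$ of a crossing of two distinct components (which is $R$, resp. $R^{-1}$, as recorded just before the ``preliminary remarks about stated skein algebras''), I expect to obtain an exchange relation for $\mathbf{U}$ of the schematic form $R\,\mathbf{U}_1\,R_{21}\,\mathbf{U}_2 = \mathbf{U}_2\,R\,\mathbf{U}_1\,R_{21}$ up to the $^t\!D$-conjugation bookkeeping, together with a quantum-determinant relation $\mathrm{qdet}$ coming from closing a strand with a cup and using \eqref{boundaryRels}; translating these through the substitution $M = \mathbf{U}\,(^t\!D)^{-1}$ gives precisely \eqref{presentationL01Uq2}. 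In fact a cleaner route is available: since $\mathrm{hol}^{\mathrm{s}}$ is already known from the proof of Theorem \ref{thIsoSkeinLgn} to be a well-defined algebra morphism $\mathcal{S}^{\mathrm{s}}_q(\Sigma_{0,1}^{\mathrm{o},\bullet})\to\mathcal{L}_{0,1}(U_{q^2})$ with $\mathrm{hol}^{\mathrm{s}}(\mathbf{U}^{(m)}) = M\,{}^t\!D$ by \eqref{holSurMatricesGen}, one may instead first establish surjectivity of $\mathrm{hol}^{\mathrm{s}}$ and then observe that $j$ is a candidate two-sided inverse on generators; but to keep the argument self-contained I would still check the relations directly as above, since that is what ``$j$ is a morphism'' means.

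For part 2, surjectivity of $j$, I would show that every $\mathbf{U}(l)^{i_1\ldots i_l}_{j_1\ldots j_l}$ lies in the subalgebra generated by the four entries of $\mathbf{U}$, hence in $\mathrm{im}(j)$ (the passage from $\mathbf{U}$ to $M$ being an invertible linear change of the generating matrix). The key input is the last displayed identity in \eqref{ruseStated}, which expresses $\mathbf{U}(l)$ as a $Z$-weighted sum of products $\mathbf{U}^{i_1}_{j_1}\cdots\mathbf{U}^{i_l}_{j_l}$ — here one should instead read that identity \emph{in the other direction}: taking $T$ in \eqref{ruseStated} to be an arbitrary diagram built from the tangle underlying $\mathbf{U}(l)$ shows $\mathbf{U}(l)$ is a linear combination of monomials in the $\mathbf{U}^s_t$, exactly as the product of $l$ copies of $\mathbf{U}$ computed with the bigon-splitting trick. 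Combined with the remark immediately following \eqref{ruseStated} that the $\mathbf{U}(l)^{i_1\ldots i_l}_{j_1\ldots j_l}$ span $\mathcal{S}^{\mathrm{s}}_q(\Sigma_{0,1}^{\mathrm{o},\bullet})$ as a $\mathbb{Z}[q^{\pm1/2}]$-module, this yields surjectivity. The explicit matrix formula in the statement is just $\mathbf{U}\,(^t\!D)^{-1}$ written out using \eqref{isoDualUq2} and \eqref{propKD}, namely $^t\!D = \overset{V_2}{g}D = \begin{pmatrix} 0 & -q^{5/2} \\ q^{1/2} & 0 \end{pmatrix}\cdot$(sign bookkeeping), and is a one-line check.

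The main obstacle I anticipate is bookkeeping rather than conceptual: getting the $R$ versus $R_{21}$ placement and the conjugation by $^t\!D$ exactly right when converting the graphically-derived relations for $\mathbf{U}$ into the stated relations for $M$, since $\mathbf{U}$ carries the duality isomorphism $D$ baked into one of its legs (via $o^{\uparrow}=D$ in \eqref{formuleHolSgn}) and the two legs of $\mathbf{U}^{(m)}{}^s_t$ in \eqref{genSgn} are attached to the handle with opposite co/orientations. I would handle this by systematically using \eqref{ruseFondaDual} and \eqref{propKD} to move all $D$'s and $g$'s to one side, and by double-checking the final relations against Lemma \ref{GensRelsLgnUq2} with $g=0$, $n=1$; once the placement is pinned down, both the morphism property and surjectivity follow formally from the splitting-map machinery and the spanning statement already in hand.
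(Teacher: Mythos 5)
Your overall route coincides with the paper's: part 1 is proved by deriving the exchange and quantum-determinant relations for $\mathbf{U}$ graphically via the bigon-splitting identity $\mathrm{id}=(\varepsilon\otimes\mathrm{id})\circ\Delta_e$ together with \eqref{RTbigone}, and then transporting them through the substitution $M=\mathbf{U}\,(^t\!D)^{-1}$ using \eqref{ruseFondaDual}, \eqref{pivot} and \eqref{propKD}; part 2 reduces surjectivity to the spanning statement for the $\mathbf{U}(l)$'s. However, your argument for part 2 as written does not close. You describe \eqref{ruseStated} as ``expressing $\mathbf{U}(l)$ as a $Z$-weighted sum of products $\mathbf{U}^{i_1}_{j_1}\cdots\mathbf{U}^{i_l}_{j_l}$'' and propose to take $T$ there to be a diagram built from the tangle underlying $\mathbf{U}(l)$; but \eqref{ruseStated} goes the other way --- it writes an arbitrary stated tangle as a linear combination of the $\mathbf{U}(l)$'s, which only yields the spanning statement, not membership of the $\mathbf{U}(l)$'s in $\mathrm{im}(j)$. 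The step you actually need is: compute the product $\mathbf{U}^{s_1}_{t_1}\cdots\mathbf{U}^{s_l}_{t_l}$ by stacking and isotoping all strands into a single handle, so that \eqref{ruseStated} gives $\mathbf{U}^{s_1}_{t_1}\cdots\mathbf{U}^{s_l}_{t_l}=\sum Z(\sigma)^{\cdots}_{\cdots}\,\mathbf{U}(l)^{\cdots}_{\cdots}$ where $\sigma$ is a \emph{braid}; since $Z(\sigma)$ is then an invertible matrix, this linear system can be inverted to express each $\mathbf{U}(l)^{i_1\ldots i_l}_{j_1\ldots j_l}$ as a polynomial in the four entries of $\mathbf{U}$. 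The invertibility of $Z(\sigma)$ is the entire content of the step and cannot be left implicit.

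A second, smaller omission: the lemma asserts that $j$ is a morphism of $\mathcal{O}_{q^2}$-\emph{comodule}-algebras, and your plan only addresses the algebra relations. One still has to check $\Delta_e\circ j=(j\otimes\mathrm{id})\circ\Omega$. From the splitting picture one gets $\Delta_e(\mathbf{U})=T\,\mathbf{U}\,^tT$, and then \eqref{ruseFondaDual} and \eqref{propKD} convert $^tT$ into $(\exposantGauche{t}{\!D})^{-1}\,S(T)\,\exposantGauche{t}{\!D}$, which is exactly what makes $\mathbf{U}\,(^t\!D)^{-1}$ transform as the coaction \eqref{coactionH} requires. This verification is short but is a genuine part of the statement and should appear in the proof.
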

\begin{proof}
1. We will show that the relations \eqref{presentationL01Uq2} defining $\mathcal{L}_{0,1}(U_{q^2})$ are satisfied. This is entirely based on formal matrix computations. Using the product in $\mathcal{S}^{\mathrm{s}}_q(\Sigma_{0,1}^{\mathrm{o},\bullet})$ and \eqref{ruseStated}, we obtain:
\begin{center}
\begingroup%
  \makeatletter%
  \providecommand\color[2][]{%
    \errmessage{(Inkscape) Color is used for the text in Inkscape, but the package 'color.sty' is not loaded}%
    \renewcommand\color[2][]{}%
  }%
  \providecommand\transparent[1]{%
    \errmessage{(Inkscape) Transparency is used (non-zero) for the text in Inkscape, but the package 'transparent.sty' is not loaded}%
    \renewcommand\transparent[1]{}%
  }%
  \providecommand\rotatebox[2]{#2}%
  \newcommand*\fsize{\dimexpr\f@size pt\relax}%
  \newcommand*\lineheight[1]{\fontsize{\fsize}{#1\fsize}\selectfont}%
  \ifx\svgwidth\undefined%
    \setlength{\unitlength}{668.0966576bp}%
    \ifx\svgscale\undefined%
      \relax%
    \else%
      \setlength{\unitlength}{\unitlength * \real{\svgscale}}%
    \fi%
  \else%
    \setlength{\unitlength}{\svgwidth}%
  \fi%
  \global\let\svgwidth\undefined%
  \global\let\svgscale\undefined%
  \makeatother%
  \begin{picture}(1,0.14523442)%
    \lineheight{1}%
    \setlength\tabcolsep{0pt}%
    \put(0,0){\includegraphics[width=\unitlength,page=1]{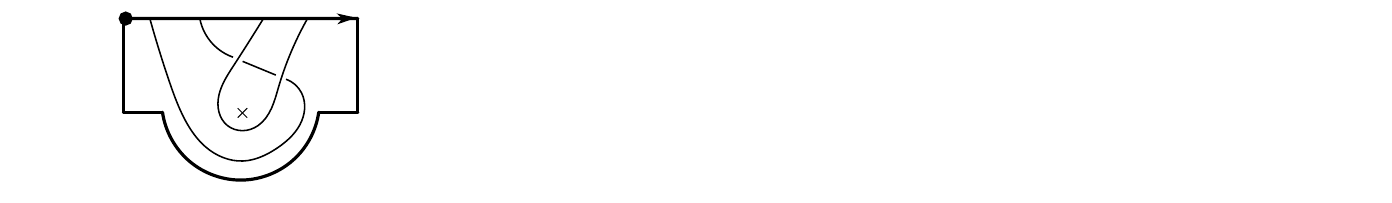}}%
    \put(0.09985646,0.13813271){\color[rgb]{0,0,0}\makebox(0,0)[lt]{\lineheight{1.25}\smash{\begin{tabular}[t]{l}$s_1$\end{tabular}}}}%
    \put(0.18179226,0.13780068){\color[rgb]{0,0,0}\makebox(0,0)[lt]{\lineheight{1.25}\smash{\begin{tabular}[t]{l}$s_2$\end{tabular}}}}%
    \put(0.13544843,0.13799706){\color[rgb]{0,0,0}\makebox(0,0)[lt]{\lineheight{1.25}\smash{\begin{tabular}[t]{l}$t_1$\end{tabular}}}}%
    \put(0.21380096,0.13780068){\color[rgb]{0,0,0}\makebox(0,0)[lt]{\lineheight{1.25}\smash{\begin{tabular}[t]{l}$t_2$\end{tabular}}}}%
    \put(-0.00077589,0.08761608){\color[rgb]{0,0,0}\makebox(0,0)[lt]{\lineheight{1.25}\smash{\begin{tabular}[t]{l}$U^{s_1}_{t_1}U^{s_2}_{t_2} =$\end{tabular}}}}%
    \put(0.26960436,0.08759592){\color[rgb]{0,0,0}\makebox(0,0)[lt]{\lineheight{1.25}\smash{\begin{tabular}[t]{l}$\displaystyle= \sum_{ijkl} \mathbf{V}^{ij}_{lk} \, Z$\end{tabular}}}}%
    \put(0,0){\includegraphics[width=\unitlength,page=2]{produitS01.pdf}}%
    \put(0.47957108,0.0580871){\color[rgb]{0,0,0}\makebox(0,0)[lt]{\lineheight{1.25}\smash{\begin{tabular}[t]{l}$_{ijkl}$\end{tabular}}}}%
    \put(0.47886204,0.12390931){\color[rgb]{0,0,0}\makebox(0,0)[lt]{\lineheight{1.25}\smash{\begin{tabular}[t]{l}$_{s_1t_1s_2t_2}$\end{tabular}}}}%
    \put(0.53170444,0.08785575){\color[rgb]{0,0,0}\makebox(0,0)[lt]{\lineheight{1.25}\smash{\begin{tabular}[t]{l}$\displaystyle= \sum_{jklm} \mathbf{V}^{s_1j}_{lk}R^{s_2t_1}_{jm}R^{t_2m}_{kl}$\end{tabular}}}}%
    \put(0.26868376,0.01616528){\color[rgb]{0,0,0}\makebox(0,0)[lt]{\lineheight{1.25}\smash{\begin{tabular}[t]{l}$\displaystyle= \sum_{jklm} (a_{\beta})^{s_2}_j \mathbf{V}^{s_1j}_{lk} \,(^t b_{\alpha})^l_m \,(^t b_{\beta})^m_{t_1}\, (^t \! a_{\alpha})^k_{t_2}.$\end{tabular}}}}%
  \end{picture}%
\endgroup%

\end{center}
where we wrote $R = a_{\eta} \otimes b_{\eta}$ with implicit summation and implicit representation on the fundamental representation $V_2$ (\textit{i.e.} $R$ is the matrix in \eqref{RMatriceV2}). Hence
\[ \mathbf{U}_1 \mathbf{U}_2 = (a_{\beta})_2 \mathbf{V} \, (^t b_{\alpha})_1 \, (^t b_{\beta})_1 \, ^t\!(a_{\alpha})_2 \]
where as usual the subscript $1,2$ indicate the two canonical embeddings of $\mathcal{S}^{\mathrm{s}}_q(\Sigma_{0,1}^{\mathrm{o},\bullet}) \otimes \mathrm{End}_{\mathbb{C}}(V_2)$ in $\mathcal{S}^{\mathrm{s}}_q(\Sigma_{0,1}^{\mathrm{o},\bullet}) \otimes \mathrm{End}_{\mathbb{C}}(V_2)^{\otimes 2}$. Now we compute:
\begin{align*}
\mathbf{U}_1 \mathbf{U}_2 &= (a_{\beta})_2 \, \mathbf{V} \, D_1^{-1} \, S^{-1}(b_{\alpha})_1 \, S^{-1}(b_{\beta})_1 \, D_1 \, D_2^{-1} \,  S^{-1}(a_{\alpha})_2 \, D_2\\
& = (a_{\beta})_2 \, \mathbf{V} \, D_1^{-1} \, D_2^{-1} \, (b_{\alpha})_1 \, S^{-1}(b_{\beta})_1 \, (a_{\alpha})_2 \, D_1 \, D_2\\
&= (a_{\beta})_2 \, \mathbf{V} \, D_1^{-1} \, D_2^{-1} \, (b_{\alpha})_1 \, g_1^{-1} \, S(b_{\beta})_1 \, g_1 \, g_2^{-1} \, S^2(a_{\alpha})_2 \, g_2 \, D_1 \, D_2\\
&= (a_{\beta})_2 \, \mathbf{V} \, D_1^{-1} \, g_1^{-1} \, D_2^{-1} \, g_2^{-1} \, S^2(b_{\alpha})_1 \, S(b_{\beta})_1 \, S^2(a_{\alpha})_2 \, g_1 \, D_1 \, g_2 \, D_2\\
&= (a_{\beta})_2 \, \mathbf{V} \, (\exposantGauche{t}{\!D})_1^{-1} \, (\exposantGauche{t}{\!D})_2^{-1} \, R_{21} \, S(b_{\beta})_1 \, (\exposantGauche{t}{\!D})_1 \, (\exposantGauche{t}{\!D})_2.
\end{align*}
For the first equality we used \eqref{ruseFondaDual}, for the second we used $(S \otimes S)(R) = R$ and obvious commutation relations between tensors, for the third and the fourth we used \eqref{pivot} (here $g$ implicitly means $\overset{V_2}{g}$) and obvious commutation relations between tensors, and for the fifth we used \eqref{propKD}. Thanks to the relation $a_{\gamma}a_{\beta} \otimes S(b_{\beta})b_{\gamma} = 1 \otimes 1$ we can invert this equality, which gives
\begin{equation}\label{fusionS01}
\mathbf{V}  (\exposantGauche{t}{\!D})_1^{-1} (\exposantGauche{t}{\!D})_2^{-1} = \bigl( \mathbf{U} (^t \! D)^{-1} \bigr)_1 R_{21}  \bigl( \mathbf{U} (^t \! D)^{-1} \bigr)_2 R_{21}^{-1}.
\end{equation}
Now since
\begin{center}
\begingroup%
  \makeatletter%
  \providecommand\color[2][]{%
    \errmessage{(Inkscape) Color is used for the text in Inkscape, but the package 'color.sty' is not loaded}%
    \renewcommand\color[2][]{}%
  }%
  \providecommand\transparent[1]{%
    \errmessage{(Inkscape) Transparency is used (non-zero) for the text in Inkscape, but the package 'transparent.sty' is not loaded}%
    \renewcommand\transparent[1]{}%
  }%
  \providecommand\rotatebox[2]{#2}%
  \newcommand*\fsize{\dimexpr\f@size pt\relax}%
  \newcommand*\lineheight[1]{\fontsize{\fsize}{#1\fsize}\selectfont}%
  \ifx\svgwidth\undefined%
    \setlength{\unitlength}{258.0613308bp}%
    \ifx\svgscale\undefined%
      \relax%
    \else%
      \setlength{\unitlength}{\unitlength * \real{\svgscale}}%
    \fi%
  \else%
    \setlength{\unitlength}{\svgwidth}%
  \fi%
  \global\let\svgwidth\undefined%
  \global\let\svgscale\undefined%
  \makeatother%
  \begin{picture}(1,0.30839194)%
    \lineheight{1}%
    \setlength\tabcolsep{0pt}%
    \put(0,0){\includegraphics[width=\unitlength,page=1]{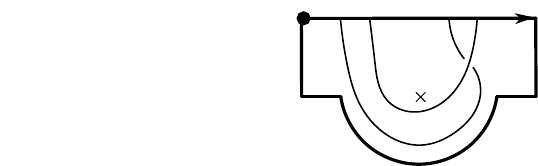}}%
    \put(0.61829661,0.28918299){\color[rgb]{0,0,0}\makebox(0,0)[lt]{\lineheight{1.25}\smash{\begin{tabular}[t]{l}$s_1$\end{tabular}}}}%
    \put(0.82188046,0.29000628){\color[rgb]{0,0,0}\makebox(0,0)[lt]{\lineheight{1.25}\smash{\begin{tabular}[t]{l}$t_1$\end{tabular}}}}%
    \put(0.67300954,0.28883177){\color[rgb]{0,0,0}\makebox(0,0)[lt]{\lineheight{1.25}\smash{\begin{tabular}[t]{l}$s_2$\end{tabular}}}}%
    \put(0.88419745,0.29000628){\color[rgb]{0,0,0}\makebox(0,0)[lt]{\lineheight{1.25}\smash{\begin{tabular}[t]{l}$t_2$\end{tabular}}}}%
    \put(0,0){\includegraphics[width=\unitlength,page=2]{eqRefS01.pdf}}%
    \put(0.06504703,0.28814505){\color[rgb]{0,0,0}\makebox(0,0)[lt]{\lineheight{1.25}\smash{\begin{tabular}[t]{l}$s_1$\end{tabular}}}}%
    \put(0.26914994,0.28896828){\color[rgb]{0,0,0}\makebox(0,0)[lt]{\lineheight{1.25}\smash{\begin{tabular}[t]{l}$t_1$\end{tabular}}}}%
    \put(0.11975996,0.28779382){\color[rgb]{0,0,0}\makebox(0,0)[lt]{\lineheight{1.25}\smash{\begin{tabular}[t]{l}$s_2$\end{tabular}}}}%
    \put(0.33146693,0.28896828){\color[rgb]{0,0,0}\makebox(0,0)[lt]{\lineheight{1.25}\smash{\begin{tabular}[t]{l}$t_2$\end{tabular}}}}%
    \put(0,0){\includegraphics[width=\unitlength,page=3]{eqRefS01.pdf}}%
    \put(0.48915933,0.19166978){\color[rgb]{0,0,0}\makebox(0,0)[lt]{\lineheight{1.25}\smash{\begin{tabular}[t]{l}$=$\end{tabular}}}}%
  \end{picture}%
\endgroup%

\end{center}
we obtain thanks to relation \eqref{ruseStated} that $\sum_{ij} R^{s_2s_1}_{ij} \mathbf{V}^{ij}_{t_2t_1} = \sum_{ij} \mathbf{V}^{s_1s_2}_{ji} R^{t_2t_1}_{ij}$, that is $R_{21} \mathbf{V}_{21} = \mathbf{V} (\exposantGauche{t}{b_{\alpha}})_1 (\exposantGauche{t}{a_{\alpha}})_2$. Then, relations  \eqref{ruseFondaDual} and $(S \otimes S)(R) = R$ yields $R_{21} \mathbf{V}_{21}D_2^{-1}D_1^{-1} = \mathbf{V} D_1^{-1}D_2^{-1} R_{21}$. Finally, we multiply this equality by $g_1^{-1}g_2^{-1}$ and we use the relations $Rg_1g_2 = g_1g_2R$ and \eqref{propKD}:
\[ R_{21} \mathbf{V}_{21} (\exposantGauche{t}{\!D})_2^{-1} (\exposantGauche{t}{\!D})_1^{-1} =  \mathbf{V} (\exposantGauche{t}{\!D})_1^{-1} (\exposantGauche{t}{\!D})_2^{-1} R_{21}. \]
This last equality together with \eqref{fusionS01} gives the first equality of \eqref{presentationL01Uq2}:
\[ R \, \bigl( \mathbf{U} (^t \! D)^{-1} \bigr)_1 \, R_{21} \, \bigl( \mathbf{U} (^t \! D)^{-1} \bigr)_2 = \bigl( \mathbf{U} (^t \! D)^{-1} \bigr)_2 \, R \, \bigl( \mathbf{U} (^t \! D)^{-1} \bigr)_1 \, R_{21}. \]
In order to obtain the quantum determinant relation, observe that
\begin{center}
\begingroup%
  \makeatletter%
  \providecommand\color[2][]{%
    \errmessage{(Inkscape) Color is used for the text in Inkscape, but the package 'color.sty' is not loaded}%
    \renewcommand\color[2][]{}%
  }%
  \providecommand\transparent[1]{%
    \errmessage{(Inkscape) Transparency is used (non-zero) for the text in Inkscape, but the package 'transparent.sty' is not loaded}%
    \renewcommand\transparent[1]{}%
  }%
  \providecommand\rotatebox[2]{#2}%
  \newcommand*\fsize{\dimexpr\f@size pt\relax}%
  \newcommand*\lineheight[1]{\fontsize{\fsize}{#1\fsize}\selectfont}%
  \ifx\svgwidth\undefined%
    \setlength{\unitlength}{256.45414334bp}%
    \ifx\svgscale\undefined%
      \relax%
    \else%
      \setlength{\unitlength}{\unitlength * \real{\svgscale}}%
    \fi%
  \else%
    \setlength{\unitlength}{\svgwidth}%
  \fi%
  \global\let\svgwidth\undefined%
  \global\let\svgscale\undefined%
  \makeatother%
  \begin{picture}(1,0.26644255)%
    \lineheight{1}%
    \setlength\tabcolsep{0pt}%
    \put(0.49272963,0.15534556){\color[rgb]{0,0,0}\makebox(0,0)[lt]{\lineheight{1.25}\smash{\begin{tabular}[t]{l}$=$\end{tabular}}}}%
    \put(0,0){\includegraphics[width=\unitlength,page=1]{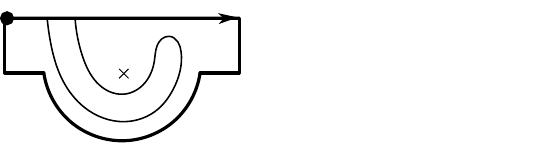}}%
    \put(0.07366821,0.24794166){\color[rgb]{0,0,0}\makebox(0,0)[lt]{\lineheight{1.25}\smash{\begin{tabular}[t]{l}$s$\end{tabular}}}}%
    \put(0.12872403,0.24758824){\color[rgb]{0,0,0}\makebox(0,0)[lt]{\lineheight{1.25}\smash{\begin{tabular}[t]{l}$t$\end{tabular}}}}%
    \put(0,0){\includegraphics[width=\unitlength,page=2]{qdetS01.pdf}}%
    \put(0.6622822,0.24633961){\color[rgb]{0,0,0}\makebox(0,0)[lt]{\lineheight{1.25}\smash{\begin{tabular}[t]{l}$s$\end{tabular}}}}%
    \put(0.71733801,0.24598619){\color[rgb]{0,0,0}\makebox(0,0)[lt]{\lineheight{1.25}\smash{\begin{tabular}[t]{l}$t$\end{tabular}}}}%
  \end{picture}%
\endgroup%

\end{center}
and then \eqref{ruseStated} gives $\sum_{ij}\mathbf{V}^{st}_{ij} \cap_{ji} = \cup^{st}$, which can be rewritten as $\mathbf{V} \, \exposantGauche{t}{\!(P\cap)} = \cup$, where $P^{ij}_{kl} = \delta^i_l\delta^j_k$ is the flip tensor. Moreover, a computation reveals that $\exposantGauche{t}{\!(P\cap)} = (\exposantGauche{t}{\!D})_1^{-1} (\exposantGauche{t}{\!D})_2^{-1} \cup$. Hence, by \eqref{fusionS01} we have 
\[ \bigl( \mathbf{U} (^t \! D)^{-1} \bigr)_1 R_{21}  \bigl( \mathbf{U} (^t \! D)^{-1} \bigr)_2 R_{21}^{-1} \cup = \cup. \]
Expanding this matrix equality in components gives only one new relation, which is precisely $\mathrm{qdet}\bigl( \mathbf{U} (^t \! D)^{-1} \bigr) = 1$. Hence $j$ is a morphism of algebras. Let us show that it is a morphism of $\mathcal{O}_{q^2}$-comodules (recall that we identify $\mathcal{S}^{\mathrm{s}}_q(\mathfrak{B})$ with $\mathcal{O}_{q^2}$). By definition of the coaction of $\mathcal{O}_{q^2}$ on $\mathcal{S}^{\mathrm{s}}_q(\Sigma_{0,1}^{\mathrm{o},\bullet})$, we have
\begin{center}
\begingroup%
  \makeatletter%
  \providecommand\color[2][]{%
    \errmessage{(Inkscape) Color is used for the text in Inkscape, but the package 'color.sty' is not loaded}%
    \renewcommand\color[2][]{}%
  }%
  \providecommand\transparent[1]{%
    \errmessage{(Inkscape) Transparency is used (non-zero) for the text in Inkscape, but the package 'transparent.sty' is not loaded}%
    \renewcommand\transparent[1]{}%
  }%
  \providecommand\rotatebox[2]{#2}%
  \newcommand*\fsize{\dimexpr\f@size pt\relax}%
  \newcommand*\lineheight[1]{\fontsize{\fsize}{#1\fsize}\selectfont}%
  \ifx\svgwidth\undefined%
    \setlength{\unitlength}{411.8112632bp}%
    \ifx\svgscale\undefined%
      \relax%
    \else%
      \setlength{\unitlength}{\unitlength * \real{\svgscale}}%
    \fi%
  \else%
    \setlength{\unitlength}{\svgwidth}%
  \fi%
  \global\let\svgwidth\undefined%
  \global\let\svgscale\undefined%
  \makeatother%
  \begin{picture}(1,0.18385076)%
    \lineheight{1}%
    \setlength\tabcolsep{0pt}%
    \put(0,0){\includegraphics[width=\unitlength,page=1]{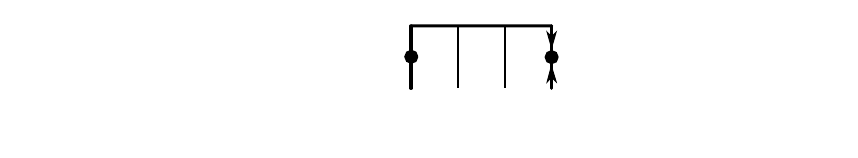}}%
    \put(0.52747336,0.16218703){\color[rgb]{0,0,0}\makebox(0,0)[lt]{\lineheight{1.25}\smash{\begin{tabular}[t]{l}$s$\end{tabular}}}}%
    \put(0.58242251,0.16212722){\color[rgb]{0,0,0}\makebox(0,0)[lt]{\lineheight{1.25}\smash{\begin{tabular}[t]{l}$t$\end{tabular}}}}%
    \put(0.52946886,0.05583789){\color[rgb]{0,0,0}\makebox(0,0)[lt]{\lineheight{1.25}\smash{\begin{tabular}[t]{l}$i$\end{tabular}}}}%
    \put(0,0){\includegraphics[width=\unitlength,page=2]{coactionS01.pdf}}%
    \put(0.59252625,0.05489366){\color[rgb]{0,0,0}\makebox(0,0)[lt]{\lineheight{1.25}\smash{\begin{tabular}[t]{l}$j$\end{tabular}}}}%
    \put(0,0){\includegraphics[width=\unitlength,page=3]{coactionS01.pdf}}%
    \put(0.09476423,0.16382851){\color[rgb]{0,0,0}\makebox(0,0)[lt]{\lineheight{1.25}\smash{\begin{tabular}[t]{l}$s$\end{tabular}}}}%
    \put(0.17429811,0.16416306){\color[rgb]{0,0,0}\makebox(0,0)[lt]{\lineheight{1.25}\smash{\begin{tabular}[t]{l}$t$\end{tabular}}}}%
    \put(0.20419174,0.10725946){\color[rgb]{0,0,0}\makebox(0,0)[lt]{\lineheight{1.25}\smash{\begin{tabular}[t]{l}$e$\end{tabular}}}}%
    \put(0,0){\includegraphics[width=\unitlength,page=4]{coactionS01.pdf}}%
    \put(0.33333445,0.12767423){\color[rgb]{0,0,0}\makebox(0,0)[lt]{\lineheight{1.25}\smash{\begin{tabular}[t]{l}$\Delta_e$\end{tabular}}}}%
    \put(0,0){\includegraphics[width=\unitlength,page=5]{coactionS01.pdf}}%
    \put(0.81414736,0.16577982){\color[rgb]{0,0,0}\makebox(0,0)[lt]{\lineheight{1.25}\smash{\begin{tabular}[t]{l}$i$\end{tabular}}}}%
    \put(0.89563252,0.16611437){\color[rgb]{0,0,0}\makebox(0,0)[lt]{\lineheight{1.25}\smash{\begin{tabular}[t]{l}$j$\end{tabular}}}}%
    \put(0.67176557,0.11298823){\color[rgb]{0,0,0}\makebox(0,0)[lt]{\lineheight{1.25}\smash{\begin{tabular}[t]{l}$\otimes$\end{tabular}}}}%
    \put(0.41289968,0.11205064){\color[rgb]{0,0,0}\makebox(0,0)[lt]{\lineheight{1.25}\smash{\begin{tabular}[t]{l}$\displaystyle\sum_{ij}$\end{tabular}}}}%
  \end{picture}%
\endgroup%

\end{center}
Hence $\Delta_e(\mathbf{U}^s_t) = \sum_{ij} T^s_iT^t_j \otimes \mathbf{U}^i_j$. Let us identify $\mathbf{U}^k_l$ (resp. $T^k_l$) with $1 \otimes \mathbf{U}^k_l$ (resp. $T^k_l  \otimes 1$); then we get $\Delta_e(\mathbf{U}) = T\, \mathbf{U} \, ^tT$. By \eqref{ruseFondaDual} and \eqref{propKD}, it holds $\exposantGauche{t}{T} = D^{-1} \, S^{-1}(T) \, D = D^{-1}g^{-1} S(T) gD = \exposantGauche{t}{\! D}^{-1} \, S(T) \, \exposantGauche{t}{\! D}$ and hence
\[ \Delta_e \circ j(M) = \Delta_e(\mathbf{U}) \, \exposantGauche{t}{\! D}^{-1} = T\, \mathbf{U} \, ^tT \, \exposantGauche{t}{\! D}^{-1} = T\, \big(\mathbf{U} \, \exposantGauche{t}{\! D}^{-1}\big) \, S(T) = (j \otimes \mathrm{id}) \circ \Omega(M)  \]
where $\Omega$ is the coaction \eqref{coactionH} on $\mathcal{L}_{0,1}(U_{q^2})$.
\\2. Observe that
\begin{center}
\begingroup%
  \makeatletter%
  \providecommand\color[2][]{%
    \errmessage{(Inkscape) Color is used for the text in Inkscape, but the package 'color.sty' is not loaded}%
    \renewcommand\color[2][]{}%
  }%
  \providecommand\transparent[1]{%
    \errmessage{(Inkscape) Transparency is used (non-zero) for the text in Inkscape, but the package 'transparent.sty' is not loaded}%
    \renewcommand\transparent[1]{}%
  }%
  \providecommand\rotatebox[2]{#2}%
  \newcommand*\fsize{\dimexpr\f@size pt\relax}%
  \newcommand*\lineheight[1]{\fontsize{\fsize}{#1\fsize}\selectfont}%
  \ifx\svgwidth\undefined%
    \setlength{\unitlength}{187.02106563bp}%
    \ifx\svgscale\undefined%
      \relax%
    \else%
      \setlength{\unitlength}{\unitlength * \real{\svgscale}}%
    \fi%
  \else%
    \setlength{\unitlength}{\svgwidth}%
  \fi%
  \global\let\svgwidth\undefined%
  \global\let\svgscale\undefined%
  \makeatother%
  \begin{picture}(1,0.45527069)%
    \lineheight{1}%
    \setlength\tabcolsep{0pt}%
    \put(0,0){\includegraphics[width=\unitlength,page=1]{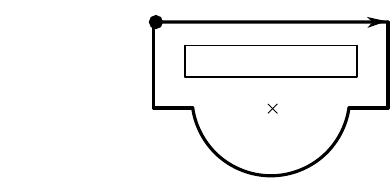}}%
    \put(0.68121361,0.28045127){\color[rgb]{0,0,0}\makebox(0,0)[lt]{\lineheight{1.25}\smash{\begin{tabular}[t]{l}$\sigma$\end{tabular}}}}%
    \put(0,0){\includegraphics[width=\unitlength,page=2]{generateurS01.pdf}}%
    \put(0.48564914,0.42762823){\color[rgb]{0,0,0}\makebox(0,0)[lt]{\lineheight{1.25}\smash{\begin{tabular}[t]{l}$s_1$\end{tabular}}}}%
    \put(0.55879939,0.21333115){\color[rgb]{0,0,0}\makebox(0,0)[lt]{\lineheight{1.25}\smash{\begin{tabular}[t]{l}...\end{tabular}}}}%
    \put(0.78608609,0.21333115){\color[rgb]{0,0,0}\makebox(0,0)[lt]{\lineheight{1.25}\smash{\begin{tabular}[t]{l}...\end{tabular}}}}%
    \put(0.67253976,0.36340108){\color[rgb]{0,0,0}\makebox(0,0)[lt]{\lineheight{1.25}\smash{\begin{tabular}[t]{l}...\end{tabular}}}}%
    \put(0,0){\includegraphics[width=\unitlength,page=3]{generateurS01.pdf}}%
    \put(0.5644218,0.42762834){\color[rgb]{0,0,0}\makebox(0,0)[lt]{\lineheight{1.25}\smash{\begin{tabular}[t]{l}$t_1$\end{tabular}}}}%
    \put(0.80646863,0.42762823){\color[rgb]{0,0,0}\makebox(0,0)[lt]{\lineheight{1.25}\smash{\begin{tabular}[t]{l}$s_l$\end{tabular}}}}%
    \put(0.88524135,0.42762834){\color[rgb]{0,0,0}\makebox(0,0)[lt]{\lineheight{1.25}\smash{\begin{tabular}[t]{l}$t_l$\end{tabular}}}}%
    \put(-0.00170092,0.27776332){\color[rgb]{0,0,0}\makebox(0,0)[lt]{\lineheight{1.25}\smash{\begin{tabular}[t]{l}$\mathbf{U}^{s_1}_{t_1} \ldots \mathbf{U}^{s_l}_{t_l} = $\end{tabular}}}}%
  \end{picture}%
\endgroup%

\end{center}
where the handle contains a bunch of $l$ parallel strands and $\sigma \in B_{2n}$ is some braid (it can be computed explicitly but we do not need it). By \eqref{ruseStated}, it follows that 
\[ \mathbf{U}^{s_1}_{t_1} \ldots \mathbf{U}^{s_l}_{t_l} = \sum_{\substack{i_1, \ldots, i_k \\i'_1, \ldots, i'_k}} Z(\sigma)_{i_1 \ldots i_l \, i'_1 \ldots i'_l}^{s_1 \, t_1 \ldots s_l \, t_l} \, \mathbf{U}(l)^{i_1 \ldots i_l}_{i'_1 \ldots i'_l}. \]
But $\sigma$ is a braid, so that $Z(\sigma)$ is an invertible matrix, and hence we can invert these relations and write any element $\mathbf{U}(l)^{i_1 \ldots i_l}_{i'_1 \ldots i'_l}$ as a linear combination of the monomials $\mathbf{U}^{s_1}_{t_1} \ldots \mathbf{U}^{s_l}_{t_l}$. The claim is proved since, as already observed, the collection of elements $\mathbf{U}(l)^{i_1 \ldots i_l}_{i'_1 \ldots i'_l}$ spans $\mathcal{S}^{\mathrm{s}}_q(\Sigma_{0,1}^{\mathrm{o},\bullet})$ as a $\mathbb{Z}[q^{\pm 1/2}]$-module.
\end{proof}

\begin{lemma}\label{isoS01L01}
The map $\mathrm{hol}^{\mathrm{s}} : \mathcal{S}_q^{\mathrm{s}}(\Sigma_{0,1}^{\mathrm{o},\bullet}) \to \mathcal{L}_{0,1}(U_{q^2})$ is an isomorphism of $\mathcal{O}_{q^2}$-comodule-algebras.
\end{lemma}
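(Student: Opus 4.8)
The plan is to exhibit $\mathrm{hol}^{\mathrm{s}}$ as the two-sided inverse of the comodule-algebra morphism $j:\mathcal{L}_{0,1}(U_{q^2})\to\mathcal{S}^{\mathrm{s}}_q(\Sigma_{0,1}^{\mathrm{o},\bullet})$ constructed in Lemma \ref{lemmeInjectionL01S01}, so that the genuine work has already been carried out (the fact that $j$ respects the relations \eqref{presentationL01Uq2} and is surjective) and what remains here is purely formal. We already know, from the proof of Theorem \ref{thIsoSkeinLgn}, that $\mathrm{hol}^{\mathrm{s}}$ is a well-defined algebra morphism $\mathcal{S}^{\mathrm{s}}_q(\Sigma_{0,1}^{\mathrm{o},\bullet})\to\mathcal{L}_{0,1}(U_{q^2})$, so it suffices to check bijectivity and compatibility with the coactions.

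First I would verify that $\mathrm{hol}^{\mathrm{s}}\circ j=\mathrm{id}_{\mathcal{L}_{0,1}(U_{q^2})}$. By Lemma \ref{lemmeInjectionL01S01} we have $j(M)=\mathbf{U}\,({}^t\!D)^{-1}$ entrywise; since the entries of $({}^t\!D)^{-1}$ are scalars and $\mathrm{hol}^{\mathrm{s}}$ is a $\mathbb{Z}[q^{\pm1/2}]$-linear algebra map, this gives $\mathrm{hol}^{\mathrm{s}}\bigl(j(M)\bigr)=\mathrm{hol}^{\mathrm{s}}(\mathbf{U})\,({}^t\!D)^{-1}$. Applying \eqref{holSurMatricesGen} in the case $(g,n)=(0,1)$, namely $\mathrm{hol}^{\mathrm{s}}(\mathbf{U})=M\,{}^t\!D$, the two pivotal factors cancel and we get $\mathrm{hol}^{\mathrm{s}}\bigl(j(M)\bigr)=M$. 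Because the coefficients of $M$ generate $\mathcal{L}_{0,1}(U_{q^2})$ as an algebra (Lemma \ref{GensRelsLgnUq2}), this forces $\mathrm{hol}^{\mathrm{s}}\circ j=\mathrm{id}$. Now $j$ is surjective by Lemma \ref{lemmeInjectionL01S01}.2 and admits the left inverse $\mathrm{hol}^{\mathrm{s}}$, hence $j$ is also injective, so it is an isomorphism; therefore $\mathrm{hol}^{\mathrm{s}}=j^{-1}$ is an isomorphism of algebras. Compatibility of $\mathrm{hol}^{\mathrm{s}}$ with the $\mathcal{O}_{q^2}$-coactions is then automatic: the inverse of an isomorphism of $\mathcal{O}_{q^2}$-comodule-algebras is again one (if $\rho_B\circ j=(\mathrm{id}\otimes j)\circ\rho_A$ then $(\mathrm{id}\otimes j^{-1})\circ\rho_B=\rho_A\circ j^{-1}$). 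Alternatively, and equally cheaply, one may check $j\circ\mathrm{hol}^{\mathrm{s}}=\mathrm{id}$ directly on the algebra generators $\mathbf{U}^s_t$ of $\mathcal{S}^{\mathrm{s}}_q(\Sigma_{0,1}^{\mathrm{o},\bullet})$ via $j\bigl(\mathrm{hol}^{\mathrm{s}}(\mathbf{U})\bigr)=j(M\,{}^t\!D)=j(M)\,{}^t\!D=\mathbf{U}$.

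The only point requiring care is the bookkeeping of the transpose/pivotal factor ${}^t\!D$: one must make sure that the convention in $\mathrm{hol}^{\mathrm{s}}(\mathbf{U})=M\,{}^t\!D$ and the convention in $j(M)=\mathbf{U}\,({}^t\!D)^{-1}$ are the same, so that they cancel, and that the entries of ${}^t\!D$ and $({}^t\!D)^{-1}$ are treated as central scalars that commute freely through the algebra morphism $\mathrm{hol}^{\mathrm{s}}$. I do not expect any real obstacle beyond this, since the structural input (well-definedness of $\mathrm{hol}^{\mathrm{s}}$, and Lemma \ref{lemmeInjectionL01S01}) has already been established.
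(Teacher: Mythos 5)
Your argument is correct and is essentially the paper's own proof: the paper also establishes $\mathrm{hol}^{\mathrm{s}}\circ j=\mathrm{id}$ from \eqref{holSurMatricesGen}, combines this with the surjectivity of $j$ from Lemma \ref{lemmeInjectionL01S01} to conclude that $j$ is an isomorphism of $\mathcal{O}_{q^2}$-comodule-algebras, and identifies $\mathrm{hol}^{\mathrm{s}}$ as its inverse. Your extra care about the cancellation of the $^t\!D$ factors and the formal remark that the inverse of a comodule-algebra isomorphism is again one are fine and only make explicit what the paper leaves implicit.
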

\begin{proof}
Thanks to \eqref{holSurMatricesGen} we have $\mathrm{hol}^{\mathrm{s}} \circ j = \mathrm{id}$. It follows that $j$ is injective and it is surjective by Lemma \ref{lemmeInjectionL01S01}. Hence it is an isomorphism of $\mathcal{O}_{q^2}$-comodule-algebras, with inverse $\mathrm{hol}^{\mathrm{s}}$.
\end{proof}

\subsection{Proof for $\Sigma_{1,0}^{\mathrm{o},\bullet}$}
Let \[
B = \overset{V_2}{B} = 
\begin{pmatrix}
B^-_- & B^-_+\\
B^+_- & B^+_+
\end{pmatrix}, \:\:
A = \overset{V_2}{A} =
\begin{pmatrix}
A^-_- & A^-_+\\
A^+_- & A^+_+
\end{pmatrix} \:\: \in \mathcal{L}_{1,0}(U_{q^2}) \otimes \mathrm{End}_{\mathbb{C}}(V_2).
\]
Then by Lemma \ref{GensRelsLgnUq2}, $\mathcal{L}_{1,0}(U_{q^2})$ is generated by the $8$ coefficients of the matrices $B$ and $A$, modulo the following relations:
\begin{equation}\label{presentationL10Uq2}
\begin{split}
& R B_1 R_{21} B_2 = B_2 R B_1 R_{21}, \:\:\:\:\:\:\:\:  B^-_-B^+_+ - q^4B^-_+B^+_- = 1\\
& R A_1 R_{21} A_2 = A_2 R A_1 R_{21}, \:\:\:\:\:\:\:\: A^-_-A^+_+ - q^4A^-_+A^+_- = 1\\
& R B_1 R_{21} A_2 = A_2 R B_1 R^{-1}.
\end{split}
\end{equation}
\indent As previously, for $l \geq 0$ and $s_1, \ldots, s_l, t_1, \ldots, t_l \in \{\pm\}$ we define the following elements of $\mathcal{S}_q^{\mathrm{s}}(\Sigma_{1,0}^{\mathrm{o},\bullet})$:
\begin{center}
\begingroup%
  \makeatletter%
  \providecommand\color[2][]{%
    \errmessage{(Inkscape) Color is used for the text in Inkscape, but the package 'color.sty' is not loaded}%
    \renewcommand\color[2][]{}%
  }%
  \providecommand\transparent[1]{%
    \errmessage{(Inkscape) Transparency is used (non-zero) for the text in Inkscape, but the package 'transparent.sty' is not loaded}%
    \renewcommand\transparent[1]{}%
  }%
  \providecommand\rotatebox[2]{#2}%
  \newcommand*\fsize{\dimexpr\f@size pt\relax}%
  \newcommand*\lineheight[1]{\fontsize{\fsize}{#1\fsize}\selectfont}%
  \ifx\svgwidth\undefined%
    \setlength{\unitlength}{504.69674593bp}%
    \ifx\svgscale\undefined%
      \relax%
    \else%
      \setlength{\unitlength}{\unitlength * \real{\svgscale}}%
    \fi%
  \else%
    \setlength{\unitlength}{\svgwidth}%
  \fi%
  \global\let\svgwidth\undefined%
  \global\let\svgscale\undefined%
  \makeatother%
  \begin{picture}(1,0.18108823)%
    \lineheight{1}%
    \setlength\tabcolsep{0pt}%
    \put(0,0){\includegraphics[width=\unitlength,page=1]{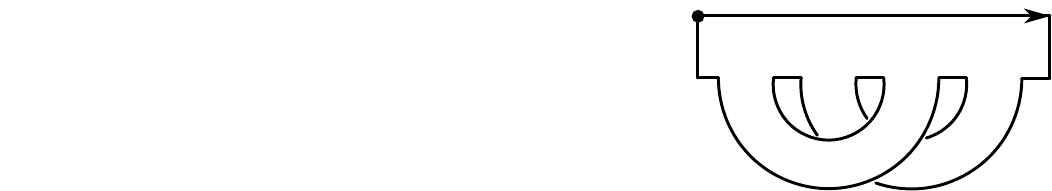}}%
    \put(-0.00064125,0.11966177){\color[rgb]{0,0,0}\makebox(0,0)[lt]{\lineheight{1.25}\smash{\begin{tabular}[t]{l}$\mathbf{U}(l)^{(b)}{^{s_1 \ldots s_l}_{t_1 \ldots t_l}}=$\end{tabular}}}}%
    \put(0,0){\includegraphics[width=\unitlength,page=2]{matricesTore.pdf}}%
    \put(0.52690321,0.1191739){\color[rgb]{0,0,0}\makebox(0,0)[lt]{\lineheight{1.25}\smash{\begin{tabular}[t]{l}$\mathbf{U}(l)^{(a)}{^{s_1 \ldots s_l}_{t_1 \ldots t_l}}=$\end{tabular}}}}%
    \put(0,0){\includegraphics[width=\unitlength,page=3]{matricesTore.pdf}}%
    \put(0.95816153,0.12543066){\color[rgb]{0,0,0}\rotatebox{-180}{\makebox(0,0)[lt]{\lineheight{1.25}\smash{\begin{tabular}[t]{l}...\end{tabular}}}}}%
    \put(0.7961727,0.12539852){\color[rgb]{0,0,0}\rotatebox{-180}{\makebox(0,0)[lt]{\lineheight{1.25}\smash{\begin{tabular}[t]{l}...\end{tabular}}}}}%
    \put(0.7643082,0.17445441){\color[rgb]{0,0,0}\makebox(0,0)[lt]{\lineheight{1.25}\smash{\begin{tabular}[t]{l}$s_1$\end{tabular}}}}%
    \put(0.79811055,0.1744672){\color[rgb]{0,0,0}\makebox(0,0)[lt]{\lineheight{1.25}\smash{\begin{tabular}[t]{l}$s_l$\end{tabular}}}}%
    \put(0.92995379,0.17469474){\color[rgb]{0,0,0}\makebox(0,0)[lt]{\lineheight{1.25}\smash{\begin{tabular}[t]{l}$t_l$\end{tabular}}}}%
    \put(0.95765789,0.17461373){\color[rgb]{0,0,0}\makebox(0,0)[lt]{\lineheight{1.25}\smash{\begin{tabular}[t]{l}$t_1$\end{tabular}}}}%
    \put(0.17212615,0.17401915){\color[rgb]{0,0,0}\makebox(0,0)[lt]{\lineheight{1.25}\smash{\begin{tabular}[t]{l}$s_1$\end{tabular}}}}%
    \put(0.20592846,0.17403193){\color[rgb]{0,0,0}\makebox(0,0)[lt]{\lineheight{1.25}\smash{\begin{tabular}[t]{l}$s_l$\end{tabular}}}}%
    \put(0.33139195,0.17238305){\color[rgb]{0,0,0}\makebox(0,0)[lt]{\lineheight{1.25}\smash{\begin{tabular}[t]{l}$t_l$\end{tabular}}}}%
    \put(0.36031572,0.17220823){\color[rgb]{0,0,0}\makebox(0,0)[lt]{\lineheight{1.25}\smash{\begin{tabular}[t]{l}$t_1$\end{tabular}}}}%
    \put(0,0){\includegraphics[width=\unitlength,page=4]{matricesTore.pdf}}%
    \put(0.20178903,0.12816383){\color[rgb]{0,0,0}\rotatebox{-180}{\makebox(0,0)[lt]{\lineheight{1.25}\smash{\begin{tabular}[t]{l}...\end{tabular}}}}}%
    \put(0.36313056,0.12816383){\color[rgb]{0,0,0}\rotatebox{-180}{\makebox(0,0)[lt]{\lineheight{1.25}\smash{\begin{tabular}[t]{l}...\end{tabular}}}}}%
  \end{picture}%
\endgroup%

\end{center}
These elements can be arranged into tensors $\mathbf{U}(l)^{(b)}, \mathbf{U}(l)^{(a)} \in \mathcal{S}_q^{\mathrm{s}}(\Sigma_{1,0}^{\mathrm{o},\bullet}) \otimes \mathrm{End}_{\mathbb{C}}(V_2)^{\otimes l}$. For simplicity we denote $\mathbf{U}^{(b)} = \mathbf{U}(1)^{(b)}, \, \mathbf{U}^{(a)} = \mathbf{U}(1)^{(a)}$. We have the analogue of \eqref{ruseStated}:
\begin{equation}\label{decoupeTore}
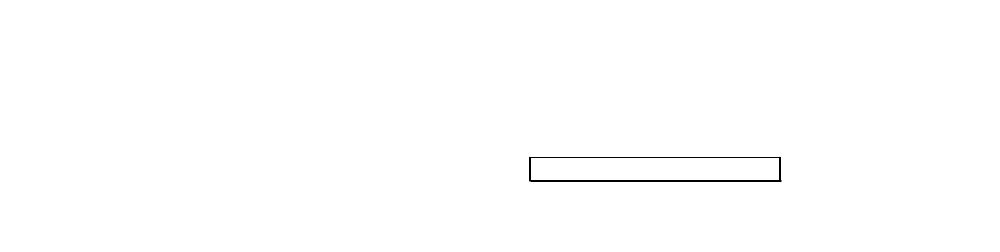
\end{equation}
where the sum is over all the indices $i_{\eta}, i'_{\eta}, j_{\eta}, j'_{\eta}$, each one ranging in $\{ \pm \}$. To obtain this relation, one applies as before the equality $\mathrm{id} = (\varepsilon \otimes \mathrm{id}) \circ \Delta_e$ and \eqref{RTbigone}  to the dashed arc $e$ in the figure below, which gives a bigon and another copy of $\Sigma_{1,0}^{\mathrm{o},\bullet}$:
\begin{center}
\begingroup%
  \makeatletter%
  \providecommand\color[2][]{%
    \errmessage{(Inkscape) Color is used for the text in Inkscape, but the package 'color.sty' is not loaded}%
    \renewcommand\color[2][]{}%
  }%
  \providecommand\transparent[1]{%
    \errmessage{(Inkscape) Transparency is used (non-zero) for the text in Inkscape, but the package 'transparent.sty' is not loaded}%
    \renewcommand\transparent[1]{}%
  }%
  \providecommand\rotatebox[2]{#2}%
  \newcommand*\fsize{\dimexpr\f@size pt\relax}%
  \newcommand*\lineheight[1]{\fontsize{\fsize}{#1\fsize}\selectfont}%
  \ifx\svgwidth\undefined%
    \setlength{\unitlength}{198.65193233bp}%
    \ifx\svgscale\undefined%
      \relax%
    \else%
      \setlength{\unitlength}{\unitlength * \real{\svgscale}}%
    \fi%
  \else%
    \setlength{\unitlength}{\svgwidth}%
  \fi%
  \global\let\svgwidth\undefined%
  \global\let\svgscale\undefined%
  \makeatother%
  \begin{picture}(1,0.72439789)%
    \lineheight{1}%
    \setlength\tabcolsep{0pt}%
    \put(0,0){\includegraphics[width=\unitlength,page=1]{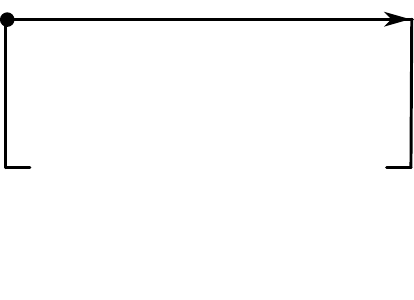}}%
    \put(0.16773787,0.64373985){\color[rgb]{0,0,0}\makebox(0,0)[lt]{\lineheight{1.25}\smash{\begin{tabular}[t]{l}...\end{tabular}}}}%
    \put(0.77158561,0.64364372){\color[rgb]{0,0,0}\makebox(0,0)[lt]{\lineheight{1.25}\smash{\begin{tabular}[t]{l}...\end{tabular}}}}%
    \put(0.13393738,0.70237608){\color[rgb]{0,0,0}\makebox(0,0)[lt]{\lineheight{1.25}\smash{\begin{tabular}[t]{l}$s_1$\end{tabular}}}}%
    \put(0.85001365,0.70527465){\color[rgb]{0,0,0}\makebox(0,0)[lt]{\lineheight{1.25}\smash{\begin{tabular}[t]{l}$s_k$\end{tabular}}}}%
    \put(0,0){\includegraphics[width=\unitlength,page=2]{ruseTore.pdf}}%
    \put(0.49735973,0.56430001){\color[rgb]{0,0,0}\makebox(0,0)[lt]{\lineheight{1.25}\smash{\begin{tabular}[t]{l}$T$\end{tabular}}}}%
    \put(0,0){\includegraphics[width=\unitlength,page=3]{ruseTore.pdf}}%
    \put(0.10234182,0.4906995){\color[rgb]{0,0,0}\makebox(0,0)[lt]{\lineheight{1.25}\smash{\begin{tabular}[t]{l}...\end{tabular}}}}%
    \put(0.8385543,0.49204792){\color[rgb]{0,0,0}\makebox(0,0)[lt]{\lineheight{1.25}\smash{\begin{tabular}[t]{l}...\end{tabular}}}}%
    \put(0,0){\includegraphics[width=\unitlength,page=4]{ruseTore.pdf}}%
  \end{picture}%
\endgroup%

\end{center}
where we force the apparition of the product $\mathbf{U}(l)^{(b)}{^{i_1 \ldots i_l}_{j_1 \ldots j_l}} \mathbf{U}(m)^{(a)}{^{i'_1 \ldots i'_m}_{j'_1 \ldots j'_m}}$ using isotopy; this explains why we have $T^{\mathrm{br}}$ instead of $T$ in the right-hand side of \eqref{decoupeTore}. In particular, \eqref{decoupeTore} implies that the collection of elements $\mathbf{U}(l)^{(b)}{^{i_1 \ldots i_l}_{j_1 \ldots j_l}} \mathbf{U}(m)^{(a)}{^{i'_1 \ldots i'_m}_{j'_1 \ldots j'_m}}$ (with $l,m \geq 0$ and $i_{\eta}, j_{\eta}, i'_{\eta}, j'_{\eta} \in \{ \pm \}$ for each $\eta$) spans $\mathcal{S}^{\mathrm{s}}_q(\Sigma_{1,0}^{\mathrm{o},\bullet})$ as a $\mathbb{Z}[q^{\pm 1/2}]$-module.

\smallskip

\indent The next result is the exact analogue of Lemma \ref{lemmeInjectionL01S01}:

\begin{lemma}\label{lemmeInjectionL10S10}
1. We have a morphism $j : \mathcal{L}_{1,0}(U_{q^2}) \to \mathcal{S}^{\mathrm{s}}_q(\Sigma_{1,0}^{\mathrm{o},\bullet})$ of $\mathcal{O}_{q^2}$-comodule-algebras defined by $j(B) = \mathbf{U}^{(b)} (^t \! D)^{-1}$ and $j(A) = \mathbf{U}^{(a)} (^t \! D)^{-1}$. Explicitly:
\[  
\begin{pmatrix}
j(B^-_-) & j(B^-_+) \\
j(B^+_-) & j(B^+_+)
\end{pmatrix}
=
\begin{pmatrix}
q^{-5/2}\mathbf{U}^{(b)}{^-_+}  & -q^{-1/2}\mathbf{U}^{(b)}{^-_-} \\
q^{-5/2}\mathbf{U}^{(b)}{^+_+} & -q^{-1/2}\mathbf{U}^{(b)}{^+_-}
\end{pmatrix}, \:\:\:\:\:\:\:
\begin{pmatrix}
j(A^-_-) & j(A^-_+) \\
j(A^+_-) & j(A^+_+)
\end{pmatrix}
=
\begin{pmatrix}
q^{-5/2}\mathbf{U}^{(a)}{^-_+}  & -q^{-1/2}\mathbf{U}^{(a)}{^-_-} \\
q^{-5/2}\mathbf{U}^{(a)}{^+_+} & -q^{-1/2}\mathbf{U}^{(a)}{^+_-}
\end{pmatrix}.
\]
2. The morphism $j$ is surjective.
\end{lemma}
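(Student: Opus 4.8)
The strategy is to mimic the proof of Lemma \ref{lemmeInjectionL01S01}, which handled the case $\Sigma_{0,1}^{\mathrm{o},\bullet}$, and to add the single new ingredient that is genuinely specific to the genus one surface, namely the exchange relation between $B$ and $A$ (the last line of \eqref{presentationL10Uq2}). First I would prove part 1: since $\mathbf{U}^{(b)}$ and $\mathbf{U}^{(a)}$ are each, individually, exactly the same kind of diagram as the matrix $\mathbf{U}$ from the $\Sigma_{0,1}^{\mathrm{o},\bullet}$ computation (a bunch of parallel strands closing around a single handle, with one bigon cut out), the verification that $j(B) = \mathbf{U}^{(b)}(^tD)^{-1}$ satisfies the reflection equation and the quantum determinant relation of \eqref{presentationL10Uq2}, and likewise for $j(A) = \mathbf{U}^{(a)}(^tD)^{-1}$, is word-for-word the argument already given in the proof of Lemma \ref{lemmeInjectionL01S01}, applied twice. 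So the only new relation to check is
\[ R \, \bigl( \mathbf{U}^{(b)} (^t\!D)^{-1} \bigr)_1 \, R_{21} \, \bigl( \mathbf{U}^{(a)} (^t\!D)^{-1} \bigr)_2 = \bigl( \mathbf{U}^{(a)} (^t\!D)^{-1} \bigr)_2 \, R \, \bigl( \mathbf{U}^{(b)} (^t\!D)^{-1} \bigr)_1 \, R^{-1}. \]

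\textbf{The new computation.} To establish this, I would compute the product $\mathbf{U}^{(b)}{^{s_1}_{t_1}} \, \mathbf{U}^{(a)}{^{s_2}_{t_2}}$ in $\mathcal{S}^{\mathrm{s}}_q(\Sigma_{1,0}^{\mathrm{o},\bullet})$ in two ways, exactly as in the $\Sigma_{0,1}^{\mathrm{o},\bullet}$ case. Stacking $\mathbf{U}^{(b)}{^{s_1}_{t_1}}$ below $\mathbf{U}^{(a)}{^{s_2}_{t_2}}$ and applying \eqref{decoupeTore} with $l=m=1$ writes the product in terms of $\mathbf{U}(1)^{(b)}$, $\mathbf{U}(1)^{(a)}$ and the braiding matrix $Z(T^{\mathrm{br}})$ coming from the particular braid $T^{\mathrm{br}}$ that records how the $b$-strand and the $a$-strand pass each other near the basepoint. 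Crucially, the topology of $\Sigma_{1,0}^{\mathrm{o},\bullet}$ — specifically the way the loops $b_1$ and $a_1$ are linked in Figure \ref{surfaceGN}, which is precisely what produces the asymmetric exchange relation \eqref{echangeL10} rather than a symmetric one — will manifest itself as a specific expression for $Z(T^{\mathrm{br}})$ in terms of $R$ and $R_{21}$ (or $R^{-1}$). Then, repeating the matrix manipulations of Lemma \ref{lemmeInjectionL01S01} (using \eqref{ruseFondaDual}, $(S\otimes S)(R)=R$, \eqref{pivot}, \eqref{propKD}, and the commutation $Rg_1g_2 = g_1g_2R$) to convert the $D$-coupons and the $S$-twists into the correct normalization, I expect to land exactly on the displayed exchange relation. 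The morphism-of-comodules check is again identical to the $\Sigma_{0,1}^{\mathrm{o},\bullet}$ case: $\Delta_e(\mathbf{U}^{(b)}) = T \, \mathbf{U}^{(b)} \, {}^tT$ and similarly for $\mathbf{U}^{(a)}$, and one concludes $\Delta_e \circ j = (j \otimes \mathrm{id}) \circ \Omega$ on both $B$ and $A$.

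\textbf{Surjectivity (part 2).} For part 2 I would adapt the argument in part 2 of Lemma \ref{lemmeInjectionL01S01}. Expressing a monomial $\mathbf{U}^{(b)}{^{s_1}_{t_1}} \cdots \mathbf{U}^{(b)}{^{s_l}_{t_l}} \mathbf{U}^{(a)}{^{s'_1}_{t'_1}} \cdots \mathbf{U}^{(a)}{^{s'_m}_{t'_m}}$ via \eqref{decoupeTore} produces a braiding matrix $Z(\sigma)$, $\sigma \in B_{2(l+m)}$, applied to the tensors $\mathbf{U}(l)^{(b)}$ and $\mathbf{U}(m)^{(a)}$; since $Z(\sigma)$ is invertible (being the evaluation of a braid), these relations can be inverted, so every $\mathbf{U}(l)^{(b)}{^{\cdots}_{\cdots}}\mathbf{U}(m)^{(a)}{^{\cdots}_{\cdots}}$ is a linear combination of such monomials, which are in the image of $j$. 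As already noted after \eqref{decoupeTore}, the elements $\mathbf{U}(l)^{(b)}{^{\cdots}_{\cdots}}\mathbf{U}(m)^{(a)}{^{\cdots}_{\cdots}}$ span $\mathcal{S}^{\mathrm{s}}_q(\Sigma_{1,0}^{\mathrm{o},\bullet})$ over $\mathbb{Z}[q^{\pm 1/2}]$, so $j$ is surjective.

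\textbf{Main obstacle.} The routine-but-not-automatic part is identifying the braid $T^{\mathrm{br}}$ and hence the precise $R$-matrix bookkeeping in the product $\mathbf{U}^{(b)}{^{s_1}_{t_1}} \mathbf{U}^{(a)}{^{s_2}_{t_2}}$: one must check that the crossings produced when the two handles' strands are pulled past each other to form the diagram in \eqref{decoupeTore} contribute exactly the factors $R$, $R_{21}$, $R^{-1}$ in the pattern demanded by \eqref{echangeL10}, and not, say, its symmetric cousin. This is the only place where the genus-one geometry differs from the puncture case, and getting the orientations and the over/under data right in the stacking picture is where care is needed; everything else transfers verbatim from Lemma \ref{lemmeInjectionL01S01} and its proof. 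Once this relation is verified, the proof of Theorem \ref{thIsoSkeinLgn} is complete, since the composition $\mathrm{hol}^{\mathrm{s}} \circ j = \mathrm{id}$ (by \eqref{holSurMatricesGen}) forces $j$ to be injective, and part 2 forces it to be surjective, so $\mathrm{hol}^{\mathrm{s}}$ is its two-sided inverse.
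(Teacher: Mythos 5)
Your proposal is correct and follows essentially the same route as the paper: part 1 by repeating the $\Sigma_{0,1}^{\mathrm{o},\bullet}$ argument in each handle for the reflection and quantum determinant relations, then isolating the exchange relation as the one genuinely new verification, obtained by reordering the product of a $b$-generator and an $a$-generator via \eqref{decoupeTore} and evaluating the resulting braid before massaging with \eqref{ruseFondaDual} and \eqref{propKD}; part 2 by spanning with the $\mathbf{U}(l)^{(b)}\mathbf{U}(m)^{(a)}$ and inverting the braid matrix $Z(\sigma)$. The only cosmetic differences are that the paper computes the reversed-order product $\mathbf{U}^{(a)}{}^{s_1}_{t_1}\mathbf{U}^{(b)}{}^{s_2}_{t_2}$ to read off the exchange relation directly, and handles surjectivity handle-by-handle rather than with a single global braid, neither of which changes the substance.
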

\begin{proof}
1. The proof is again entirely based on formal matrix computations. It is clear that the two first lines of relations in \eqref{presentationL10Uq2} are satisfied, since it suffices to repeat the proof of Lemma \ref{lemmeInjectionL01S01} in each handle with the matrices $\mathbf{U}^{(b)}, \mathbf{U}^{(a)}$ instead of $\mathbf{U}$. To show the exchange relation (third line of \eqref{presentationL10Uq2}), observe that by \eqref{decoupeTore}:
\begin{center}
\begingroup%
  \makeatletter%
  \providecommand\color[2][]{%
    \errmessage{(Inkscape) Color is used for the text in Inkscape, but the package 'color.sty' is not loaded}%
    \renewcommand\color[2][]{}%
  }%
  \providecommand\transparent[1]{%
    \errmessage{(Inkscape) Transparency is used (non-zero) for the text in Inkscape, but the package 'transparent.sty' is not loaded}%
    \renewcommand\transparent[1]{}%
  }%
  \providecommand\rotatebox[2]{#2}%
  \newcommand*\fsize{\dimexpr\f@size pt\relax}%
  \newcommand*\lineheight[1]{\fontsize{\fsize}{#1\fsize}\selectfont}%
  \ifx\svgwidth\undefined%
    \setlength{\unitlength}{455.95161859bp}%
    \ifx\svgscale\undefined%
      \relax%
    \else%
      \setlength{\unitlength}{\unitlength * \real{\svgscale}}%
    \fi%
  \else%
    \setlength{\unitlength}{\svgwidth}%
  \fi%
  \global\let\svgwidth\undefined%
  \global\let\svgscale\undefined%
  \makeatother%
  \begin{picture}(1,0.19518288)%
    \lineheight{1}%
    \setlength\tabcolsep{0pt}%
    \put(0,0){\includegraphics[width=\unitlength,page=1]{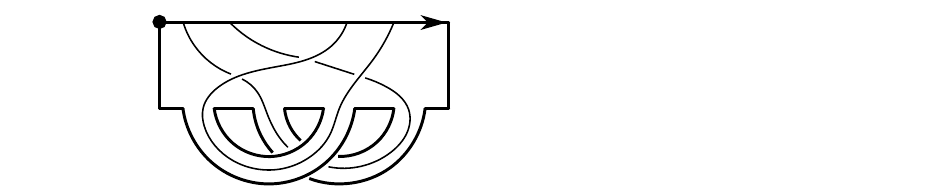}}%
    \put(0.18459987,0.18129877){\color[rgb]{0,0,0}\makebox(0,0)[lt]{\lineheight{1.25}\smash{\begin{tabular}[t]{l}$s_1$\end{tabular}}}}%
    \put(0.23819102,0.18168204){\color[rgb]{0,0,0}\makebox(0,0)[lt]{\lineheight{1.25}\smash{\begin{tabular}[t]{l}$t_1$\end{tabular}}}}%
    \put(0.35715066,0.18087913){\color[rgb]{0,0,0}\makebox(0,0)[lt]{\lineheight{1.25}\smash{\begin{tabular}[t]{l}$s_2$\end{tabular}}}}%
    \put(0.41074181,0.1812624){\color[rgb]{0,0,0}\makebox(0,0)[lt]{\lineheight{1.25}\smash{\begin{tabular}[t]{l}$t_2$\end{tabular}}}}%
    \put(0.48801479,0.09547944){\color[rgb]{0,0,0}\makebox(0,0)[lt]{\lineheight{1.25}\smash{\begin{tabular}[t]{l}$\displaystyle= \sum_{ijkl} \mathbf{U}^{(b)}{^i_j}\,\mathbf{U}^{(a)}{^k_l}\,Z$\end{tabular}}}}%
    \put(-0.00109496,0.09403165){\color[rgb]{0,0,0}\makebox(0,0)[lt]{\lineheight{1.25}\smash{\begin{tabular}[t]{l}$\mathbf{U}^{(a)}{^{s_1}_{t_1}} \mathbf{U}^{(b)}{^{s_2}_{t_2}} =$\end{tabular}}}}%
    \put(0,0){\includegraphics[width=\unitlength,page=2]{EchangeS10.pdf}}%
    \put(0.87681649,0.14941812){\color[rgb]{0,0,0}\makebox(0,0)[lt]{\lineheight{1.25}\smash{\begin{tabular}[t]{l}$_{s_1t_1s_2t_2}$\end{tabular}}}}%
    \put(0.87617864,0.05042959){\color[rgb]{0,0,0}\makebox(0,0)[lt]{\lineheight{1.25}\smash{\begin{tabular}[t]{l}$_{ijkl}$\end{tabular}}}}%
  \end{picture}%
\endgroup%

\end{center}
The evaluation of the braid gives the following expression:
\[ \mathbf{U}^{(a)}_1 \mathbf{U}^{(b)}_2 = \Big(a_{\delta} \, a_{\gamma} \, \mathbf{U}^{(b)} \, ^tb_{\alpha} \, ^ta_{\beta}\Big)_{\!2} \, \Big( b_{\gamma} \, S(a_{\alpha}) \, \mathbf{U}^{(a)} \, {^tb_{\beta}} \, {^tb_{\delta}} \Big)_{\!1} \]
with $R = a_{\eta} \otimes b_{\eta}$ is the matrix in \eqref{RMatriceV2}. Then as in the proof of Lemma \ref{lemmeInjectionL01S01}, a computation left to the reader based on \eqref{ruseFondaDual} and \eqref{propKD} shows that this equality is equivalent to
\[ R \, \bigl( \mathbf{U}^{(b)} (^t \! D)^{-1} \bigr)_{\!1} \, R_{21} \, \bigl( \mathbf{U}^{(a)} (^t \! D)^{-1} \bigr)_{\!2} = \bigl( \mathbf{U}^{(a)} (^t \! D)^{-1} \bigr)_{\!2} \, R \,  \bigl( \mathbf{U}^{(b)} (^t \! D)^{-1} \bigr)_{\!1 } \, R^{-1} \]
as desired.
\\2. By repeating the proof of the second claim of Lemma \ref{lemmeInjectionL01S01} in each handle, we know that we can write any element $\mathbf{U}(l)^{(b)}{^{i_1 \ldots i_l}_{j_1 \ldots j_l}}$ (resp. $\mathbf{U}(m)^{(a)}{^{i'_1 \ldots i'_m}_{j'_1 \ldots j'_m}}$) as a polynomial in the $4$ elements $\mathbf{U}^{(b)}{^s_t}$ (resp. $\mathbf{U}^{(a)}{^{s'}_{t'}}$), with $s,t,s',t' \in \{ \pm \}$. Hence any element $\mathbf{U}(l)^{(b)}{^{i_1 \ldots i_l}_{j_1 \ldots j_l}}\mathbf{U}(m)^{(a)}{^{i'_1 \ldots i'_m}_{j'_1 \ldots j'_m}}$ can be written as a polynomial in the $8$ elements $\mathbf{U}^{(b)}{^s_t}$, $\mathbf{U}^{(a)}{^{s'}_{t'}}$. The claim is proved since, as already observed, this collection of elements spans $\mathcal{S}^{\mathrm{s}}_q(\Sigma_{1,0}^{\mathrm{o},\bullet})$ as a $\mathbb{Z}[q^{\pm 1/2}]$-module.
\end{proof}

\begin{lemma}
The map $\mathrm{hol}^{\mathrm{s}} : \mathcal{S}_q^{\mathrm{s}}(\Sigma_{1,0}^{\mathrm{o},\bullet}) \to \mathcal{L}_{1,0}(U_{q^2})$ is an isomorphism of $\mathcal{O}_{q^2}$-comodule-algebras.
\end{lemma}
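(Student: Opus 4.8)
The plan is to reproduce, for the one-holed torus, exactly the argument used in Lemma~\ref{isoS01L01} for $\Sigma_{0,1}^{\mathrm{o},\bullet}$: the morphism $j$ of Lemma~\ref{lemmeInjectionL10S10} and the map $\mathrm{hol}^{\mathrm{s}}$ are mutually inverse. All the substantive work has already been done, so this is just an assembly step.

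First I would record that $\mathrm{hol}^{\mathrm{s}}\circ j=\mathrm{id}_{\mathcal{L}_{1,0}(U_{q^2})}$. By \eqref{holSurMatricesGen} we have $\mathrm{hol}^{\mathrm{s}}(\mathbf{U}^{(b)})=B\,{}^t\!D$ and $\mathrm{hol}^{\mathrm{s}}(\mathbf{U}^{(a)})=A\,{}^t\!D$, while by Lemma~\ref{lemmeInjectionL10S10} one has $j(B)=\mathbf{U}^{(b)}({}^t\!D)^{-1}$ and $j(A)=\mathbf{U}^{(a)}({}^t\!D)^{-1}$. Since $\mathrm{hol}^{\mathrm{s}}$ is an algebra morphism (a consequence of Theorem~\ref{wilsonStack}, as already noted in the proof of Theorem~\ref{thIsoSkeinLgn}) it commutes with right multiplication by the fixed scalar matrix ${}^t\!D$, so $\mathrm{hol}^{\mathrm{s}}(j(B))=\mathrm{hol}^{\mathrm{s}}(\mathbf{U}^{(b)})({}^t\!D)^{-1}=B\,{}^t\!D\,({}^t\!D)^{-1}=B$, and likewise $\mathrm{hol}^{\mathrm{s}}(j(A))=A$. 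Because $\mathcal{L}_{1,0}(U_{q^2})$ is generated by the coefficients of $B$ and $A$ (Lemma~\ref{GensRelsLgnUq2}) and both $\mathrm{hol}^{\mathrm{s}}$ and $j$ are algebra morphisms, the identity $\mathrm{hol}^{\mathrm{s}}\circ j=\mathrm{id}$ propagates from these generators to all of $\mathcal{L}_{1,0}(U_{q^2})$.

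From $\mathrm{hol}^{\mathrm{s}}\circ j=\mathrm{id}$ it follows that $j$ is injective; by the second part of Lemma~\ref{lemmeInjectionL10S10} it is surjective, hence bijective, and by the first part of that lemma it is a morphism of $\mathcal{O}_{q^2}$-comodule-algebras. Therefore $j$ is an isomorphism of $\mathcal{O}_{q^2}$-comodule-algebras, and $\mathrm{hol}^{\mathrm{s}}=j^{-1}$ is one as well. I do not expect a genuine obstacle here: the construction of $j$, the verification of the defining relations \eqref{presentationL10Uq2}, and the spanning argument proving surjectivity were all carried out in Lemma~\ref{lemmeInjectionL10S10}; the only point needing care is the harmless bookkeeping with the fixed matrix ${}^t\!D$ in the identity $\mathrm{hol}^{\mathrm{s}}\circ j=\mathrm{id}$.
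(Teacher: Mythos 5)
Your proposal is correct and follows exactly the paper's argument: the paper's proof of this lemma is simply ``completely similar to the proof of Lemma \ref{isoS01L01}'', which is the same assembly step you carry out — $\mathrm{hol}^{\mathrm{s}}\circ j=\mathrm{id}$ via \eqref{holSurMatricesGen}, hence $j$ is injective, surjective by Lemma \ref{lemmeInjectionL10S10}, and thus an isomorphism with inverse $\mathrm{hol}^{\mathrm{s}}$. (The only cosmetic remark: to see $\mathrm{hol}^{\mathrm{s}}\bigl(\mathbf{U}^{(b)}(^t\!D)^{-1}\bigr)=\mathrm{hol}^{\mathrm{s}}(\mathbf{U}^{(b)})(^t\!D)^{-1}$ you only need linearity of $\mathrm{hol}^{\mathrm{s}}$, since $^t\!D$ is a scalar matrix, not the full algebra-morphism property.)
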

\begin{proof}
Completely similar to the proof of Lemma \ref{isoS01L01}.
\end{proof}

\section{Geometric interpretation of the vacuum representation of $\mathcal{L}_{g,0}(H)$ on $\mathcal{L}_{0,g}(H)$}\label{sectionVacuum}
In this section we take $n=0$. Let $\mathcal{L}_{g,0}^A(H)$ be the subalgebra of $\mathcal{L}_{g,0}(H)$ generated by all the coefficients $\overset{I}{A}(i)^k_l$ of the matrices $\overset{I}{A}(i)$ (for all $i$ and all the finite dimensional $H$-modules $I$). We define a right action $\triangleleft$ of $\mathcal{L}^A_{g,0}(H)$ on $\mathbb{C}$ by 
\[ \forall \, i, \:\:\: 1 \triangleleft \overset{I}{A}(i) = \mathbb{I}_{\dim(I)} \:\:\:\:\:\:\:\:\: (\textit{i.e.} \:\: 1 \triangleleft \overset{I}{A}(i)^k_l = \delta^k_l \: ). \]
It is immediate that $\triangleleft$ is compatible with the relations \eqref{relationFusion}, \eqref{echangeLgn} of Definition \ref{defLgn}. Hence we have a representation $\mathbb{C}_{\mathrm{vac}}$ of $\mathcal{L}^A_{g,0}(H)$. $1$ is called a vacuum vector or a cyclic vector, often denoted $\Omega$ or $|0\rangle$ (see for instance \cite[Thm. 21]{AS}, but with different conventions than here). Then we define the right vacuum representation of $\mathcal{L}_{g,0}(H)$ to be the induced representation
\[ \mathbb{C}_{\mathrm{vac}} \underset{\mathcal{L}^A_{g,0}(H)}{\otimes} \mathcal{L}_{g,0}(H) \]
where $\mathcal{L}^A_{g,0}(H)$ acts by left multiplication on $\mathcal{L}_{g,0}(H)$. Thanks to the defining relations of $\mathcal{L}_{g,0}(H)$ (Definition \ref{defLgn}), it is clear that any element of $\mathcal{L}_{g,0}(H)$ can be written as a linear combination of some products of the form
\[ \overset{J_1}{A}(1)^{k_1}_{l_1} \ldots \overset{J_g}{A}(g)^{k_g}_{l_g} \, \overset{I_1}{B}(1)^{i_1}_{j_1} \ldots \overset{I_g}{B}(g)^{i_g}_{j_g}. \]
Hence, any element in the vacuum representation is a linear combination of vectors of the form 
\[ 1 \triangleleft \overset{I_1}{B}(1)^{i_1}_{j_1} \ldots \overset{I_g}{B}(g)^{i_g}_{j_g}. \]
Similarly, any element of $\mathcal{L}_{0,g}(H)$ can be written as a linear combination of some products of the form $\overset{I_1}{M}(1)^{i_1}_{j_1} \ldots \overset{I_g}{M}(g)^{i_g}_{j_g}$, and we have an isomorphism of vectors spaces
\[ \flecheIso{ \mathbb{C}_{\mathrm{vac}} \underset{\mathcal{L}^A_{g,0}(H)}{\otimes} \mathcal{L}_{g,0}(H) }{ \mathcal{L}_{0,g}(H) }{ 1 \triangleleft \overset{I_1}{B}(1)^{i_1}_{j_1} \ldots \overset{I_g}{B}(g)^{i_g}_{j_g} }{ \overset{I_1}{M}(1)^{i_1}_{j_1} \ldots \overset{I_g}{M}(g)^{i_g}_{j_g}. } \]
By identification of these two spaces we obtain the (right) vacuum representation of $\mathcal{L}_{g,0}(H)$ on $\mathcal{L}_{0,g}(H)$; we again denote the action by $\triangleleft$.

\begin{remark}
We consider a right representation because we want to relate it to the stacking representation of $\mathbb{C}\mathscr{T}(\Sigma_{g,0}^{\mathrm{o},\bullet})$ on $\mathbb{C}\mathscr{T}(\Sigma_{0,g}^{\mathrm{o},\bullet})$, which is a right representation due to our convention on the stack product (Definition \ref{stackProduct}).
\end{remark}

\begin{lemma}\label{formulesVacuum}
The vacuum representation of $\mathcal{L}_{g,0}(H)$ on $\mathcal{L}_{0,g}(H)$ is explicitly given by 
\begin{center}
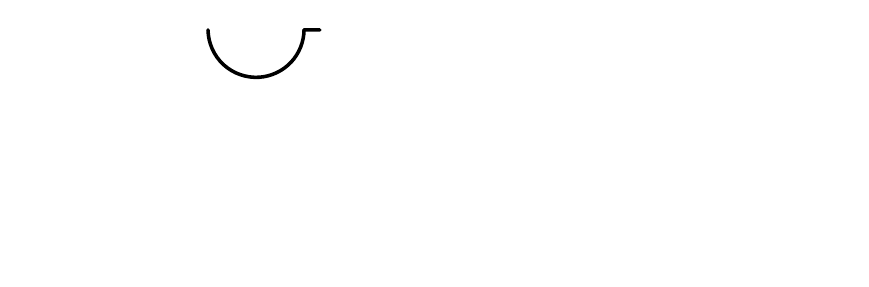
\end{center}
and
\begin{center}
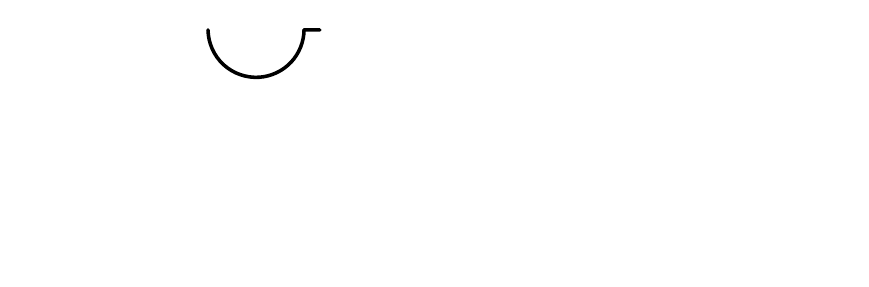
\end{center}
The left-hand side of the first equality means of course $\big(\overset{I_1}{M}(1)^i_k \ldots \overset{I_g}{M}(g)^l_m \triangleleft \overset{J}{B}(j)^n_o\big)\otimes u_i \otimes u^k \otimes \ldots \otimes v_l \otimes v^m \otimes w_n \otimes w^o$, and similarly for the second.
\end{lemma}
\begin{proof}
These are just the outcomes of graphical computations. For instance, below is the computation for the action of $\overset{J}{A}(1)$ in the case $g=2$. Other cases are treated similarly.
\begin{center}
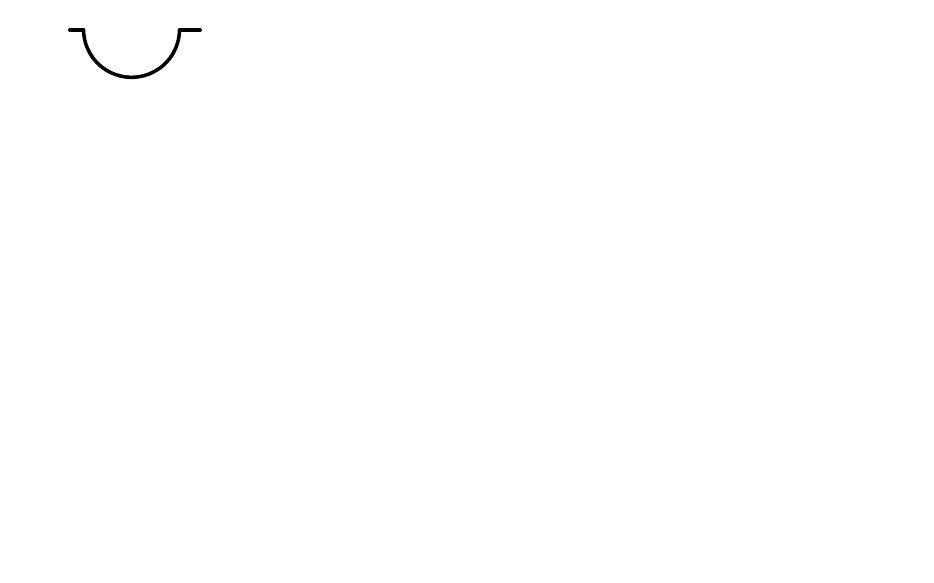
\end{center}
We used \eqref{dessinEchangeLgn}, \eqref{dessinEchangeL10} and the definition of the vacuum representation.
\end{proof}

\begin{remark}\label{remarqueHeisenberg}
There is a left representation of $\mathcal{L}_{g,0}(H)$ on $(H^{\circ})^{\otimes g}$ which comes from the facts that there exists \cite{alekseev} a morphism of algebras $\mathcal{L}_{g,0}(H) \to \mathcal{H}\big( \mathcal{O}(H) \big)^{\otimes g}$ (the Heisenberg double of the dual Hopf algebra $\mathcal{O}(H)$) and that there is a natural representation of $\mathcal{H}\big( \mathcal{O}(H) \big)$ on $H^{\circ}$. When $H$ is finite dimensional and factorizable, the morphism $\mathcal{L}_{g,0}(H) \to \mathcal{H}\big( \mathcal{O}(H) \big)^{\otimes g}$ is an isomorphism \cite[\S 3.3]{Fai18c}, and it is well-known that when $H$ is finite dimensional $\mathcal{H}\big( \mathcal{O}(H) \big) \cong \mathrm{End}_{\mathbb{C}}(H^*)$. Hence under these assumptions, $\mathcal{L}_{g,0}(H)$ has only one indecomposable left representation, namely $(H^*)^{\otimes g}$, which is necessarily dual to the vacuum (right) representation on $\mathcal{L}_{0,g}(H)$. Note however that writing down an explicit intertwiner is not obvious (one has to use certain elements defined in \cite{Fai18c} which implement the action of the mapping class group). I do not know if these two representations are isomorphic when $H$ is infinite dimensional.
\end{remark}

\indent Recall that we denote by $\mathscr{T}(\Sigma_{g,n}^{\mathrm{o},\bullet})$ the set of isotopy classes of $H$-colored $\partial\boldsymbol{\Sigma}$-tangles, with $\boldsymbol{\Sigma} = \Sigma_{g,n}^{\mathrm{o},\bullet} \times [0,1]$, and by $\mathbb{C}\mathscr{T}(\Sigma_{g,n}^{\mathrm{o},\bullet})$ the $\mathbb{C}$-vector space with basis $\mathscr{T}(\Sigma_{g,n}^{\mathrm{o},\bullet})$. Then $\mathbb{C}\mathscr{T}(\Sigma_{g,n}^{\mathrm{o},\bullet})$ is an algebra for the stack product. There is a right representation of $\mathbb{C}\mathscr{T}(\Sigma_{g,0}^{\mathrm{o},\bullet})$ on $\mathbb{C}\mathscr{T}(\Sigma_{0,g}^{\mathrm{o},\bullet})$ obtained by stacking $\Sigma_{g,0}^{\mathrm{o},\bullet} \times [0,1]$ atop $\Sigma_{0,g}^{\mathrm{o},\bullet} \times [0,1]$; we denote it by $\blacktriangleleft$. Note that it is a right representation due to our convention for the stack product (Definition \ref{stackProduct}).

\smallskip

\indent For $x \otimes v \in \mathcal{L}_{0,g}(H) \otimes V$ and $y \otimes w \in \mathcal{L}_{g,0}(H) \otimes W$, where $V,W$ are finite dimensional $H$-modules, we define
\[ (x \otimes v) \triangleleft (y \otimes w) = (x \triangleleft y) \otimes v \otimes w \in \mathcal{L}_{0,g}(H) \otimes V \otimes W. \]
We also denote by $\mathrm{hol}_{0,g}$ and $\mathrm{hol}_{g,0}$ the holonomy maps for $\mathbb{C}\mathscr{T}(\Sigma_{0,g}^{\mathrm{o},\bullet})$ and $\mathbb{C}\mathscr{T}(\Sigma_{g,0}^{\mathrm{o},\bullet})$ respectively (see Definition \ref{defHol}). The following result relates the representations $\triangleleft$ and $\blacktriangleleft$.

\begin{theorem}\label{theoremRepVacuum}
For $\mathbf{S} \in \mathbb{C}\mathscr{T}(\Sigma_{0,g}^{\mathrm{o},\bullet})$ and $\mathbf{T} \in \mathbb{C}\mathscr{T}(\Sigma_{g,0}^{\mathrm{o},\bullet})$ it holds
\[ \mathrm{hol}_{0,g}(\mathbf{S}) \triangleleft \mathrm{hol}_{g,0}(\mathbf{T}) = \mathrm{hol}_{0,g}(\mathbf{S} \blacktriangleleft \mathbf{T}). \]
\end{theorem}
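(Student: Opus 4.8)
The plan is to turn both sides into diagrams (in the sense of Definition~\ref{defHol}) and identify them, first peeling $\mathbf{T}$ off into elementary pieces so that the only genuinely geometric input is a single picture.

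\textbf{Step 1: reduction to single tangles and to elementary $\mathbf{T}$.} Both sides of the asserted identity are bilinear in $(\mathbf{S},\mathbf{T})$, so we may assume $\mathbf{S}\in\mathscr{T}(\Sigma_{0,g}^{\mathrm{o},\bullet})$ and $\mathbf{T}\in\mathscr{T}(\Sigma_{g,0}^{\mathrm{o},\bullet})$ are honest $\partial\boldsymbol{\Sigma}$-tangles. Put $\mathbf{T}$ in the standard form \eqref{formeStandard}: it is then visibly a stack product $\mathbf{T}=\mathbf{E}_1\ast\cdots\ast\mathbf{E}_N\ast\mathbf{T}_0$, where each $\mathbf{E}_r$ is an \emph{elementary} tangle — a single colored strand running once through one of the handles $b_i$ or $a_i$ and closed up trivially (a copy of one of the tangles $\mathbf{U}^{(b_i)},\mathbf{U}^{(a_i)}$ of \eqref{genSgn}) — and $\mathbf{T}_0$ is isotopic to a tangle in a thickened disc at the top, disjoint from all handles. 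Since $\blacktriangleleft$ is a right representation, $\mathbf{S}\blacktriangleleft\mathbf{T}=(\cdots((\mathbf{S}\blacktriangleleft\mathbf{E}_1)\blacktriangleleft\mathbf{E}_2)\cdots)\blacktriangleleft\mathbf{T}_0$; by Theorem~\ref{wilsonStack}, $\mathrm{hol}_{g,0}(\mathbf{T})=\mathrm{hol}_{g,0}(\mathbf{E}_1)\odot\cdots\odot\mathrm{hol}_{g,0}(\mathbf{E}_N)\odot\mathrm{hol}_{g,0}(\mathbf{T}_0)$; and since $\triangleleft$ acts only on the $\mathcal{L}_{0,g}(H)$-tensor factor and is a right algebra action there, $x\triangleleft(y_1\odot y_2)=(x\triangleleft y_1)\triangleleft y_2$. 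Hence by induction on $N$ it is enough to treat (a) $\mathbf{T}=\mathbf{T}_0$ disjoint from the handles, and (b) $\mathbf{T}=\mathbf{E}$ a single elementary handle-passage. Case (a) is immediate: there $\mathrm{hol}_{g,0}(\mathbf{T}_0)=1\otimes w$ with $w$ the Reshetikhin--Turaev value of the underlying tangle, $\mathbf{S}\blacktriangleleft\mathbf{T}_0=\mathbf{S}\ast\iota(\mathbf{T}_0)$ with $\iota(\mathbf{T}_0)$ the same tangle placed in the disc of $\Sigma_{0,g}^{\mathrm{o},\bullet}$ away from the holes, so $\mathrm{hol}_{0,g}(\iota(\mathbf{T}_0))=1\otimes w$ too, and both sides equal $\mathrm{hol}_{0,g}(\mathbf{S})\otimes w$ by Theorem~\ref{wilsonStack} together with $x\odot(1\otimes w)=x\otimes w=x\triangleleft(1\otimes w)$.

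\textbf{Step 2: the elementary case.} It remains to treat $\mathbf{T}=\mathbf{E}$, say a strand coloured by a finite dimensional $H$-module $I$ running once through the handle $b_j$ (the case $a_j$, and the reversed orientations, being identical after \eqref{changeOrientationTangle}). By Proposition~\ref{holonomySimpleLoop} one has $\mathrm{hol}_{g,0}(\mathbf{E})=\overset{I}{B}(j)$ up to the trivial caps/cups used to close it, whence $\mathrm{hol}_{0,g}(\mathbf{S})\triangleleft\mathrm{hol}_{g,0}(\mathbf{E})=\mathrm{hol}_{0,g}(\mathbf{S})\triangleleft\overset{I}{B}(j)$. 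Writing $\mathrm{hol}_{0,g}(\mathbf{S})$ as the evaluation of its tangle atop the handles $\overset{\,\cdot}{M}(1),\dots,\overset{\,\cdot}{M}(g)$, Lemma~\ref{formulesVacuum} rewrites this action as an explicit diagram $D_{\mathrm{LHS}}$ in which a new strand coloured $I$ is tensored into the handle $M(j)$ and, in front of the handles $M(j{+}1),\dots,M(g)$, loops around before rejoining. On the other side, the embedding realising $\blacktriangleleft$ (obtained by capping each hole $m_i$ of $\Sigma_{0,g}^{\mathrm{o},\bullet}$ by a once-punctured torus, passing from Figure~\ref{grapheSurSurface} to Figure~\ref{surfaceGN}) carries the $b_j$-loop of $\Sigma_{g,0}^{\mathrm{o},\bullet}$ to a loop which, once $\mathbf{E}$ is pushed down onto $\Sigma_{0,g}^{\mathrm{o},\bullet}\times\{0\}$, dives through $m_j$ and returns around $m_{j+1},\dots,m_g$. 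Presenting $\mathbf{S}\blacktriangleleft\mathbf{E}$ in the standard form \eqref{formeStandard} and applying $\mathrm{hol}_{0,g}$ yields precisely the same diagram $D_{\mathrm{LHS}}$, after using naturality \eqref{naturalite} to slide the tangle part of $\mathbf{S}$ past the inserted strand. This proves case (b) and, with Step~1, the theorem.

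\textbf{Main obstacle.} Steps~1 and the formal parts of Step~2 are bookkeeping; the one non-formal ingredient is to pin down the embedding $\Sigma_{g,0}^{\mathrm{o},\bullet}\hookrightarrow\Sigma_{0,g}^{\mathrm{o},\bullet}\times[0,1]$ that defines the stacking representation $\blacktriangleleft$ and to check, by drawing the single relevant picture, that pushing one handle-loop through the corresponding hole reproduces \emph{exactly} the wrapping pattern appearing in Lemma~\ref{formulesVacuum} — including the correct over/under crossings with the remaining handles, the framing (so that no spurious power of the ribbon element $v$ enters, consistent with $\overset{I}{\widetilde{b_j}}=\overset{I}{B}(j)$), and, for a general ribbon Hopf algebra $H$, the strand orientations. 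Once that one diagrammatic identification is established, the rest of the argument is the routine propagation through Steps~1--2.
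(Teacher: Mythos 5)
Your Step 2 and your closing paragraph identify exactly the content of the paper's proof: the theorem reduces to comparing the explicit wrapping diagrams of Lemma \ref{formulesVacuum} with the single geometric picture of what happens when a handle-loop of $\Sigma_{g,0}^{\mathrm{o},\bullet}$ is pushed down through the corresponding hole of $\Sigma_{0,g}^{\mathrm{o},\bullet}$ (the paper does this by drawing Figure \ref{vacuumEnPile} for $g=2$ with a general coupon $T$). However, your Step 1 contains a genuine gap. A tangle $\mathbf{T}$ in standard form \eqref{formeStandard} is \emph{not} ``visibly a stack product'' $\mathbf{E}_1\ast\cdots\ast\mathbf{E}_N\ast\mathbf{T}_0$ of elementary one-handle passages: by Definition \ref{stackProduct} the stack product is a disjoint union of sub-tangles living in disjoint height slabs, so any $\ast$-factorization of $\mathbf{T}$ into $N+1$ nonempty pieces forces $\mathbf{T}$ to have at least $N+1$ connected components. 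A single connected strand that runs through two or more handles (e.g.\ the separating loop $b_1a_1^{-1}b_1^{-1}a_1$ of \eqref{canonicalSimpleLoops}, or any closed curve winding through several handles) admits no such decomposition, and the tangles $\mathbf{U}^{(b_i)}$, $\mathbf{U}^{(a_i)}$ of \eqref{genSgn} are arcs with boundary points, so reassembling $\mathbf{T}$ from them would require gluing boundary points --- an operation not covered by Theorem \ref{wilsonStack}. Hence your induction on $N$ does not get off the ground for general $\mathbf{T}$.

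The repair is to decompose algebraically rather than topologically: $\mathrm{hol}_{g,0}(\mathbf{T})$ is, by Definition \ref{defHol}, a contraction of a \emph{product} of the matrices $\overset{I}{B}(j)^{\pm1},\overset{J}{A}(j)^{\pm1}$ against $F_{\mathrm{RT}}(T)$, and since $\triangleleft$ is a right algebra action one has $x\triangleleft(y_1y_2)=(x\triangleleft y_1)\triangleleft y_2$, so Lemma \ref{formulesVacuum} (applied once per handle-passage, i.e.\ per generator matrix in the word) determines the left-hand side for arbitrary $\mathbf{T}$. On the geometric side one then observes that the wrapping patterns inserted by pushing the various handle-passages through their holes are local and independent of one another and of the connecting tangle $T$ (which slides past them by naturality \eqref{naturalite}), so the same one-picture verification you isolate in Step 2 suffices. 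This is precisely the paper's argument; with Step 1 replaced as above your proposal becomes correct and coincides with it.
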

\begin{proof}
The representation $\blacktriangleleft$ is depicted in Figure \ref{vacuumEnPile} for the case $g=2$ (which is completely representative of the general situation). For simplicity, we omitted the orientations and the colorings in this figure; moreover even if we draw only one strand in each handle for $\mathbf{S}$ and $\mathbf{T}$, this represents in fact bunches of parallel strands. To obtain $\mathbf{S} \blacktriangleleft \mathbf{T}$, use the embeddings of the fat graphs in the more intuitive views of the surfaces $\Sigma_{2,0}^{\mathrm{o},\bullet}$ and $\Sigma_{0,2}^{\mathrm{o},\bullet}$ (see Figure \ref{grapheSurSurface}); it is more easy to perform the stacking from this viewpoint. The theorem follows from the comparison between Figure \ref{vacuumEnPile} and the diagrammatic formulas in Lemma \ref{formulesVacuum}. Note that in Figure \ref{vacuumEnPile} we have presented $\mathbf{S} \blacktriangleleft \mathbf{T}$ in such a way that the comparison is immediate; in particular the two crossings just below the coupon $T$ correspond to the crossings which appear when we apply $\mathrm{hol}_{2,0}$ to $\mathbf{T}$ (Definition \ref{defHol}).
\begin{figure}
\centering
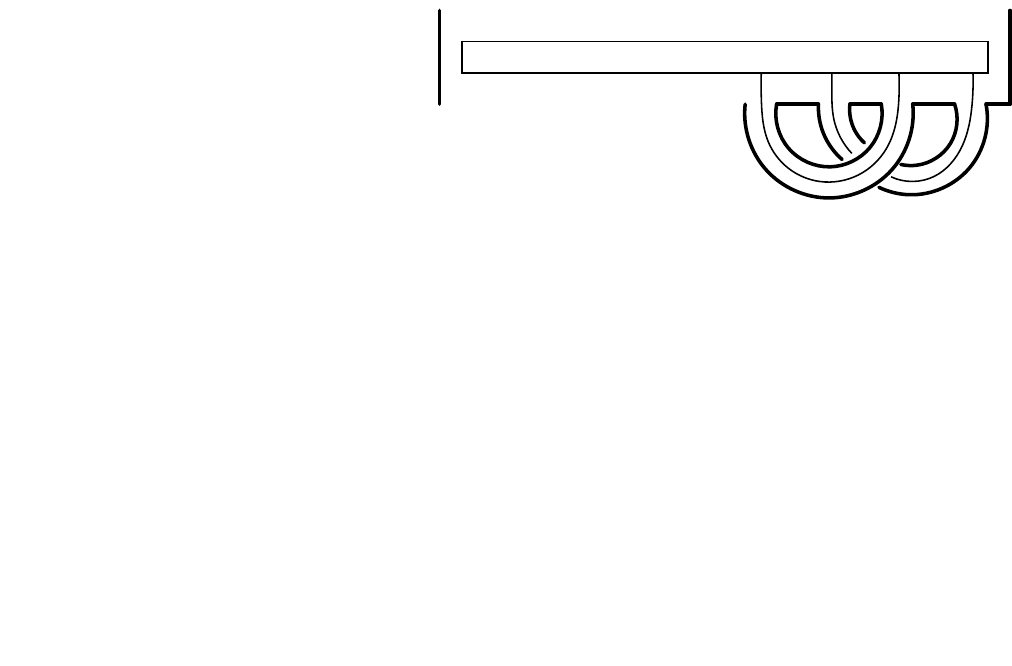
\caption{The stacking representation of $\mathbb{C}\mathscr{T}(\Sigma_{2,0}^{\mathrm{o},\bullet})$ on $\mathbb{C}\mathscr{T}(\Sigma_{0,2}^{\mathrm{o},\bullet})$.}
\label{vacuumEnPile}
\end{figure}
\end{proof}

\begin{corollary}
The subspace $\mathcal{L}_{0,g}^{\mathrm{inv}}(H)$ is stable under the action of $\mathcal{L}_{g,0}^{\mathrm{inv}}(H)$: 
\[ x \in \mathcal{L}_{0,g}^{\mathrm{inv}}(H), \: y \in \mathcal{L}_{g,0}^{\mathrm{inv}}(H) \:\:\: \Rightarrow \:\:\: x \triangleleft y \in \mathcal{L}_{0,g}^{\mathrm{inv}}(H). \]
\end{corollary}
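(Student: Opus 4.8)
The plan is to reduce the corollary to a single ``Hopf-equivariance'' identity for the vacuum action, namely
\[ (x\triangleleft y)\cdot h = (x\cdot h')\triangleleft(y\cdot h'') \qquad\text{for all } x\in\mathcal{L}_{0,g}(H),\ y\in\mathcal{L}_{g,0}(H),\ h\in H, \]
where $\cdot$ is the right $H$-action \eqref{actionH} on each algebra and $h'\otimes h''=\Delta(h)$. Granting this, the corollary is immediate: if $x$ and $y$ are invariant then $(x\triangleleft y)\cdot h=\varepsilon(h')\varepsilon(h'')(x\triangleleft y)=\varepsilon(h)(x\triangleleft y)$, so $x\triangleleft y\in\mathcal{L}_{0,g}^{\mathrm{inv}}(H)$.

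To establish the identity I would first upgrade the vacuum representation to an $H$-module. Recall that $\mathcal{L}_{0,g}(H)$ is identified with $\mathbb{C}_{\mathrm{vac}}\otimes_{\mathcal{L}^A_{g,0}(H)}\mathcal{L}_{g,0}(H)=\mathcal{L}_{g,0}(H)/N$, where $N=I_A\cdot\mathcal{L}_{g,0}(H)$ and $I_A=\{a\in\mathcal{L}^A_{g,0}(H)\mid 1\triangleleft a=0\}$; write $\pi$ for the quotient map, so that $\pi$ is right $\mathcal{L}_{g,0}(H)$-linear and, under the identification, $\pi(zw)=\pi(z)\triangleleft w$. Using \eqref{actionH} one checks by induction on the length of a monomial in the $\overset{I}{A}(i)^k_l$ that $1\triangleleft(a\cdot h)=\varepsilon(h)(1\triangleleft a)$ for every $a\in\mathcal{L}^A_{g,0}(H)$ — the base case is $1\triangleleft(\overset{I}{A}(i)\cdot h)=\overset{I}{h'}\,\mathbb{I}_{\dim(I)}\,\overset{I}{S(h'')}=\varepsilon(h)\,\mathbb{I}_{\dim(I)}$, and the inductive step uses that $\mathcal{L}_{g,0}(H)$ is an $H$-module-algebra. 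Hence $I_A$ is an $H$-submodule, and then so is $N$ (again by the module-algebra axiom, since $(az)\cdot h=\sum(a\cdot h')(z\cdot h'')\in I_A\cdot\mathcal{L}_{g,0}(H)$ when $a\in I_A$). Therefore the $H$-action descends to $\mathcal{L}_{g,0}(H)/N$ and $\pi$ becomes $H$-equivariant; moreover, because \eqref{actionH} takes exactly the same shape for the $B$'s as for the $M$'s and the identification matches these two families of matrices term by term, the resulting $H$-action on $\mathcal{L}_{g,0}(H)/N$ corresponds precisely to the standard $H$-action on $\mathcal{L}_{0,g}(H)$.

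With this in place the identity is a one-line computation. Given $x\in\mathcal{L}_{0,g}(H)$, pick $b\in\mathcal{L}_{g,0}(H)$ with $\pi(b)=x$ (take for $b$ the element obtained by replacing each $\overset{I}{M}(i)^k_l$ by $\overset{I}{B}(i)^k_l$ in an expression of $x$). Then $x\triangleleft y=\pi(by)$ and, by $H$-equivariance of $\pi$ together with the module-algebra axiom for $\mathcal{L}_{g,0}(H)$,
\[ (x\triangleleft y)\cdot h=\pi\bigl((by)\cdot h\bigr)=\pi\Bigl(\sum(b\cdot h')(y\cdot h'')\Bigr)=\sum\pi(b\cdot h')\triangleleft(y\cdot h'')=\sum(x\cdot h')\triangleleft(y\cdot h''), \]
which is the desired identity.

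There is also an alternative, geometric route via Theorem \ref{theoremRepVacuum} and Proposition \ref{propertiesWf}: write $x=W^f(L_S)$ and $y=W^{f'}(L_T)$ with $f\in\mathrm{Inv}_k(H)$, $f'\in\mathrm{Inv}_{k'}(H)$, unfold the Wilson loops into contractions of holonomies via \eqref{HenUniversal}, transport $\triangleleft$ through those contractions, rewrite $\mathrm{hol}_{0,g}(\mathbf{S})\triangleleft\mathrm{hol}_{g,0}(\mathbf{T})=\mathrm{hol}_{0,g}(\mathbf{S}\blacktriangleleft\mathbf{T})$ using Theorem \ref{theoremRepVacuum}, and conclude by Proposition \ref{propertiesWf}(1) after checking $\mathrm{Inv}_k(H)\otimes\mathrm{Inv}_{k'}(H)\subseteq\mathrm{Inv}_{k+k'}(H)$ (a short coassociativity computation). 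I would prefer the module-algebra route, since it avoids bookkeeping of pivotal shifts and the precise meaning of stacking based links living on different surfaces. In either approach, the only genuinely non-formal point — and the step I expect to demand the most care — is the well-definedness and correct transport of the $H$-action on the vacuum module: one must verify that $I_A$, hence $N$, is $H$-stable and that the induced action agrees with the standard one on $\mathcal{L}_{0,g}(H)$, bearing in mind that $N$ is not a two-sided ideal and that the identification with $\mathcal{L}_{0,g}(H)$ is pinned down only on the spanning monomials.
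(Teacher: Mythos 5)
Your argument is correct, but it is not the route the paper takes. The paper's proof is exactly your ``alternative, geometric route'': it invokes Proposition \ref{propertiesWf}(5) to write $x=W^{f_1}(L_1)$ and $y=W^{f_2}(L_2)$ with $f_1\in\mathrm{Inv}_k(H)$, $f_2\in\mathrm{Inv}_l(H)$, uses the compatibility of $\triangleleft$ with stacking (a consequence of Theorem \ref{theoremRepVacuum}) to get $x\triangleleft y=W^{f_1\otimes f_2}(L_1\blacktriangleleft L_2)$, notes $f_1\otimes f_2\in\mathrm{Inv}_{k+l}(H)$, and concludes by Proposition \ref{propertiesWf}(1). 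Your preferred module-algebra route is genuinely different and, as far as I can see, complete: the key points --- that $\varepsilon_A:=1\triangleleft(-)$ is an algebra character on the $H$-stable subalgebra $\mathcal{L}^A_{g,0}(H)$ satisfying $\varepsilon_A(a\cdot h)=\varepsilon(h)\varepsilon_A(a)$, hence $I_A$ and $N=I_A\cdot\mathcal{L}_{g,0}(H)$ are $H$-submodules, the action descends, and the transported action matches the standard one on $\mathcal{L}_{0,g}(H)$ because \eqref{actionH} has identical matrix form for the $B(i)$ and the $M(i)$ --- all check out, and the well-definedness caveats you flag are no worse than what the paper itself assumes when it asserts the vector-space isomorphism with $\mathcal{L}_{0,g}(H)$. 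What your route buys is independence from the holonomy and Wilson-loop machinery (in particular from the surjectivity statement in Proposition \ref{propertiesWf}(5)) and a strictly stronger conclusion, namely the equivariance identity $(x\triangleleft y)\cdot h=(x\cdot h')\triangleleft(y\cdot h'')$ for arbitrary $x,y$; what the paper's route buys is brevity given the machinery already in place and a geometric description of the resulting invariant element as a generalized Wilson loop of the stacked link.
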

\begin{proof}
Thanks to the fifth property in Proposition \ref{propertiesWf} write  $x = W^{f_1}(L_1)$ (resp.  $y=W^{f_2}(L_2)$), where the based link $L_1$ (resp. $L_2$) has $k$ (resp. $l$) basepoints and $f_1 \in \mathrm{Inv}_k(H)$ (resp. $f_2 \in \mathrm{Inv}_l(H)$). Then
\[ x \triangleleft y = W^{f_1}(L_1) \triangleleft W^{f_2}(L_2) = W^{f_1 \otimes f_2}(L_1 \blacktriangleleft L_2).\]
But it is clear that $f_1 \otimes f_2 \in \mathrm{Inv}_{k+l}(H)$ and thus by the first property in Proposition \ref{propertiesWf}, this element is invariant.
\end{proof}

\indent Recall that $\mathscr{L}(\Sigma_{g,0}^{\mathrm{o}})$ is the set of (isotopy classes of) $\mathrm{SLF}(H)$-colored framed links in $\Sigma_{g,0}^{\mathrm{o}} \times [0,1]$, and similarly for $\mathscr{L}(\Sigma_{g,0})$. The canonical embedding $j \times \mathrm{id} : \Sigma_{g,0}^{\mathrm{o}} \times [0,1] \to \Sigma_{g,0} \times [0,1]$ induces a surjective (non injective) linear map $\pi : \mathbb{C}\mathscr{L}(\Sigma_{g,0}^{\mathrm{o}}) \to \mathbb{C}\mathscr{L}(\Sigma_{g,0})$. Also recall the generalized Wilson loop map $W$ defined in \eqref{defWilsonGen}.

\begin{proposition}\label{propRepSurfaceFermee}
Let $x \in \mathcal{L}_{0,g}^{\mathrm{inv}}(H)$ and $L_1, L_2 \in \mathbb{C}\mathscr{L}(\Sigma_{g,0}^{\mathrm{o}})$. It holds
\[ \pi(L_1) = \pi(L_2) \:\:\: \Rightarrow \:\:\: x \triangleleft W(L_1) = x \triangleleft W(L_2). \]
Hence for $L \in \mathbb{C}\mathscr{L}(\Sigma_{g,0})$, $x \triangleleft W(L)$ is well-defined.
\end{proposition}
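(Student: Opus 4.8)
The plan is to show that $x\triangleleft W(L)$ depends on $L$ only through $\pi(L)$, i.e.\ that it is annihilated by $\ker\pi$. The map $\pi$ is induced by the inclusion $\Sigma_{g,0}^{\mathrm{o}}\times[0,1]\hookrightarrow\Sigma_{g,0}\times[0,1]$ that fills in the disk $D$, so $\ker\pi$ is spanned by differences $L-L'$ in which $L'$ is obtained from $L$ by an isotopy of $\Sigma_{g,0}\times[0,1]$ which, after a generic perturbation, meets $D\times[0,1]$ transversally. Decomposing such an isotopy into elementary moves, each is either supported in $\Sigma_{g,0}^{\mathrm{o}}\times[0,1]$ (changing nothing) or is a single ``finger'' sweeping an arc of $L$ across $D\times[0,1]$ and back; so it suffices to check that one such finger move leaves $x\triangleleft W(L)$ unchanged.

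First I would rewrite $x\triangleleft W(L)$ as a single generalized Wilson loop on $\Sigma_{0,g}^{\mathrm{o},\bullet}$. By the fifth property of Proposition~\ref{propertiesWf} write $x=W^{f}(S)$ for some based link $S\subset\Sigma_{0,g}^{\mathrm{o},\bullet}\times[0,1]$ and $f\in\mathrm{Inv}_k(H)$; by \eqref{defWilsonGen} write $W(L)=W^{\boldsymbol{\varphi}}(L_b)$, where $L_b$ is any based representative of $L$ and $\boldsymbol{\varphi}=\varphi_1\otimes\cdots\otimes\varphi_r$ with $\varphi_i\in\mathrm{SLF}(H)$. As in the proof of the preceding corollary (which combines Theorem~\ref{theoremRepVacuum} with the Hennings description of holonomy), one then has
\[ x\triangleleft W(L)=W^{f}(S)\triangleleft W^{\boldsymbol{\varphi}}(L_b)=W^{\,f\otimes\boldsymbol{\varphi}}(S\blacktriangleleft L_b), \]
where $S\blacktriangleleft L_b$ is the based link in $\Sigma_{0,g}^{\mathrm{o},\bullet}\times[0,1]$ obtained by stacking the image $\iota(L_b)$ of $L_b$ under the fat-graph embedding $\iota$ defining $\blacktriangleleft$ (Figure~\ref{vacuumEnPile}) on top of $S$.

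Next, since the colors $\varphi_i$ of the components of $L_b$ are symmetric linear forms, the cyclicity argument used to prove the proposition preceding \eqref{defWilsonGen} shows that $W^{\,f\otimes\boldsymbol{\varphi}}(S\blacktriangleleft L_b)$ is unchanged when the basepoints of the $L_b$-components are moved along their strands. Hence $x\triangleleft W(L)$ depends on $L_b$ only through the free isotopy class of $\iota(L_b)$ inside $\Sigma_{0,g}^{\mathrm{o}}\times[0,1]$ relative to the fixed link $S$ (which can be taken to lie in $\Sigma_{0,g}^{\mathrm{o},\bullet}\times[0,\tfrac12[$). It therefore remains to prove the geometric claim that a finger of $L$ sweeping across $D\times[0,1]$ translates, under $\iota$, into an isotopy of $\iota(L_b)$ in $\Sigma_{0,g}^{\mathrm{o},\bullet}\times]\tfrac12,1]$ relative to $S$. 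Concretely, this amounts to checking that $\iota$ carries a collar of $\partial\Sigma_{g,0}^{\mathrm{o}}$, together with the cap $D$, onto a collar of an embedded disk $\Delta\subset\Sigma_{0,g}^{\mathrm{o},\bullet}$ which is unobstructed (disjoint from $S$ in the relevant height range); granting this, the finger sweeps across $\Delta\times]\tfrac12,1]$ and so does not change the isotopy class rel $S$, whence $x\triangleleft W(L)$ is unchanged. Since such finger moves generate $\ker\pi$ this proves the displayed implication, and the last assertion follows by lifting along the surjection $\pi$.

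I expect the main obstacle to be precisely this last identification: one must read off from the explicit stacking embedding underlying Lemma~\ref{formulesVacuum} (equivalently from Figure~\ref{vacuumEnPile}) that capping $\Sigma_{g,0}^{\mathrm{o}}$ to $\Sigma_{g,0}$ corresponds, in the target, to filling in an embedded disk of $\Sigma_{0,g}^{\mathrm{o},\bullet}$ lying on the far side of $\iota(\partial\Sigma_{g,0}^{\mathrm{o}})$ from $\iota(\Sigma_{g,0}^{\mathrm{o}})$ — i.e.\ that $\iota(\partial\Sigma_{g,0}^{\mathrm{o}})$ bounds such a disk. This is a statement purely about how the genus-$g$ fat graph is laid out inside the planar surface $\Sigma_{0,g}^{\mathrm{o},\bullet}$, and once it is pinned down the remainder of the argument is routine.
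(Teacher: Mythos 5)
Your reduction of the problem to a single elementary move --- one strand of $L$ swept across the capping disk $D$ and back --- and your rewriting of $x\triangleleft W(L)$ as $W^{f\otimes\boldsymbol{\varphi}}(S\blacktriangleleft L_b)$ via Theorem \ref{theoremRepVacuum} and the Hennings formulation both match the paper's proof exactly. The gap is the final topological claim, which you correctly flag as ``the main obstacle'' but then assume can be resolved: it cannot. Under the stacking embedding, the curve $\iota(\partial\Sigma_{g,0}^{\mathrm{o}})$ does bound a disk in the handlebody $\Sigma_{0,g}^{\mathrm{o}}\times[0,1]$ (namely $D$ pushed into the interior), but no such disk is disjoint from the based link $S$: the basepoints of $S$ sit on $\partial\boldsymbol{\Sigma}$, outside the fat-graph neighbourhood that is $\iota(\Sigma_{g,0}^{\mathrm{o}})$, so the vertical legs of $S$ running to its basepoints puncture the disk. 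Concretely, in the paper's diagram for $S\blacktriangleleft L_b^{\partial}$ the swept strand acquires $4k$ crossings with these legs ($k$ the number of basepoints of $S$), so the finger move is \emph{not} an isotopy rel $S$. A quick sanity check that no purely isotopy-theoretic argument can work: such an argument would prove the statement for arbitrary $x\in\mathcal{L}_{0,g}(H)$, whereas the proposition is (necessarily) restricted to invariant $x$.

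What actually happens is an algebraic cancellation. In the Hennings formulation the extra crossings decorate the beads $Z_1,\dots,Z_k$ of $S$ with a twisted coadjoint action $S^2\big(a_i^{(m)}b_j^{(m)}\big)\,?\,S\big(a_i^{(m')}b_j^{(m')}\big)$ and leave a factor $gS^{-1}(b_i)S(a_j)g^{-1}$ on the swept component of $L_b$. The first is killed precisely because $f\in\mathrm{Inv}_k(H)$, producing counits $\varepsilon(a_i)\varepsilon(b_j)$, and these then kill the leftover factor via $(\varepsilon\otimes\mathrm{id})(R)=(\mathrm{id}\otimes\varepsilon)(R)=1$. Note in particular that cyclicity of the symmetric forms $\varphi_i$ alone does not remove the factor on the swept component, so your intermediate step (reducing to free isotopy of $\iota(L_b)$ rel $S$) is correct but insufficient. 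Your steps 1 and 2 are sound; the conclusion requires replacing the claimed unobstructed disk by the explicit $R$-matrix computation.
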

\begin{proof}
Consider the $\mathrm{SLF}(H)$-colored links $L, L^{\partial}$ depicted below:
\begin{center}
\begingroup%
  \makeatletter%
  \providecommand\color[2][]{%
    \errmessage{(Inkscape) Color is used for the text in Inkscape, but the package 'color.sty' is not loaded}%
    \renewcommand\color[2][]{}%
  }%
  \providecommand\transparent[1]{%
    \errmessage{(Inkscape) Transparency is used (non-zero) for the text in Inkscape, but the package 'transparent.sty' is not loaded}%
    \renewcommand\transparent[1]{}%
  }%
  \providecommand\rotatebox[2]{#2}%
  \newcommand*\fsize{\dimexpr\f@size pt\relax}%
  \newcommand*\lineheight[1]{\fontsize{\fsize}{#1\fsize}\selectfont}%
  \ifx\svgwidth\undefined%
    \setlength{\unitlength}{324.65341434bp}%
    \ifx\svgscale\undefined%
      \relax%
    \else%
      \setlength{\unitlength}{\unitlength * \real{\svgscale}}%
    \fi%
  \else%
    \setlength{\unitlength}{\svgwidth}%
  \fi%
  \global\let\svgwidth\undefined%
  \global\let\svgscale\undefined%
  \makeatother%
  \begin{picture}(1,0.28339819)%
    \lineheight{1}%
    \setlength\tabcolsep{0pt}%
    \put(0,0){\includegraphics[width=\unitlength,page=1]{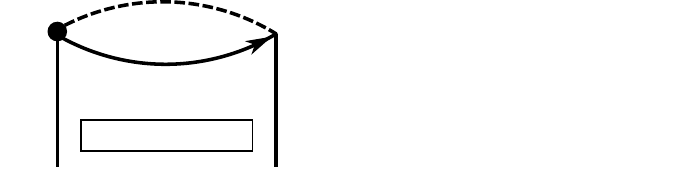}}%
    \put(0.23504747,0.0687546){\color[rgb]{0,0,0}\makebox(0,0)[lt]{\lineheight{1.25}\smash{\begin{tabular}[t]{l}$T$\end{tabular}}}}%
    \put(0,0){\includegraphics[width=\unitlength,page=2]{dessinPreuveRecolleDisque.pdf}}%
    \put(0.82413714,0.06875462){\color[rgb]{0,0,0}\makebox(0,0)[lt]{\lineheight{1.25}\smash{\begin{tabular}[t]{l}$T$\end{tabular}}}}%
    \put(0,0){\includegraphics[width=\unitlength,page=3]{dessinPreuveRecolleDisque.pdf}}%
    \put(-0.00192097,0.10550753){\color[rgb]{0,0,0}\makebox(0,0)[lt]{\lineheight{1.25}\smash{\begin{tabular}[t]{l}$L =$\end{tabular}}}}%
    \put(0.57157816,0.10585536){\color[rgb]{0,0,0}\makebox(0,0)[lt]{\lineheight{1.25}\smash{\begin{tabular}[t]{l}$L^{\partial} =$\end{tabular}}}}%
  \end{picture}%
\endgroup%

\end{center}
The dots mean the remaining of $\Sigma_{g,0}^{\mathrm{o}} \times [0,1]$ (see Figure \ref{grapheSurSurface}), where the links $L, L^{\partial}$ are equal. It is clear that two links $L_1,L_2$ satisfy $\pi(L_1) = \pi(L_2)$ if we can pass from one to another by this transformation. Thanks to the fifth property in Proposition \ref{propertiesWf}, write  $x = W^f(K)$ where the uncolored based link $K$ has $k$ basepoints and $f \in \mathrm{Inv}_k(H)$. We use the Hennings formulation of holonomy (see \S\ref{Sectionhennings}). Let $L_b$ and $L_b^{\partial}$ be uncolored based links such that $[L_b]=L$ and $[L_b^{\partial}]=L^{\partial}$. The diagrams (as in \eqref{defHolHennings}) associated to $K \blacktriangleleft L_b$ and $K \blacktriangleleft L^{\partial}_b$ are related as follows:
\begin{center}
\begingroup%
  \makeatletter%
  \providecommand\color[2][]{%
    \errmessage{(Inkscape) Color is used for the text in Inkscape, but the package 'color.sty' is not loaded}%
    \renewcommand\color[2][]{}%
  }%
  \providecommand\transparent[1]{%
    \errmessage{(Inkscape) Transparency is used (non-zero) for the text in Inkscape, but the package 'transparent.sty' is not loaded}%
    \renewcommand\transparent[1]{}%
  }%
  \providecommand\rotatebox[2]{#2}%
  \newcommand*\fsize{\dimexpr\f@size pt\relax}%
  \newcommand*\lineheight[1]{\fontsize{\fsize}{#1\fsize}\selectfont}%
  \ifx\svgwidth\undefined%
    \setlength{\unitlength}{487.50001882bp}%
    \ifx\svgscale\undefined%
      \relax%
    \else%
      \setlength{\unitlength}{\unitlength * \real{\svgscale}}%
    \fi%
  \else%
    \setlength{\unitlength}{\svgwidth}%
  \fi%
  \global\let\svgwidth\undefined%
  \global\let\svgscale\undefined%
  \makeatother%
  \begin{picture}(1,0.15043177)%
    \lineheight{1}%
    \setlength\tabcolsep{0pt}%
    \put(0,0){\includegraphics[width=\unitlength,page=1]{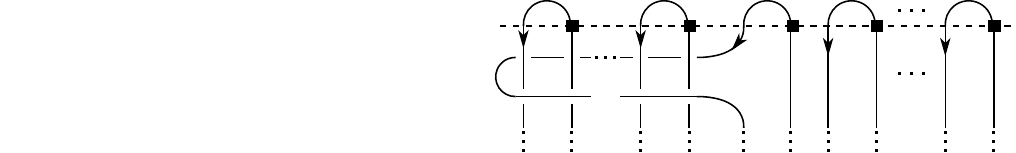}}%
    \put(0.56511418,0.13444421){\color[rgb]{0,0,0}\makebox(0,0)[lt]{\lineheight{1.25}\smash{\begin{tabular}[t]{l}$1$\end{tabular}}}}%
    \put(0.68058277,0.13482881){\color[rgb]{0,0,0}\makebox(0,0)[lt]{\lineheight{1.25}\smash{\begin{tabular}[t]{l}$k$\end{tabular}}}}%
    \put(0.78242964,0.13529543){\color[rgb]{0,0,0}\makebox(0,0)[lt]{\lineheight{1.25}\smash{\begin{tabular}[t]{l}$1'$\end{tabular}}}}%
    \put(0.86496192,0.13467262){\color[rgb]{0,0,0}\makebox(0,0)[lt]{\lineheight{1.25}\smash{\begin{tabular}[t]{l}$2'$\end{tabular}}}}%
    \put(0.98027227,0.13638289){\color[rgb]{0,0,0}\makebox(0,0)[lt]{\lineheight{1.25}\smash{\begin{tabular}[t]{l}$l'$\end{tabular}}}}%
    \put(0,0){\includegraphics[width=\unitlength,page=2]{diagrammePreuveRecolleDisque.pdf}}%
    \put(0.07265425,0.13467261){\color[rgb]{0,0,0}\makebox(0,0)[lt]{\lineheight{1.25}\smash{\begin{tabular}[t]{l}$1$\end{tabular}}}}%
    \put(0.1879646,0.13638289){\color[rgb]{0,0,0}\makebox(0,0)[lt]{\lineheight{1.25}\smash{\begin{tabular}[t]{l}$k$\end{tabular}}}}%
    \put(0,0){\includegraphics[width=\unitlength,page=3]{diagrammePreuveRecolleDisque.pdf}}%
    \put(0.28034655,0.13467261){\color[rgb]{0,0,0}\makebox(0,0)[lt]{\lineheight{1.25}\smash{\begin{tabular}[t]{l}$1'$\end{tabular}}}}%
    \put(0.39565687,0.13638289){\color[rgb]{0,0,0}\makebox(0,0)[lt]{\lineheight{1.25}\smash{\begin{tabular}[t]{l}$l'$\end{tabular}}}}%
  \end{picture}%
\endgroup%

\end{center}
This represents a neighborhood of the dotted line, and the remaining of the diagrams are equal. We number the basepoints of $L_b, L_b^{\partial}$ with primes to distinguish them from the basepoints of $K$. To avoid cumbersome computations, take $k=2$ (this is completely representative of the general situation). Write $\mathrm{Hen}(K \blacktriangleleft L_b) =Z_0 \otimes Z_1 \otimes Z_2 \otimes Z'_{1} \otimes \ldots \otimes Z'_{l} \in \mathcal{L}_{g,n}(H) \otimes H^{\otimes (2+l)}$, with implicit summation. Let $a_{i(1)} \otimes b_{i(1)}, \ldots, a_{i(8)} \otimes b_{i(8)}$ be $8 = 4k$ copies of $R = a_i \otimes b_i$ (one for each crossing in the diagram above), then
\begin{align*}
\mathrm{Hen}(K \blacktriangleleft L^{\partial}_b) =&Z_0 \otimes gS(a_{i(4)})b_{i(5)}g^{-1}Z_1b_{i(6)}S^2(a_{i(3)}) \otimes gS(a_{i(2)}) b_{i(7)}g^{-1}Z_2b_{i(8)}S^2(a_{i(1)}) \\
&\otimes gb_{i(1)}b_{i(2)}b_{i(3)}b_{i(4)}S(a_{i(5)})a_{i(6)}S(a_{i(7)})a_{i(8)}g^{-1}Z'_{1} \otimes Z_{2}' \otimes \ldots \otimes Z'_{l}\\
=&Z_0 \otimes S^2\big(a_{i(4)} b_{i(5)}\big)Z_1S\big(a_{i(3)}b_{i(6)}\big) \otimes S^2\big(a_{i(2)} b_{i(7)})Z_2 S\big(a_{i(1)}b_{i(8)}\big) \\
&\otimes gS^{-1}\big(b_{i(4)}b_{i(3)}b_{i(2)}b_{i(1)}\big)S\big(a_{i(8)} a_{i(7)} a_{i(6)} a_{i(5)}\big) g^{-1}Z'_{1} \otimes Z_{2}' \otimes \ldots \otimes Z'_{l}\\
=&Z_0 \otimes S^2\big(a_i^{(1)} b_j^{(1)}\big)Z_1S\big(a_i^{(2)}b_j^{(2)}\big) \otimes S^2\big(a_i^{(3)} b_j^{(3)})Z_2 S\big(a_i^{(4)}b_j^{(4)}\big) \\
&\otimes gS^{-1}(b_i)S(a_j) g^{-1}Z'_{1} \otimes Z_{2}' \otimes \ldots \otimes Z'_{l} 
\end{align*}
For the second equality we used the properties of $S$, the formula $(S\otimes S)(R)=R$ and \eqref{pivot}, and for the third equality we used an iteration of the formulas $(\Delta \otimes \mathrm{id})(R)=R_{13}R_{23}$, $(\mathrm{id} \otimes \Delta)(R)=R_{13}R_{12}$. Hence, denoting by $\varphi_1, \ldots, \varphi_l$ the colors of the components of $L, L^{\partial}$, we have:
\begin{align*}
x \triangleleft W(L^{\partial}) =& W^f(K) \triangleleft W^{\varphi_1 \otimes \ldots \otimes \varphi_l}(L^{\partial}_b)  = W^{f \otimes \varphi_1 \otimes \ldots \otimes \varphi_l}(K \blacktriangleleft L^{\partial}_b)\\
=& Z_0 \, f\bigg(S^2\big(a_i^{(1)} b_j^{(1)}\big)Z_1S\big(a_i^{(2)}b_j^{(2)}\big) \otimes S^2\big(a_i^{(3)} b_j^{(3)})Z_2 S\big(a_i^{(4)}b_j^{(4)}\big)\bigg) \\
& \varphi_1\big(gS^{-1}(b_i)S(a_j) g^{-1}Z'_{1}\big) \varphi_2(Z_{2}') \ldots \varphi_l(Z'_{l})\\
=& Z_0 \, f\big(Z_1 \otimes Z_2\big) \varphi_1(Z'_{1}) \ldots \varphi_l(Z'_{l})\\
=& W^{f \otimes \varphi_1 \otimes \ldots \otimes \varphi_l}(K \blacktriangleleft L_b) = W^f(K) \triangleleft W^{\varphi_1 \otimes \ldots \otimes \varphi_l}(L_b) = x \triangleleft W(L).
\end{align*}
For the second and sixth equalities we used Theorem \ref{theoremRepVacuum} and for the fourth equality we used that $f \in \mathrm{Inv}_k(H)$ and that $(\varepsilon \otimes \mathrm{id})(R) = (\mathrm{id} \otimes \varepsilon)(R) = 1$.
\end{proof}
\noindent This proposition shows that we have a representation of $\mathbb{C}\mathscr{L}(\Sigma_{g,0})$ (and not just of $\mathbb{C}\mathscr{L}(\Sigma_{g,0} \!\setminus\! D)$) on $\mathcal{L}_{0,g}^{\mathrm{inv}}(H)$. Hence in some sense, the disc $D$ is glued back \textit{via} this process and we can deal with the closed surface $\Sigma_{g,0}$.

\smallskip

\indent Let us specialize these results to $H = U_{q^2} = U_{q^2}(\mathfrak{sl}_2)$, where they descend to the skein algebra (quotient of $\mathbb{C}\mathscr{L}(\Sigma_{g,0})$ by skein relations). As already said (before Corollary \ref{coroInvariantsStated}), in that case we can without loss of generality color all the link components by $\chi^+_2$ and we have two isomorphisms 
\begin{equation}\label{identificationInvariantsSkein}
 W_{g,0} : \mathcal{S}_q(\Sigma_{g,0}^{\mathrm{o}}) \overset{\sim}{\to} \mathcal{L}_{g,0}^{\mathrm{inv}}(U_{q^2}), \:\:\:\:\:W_{0,g} : \mathcal{S}_q(\Sigma_{0,g}^{\mathrm{o}}) \overset{\sim}{\to} \mathcal{L}_{0,g}^{\mathrm{inv}}(U_{q^2}). 
\end{equation}
Moreover, there is a natural representation of $\mathcal{S}_q(\Sigma_{g,0})$ on $\mathcal{S}_q(\Sigma_{0,g}^{\mathrm{o}}) = \mathcal{S}_q(\Sigma_{0,g+1})$ (skein module of a handlebody) obtained by stacking, we still denote it by $\blacktriangleleft$.

\begin{corollary}\label{corollaireRepSkein}
Under the identifications \eqref{identificationInvariantsSkein}, the representation $\triangleleft$ of $\mathcal{L}_{g,0}^{\mathrm{inv}}(U_{q^2})$ on $\mathcal{L}_{0,g}^{\mathrm{inv}}(U_{q^2})$ is the stacking representation $\blacktriangleleft$ of $\mathcal{S}_q(\Sigma_{g,0})$ on $\mathcal{S}_q(\Sigma_{0,g}^{\mathrm{o}})$. In other words:
\[ W_{0,g}(L_1) \triangleleft W_{g,0}(L_2) = W_{0,g}(L_1 \blacktriangleleft L_2). \]
\end{corollary}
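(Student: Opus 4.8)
The strategy is to recognize the statement as a direct translation of Theorem~\ref{theoremRepVacuum} into the language of skein algebras, via the dictionary provided by Remark~\ref{remarqueAvecCaracteres} and Proposition~\ref{propRepSurfaceFermee}. Recall that for $H = U_{q^2}$ the isomorphisms in \eqref{identificationInvariantsSkein} are the generalized Wilson loop maps with every link component coloured by the character $\chi^+_2 = \chi^{V_2}$, and that by Remark~\ref{remarqueAvecCaracteres}, for a framed link $L$ without boundary points with every component coloured by $\chi^+_2$ one has $W(L) = \mathrm{hol}(L_{V_2})$, where $L_{V_2}$ denotes $L$ with every component coloured by the fundamental module $V_2$ and $\mathrm{hol}$ is the holonomy of Definition~\ref{defHol} (so that its value lies in $\mathcal{L}_{g,n}(U_{q^2})$). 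In particular, for link representatives $L_1$ of a class in $\mathcal{S}_q(\Sigma_{0,g}^{\mathrm{o}})$ and $L_2$ of a class in $\mathcal{S}_q(\Sigma_{g,0}^{\mathrm{o}})$ we have $W_{0,g}(L_1) = \mathrm{hol}_{0,g}\big((L_1)_{V_2}\big)$ and $W_{g,0}(L_2) = \mathrm{hol}_{g,0}\big((L_2)_{V_2}\big)$.

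First I would settle the identity over the punctured surface $\Sigma_{g,0}^{\mathrm{o}}$. Stacking $L_2 \subset \Sigma_{g,0}^{\mathrm{o}} \times [0,1]$ atop $L_1 \subset \Sigma_{0,g}^{\mathrm{o}} \times [0,1]$ produces the boundary\nobreakdash-pointless $\partial\boldsymbol{\Sigma}$-tangle $L_1 \blacktriangleleft L_2 \subset \Sigma_{0,g}^{\mathrm{o}} \times [0,1]$ whose components, together with their colourings, form the disjoint union of those of $L_1$ and $L_2$; hence $(L_1)_{V_2} \blacktriangleleft (L_2)_{V_2} = (L_1 \blacktriangleleft L_2)_{V_2}$ as $V_2$-coloured tangles, and this is precisely the stack product on $\mathbb{C}\mathscr{T}$ appearing in Theorem~\ref{theoremRepVacuum}. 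Since all these tangles have no boundary points, the pairing $\triangleleft$ of Theorem~\ref{theoremRepVacuum} on $\mathcal{L}_{0,g}(H) \otimes V$ reduces, with $V = W = \mathbb{C}$, to the plain vacuum action of $\mathcal{L}_{g,0}(H)$ on $\mathcal{L}_{0,g}(H)$. Applying Theorem~\ref{theoremRepVacuum} with $\mathbf{S} = (L_1)_{V_2}$ and $\mathbf{T} = (L_2)_{V_2}$ and reading the two ends back through Remark~\ref{remarqueAvecCaracteres} then gives
\[ W_{0,g}(L_1) \triangleleft W_{g,0}(L_2) = \mathrm{hol}_{0,g}\big((L_1)_{V_2}\big) \triangleleft \mathrm{hol}_{g,0}\big((L_2)_{V_2}\big) = \mathrm{hol}_{0,g}\big((L_1 \blacktriangleleft L_2)_{V_2}\big) = W_{0,g}(L_1 \blacktriangleleft L_2). \]
Both sides are well defined on $\mathcal{S}_q(\Sigma_{0,g}^{\mathrm{o}})$ and $\mathcal{S}_q(\Sigma_{g,0}^{\mathrm{o}})$ rather than merely on spaces of link representatives, since the $\chi^+_2$-coloured Wilson loop map descends to the skein algebra --- this is exactly what makes $W_{g,0}$ and $W_{0,g}$ isomorphisms, by \cite[Th.~10]{BFKB} and \cite[Th.~8.4]{BaR} --- and $\blacktriangleleft$ is compatible with the Kauffman and boundary relations.

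Finally, to reach the closed surface $\Sigma_{g,0}$ as stated, I would invoke Proposition~\ref{propRepSurfaceFermee}: it shows that $W_{0,g}(L_1) \triangleleft W_{g,0}(\widetilde L_2)$ depends only on the image of $\widetilde L_2$ under $\pi : \mathbb{C}\mathscr{L}(\Sigma_{g,0}^{\mathrm{o}}) \to \mathbb{C}\mathscr{L}(\Sigma_{g,0})$, so the action descends along $\pi$ and, after quotienting by skein relations, to a representation of $\mathcal{S}_q(\Sigma_{g,0})$ on $\mathcal{S}_q(\Sigma_{0,g}^{\mathrm{o}}) = \mathcal{S}_q(\Sigma_{0,g+1})$; the punctured-surface identity above then identifies this representation with the stacking one $\blacktriangleleft$. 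I expect no genuine difficulty in the argument: it is entirely a matter of unwinding definitions. The one point deserving care --- and the closest thing to an obstacle --- is the bookkeeping above: checking that the description of $W$ through based links coloured by symmetric linear forms really matches, for the character $\chi^+_2$, the holonomy of the corresponding unbased $V_2$-coloured link, and that the handlebody identification $\mathcal{S}_q(\Sigma_{0,g}^{\mathrm{o}}) = \mathcal{S}_q(\Sigma_{0,g+1})$ is compatible with both descriptions of $\blacktriangleleft$; but all of this is already set up in Sections~\ref{sectionHolonomyWilson} and~\ref{sectionVacuum}.
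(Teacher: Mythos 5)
Your proposal is correct and follows essentially the same route the paper intends: the corollary is obtained by specializing Theorem \ref{theoremRepVacuum} to $V_2$-coloured closed links, translating both sides through Remark \ref{remarqueAvecCaracteres} (so that $W = \mathrm{hol}^{V_2,\ldots,V_2}$ on $\chi^+_2$-coloured links), and invoking Proposition \ref{propRepSurfaceFermee} together with the isomorphisms \eqref{identificationInvariantsSkein} to descend to the closed surface and the skein algebras. The paper treats this as immediate and gives no separate proof, so your careful bookkeeping is exactly the unwinding it leaves implicit.
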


\section{A remark on roots of unity}\label{sectionRepSkeinAlg}
\indent Corollary \ref{corollaireRepSkein} is a bit disappointing because it tells us that the representation of $\mathcal{S}_q(\Sigma_{g,0})$ produced by combinatorial quantization is just equivalent to the obvious representation on the skein module of a handlebody. This changes dramatically if we replace the formal variable $q$ by a root of unity $\epsilon$ and we will now describe a construction producing much less obvious representations of $\mathcal{S}_{\epsilon}(\Sigma_{g,0})$, which are moreover finite dimensional.

\smallskip

\indent Let $\epsilon = e^{i\pi/2p}$ be a $4p$-th root of unity ($p \geq 2$), and let $U_{\epsilon^2} $ be the specialization of $U_{q^2}(\mathfrak{sl}_2)$ to $\epsilon^2 = e^{i\pi/p}$. Then we can consider the restricted quantum group of $\mathfrak{sl}_2$, denoted by $\overline{U}_{\! \epsilon^2} = \overline{U}_{\! \epsilon^2}(\mathfrak{sl}_2)$, which is the quotient of $U_{\epsilon^2}$ by
\[ E^{p} = F^{p} = 0, \:\:\:\:\: K^{2p}=1. \]

\indent The algebra $\mathcal{L}_{g,n}(U_{\epsilon^2})$ does not exist since $U_{\epsilon^2}$ is not braided. Nevertheless, we can consider the specialization $\mathcal{L}_{g,n}(U_{q^2})_{\epsilon^2}$ of $\mathcal{L}_{g,n}(U_{q^2})$ at $q^2 = \epsilon^2$ (this specialization is studied in great detail in \cite{BaR} for $g=0$). Moreover, it is possible to define $\mathcal{L}_{g,n}(\overline{U}_{\!\epsilon^2})$ (see \cite[\S 6]{Fai18} for full details), and it turns out it is the quotient of $\mathcal{L}_{g,n}(U_{q^2})_{\epsilon^2}$ by
\[ (X(i)^-_+)^{p} = (X(i)^+_-)^{p} = 0, \:\:\:\:\: (X(i)^+_+)^{2p} = 1 \]
where as usual $X(i)$ is $A(i)$ or $B(i)$ if $1 \leq i \leq g$ and is $M(i)$ if $g+1 \leq i \leq g+n$ (recall Lemma \ref{GensRelsLgnUq2}, from which we take back the notations). This together with the relations of Lemma \ref{GensRelsLgnUq2} gives a presentation by generators and relations of $\mathcal{L}_{g,n}(\overline{U}_{\!\epsilon^2})$. The dimension of $\mathcal{L}_{g,n}(\overline{U}_{\!\epsilon^2})$ is $(2p^3)^{2g+n}$.

\smallskip

\indent Let $W_{\epsilon} : \mathcal{S}_{\epsilon}(\Sigma_{g,n}^{\mathrm{o}}) \to \mathcal{L}_{g,n}^{\mathrm{inv}}(U_{q^2})_{\epsilon^2}$ be the specialization of the Wilson loop map (we recall that all the strands in $\mathcal{S}_{\epsilon}(\Sigma_{g,n}^{\mathrm{o}})$ are implicitly colored by the character $\chi^+_2$ of the fundamental representation $V_2$) and let $\overline{W}$ be the Wilson loop map for $H = \overline{U}_{\!\epsilon^2}$ (with source the $\mathrm{SLF}(\overline{U}_{\! \epsilon^2})$-colored links and with values in $\mathcal{L}_{g,n}^{\mathrm{inv}}(\overline{U}_{\!\epsilon^2})$, according to Definition \ref{defWilsonLoopMap}); note that we choose $g = K^{p+1}$ for the pivotal element in $\overline{U}_{\!\epsilon^2}$. The representation $V_2$ descends to $\overline{U}_{\!\epsilon^2}$, so that $\chi^+_2 \in \mathrm{SLF}(\overline{U}_{\! \epsilon^2})$.  If we restrict $\overline{W}$ to $\mathcal{S}_{\epsilon}(\Sigma_{g,n}^{\mathrm{o}})$ (\textit{i.e.} we restrict $\overline{W}$ to the links colored by $\chi^+_2$), then $\overline{W}$ factors through $W_{\epsilon}$:
\[
\xymatrix{
\mathcal{S}_{\epsilon}(\Sigma_{g,n}^{\mathrm{o}})  \ar[r]^{\overline{W}} \ar[d]_{W_{\epsilon}} & \mathcal{L}_{g,n}^{\mathrm{inv}}(\overline{U}_{\!\epsilon^2}) \\  
\mathcal{L}_{g,n}^{\mathrm{inv}}(U_{q^2})_{\epsilon^2} \ar[ru]_{\mathrm{pr}} &
}
\]
where $\mathrm{pr}$ is the canonical projection. The restriction of $\mathrm{pr}$ to $\mathcal{L}_{g,n}^{\mathrm{inv}}(U_{q^2})_{\epsilon^2}$ (specialization of $\mathcal{L}_{g,n}^{\mathrm{inv}}(U_{q^2})$ at ${\epsilon^2}$) takes values in $\mathcal{L}_{g,n}^{\mathrm{inv}}(\overline{U}_{\!\epsilon^2})$ (but note that this restriction is not surjective). By the results of the previous section, we obtain an action of $L \in \mathcal{S}_{\epsilon}(\Sigma_{g,0})$ on $x \in \mathcal{L}_{0,g}^{\mathrm{inv}}(\overline{U}_{\!\epsilon^2})$ by
\[ x \triangleleft \overline{W}(L) \]
and this produces a finite dimensional right representation of $\mathcal{S}_{\epsilon}(\Sigma_{g,0})$.

\smallskip

\indent In the sequel we describe explicitly this representation in the case of the torus $\Sigma_{1,0}$. Consider the following curves in $\Sigma_{0,1}^{\mathrm{o}} \times \{0\} \subset \Sigma_{0,1}^{\mathrm{o}} \times [0,1]$ and in $\Sigma_{1,0} \times \{0\} \subset \Sigma_{1,0} \times [0,1]$ respectively:
\begin{center}
\begingroup%
  \makeatletter%
  \providecommand\color[2][]{%
    \errmessage{(Inkscape) Color is used for the text in Inkscape, but the package 'color.sty' is not loaded}%
    \renewcommand\color[2][]{}%
  }%
  \providecommand\transparent[1]{%
    \errmessage{(Inkscape) Transparency is used (non-zero) for the text in Inkscape, but the package 'transparent.sty' is not loaded}%
    \renewcommand\transparent[1]{}%
  }%
  \providecommand\rotatebox[2]{#2}%
  \newcommand*\fsize{\dimexpr\f@size pt\relax}%
  \newcommand*\lineheight[1]{\fontsize{\fsize}{#1\fsize}\selectfont}%
  \ifx\svgwidth\undefined%
    \setlength{\unitlength}{229.77916697bp}%
    \ifx\svgscale\undefined%
      \relax%
    \else%
      \setlength{\unitlength}{\unitlength * \real{\svgscale}}%
    \fi%
  \else%
    \setlength{\unitlength}{\svgwidth}%
  \fi%
  \global\let\svgwidth\undefined%
  \global\let\svgscale\undefined%
  \makeatother%
  \begin{picture}(1,0.29993334)%
    \lineheight{1}%
    \setlength\tabcolsep{0pt}%
    \put(0,0){\includegraphics[width=\unitlength,page=1]{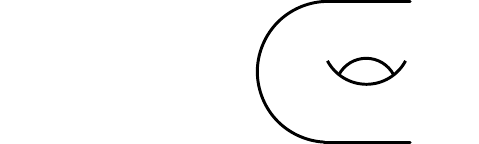}}%
    \put(0.88523485,0.22751224){\color[rgb]{0,0,0}\makebox(0,0)[lt]{\lineheight{1.25}\smash{\begin{tabular}[t]{l}$b$\end{tabular}}}}%
    \put(0,0){\includegraphics[width=\unitlength,page=2]{anneauEtTore.pdf}}%
    \put(0.70521325,0.08458243){\color[rgb]{0,0,0}\makebox(0,0)[lt]{\lineheight{1.25}\smash{\begin{tabular}[t]{l}$a$\end{tabular}}}}%
    \put(0,0){\includegraphics[width=\unitlength,page=3]{anneauEtTore.pdf}}%
    \put(0.13376461,0.24841372){\color[rgb]{0,0,0}\makebox(0,0)[lt]{\lineheight{1.25}\smash{\begin{tabular}[t]{l}$m$\end{tabular}}}}%
    \put(0,0){\includegraphics[width=\unitlength,page=4]{anneauEtTore.pdf}}%
  \end{picture}%
\endgroup%

\end{center}
\noindent Since $\overline{U}_{\! \epsilon^2}$ is factorizable, we know by \cite[Lem. 3.9]{Fai18} that we have an isomorphism of algebras
\begin{equation}\label{isoInvSLF}
\mathrm{SLF}(\overline{U}_{\! \epsilon^2}) \overset{\sim}{\to} \mathcal{L}_{0,1}^{\mathrm{inv}}(\overline{U}_{\! \epsilon^2}), \:\:\:\:\:\varphi \mapsto \overline{W}^{\varphi}(m).
\end{equation}
\noindent where $\overline{W}^{\varphi}(m)$ is the value of $\overline{W}$ on $m$ colored by $\varphi$. Recall that $\mathrm{SLF}(\overline{U}_{\! \epsilon^2})$ is a commutative algebra, for the usual product of linear forms $\varphi\psi = (\varphi \otimes \psi) \circ \Delta$ (this is true for any finite dimensional factorizable ribbon Hopf algebra $H$; actually for such $H$ one has $\mathrm{SLF}(H) \cong \mathcal{Z}(H) \cong \mathcal{L}_{0,1}^{\mathrm{inv}}(H)$).

\smallskip

\indent According to \cite{BP}, $\mathcal{S}_{\epsilon}(\Sigma_{1,0})$ is generated by the links $a$ and $b$. Hence, to describe the representation it suffices to compute the action of $\overline{W}(a) = \overline{W}^{\chi^+_2}(a)$ and $\overline{W}(b) = \overline{W}^{\chi^+_2}(b)$ on $\overline{W}^{\varphi}(m)$. Recall that the Casimir element is $\displaystyle C = FE + \frac{\epsilon^2 K + \epsilon^{-2}K^{-1}}{(\epsilon^2-\epsilon^{-2})^2} \in \mathcal{Z}(\overline{U}_{\!\epsilon^2})$.
\begin{lemma}
For all $\varphi \in \mathrm{SLF}(H)$, it holds
\[ \overline{W}^{\varphi}(m) \triangleleft \overline{W}(a) = \overline{W}^{\varphi(?c)}(m), \:\:\:\:\:\:\: \overline{W}^{\varphi}(m) \triangleleft \overline{W}(b) =  \overline{W}^{\varphi \chi^+_2}(m) \]
where $c = (\epsilon^2-\epsilon^{-2})^2C$, $\varphi(?c)$ is defined by $h \mapsto \varphi(hc)$ and $\varphi \chi^+_2$ is the product of the linear forms $\varphi$ and $\chi^+_2$.
\end{lemma}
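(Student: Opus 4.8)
The plan is to reduce both identities to computations with the generating matrices $M=\overset{V_2}{M}(1)$ of $\mathcal{L}_{0,1}(\overline{U}_{\!\epsilon^2})$ and $B=\overset{V_2}{B}(1)$, $A=\overset{V_2}{A}(1)$ of $\mathcal{L}_{1,0}(\overline{U}_{\!\epsilon^2})$, using Theorem~\ref{theoremRepVacuum} and the explicit formulas of Lemma~\ref{formulesVacuum} in the case $g=1$. First I would record that $a,b$ are (isotopic to) the loops $a_1,b_1$ generating $\pi_1(\Sigma_{1,0}^{\mathrm{o}})$, so by Proposition~\ref{holonomySimpleLoop} and Remark~\ref{remarqueAvecCaracteres} one has $\overline{W}(a)=\mathrm{tr}_q(A)$ and $\overline{W}(b)=\mathrm{tr}_q(B)$; likewise $\overline{W}^\varphi(m)=W^\varphi(m_b)$ for a based loop $m_b$ with $[m_b]=m$, and by \eqref{elementUniversel}--\eqref{HenUniversal} its universal/Hennings form is $\overline{W}^\varphi(m)=\mathcal{M}_0\,\varphi(g\mathcal{M}_1)$, where $\mathcal{M}=R'R$ is the universal element of $\mathcal{L}_{0,1}$ (notation as in the proof of the lemma preceding \S\ref{Sectionhennings}).

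For the action of $\overline{W}(b)$: by the very definition of the vacuum representation, $(1\triangleleft x)\triangleleft \mathrm{tr}_q(B)=1\triangleleft\bigl(x\cdot\mathrm{tr}_q(B)\bigr)$, and since the vacuum isomorphism $\mathcal{L}_{0,1}\cong\mathbb{C}_{\mathrm{vac}}\otimes_{\mathcal{L}^A_{1,0}}\mathcal{L}_{1,0}$ uniformly replaces $\overset{I}{B}(1)$ by $\overset{I}{M}(1)$, this operator is simply right multiplication by $\mathrm{tr}_q(M)=\overline{W}^{\chi^+_2}(m)$. As \eqref{isoInvSLF} is an isomorphism of algebras $\mathrm{SLF}(\overline{U}_{\!\epsilon^2})\xrightarrow{\sim}\mathcal{L}^{\mathrm{inv}}_{0,1}(\overline{U}_{\!\epsilon^2})$, $\varphi\mapsto\overline{W}^\varphi(m)$, and the product on $\mathrm{SLF}$ is $\varphi\psi=(\varphi\otimes\psi)\circ\Delta$, we get $\overline{W}^\varphi(m)\triangleleft\overline{W}(b)=\overline{W}^\varphi(m)\,\overline{W}^{\chi^+_2}(m)=\overline{W}^{\varphi\chi^+_2}(m)$, which is the second identity. (Alternatively: by Theorem~\ref{theoremRepVacuum}, $\overline{W}^\varphi(m)\triangleleft\overline{W}(b)=W^{\varphi\otimes\chi^+_2}(m_b\blacktriangleleft b_b)$, and stacking $b_1$ onto $\Sigma_{0,1}^{\mathrm{o},\bullet}\times[0,1]$ yields a loop parallel to $m$, so one concludes via Proposition~\ref{propertiesWf}.2.)

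For the action of $\overline{W}(a)$: by Lemma~\ref{formulesVacuum} (the $A(j)$ formula with $g=1$), or equivalently by Theorem~\ref{theoremRepVacuum} together with $\overline{W}^\varphi(m)\triangleleft\overline{W}(a)=W^{\varphi\otimes\chi^+_2}(m_b\blacktriangleleft a_b)$, the operator $\mathrm{tr}_q(A)$ encircles the $M$-handle with a quantum-traced $V_2$-loop; that is, $m_b\blacktriangleleft a_b\subset\Sigma_{0,1}^{\mathrm{o},\bullet}\times[0,1]$ is $m$ together with a small $V_2$-loop linking the $m$-handle once. The two crossings of this $V_2$-loop with $m$ contribute, after tracing over $V_2$ with the pivot $g$, the central element $\mathfrak{c}_{V_2}:=(\mathrm{id}_{\overline{U}_{\!\epsilon^2}}\otimes\mathrm{tr}^{V_2}_q)(R_{21}R)\in\mathcal{Z}(\overline{U}_{\!\epsilon^2})$ (double braiding of the two strands, traced over one factor); being central, $\mathfrak{c}_{V_2}$ can be slid into the $H$-slot of the universal form, giving $\overline{W}^\varphi(m)\triangleleft\overline{W}(a)=\mathcal{M}_0\,\varphi(g\mathcal{M}_1\mathfrak{c}_{V_2})=\overline{W}^{\varphi(?\mathfrak{c}_{V_2})}(m)$, with $\varphi(?\mathfrak{c}_{V_2})\in\mathrm{SLF}$ since $\mathfrak{c}_{V_2}$ is central. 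It then remains to evaluate $\mathfrak{c}_{V_2}$: a direct computation with the explicit $R$-matrix \eqref{RMatriceV2} (set $q=\epsilon$) and $\overset{V_2}{g}=\mathrm{diag}(-\epsilon^2,-\epsilon^{-2})$ gives $\mathfrak{c}_{V_2}=(\epsilon^2-\epsilon^{-2})^2FE+\epsilon^2K+\epsilon^{-2}K^{-1}=(\epsilon^2-\epsilon^{-2})^2C=c$, finishing the proof.

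The $\overline{W}(b)$ case and the reductions are routine once set up; the main obstacle is the $\overline{W}(a)$ case, where one must carefully match the diagrammatic output of Lemma~\ref{formulesVacuum} with the Hennings/universal expression $\mathcal{M}_0\varphi(g\mathcal{M}_1)$ — in particular confirming that encircling the $M$-handle by a $V_2$-loop amounts exactly to the insertion of a central element in the $H$-slot and nothing more — and then carry out the $R$-matrix computation of $\mathfrak{c}_{V_2}$ with all conventions tracked correctly (the normalization of $R$ in \eqref{RMatriceV2}, the non-standard pivot $g=K^{p+1}$, and the root-of-unity relation $\epsilon^{2p}=-1$ which produces the sign in $\overset{V_2}{g}$).
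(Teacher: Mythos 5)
Your proposal is correct and follows essentially the same route as the paper: both reduce via Lemma \ref{formulesVacuum} to showing that encircling the $M$-handle multiplies $\overset{I}{M}$ on the right by the representation of the central element $(\chi^+_2\otimes\mathrm{id})\bigl((g\otimes 1)RR'\bigr)=c$, and both obtain the $b$-identity from the definition of $\triangleleft$ together with the algebra isomorphism \eqref{isoInvSLF}. The only small imprecision is that the central element $\mathfrak{c}_{V_2}=(\mathrm{id}\otimes\mathrm{tr}_q^{V_2})(R_{21}R)$ cannot be computed from the fully evaluated matrix \eqref{RMatriceV2} alone --- one needs the partial evaluation $(\mathrm{id}\otimes\overset{V_2}{T})(R)$ of the universal $R$-matrix, which is why the paper invokes the explicit formula for $R$ from [FGST06] at this step.
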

\begin{proof}
Thanks to Lemma \ref{formulesVacuum}, we have
\begin{center}
\begingroup%
  \makeatletter%
  \providecommand\color[2][]{%
    \errmessage{(Inkscape) Color is used for the text in Inkscape, but the package 'color.sty' is not loaded}%
    \renewcommand\color[2][]{}%
  }%
  \providecommand\transparent[1]{%
    \errmessage{(Inkscape) Transparency is used (non-zero) for the text in Inkscape, but the package 'transparent.sty' is not loaded}%
    \renewcommand\transparent[1]{}%
  }%
  \providecommand\rotatebox[2]{#2}%
  \newcommand*\fsize{\dimexpr\f@size pt\relax}%
  \newcommand*\lineheight[1]{\fontsize{\fsize}{#1\fsize}\selectfont}%
  \ifx\svgwidth\undefined%
    \setlength{\unitlength}{395.52448061bp}%
    \ifx\svgscale\undefined%
      \relax%
    \else%
      \setlength{\unitlength}{\unitlength * \real{\svgscale}}%
    \fi%
  \else%
    \setlength{\unitlength}{\svgwidth}%
  \fi%
  \global\let\svgwidth\undefined%
  \global\let\svgscale\undefined%
  \makeatother%
  \begin{picture}(1,0.18069025)%
    \lineheight{1}%
    \setlength\tabcolsep{0pt}%
    \put(0,0){\includegraphics[width=\unitlength,page=1]{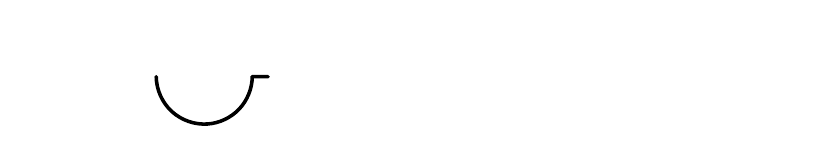}}%
    \put(0.28918858,0.01046593){\color[rgb]{0,0,0}\makebox(0,0)[lt]{\lineheight{1.25}\smash{\begin{tabular}[t]{l}$\overset{I}{M}$\end{tabular}}}}%
    \put(0,0){\includegraphics[width=\unitlength,page=2]{actionWA.pdf}}%
    \put(0.33518498,0.07960646){\color[rgb]{0,0,0}\makebox(0,0)[lt]{\lineheight{1.25}\smash{\begin{tabular}[t]{l}$\triangleleft$\end{tabular}}}}%
    \put(0.52590949,0.07949915){\color[rgb]{0,0,0}\makebox(0,0)[lt]{\lineheight{1.25}\smash{\begin{tabular}[t]{l}$=$\end{tabular}}}}%
    \put(0,0){\includegraphics[width=\unitlength,page=3]{actionWA.pdf}}%
    \put(0.6739328,0.00276896){\color[rgb]{0,0,0}\makebox(0,0)[lt]{\lineheight{1.25}\smash{\begin{tabular}[t]{l}$\overset{I}{M}$\end{tabular}}}}%
    \put(0,0){\includegraphics[width=\unitlength,page=4]{actionWA.pdf}}%
    \put(0.49305566,0.01285166){\color[rgb]{0,0,0}\makebox(0,0)[lt]{\lineheight{1.25}\smash{\begin{tabular}[t]{l}$\overset{V_2}{A}$\end{tabular}}}}%
    \put(0,0){\includegraphics[width=\unitlength,page=5]{actionWA.pdf}}%
    \put(0.18902978,0.10540729){\color[rgb]{0,0,0}\makebox(0,0)[lt]{\lineheight{1.25}\smash{\begin{tabular}[t]{l}$I$\end{tabular}}}}%
    \put(0.37526737,0.11075699){\color[rgb]{0,0,0}\makebox(0,0)[lt]{\lineheight{1.25}\smash{\begin{tabular}[t]{l}$V_2$\end{tabular}}}}%
    \put(0,0){\includegraphics[width=\unitlength,page=6]{actionWA.pdf}}%
    \put(0.56492479,0.15828071){\color[rgb]{0,0,0}\makebox(0,0)[lt]{\lineheight{1.25}\smash{\begin{tabular}[t]{l}$I$\end{tabular}}}}%
    \put(0.7025924,0.12251823){\color[rgb]{0,0,0}\makebox(0,0)[lt]{\lineheight{1.25}\smash{\begin{tabular}[t]{l}$V_2$\end{tabular}}}}%
    \put(0,0){\includegraphics[width=\unitlength,page=7]{actionWA.pdf}}%
    \put(-0.00054728,0.08171326){\color[rgb]{0,0,0}\makebox(0,0)[lt]{\lineheight{1.25}\smash{\begin{tabular}[t]{l}$\overset{I}{M} \triangleleft \overline{W}(a) =$\end{tabular}}}}%
    \put(0.72710416,0.08233531){\color[rgb]{0,0,0}\makebox(0,0)[lt]{\lineheight{1.25}\smash{\begin{tabular}[t]{l}$= \overset{I}{M} \, (\chi^+_2 \otimes \overset{I}{T})\big((g \otimes 1)RR'\big)$\end{tabular}}}}%
  \end{picture}%
\endgroup%

\end{center}
where the last equality is obtained by a straightforward computation. Using the formula for $R$ \cite[\S 4]{FGST}, we get that $(\chi^+_2 \otimes \mathrm{id})\big( (g \otimes 1) RR' \big) = c$, and hence $\overset{I}{M} \triangleleft \overline{W}(a) = \overset{I}{M}\overset{I}{c}$. Thus if we let $\varphi = \sum_I \mathrm{tr}\big( \Lambda_I \overset{I}{T} \big)$,
\[ \overline{W}^{\varphi}(m) \triangleleft \overline{W}(a) = \sum_I \mathrm{tr}\big( \Lambda_I \overset{I}{g} \overset{I}{M} \triangleleft \overline{W}(a) \big) = \sum_I \mathrm{tr}\big( \Lambda_I \overset{I}{g} \overset{I}{M}\overset{I}{c} \big) = \overline{W}^{\varphi(?c)}(m). \]
For $b$, it is clear by definition of $\triangleleft$ that $\overset{I}{M} \triangleleft \overline{W}(b) = \overset{I}{M} \overline{W}^{\chi^+_2\!}(m)$, which implies $\overline{W}^{\varphi}(m) \triangleleft \overline{W}(b) = \overline{W}^{\varphi}(m) \overline{W}^{\chi^+_2\!}(m)$. But due to \eqref{isoInvSLF} it holds $\overline{W}^{\varphi}(m) \overline{W}^{\chi^+_2\!}(m) = \overline{W}^{\varphi \chi^+_2\!}(m)$.
\end{proof}

\indent To simplify notations, we identify $\mathrm{SLF}(\overline{U}_{\! \epsilon^2})$ and $\mathcal{L}_{0,1}^{\mathrm{inv}}(\overline{U}_{\! \epsilon^2})$ through the isomorphism \eqref{isoInvSLF}, so that we obtain a representation of $\mathcal{S}_{\epsilon}(\Sigma_{1,0})$ on $\mathrm{SLF}(\overline{U}_{\! \epsilon^2})$. Recall that $\dim\big(\mathrm{SLF}(\overline{U}_{\! \epsilon^2})\big) = 3p-1$, and we have the GTA basis (\cite{GT}, \cite{arike}, here we use notations from \cite{F}):
\[ \chi^+_1, \: \chi^-_1, \: \ldots, \: \chi^+_p, \: \chi^-_p, \: G_1, \: \ldots, \: G_{p-1}. \]
The $\chi^{\pm}_s$ are the characters of the simple modules $\mathcal{X}^{\pm}(s)$ (in particular $\mathcal{X}^+(2) = V_2$) and the $G_s$ are certain linear combinations of matrix coefficients of projective modules. It is easy to compute the action of $a$ ($\varphi \triangleleft \overline{W}(a) = \varphi(?c)$) on these elements:
\[ \chi^{\alpha}_s \triangleleft \overline{W}(a) = -\alpha (\epsilon^{2s} + \epsilon^{-2s})\chi^{\alpha}_s, \:\:\:\:\:\: G_s \triangleleft \overline{W}(a) = -(\epsilon^{2s} + \epsilon^{-2s})G_s - (\epsilon^2 - \epsilon^{-2})^2 (\chi^+_s + \chi^-_{p-s})  \]
where $\alpha \in \{\pm\}$. The action of $b$ ($\varphi \triangleleft \overline{W}(b) = \varphi \chi^+_2$) is given by the multiplication rules in the GTA basis (\cite{GT}, also see \cite{F} for an elementary proof):
\[ \begin{array}{lll}
\displaystyle \chi^{\alpha}_1 \triangleleft \overline{W}(b) = \chi^{\alpha}_2, & \displaystyle \chi^{\alpha}_s \triangleleft \overline{W}(b) = \chi^{\alpha}_{s-1} + \chi^{\alpha}_{s+1}, & \displaystyle \chi^{\alpha}_p \triangleleft \overline{W}(b) = 2\chi^{\alpha}_{p-1} + 2\chi^{-\alpha}_{1},\\[5pt]
\displaystyle G_1 \triangleleft \overline{W}(b) = [2]G_2, & \displaystyle G_s \triangleleft \overline{W}(b) = \frac{[s-1]}{[s]}G_{s-1} + \frac{[s+1]}{[s]}G_{s+1}, & \displaystyle G_{p-1} \triangleleft \overline{W}(b) = [2]G_{p-2}.
\end{array}\]
where $\displaystyle [k] = \frac{\epsilon^{2k} - \epsilon^{-2k}}{\epsilon^2 - \epsilon^{-2}}$.

\smallskip

Let 
\[ \mathcal{V}_1 = \mathrm{vect}\big( \chi^+_s + \chi^-_{p-s}, \chi^+_p, \chi^-_p \big)_{1 \leq s \leq p-1}, \:\:\:\: \mathcal{V}_2 = \mathrm{vect}\big( \chi^+_s \big)_{1 \leq s \leq p-1}, \:\:\:\: \mathcal{V}_3 = \mathrm{vect}\big( G_s \big)_{1 \leq s \leq p-1} \]
so that $\mathrm{SLF}(\overline{U}_{\! \epsilon^2}) = \mathcal{V}_1 \oplus \mathcal{V}_2 \oplus \mathcal{V}_3$. Note that $\mathcal{V}_1$ is the subspace generated by the characters of the projective modules. From the formulas above, we see that the structure of $\mathrm{SLF}(\overline{U}_{\! \epsilon^2})$ under the action of $\mathcal{S}_{\epsilon}(\Sigma_{1,0})$ has the following shape:
\[
\xymatrix{
\mathcal{V}_2  \ar[rd]_{\overline{W}(b)} & &  \mathcal{V}_3 \ar[ld]^{\overline{W}(a), \, \overline{W}(b)}\\  
 &\mathcal{V}_1 &
}
\]
By this diagram we mean that $\mathcal{V}_1$ is a submodule, that $\mathcal{V}_2 \triangleleft W(a) \subset \mathcal{V}_2$, $\mathcal{V}_2 \triangleleft W(b) \subset \mathcal{V}_1 + \mathcal{V}_2$ \textit{etc}. Let
\[ J_1 = \mathcal{V}_1, \:\:\:\:\: J_2 = \mathcal{V}_1 + \mathcal{V}_2, \:\:\:\:\: J_3 = \mathcal{V}_1 + \mathcal{V}_2 + \mathcal{V}_3. \]

\begin{proposition}\label{structureRepSkein}
$J_1 \subset J_2 \subset J_3$ is a composition series for $\mathrm{SLF}(\overline{U}_{\! \epsilon^2})$ under the action of $\mathcal{S}_{\epsilon}(\Sigma_{1,0})$. Moreover, the composition factor $J_2/J_1$ is isomorphic to $\mathcal{S}_{\epsilon}^{\mathrm{red}}(\Sigma_{0,1}^{\mathrm{o}})$, the reduced skein module of the annulus (which is a $\mathcal{S}_{\epsilon}(\Sigma_{1,0})$-module by stacking and reducing).
\end{proposition}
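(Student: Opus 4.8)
The plan is to reduce everything to the explicit formulas for $\overline{W}(a)$ and $\overline{W}(b)$ recorded just before the statement, together with the fact (see \cite{BP}) that $\mathcal{S}_{\epsilon}(\Sigma_{1,0})$ is generated by the two links $a$ and $b$; hence a subspace of $\mathrm{SLF}(\overline{U}_{\! \epsilon^2})$ is a submodule precisely when it is stable under $\overline{W}(a)$ and $\overline{W}(b)$. First I would record the root of unity identity $\epsilon^{2p} = \epsilon^{-2p} = -1$, whence $\epsilon^{2(p-s)} + \epsilon^{-2(p-s)} = -(\epsilon^{2s} + \epsilon^{-2s})$ for all $s$, and then check stability of $J_1 \subset J_2 \subset J_3$ directly. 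For $J_1 = \mathcal{V}_1$: thanks to that symmetry $\overline{W}(a)$ is \emph{diagonal} on the basis $\chi^{+}_s + \chi^{-}_{p-s}$ $(1 \le s \le p-1)$, $\chi^{\pm}_p$, while $\overline{W}(b)$ sends each of these basis vectors to a combination of the same vectors — the only cases to verify by hand being $s=1$, $s=p-1$ and the two vectors $\chi^{\pm}_p$, for which one uses $\chi^{+}_p \triangleleft \overline{W}(b) = 2(\chi^{+}_{p-1}+\chi^{-}_1)$ and $\chi^{-}_p \triangleleft \overline{W}(b) = 2(\chi^{+}_{1}+\chi^{-}_{p-1})$. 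For $J_2 = J_1 \oplus \mathrm{vect}(\chi^{+}_s)_{1\le s \le p-1}$ one additionally only needs that $\chi^{+}_{p-1}\triangleleft\overline{W}(b) = \chi^{+}_{p-2}+\chi^{+}_p$ lands in $J_2$, which holds because $\chi^{+}_p \in J_1$; and $J_3$ is all of $\mathrm{SLF}(\overline{U}_{\! \epsilon^2})$.

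Next I would show that the three successive quotients $J_1$, $J_2/J_1$, $J_3/J_2$ are \emph{simple} $\mathcal{S}_{\epsilon}(\Sigma_{1,0})$-modules, which is what ``composition series'' demands. The key observation is that on each quotient $\overline{W}(a)$ is diagonalizable with \emph{pairwise distinct} eigenvalues: on $J_1$ they are $-2\cos(\pi s/p)$ for $s = 0,1,\ldots,p$ (the values $-2$ and $2$ being attained on $\chi^{-}_p$ and $\chi^{+}_p$ respectively), and on $J_2/J_1$ and on $J_3/J_2$ they are $-2\cos(\pi s/p)$ for $s = 1,\ldots,p-1$ (in the latter the correction term in $G_s\triangleleft \overline{W}(a)$ lies in $J_2$ and hence vanishes in the quotient). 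Therefore any nonzero submodule of a quotient is a sum of $\overline{W}(a)$-eigenlines, so it contains at least one of them; and $\overline{W}(b)$ acts on each quotient by a tridiagonal (``path'') matrix all of whose off-diagonal coefficients ($1$, $2$, or $[s\pm1]/[s]$ with $[k]\neq 0$ for $1\le k\le p-1$) are nonzero, hence it links every eigenline to its neighbours. So a nonzero submodule contains all the eigenlines and equals the whole quotient, proving simplicity, and thus the first assertion.

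Finally I would identify $J_2/J_1$ with $\mathcal{S}^{\mathrm{red}}_{\epsilon}(\Sigma_{0,1}^{\mathrm{o}})$. By Corollary \ref{corollaireRepSkein} (equivalently, Theorem \ref{theoremRepVacuum}), specializing at $\epsilon^2$ and projecting to $\overline{U}_{\! \epsilon^2}$ yields, via the isomorphism \eqref{isoInvSLF}, a $\mathcal{S}_{\epsilon}(\Sigma_{1,0})$-equivariant algebra map $\mathcal{S}_{\epsilon}(\Sigma_{0,1}^{\mathrm{o}}) = \mathbb{C}[z] \to \mathrm{SLF}(\overline{U}_{\! \epsilon^2})$ sending the $\chi^{+}_2$-coloured core $z$ to $\chi^{+}_2$ (here $\mathcal{S}_{\epsilon}(\Sigma_{1,0})$ acts on the source $\mathbb{C}[z]$ by stacking: $b$ acts by multiplication by $z$ and $a$ by the operator determined in the preceding lemma). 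Composing with the projection $\mathrm{SLF}(\overline{U}_{\! \epsilon^2})\twoheadrightarrow \mathrm{SLF}/J_1$, the image lands in $J_2/J_1$ (all $z^n \mapsto (\chi^{+}_2)^n$ lie in $J_2$, as $J_2$ is a submodule containing $1$ and stable under $\cdot\,\chi^{+}_2$). Using $S_{s-1}(\chi^{+}_2) = \chi^{+}_s$ for $1\le s\le p$ — where $S_0 = 1$, $S_1 = z$, $S_{n+1}=zS_n-S_{n-1}$ are the Chebyshev polynomials — one checks that $\mathbb{C}[z]\to J_2/J_1$ is surjective and, since $S_{p-1}(\chi^{+}_2) = \chi^{+}_p \in J_1$ while $\deg S_{p-1} = p-1 = \dim(J_2/J_1)$, that its kernel is exactly the principal ideal $(S_{p-1}(z))$. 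Because $\mathcal{X}^{+}(p)$ is negligible (it is simple of quantum dimension $(-1)^{p-1}[p] = 0$), the reduced skein module of the annulus is $\mathcal{S}^{\mathrm{red}}_{\epsilon}(\Sigma_{0,1}^{\mathrm{o}}) \cong \mathbb{C}[z]/(S_{p-1}(z))$ with the stacking action descending to it, so the map above induces the desired isomorphism of $\mathcal{S}_{\epsilon}(\Sigma_{1,0})$-modules $\mathcal{S}^{\mathrm{red}}_{\epsilon}(\Sigma_{0,1}^{\mathrm{o}}) \overset{\sim}{\rightarrow} J_2/J_1$.

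The step I expect to be the main obstacle is this last identification: one must match the \emph{whole} stacking-then-reducing module structure on $\mathcal{S}^{\mathrm{red}}_{\epsilon}(\Sigma_{0,1}^{\mathrm{o}})$ — not only $\overline{W}(b)=$ multiplication by $z$ but also $\overline{W}(a)$, and the well-definedness of the reduction (stability of the negligible ideal under stacking) — through the chain of identifications coming from Corollary \ref{corollaireRepSkein} and \eqref{isoInvSLF}, and one must use the correct normalization of the reduced skein module of the annulus. By contrast, the submodule verifications and the simplicity argument are routine once the explicit action formulas are in hand.
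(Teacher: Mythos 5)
Your proof is correct and follows the same overall strategy as the paper's: everything is reduced to the explicit formulas for $\overline{W}(a)$ and $\overline{W}(b)$ together with the fact that $a,b$ generate $\mathcal{S}_{\epsilon}(\Sigma_{1,0})$, and $J_2/J_1$ is identified with the reduced skein algebra of the annulus through the subalgebra generated by $\chi^+_2$. The one step where you genuinely diverge is the simplicity of the composition factors. The paper invokes the primitive central idempotents $e_0,\ldots,e_p$ of $\overline{U}_{\! \epsilon^2}$, written as polynomials $e_s = P_s(c)$ (quoted from Feigin--Gainutdinov--Semikhatov--Tipunin), and applies $P_s(\overline{W}(a))$ to cut a nonzero submodule down to a single basis vector; you instead observe directly that $\overline{W}(a)$ acts on each successive quotient diagonalizably with pairwise distinct eigenvalues $-2\cos(\pi s/p)$, so any submodule is a sum of eigenlines, and then use the irreducible tridiagonal action of $\overline{W}(b)$ (all off-diagonal entries $1$, $2$ or $[s\pm1]/[s]$ nonzero) to connect them. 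These are two faces of the same mechanism --- $P_s(c)$ is precisely the interpolation polynomial isolating the eigenvalue of $c$ on the $s$-th block --- but your version is self-contained and avoids importing the structure of the center of $\overline{U}_{\! \epsilon^2}$, at the small price of having to check by hand that the eigenvalues are distinct and that the endpoints $\chi^{\pm}_p$ fit into the path. For the last claim, your description $\mathcal{S}^{\mathrm{red}}_{\epsilon}(\Sigma_{0,1}^{\mathrm{o}}) \cong \mathbb{C}[z]/(S_{p-1}(z))$ is literally the paper's description via closures of Jones--Wenzl idempotents, since $\mathrm{cl}(f_n) = S_n(z)$ in the skein algebra of the annulus; your kernel computation from $S_{p-1}(\chi^+_2) = \chi^+_p \in J_1$ plus a dimension count matches the paper's observation that $\overline{W}\big(\mathrm{cl}(f_{p-1})\big) + J_1 = 0$, and the equivariance comes in both cases from Theorem \ref{theoremRepVacuum}. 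The obstacle you flag --- descending the stacking action to the reduced quotient --- is resolved exactly as you suggest: the ideal $(S_{p-1}(z))$ is the kernel of the equivariant map $\mathbb{C}[z] \to J_2/J_1$, hence a submodule, so the reduced module structure is well defined and the induced map is an isomorphism of $\mathcal{S}_{\epsilon}(\Sigma_{1,0})$-modules.
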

\begin{proof}
We must show that $J_1, J_2/J_1, J_3/J_2$ are irreducible. Recall that the center of $\overline{U}_{\! \epsilon^2}$ contains $p+1$ primitive orthogonal idempotents $e_0, \ldots, e_p$, associated to the blocks of $\overline{U}_{\! \epsilon^2}$ \cite[Prop. 4.4.4]{FGST}. They can be expressed as polynomials of $c$ \cite[Prop. D.1.1]{FGST}: $e_0 = P_0(c), \ldots, e_p = P_p(c)$. Moreover, it is easy to see that
\[ \chi^+_s(?e_t) = \delta_{s,t}\chi^+_s, \:\:\:\:\: \chi^-_{s}(?e_t) = \delta_{p-s,t}\chi^-_{s}, \:\:\:\:\: G_s(?e_t) = \delta_{s,t}G_s. \]
Let us show that $J_1$ is irreducible. Let $0 \neq S \subset J_1$ be a submodule, and let $0 \neq \varphi = \lambda_0 \chi^-_p + \sum_s \lambda_s(\chi^+_s + \chi^-_{p-s}) + \lambda_p \chi^+_p \in S$. At least one of the coefficients, say $\lambda_s$, is non-zero. Then 
\[ \varphi \triangleleft P_s(\overline{W}(a)) = \varphi(?P_s(c)) = \varphi(?e_s) = \lambda_s(\chi^+_s + \chi^-_{p-s})\]
and thus $\chi^+_s + \chi^-_{p-s} \in S$. Assume for instance $s=1$. We have $(\chi^+_1 + \chi^-_{p-1}) \triangleleft \overline{W}(b) = (\chi^+_2 + \chi^-_{p-2}) + \chi^-_p$, and 
\begin{align*}
\big((\chi^+_2 + \chi^-_{p-2}) + \chi^-_p\big) \triangleleft P_2(\overline{W}(a)) &= \big((\chi^+_2 + \chi^-_{p-2}) + \chi^-_p\big)(?e_2) = \chi^+_2 + \chi^-_{p-2},\\
\big((\chi^+_2 + \chi^-_{p-2}) + \chi^-_p\big) \triangleleft P_0(\overline{W}(a)) &= \big((\chi^+_2 + \chi^-_{p-2}) + \chi^-_p\big)(?e_0) = \chi^-_p
\end{align*}
so that $\chi^+_2 + \chi^-_{p-2}, \chi^-_p \in S$. Continuing like this, one shows step by step that all the basis vectors are in $S$, and thus $S=J_1$ as desired. The proofs for $J_2/J_1$ and $J_3/J_2$ are similar.
\\For the last claim, recall that $\mathcal{S}_{\epsilon}^{\mathrm{red}}(\Sigma_{0,1}^{\mathrm{o}})$ is the quotient (in some sense) of $\mathcal{S}_{\epsilon}(\Sigma_{0,1}^{\mathrm{o}})$ by the $(p\!-\!1)$-th Jones-Wenzl idempotent (see \cite[\S 6.5]{costantino} for a survey) and that it is generated as a vector space by the ``closures'' $\mathrm{cl}(f_n)$ of the Jones-Wenzl idempotents $f_n$ for $0 \leq n \leq p-2$:
\begin{center}
\begingroup%
  \makeatletter%
  \providecommand\color[2][]{%
    \errmessage{(Inkscape) Color is used for the text in Inkscape, but the package 'color.sty' is not loaded}%
    \renewcommand\color[2][]{}%
  }%
  \providecommand\transparent[1]{%
    \errmessage{(Inkscape) Transparency is used (non-zero) for the text in Inkscape, but the package 'transparent.sty' is not loaded}%
    \renewcommand\transparent[1]{}%
  }%
  \providecommand\rotatebox[2]{#2}%
  \newcommand*\fsize{\dimexpr\f@size pt\relax}%
  \newcommand*\lineheight[1]{\fontsize{\fsize}{#1\fsize}\selectfont}%
  \ifx\svgwidth\undefined%
    \setlength{\unitlength}{125.02380804bp}%
    \ifx\svgscale\undefined%
      \relax%
    \else%
      \setlength{\unitlength}{\unitlength * \real{\svgscale}}%
    \fi%
  \else%
    \setlength{\unitlength}{\svgwidth}%
  \fi%
  \global\let\svgwidth\undefined%
  \global\let\svgscale\undefined%
  \makeatother%
  \begin{picture}(1,0.59019134)%
    \lineheight{1}%
    \setlength\tabcolsep{0pt}%
    \put(-0.00164496,0.28835496){\color[rgb]{0,0,0}\makebox(0,0)[lt]{\lineheight{1.25}\smash{\begin{tabular}[t]{l}$\mathrm{cl}(f_n) =$\end{tabular}}}}%
    \put(0,0){\includegraphics[width=\unitlength,page=1]{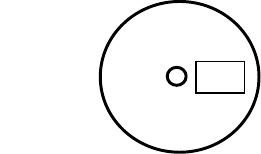}}%
    \put(0.80280541,0.27175387){\color[rgb]{0,0,0}\makebox(0,0)[lt]{\lineheight{1.25}\smash{\begin{tabular}[t]{l}$f_n$\end{tabular}}}}%
    \put(0,0){\includegraphics[width=\unitlength,page=2]{closure.pdf}}%
  \end{picture}%
\endgroup%

\end{center}
We have $\overline{W}\big( \mathrm{cl}(f_{s-1}) \big) = \overline{W}^{\chi^+_{s}}(m)$, so that $\overline{W}\big( \mathrm{cl}(f_{p-1}) \big) + J_1 = \overline{W}^{\chi^+_{p}}(m) + J_1 = 0$ (where $x+J_1$ means the class of $x$ modulo $J_1$) and $\overline{W} : \mathcal{S}_{\epsilon}^{\mathrm{red}}(\Sigma_{0,1}^{\mathrm{o}}) \to J_2/J_1$ is a well-defined map. Due to Theorem \ref{theoremRepVacuum} we see that
\[ \overline{W}\big(\mathrm{cl}(f_{s-1}) \blacktriangleleft L\big) + J_1 = \overline{W}\big(\mathrm{cl}(f_{s-1})\big) \triangleleft \overline{W}(L) + J_1 = \overline{W}^{\chi^+_{s}}(m) \triangleleft \overline{W}(L) + J_1 \]
where $L \in \mathcal{S}_{\epsilon}(\Sigma_{1,0})$ and $\blacktriangleleft$ is the stacking and reducing representation of $\mathcal{S}_{\epsilon}(\Sigma_{1,0})$ on $\mathcal{S}_{\epsilon}^{\mathrm{red}}(\Sigma_{0,1}^{\mathrm{o}})$. Thus $\overline{W} : \mathcal{S}_{\epsilon}^{\mathrm{red}}(\Sigma_{0,1}^{\mathrm{o}}) \to J_2/J_1$ is an isomorphism of $\mathcal{S}_{\epsilon}(\Sigma_{1,0})$-modules.
\end{proof}

\begin{remark}
In \cite[\S 6.5]{these}, we studied the left representation of $\mathcal{S}_{\epsilon}(\Sigma_{1,0})$ on the space mentionned in Remark \ref{remarqueHeisenberg} instead of on the vacuum representation space. Then the claim in \cite[Prop. 6.5.5]{these}, even if it is in agreement with the one in Proposition \ref{structureRepSkein}, does not make sense since we compare a left representation with the representation of $\mathcal{S}_{\epsilon}(\Sigma_{1,0})$ on $\mathcal{S}_{\epsilon}^{\mathrm{red}}(\Sigma_{0,1}^{\mathrm{o}})$ which is right due to our convention for the stack product (Definition \ref{stackProduct}); we did not realize this subtlety at that time. To make sense of it, one must take the dual representation, and apply the intertwiner mentionned in Remark \ref{remarqueHeisenberg}.
\end{remark}

\indent It is very difficult to generalize to higher genus such an explicit description of the representation of $\mathcal{S}_{\epsilon}(\Sigma_{g,0})$ on $\mathcal{L}_{0,g}^{\mathrm{inv}}(\overline{U}_{\!\epsilon^2})$. Indeed, by Remark \ref{baseInvariants}, finding a basis of $\mathcal{L}_{0,g}^{\mathrm{inv}}(\overline{U}_{\! \epsilon^2})$ is equivalent to finding a basis of $\mathrm{Inv}_{2g}(\overline{U}_{\! \epsilon^2})$ (multilinear forms invariant under the iterated coadjoint action), which is a difficult problem. Moreover, at roots of unity, generating sets of $\mathcal{S}_{\epsilon^2}(\Sigma_{g,0})$ are not known. However, we see from the previous result that the natural representation of $\mathcal{S}_{\epsilon^2}(\Sigma_{g,0})$ on $\mathcal{S}^{\mathrm{red}}_{\epsilon^2}(\Sigma_{0,g}^{\mathrm{o}})$ (which is known to be irreducible \cite{BW}) will be a composition factor of the representation of $\mathcal{S}_{\epsilon^2}(\Sigma_{g,0})$ on $\mathcal{L}_{0,g}^{\mathrm{inv}}(\overline{U}_{\!\epsilon^2})$ in a non-trivial way. Finally, the socle of this representation might be a generalization of the ideal formed by the characters of projective modules (which is the socle in the case of the torus, see $\mathcal{V}_1$ above).

\appendix
\section{Appendix: Proof of Theorem \ref{wilsonStack}}\label{appendixPreuveStack}
The proof is purely diagrammatic but requires some preliminaries. The interested reader might find it relevant to study the proof on an example with small values of $g,n$, like $g=2, n=3$. We begin with some notations:
\begin{center}
\begingroup%
  \makeatletter%
  \providecommand\color[2][]{%
    \errmessage{(Inkscape) Color is used for the text in Inkscape, but the package 'color.sty' is not loaded}%
    \renewcommand\color[2][]{}%
  }%
  \providecommand\transparent[1]{%
    \errmessage{(Inkscape) Transparency is used (non-zero) for the text in Inkscape, but the package 'transparent.sty' is not loaded}%
    \renewcommand\transparent[1]{}%
  }%
  \providecommand\rotatebox[2]{#2}%
  \newcommand*\fsize{\dimexpr\f@size pt\relax}%
  \newcommand*\lineheight[1]{\fontsize{\fsize}{#1\fsize}\selectfont}%
  \ifx\svgwidth\undefined%
    \setlength{\unitlength}{433.24863599bp}%
    \ifx\svgscale\undefined%
      \relax%
    \else%
      \setlength{\unitlength}{\unitlength * \real{\svgscale}}%
    \fi%
  \else%
    \setlength{\unitlength}{\svgwidth}%
  \fi%
  \global\let\svgwidth\undefined%
  \global\let\svgscale\undefined%
  \makeatother%
  \begin{picture}(1,0.20079189)%
    \lineheight{1}%
    \setlength\tabcolsep{0pt}%
    \put(0,0){\includegraphics[width=\unitlength,page=1]{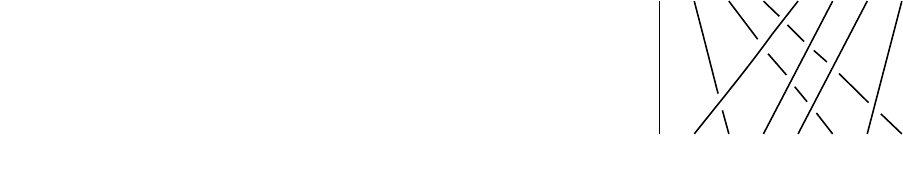}}%
    \put(0.64291618,0.11019271){\color[rgb]{0,0,0}\makebox(0,0)[lt]{\lineheight{1.25}\smash{\begin{tabular}[t]{l}$\beta_{1,0} =$\end{tabular}}}}%
    \put(0,0){\includegraphics[width=\unitlength,page=2]{tresse_1_0.pdf}}%
    \put(0.36539282,0.1086915){\color[rgb]{0,0,0}\makebox(0,0)[lt]{\lineheight{1.25}\smash{\begin{tabular}[t]{l}$\beta_{0,1} =$\end{tabular}}}}%
    \put(0,0){\includegraphics[width=\unitlength,page=3]{tresse_1_0.pdf}}%
    \put(-0.00278928,0.10808383){\color[rgb]{0,0,0}\makebox(0,0)[lt]{\lineheight{1.25}\smash{\begin{tabular}[t]{l}$c_{i,j} =$\end{tabular}}}}%
    \put(0,0){\includegraphics[width=\unitlength,page=4]{tresse_1_0.pdf}}%
    \put(0.06660494,0.00432537){\color[rgb]{0,0,0}\makebox(0,0)[lt]{\lineheight{1.25}\smash{\begin{tabular}[t]{l}$i$ strands\end{tabular}}}}%
    \put(0.19808621,0.00432817){\color[rgb]{0,0,0}\makebox(0,0)[lt]{\lineheight{1.25}\smash{\begin{tabular}[t]{l}$j$ strands\end{tabular}}}}%
  \end{picture}%
\endgroup%

\end{center}

\smallskip

\indent Note that up to using coupons we can assume that all the strands are positively oriented when they go through an handle and also that the bunch of strands in each handle is reduced to only one strand (colored by the tensor product of the colors). Hence we can assume the following form for $\mathbf{T}_1$:
\begin{center}
\begingroup%
  \makeatletter%
  \providecommand\color[2][]{%
    \errmessage{(Inkscape) Color is used for the text in Inkscape, but the package 'color.sty' is not loaded}%
    \renewcommand\color[2][]{}%
  }%
  \providecommand\transparent[1]{%
    \errmessage{(Inkscape) Transparency is used (non-zero) for the text in Inkscape, but the package 'transparent.sty' is not loaded}%
    \renewcommand\transparent[1]{}%
  }%
  \providecommand\rotatebox[2]{#2}%
  \newcommand*\fsize{\dimexpr\f@size pt\relax}%
  \newcommand*\lineheight[1]{\fontsize{\fsize}{#1\fsize}\selectfont}%
  \ifx\svgwidth\undefined%
    \setlength{\unitlength}{482.99372168bp}%
    \ifx\svgscale\undefined%
      \relax%
    \else%
      \setlength{\unitlength}{\unitlength * \real{\svgscale}}%
    \fi%
  \else%
    \setlength{\unitlength}{\svgwidth}%
  \fi%
  \global\let\svgwidth\undefined%
  \global\let\svgscale\undefined%
  \makeatother%
  \begin{picture}(1,0.20750645)%
    \lineheight{1}%
    \setlength\tabcolsep{0pt}%
    \put(0,0){\includegraphics[width=\unitlength,page=1]{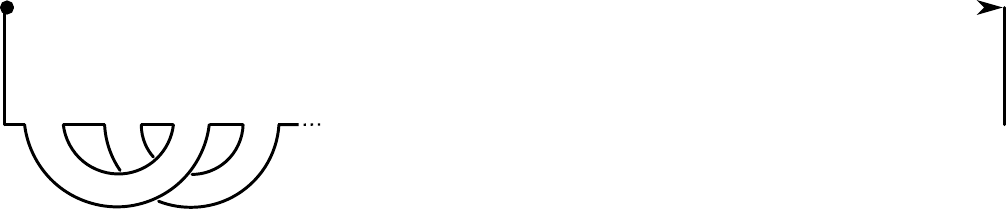}}%
    \put(0.62923982,0.09648827){\color[rgb]{0,0,0}\makebox(0,0)[lt]{\lineheight{1.25}\smash{\begin{tabular}[t]{l}{\footnotesize $I_{2g+1}$}\end{tabular}}}}%
    \put(0,0){\includegraphics[width=\unitlength,page=2]{boundaryTanglePreuveStack.pdf}}%
    \put(0.01194613,0.09421409){\color[rgb]{0,0,0}\makebox(0,0)[lt]{\lineheight{1.25}\smash{\begin{tabular}[t]{l}$I_1$\end{tabular}}}}%
    \put(0,0){\includegraphics[width=\unitlength,page=3]{boundaryTanglePreuveStack.pdf}}%
    \put(0.09573712,0.09343645){\color[rgb]{0,0,0}\makebox(0,0)[lt]{\lineheight{1.25}\smash{\begin{tabular}[t]{l}$I_2$\end{tabular}}}}%
    \put(0,0){\includegraphics[width=\unitlength,page=4]{boundaryTanglePreuveStack.pdf}}%
    \put(0.81753997,0.09648827){\color[rgb]{0,0,0}\makebox(0,0)[lt]{\lineheight{1.25}\smash{\begin{tabular}[t]{l}{\footnotesize $I_{2g+n}$}\end{tabular}}}}%
    \put(0,0){\includegraphics[width=\unitlength,page=5]{boundaryTanglePreuveStack.pdf}}%
    \put(0.47098025,0.13381054){\color[rgb]{0,0,0}\makebox(0,0)[lt]{\lineheight{1.25}\smash{\begin{tabular}[t]{l}$T_1$\end{tabular}}}}%
    \put(0,0){\includegraphics[width=\unitlength,page=6]{boundaryTanglePreuveStack.pdf}}%
    \put(0.81392093,0.1728931){\color[rgb]{0,0,0}\makebox(0,0)[lt]{\lineheight{1.25}\smash{\begin{tabular}[t]{l}$V_k$\end{tabular}}}}%
    \put(0.16558507,0.17284044){\color[rgb]{0,0,0}\makebox(0,0)[lt]{\lineheight{1.25}\smash{\begin{tabular}[t]{l}$V_1$\end{tabular}}}}%
    \put(0,0){\includegraphics[width=\unitlength,page=7]{boundaryTanglePreuveStack.pdf}}%
    \put(0.30164664,0.09558663){\color[rgb]{0,0,0}\makebox(0,0)[lt]{\lineheight{1.25}\smash{\begin{tabular}[t]{l}$I_{2g-1}$\end{tabular}}}}%
    \put(0,0){\includegraphics[width=\unitlength,page=8]{boundaryTanglePreuveStack.pdf}}%
    \put(0.40524102,0.09382859){\color[rgb]{0,0,0}\makebox(0,0)[lt]{\lineheight{1.25}\smash{\begin{tabular}[t]{l}$I_{2g}$\end{tabular}}}}%
    \put(0,0){\includegraphics[width=\unitlength,page=9]{boundaryTanglePreuveStack.pdf}}%
  \end{picture}%
\endgroup%

\end{center}
where the tangle $T_1$ is allowed to contain coupons. Similarly we can assume such a form for $\mathbf{T}_2$, but with colors $J_1, \ldots, J_{2g+n}$ instead of $I_1, \ldots, I_{2g+n}$ and $W_1, \ldots, W_l$ instead of $V_1, \ldots, V_k$ and with some tangle $T_2$ instead of $T_1$. The product $\mathbf{T}_1 \ast \mathbf{T}_2$ then looks as follows:
\begin{center}
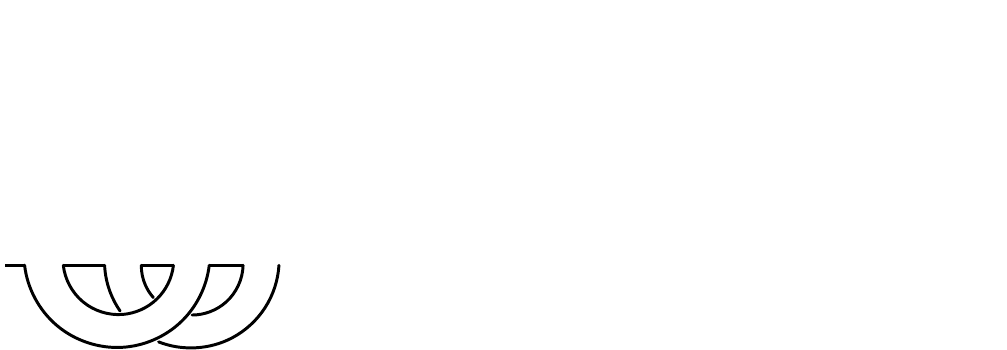
\end{center}
where the braid $\mathrm{ex}_{g,n}$ (or rather its diagram) is constructed as follows. Let $[0,1]^3$ with coordinates $(x,y,z)$ and let $\pi_x$ be the projection on the first coordinate. Let $a_i, c_j$ be points in $[0,1] \times \{0\} \times \{0\}$ and let $b_i, d_j$ be points in $[0,1] \times \{0\} \times \{1\}$, where $1 \leq i \leq g$ and $1 \leq j \leq n$, such that $\pi_x$ has strictly increasing values on the following sequence:
\[ \bigl( a_1, b_1, a_2, b_2, \ldots, a_g, b_g, c_1, d_1, c_2, d_2, \ldots, c_n, d_n \bigr). \]
Similarly, let $\bar a_i, \bar c_j$ be points in $[0,1] \times \{1\} \times \{0\}$ and $\bar b_i, \bar d_j$ be points in $[0,1] \times \{1\} \times \{1\}$, where $1 \leq i \leq g$ and $1 \leq j \leq n$, such that $\pi_x$ has strictly increasing values on the following sequence:
\[ \bigl( \bar a_1, \ldots, \bar a_g, \bar c_1, \ldots, \bar c_n, \bar b_1, \ldots, \bar b_g, \bar d_1, \ldots, \bar d_n \bigr). \]
Join the points $a_i$ and $\bar a_i$ (resp. $b_i$ and $\bar b_i$, $c_j$ and $\bar c_j$, $d_j$ and $\bar d_j$) by straight lines and project the result on the $(x,y)$-plane, this gives a braid diagram (we can always arrange the points $a_i,b_i,c_j,d_j$ and $\bar a_i, \bar b_i, \bar c_j, \bar d_j$ so that the projection on $(x,y)$ has only simple or double points and we get a well-defined diagram). Finally, for each $1 \leq i \leq g$, replace the strand joining the points $a_i$ and $\bar a_i$ (resp. $b_i$ and $\bar b_i$) by $4$ parallel strands ($4$-cabling). Similarly, for each $1 \leq j \leq n$, replace the strand joining the points $c_j$ and $\bar c_j$ (resp. $d_j$ and $\bar d_j$) by $2$ parallel strands ($2$-cabling). This gives a braid diagram of $\mathrm{ex}_{g,n}$ in $[0,1] \times [0,1]$.

\begin{lemma}\label{lemmaStack}
1. The following equalities hold for all $1 \leq i \leq g$ and $g+1 \leq j \leq g+n$:
\begin{center}
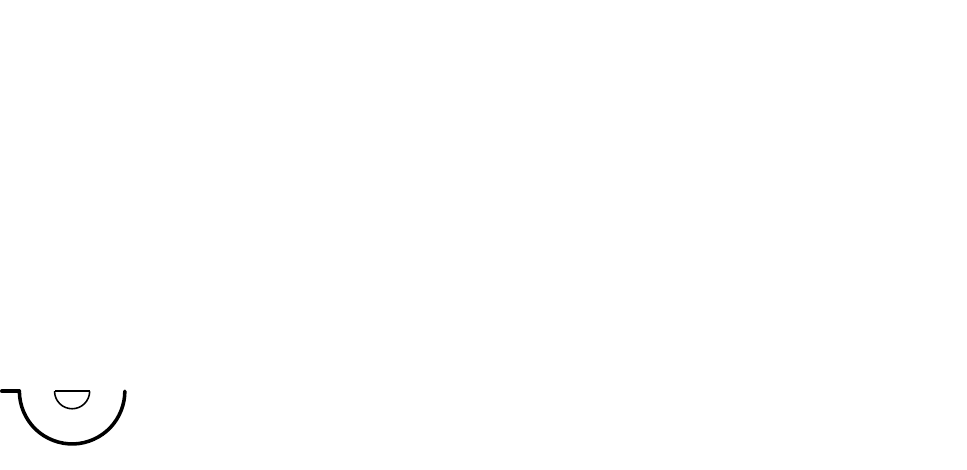
\end{center}
2. Let $\alpha_1, \ldots, \alpha_{g}$ and $\beta_1, \ldots, \beta_g$ be braids on $4$ strands and let $\gamma_1, \ldots, \gamma_n$ and $\delta_1, \ldots, \delta_n$ be braids on $2$ strands. It holds
\begin{align*}
&\mathrm{ex}_{g,n} \circ (\alpha_1 \otimes \beta_1 \otimes \alpha_2 \otimes \beta_2 \otimes \ldots \otimes \alpha_g \otimes \beta_g \otimes \gamma_1 \otimes \delta_1 \otimes \gamma_2 \otimes \delta_2 \otimes \ldots \otimes \gamma_n \otimes \delta_n)\\
= \: &(\alpha_1 \otimes \ldots \otimes \alpha_g \otimes \gamma_1 \otimes \ldots \otimes \gamma_n \otimes \beta_1 \otimes \ldots \otimes \beta_g \otimes \delta_1 \otimes \ldots \otimes \delta_n) \circ \mathrm{ex}_{g,n}.
\end{align*}
3. For $k, l \geq 1$, define 
\begin{align*}
\Gamma_{k,l} = \: &\bigl( \mathrm{id}_k^{\otimes (l-1)} \otimes c_{k,k} \otimes \mathrm{id}_k^{\otimes (l-1)} \bigr) \circ 
\bigl( \mathrm{id}_k^{\otimes (l-2)} \otimes c_{k,k}^{\otimes 2} \otimes \mathrm{id}_k^{\otimes (l-2)} \bigr) \circ \ldots\\
& \ldots \circ \bigl( \mathrm{id}_k^{\otimes 2} \otimes c_{k,k}^{\otimes (l-2)} \otimes \mathrm{id}_k^{\otimes 2} \bigr) \circ \bigl( \mathrm{id}_k \otimes c_{k,k}^{\otimes (l-1)} \otimes \mathrm{id}_k \bigr)
\end{align*}
where $\mathrm{id}_k = \mathrm{id}^{\otimes k}$ is the identity braid on $k$ strands; then
\[ \mathrm{ex}_{g,n} = \bigl(\mathrm{id}^{\otimes 4g} \otimes c_{4g,2n} \otimes \mathrm{id}^{\otimes 2n}\bigr) \circ \bigl( \Gamma_{4,g} \otimes \Gamma_{2,n} \bigr). \]
\end{lemma}
\begin{proof}
1. The first equality is readily equivalent to \eqref{dessinRelationFusion}. The second equality is the outcome of a diagrammatic computation displayed in Figure \ref{preuveHolStackTore} and based on \eqref{dessinRelationFusion} and \eqref{dessinEchangeL10}.
\\2. This is obvious by definition of the braid $\mathrm{ex}_{g,n}$.
\\3. Let $a'_i, c'_j$ be points in $[0,1] \times \{\frac{1}{2}\} \times \{0\}$ and let $b'_i, d'_j$ be points in $[0,1] \times \{\frac{1}{2}\} \times \{1\}$, where $1 \leq i \leq g$ and $1 \leq j \leq n$, such that $\pi_x$ has strictly increasing values on the following sequence:
\[ \bigl( a'_1, \ldots, a'_g, b'_1, \ldots, b'_g, c'_1, \ldots, c'_n, d'_1, \ldots, d'_n \bigr). \]
Join the points $a_i$ and $a'_i$ (resp. $b_i$ and $b'_i$, $c_j$ and $c'_j$, $d_j$ and $d'_j$) by straight lines and project the result on the $(x,y)$-plane, this gives a braid diagram. For each $1 \leq i \leq g$, replace the strand joining the points $a_i$ and $a'_i$ (resp. $b_i$ and $b'_i$) by $4$ parallel strands ($4$-cabling); for each $1 \leq j \leq n$, replace the strand joining the points $c_j$ and $c'_j$ (resp. $d_j$ and $d'_j$) by $2$ parallel strands ($2$-cabling). This defines a braid diagram $B_1$ in $[0,1] \times [0,\frac{1}{2}]$.
\\\noindent Similarly, join the points $a'_i$ and $\bar a_i$ (resp. $b'_i$ and $\bar b_i$, $c'_j$ and $\bar c_j$, $d'_j$ and $\bar d_j$) by straight lines and project the result on the $(x,y)$-plane, this gives a braid diagram. For each $1 \leq i \leq g$, replace the strand joining the points $a'_i$ and $\bar a_i$ (resp. $b'_i$ and $\bar b_i$) by $4$ parallel strands ($4$-cabling); for each $1 \leq j \leq n$, replace the strand joining the points $c'_j$ and $\bar c_j$ (resp. $d'_j$ and $\bar d_j$) by $2$ parallel strands ($2$-cabling). This defines a braid diagram $B_2$ in $[0,1] \times [\frac{1}{2},1]$.
\\\noindent Then it is not difficult to see that $\mathrm{ex}_{g,n} = B_2 \circ B_1$, $B_1 = \Gamma_{4,g} \otimes \Gamma_{2,n}$, $B_2 = \mathrm{id}^{\otimes 4g} \otimes c_{4g,2n} \otimes \mathrm{id}^{\otimes 2n}$ (it is helpful to draw some examples for small values of $g,n$).

\begin{figure}[h]
\centering
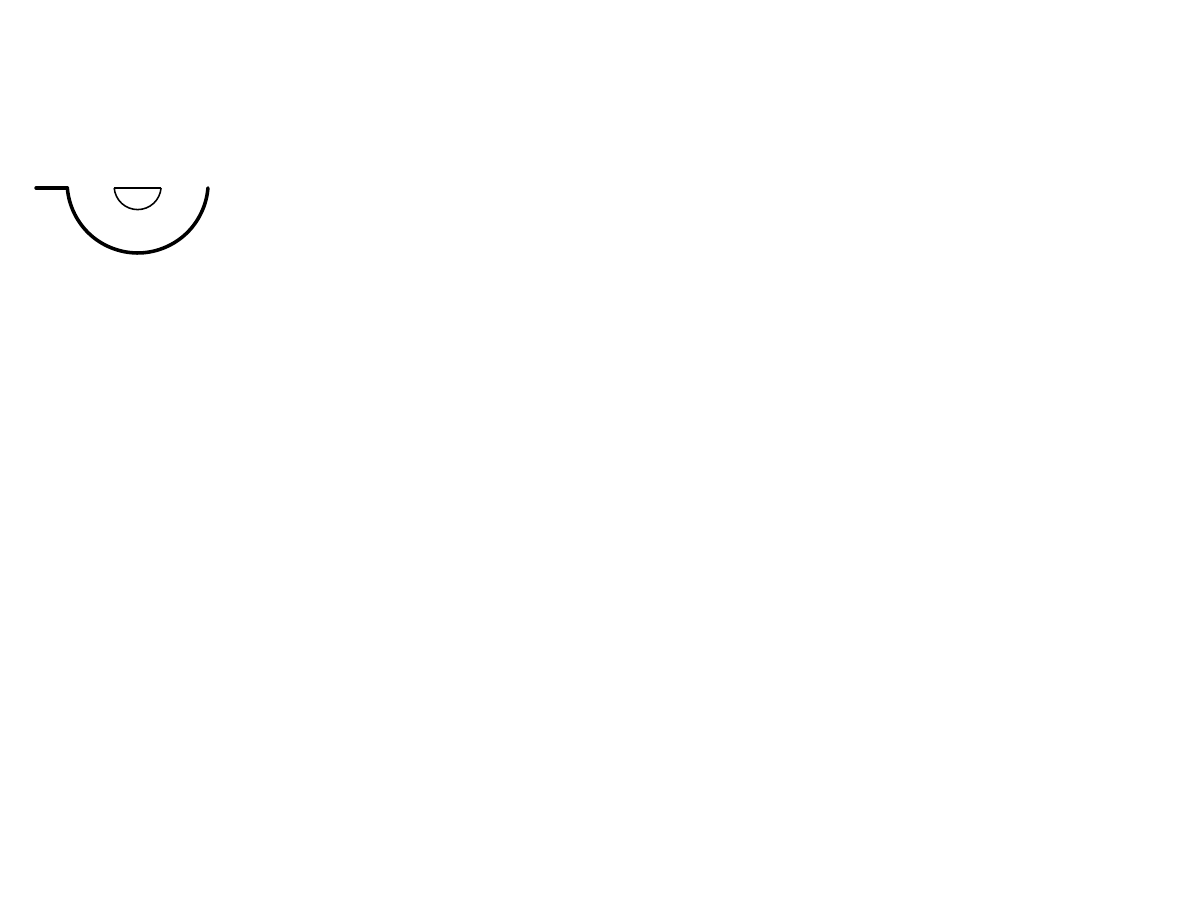
\caption{Proof of the second equality in Lemma \ref{lemmaStack}.1.}
\label{preuveHolStackTore}
\end{figure}
\end{proof}

The proof of Theorem \ref{wilsonStack} is now reduced to the diagrammatic computation displayed in Figure \ref{preuveStackGeneral} on page \pageref{preuveStackGeneral}, as we now explain. First note that by definition, the first diagram in this figure represents $\mathrm{hol}(\mathbf{T}_1 \ast \mathbf{T}_2)$ while the last diagram represents $\mathrm{hol}(\mathbf{T}_1) \odot \mathrm{hol}(\mathbf{T}_2)$. The first equality is obtained by applying the first item in Lemma \ref{lemmaStack} (1.) while the second equality is obtained thanks to the second and third items. For the third equality, observe first that due to the exchange relation \eqref{dessinEchangeLgn} we have that for $i < j$:
\begin{center}
\begingroup%
  \makeatletter%
  \providecommand\color[2][]{%
    \errmessage{(Inkscape) Color is used for the text in Inkscape, but the package 'color.sty' is not loaded}%
    \renewcommand\color[2][]{}%
  }%
  \providecommand\transparent[1]{%
    \errmessage{(Inkscape) Transparency is used (non-zero) for the text in Inkscape, but the package 'transparent.sty' is not loaded}%
    \renewcommand\transparent[1]{}%
  }%
  \providecommand\rotatebox[2]{#2}%
  \newcommand*\fsize{\dimexpr\f@size pt\relax}%
  \newcommand*\lineheight[1]{\fontsize{\fsize}{#1\fsize}\selectfont}%
  \ifx\svgwidth\undefined%
    \setlength{\unitlength}{318.85276682bp}%
    \ifx\svgscale\undefined%
      \relax%
    \else%
      \setlength{\unitlength}{\unitlength * \real{\svgscale}}%
    \fi%
  \else%
    \setlength{\unitlength}{\svgwidth}%
  \fi%
  \global\let\svgwidth\undefined%
  \global\let\svgscale\undefined%
  \makeatother%
  \begin{picture}(1,0.25894095)%
    \lineheight{1}%
    \setlength\tabcolsep{0pt}%
    \put(0,0){\includegraphics[width=\unitlength,page=1]{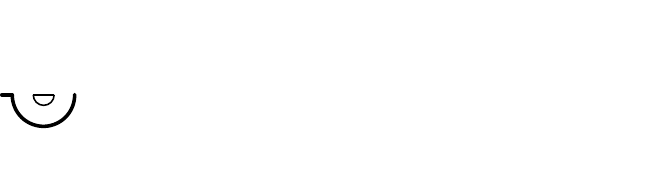}}%
    \put(0.03762925,0.00329696){\color[rgb]{0,0,0}\makebox(0,0)[lt]{\lineheight{1.25}\smash{\begin{tabular}[t]{l}$\overset{J_{2i-1}}{B}\!\!(i)$\end{tabular}}}}%
    \put(0,0){\includegraphics[width=\unitlength,page=2]{echangeBABA.pdf}}%
    \put(0.16020037,0.00338659){\color[rgb]{0,0,0}\makebox(0,0)[lt]{\lineheight{1.25}\smash{\begin{tabular}[t]{l}$\overset{J_{2i}}{A}(i)$\end{tabular}}}}%
    \put(0,0){\includegraphics[width=\unitlength,page=3]{echangeBABA.pdf}}%
    \put(0.25966961,0.00359661){\color[rgb]{0,0,0}\makebox(0,0)[lt]{\lineheight{1.25}\smash{\begin{tabular}[t]{l}$\overset{I_{2j-1}}{B}\!\!(j)$\end{tabular}}}}%
    \put(0,0){\includegraphics[width=\unitlength,page=4]{echangeBABA.pdf}}%
    \put(0.3809807,0.00389636){\color[rgb]{0,0,0}\makebox(0,0)[lt]{\lineheight{1.25}\smash{\begin{tabular}[t]{l}$\overset{I_{2j}}{A}(j)$\end{tabular}}}}%
    \put(0,0){\includegraphics[width=\unitlength,page=5]{echangeBABA.pdf}}%
    \put(0.48352812,0.10776558){\color[rgb]{0,0,0}\makebox(0,0)[lt]{\lineheight{1.25}\smash{\begin{tabular}[t]{l}$=$\end{tabular}}}}%
    \put(0.20680814,0.16818596){\color[rgb]{0,0,0}\makebox(0,0)[lt]{\lineheight{1.25}\smash{\begin{tabular}[t]{l}$c_{4,4}$\end{tabular}}}}%
    \put(0,0){\includegraphics[width=\unitlength,page=6]{echangeBABA.pdf}}%
    \put(0.57488862,0.00329696){\color[rgb]{0,0,0}\makebox(0,0)[lt]{\lineheight{1.25}\smash{\begin{tabular}[t]{l}$\overset{I_{2j-1}}{B}\!\!(j)$\end{tabular}}}}%
    \put(0,0){\includegraphics[width=\unitlength,page=7]{echangeBABA.pdf}}%
    \put(0.69745977,0.00338656){\color[rgb]{0,0,0}\makebox(0,0)[lt]{\lineheight{1.25}\smash{\begin{tabular}[t]{l}$\overset{I_{2j}}{A}(j)$\end{tabular}}}}%
    \put(0,0){\includegraphics[width=\unitlength,page=8]{echangeBABA.pdf}}%
    \put(0.79692898,0.00359659){\color[rgb]{0,0,0}\makebox(0,0)[lt]{\lineheight{1.25}\smash{\begin{tabular}[t]{l}$\overset{J_{2i-1}}{B}\!\!(i)$\end{tabular}}}}%
    \put(0,0){\includegraphics[width=\unitlength,page=9]{echangeBABA.pdf}}%
    \put(0.91824008,0.00389634){\color[rgb]{0,0,0}\makebox(0,0)[lt]{\lineheight{1.25}\smash{\begin{tabular}[t]{l}$\overset{J_{2i}}{A}(i)$\end{tabular}}}}%
    \put(0,0){\includegraphics[width=\unitlength,page=10]{echangeBABA.pdf}}%
  \end{picture}%
\endgroup%

\end{center}
Using repeatedly this identity, we transform the tensor product of the handles labelled by
\[ \overset{I_1}{B}(1), \overset{I_2}{A}(1), \overset{J_1}{B}(1), \overset{J_2}{A}(1), \overset{I_3}{B}(2), \overset{I_4}{A}(2), \overset{J_3}{B}(2), \overset{J_4}{A}(2), \ldots, \overset{I_{2g-1}}{B}\!\!(g), \overset{I_{2g}}{A}(g), \overset{J_{2g-1}}{B}\!\!(g), \overset{J_{2g}}{A}(g) \]
into the tensor product of the handles labelled by
\[ \overset{I_1}{B}(1), \overset{I_2}{A}(1), \overset{I_3}{B}(2), \overset{I_4}{A}(2), \ldots, \overset{I_{2g-1}}{B}\!\!(g), \overset{I_{2g}}{A}(g), \overset{J_1}{B}(1), \overset{J_2}{A}(1), \overset{J_3}{B}(2), \overset{J_4}{A}(2), \ldots, \overset{J_{2g-1}}{B}\!\!(g), \overset{J_{2g}}{A}(g). \]
Moreover, this manipulation removes exactly the braid $\Gamma_{4,g}$ in the diagram. Similarly, using repeatedly relation \eqref{dessinEchangeLgn}, we transform the tensor product of the handles labelled by
\[ \overset{I_{2g+1}}{M}\!\!(g\!+\!1), \overset{J_{2g+1}}{M}\!\!(g\!+\!1), \overset{I_{2g+2}}{M}\!\!(g\!+\!2), \overset{J_{2g+2}}{M}\!\!(g\!+\!2), \ldots, \overset{I_{2g+n}}{M}\!\!(g\!+\!n), \overset{J_{2g+n}}{M}\!\!(g\!+\!n) \]
into the tensor product of the handles labelled by
\[ \overset{I_{2g+1}}{M}\!\!(g\!+\!1), \overset{I_{2g+2}}{M}\!\!(g\!+\!2), \ldots, \overset{I_{2g+n}}{M}\!\!(g\!+\!n), \overset{J_{2g+1}}{M}\!\!(g\!+\!1), \overset{J_{2g+2}}{M}\!\!(g\!+\!2), \ldots, \overset{J_{2g+n}}{M}\!\!(g\!+\!n) \]
and this manipulation removes exactly the braid $\Gamma_{2,n}$ in the diagram. For the fourth equality, we simply use relation \eqref{dessinEchangeLgn} several times to transform the tensor product of the handles labelled by
\[ \overset{J_1}{B}(1), \overset{J_2}{A}(1), \overset{J_3}{B}(2), \overset{J_4}{A}(2), \ldots, \overset{J_{2g-1}}{B}\!\!(g), \overset{J_{2g}}{A}(g), \overset{I_{2g+1}}{M}\!\!(g\!+\!1), \overset{I_{2g+2}}{M}\!\!(g\!+\!2), \ldots, \overset{I_{2g+n}}{M}\!\!(g\!+\!n) \]
into the tensor product of the handles labelled by
\[ \overset{I_{2g+1}}{M}\!\!(g\!+\!1), \overset{I_{2g+2}}{M}\!\!(g\!+\!2), \ldots, \overset{I_{2g+n}}{M}\!\!(g\!+\!n), \overset{J_1}{B}(1), \overset{J_2}{A}(1), \overset{J_3}{B}(2), \overset{J_4}{A}(2), \ldots, \overset{J_{2g-1}}{B}\!\!(g), \overset{J_{2g}}{A}(g) \]
and this manipulation removes exactly the braid $c_{4g,2n}$ in the diagram.

\begin{figure}[h]
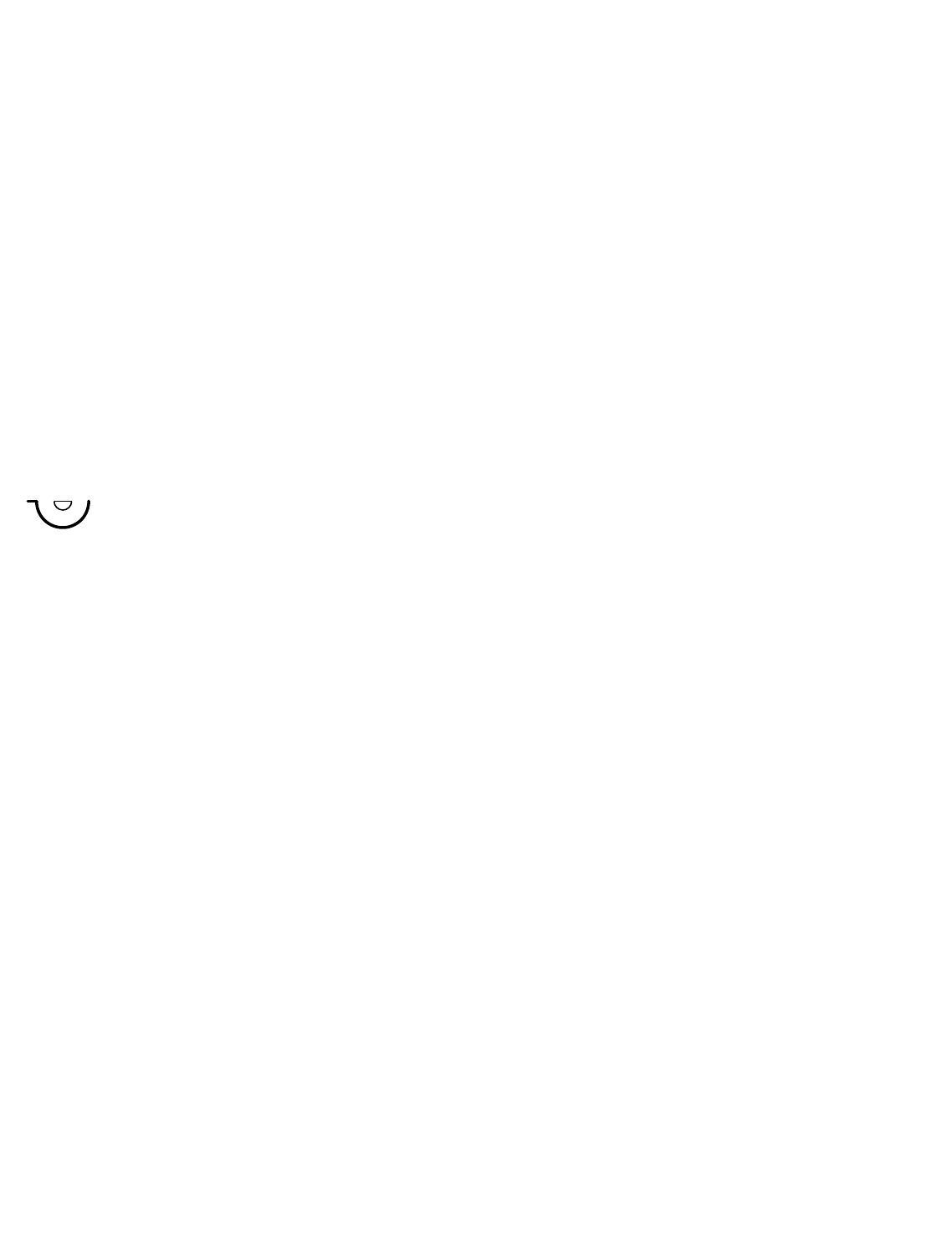
\caption{Proof of $\mathrm{hol}(\mathbf{T}_1 \ast \mathbf{T}_2) = \mathrm{hol}(\mathbf{T}_1) \odot \mathrm{hol}(\mathbf{T}_2)$.}
\label{preuveStackGeneral}
\end{figure}

\end{document}